\theoremstyle{plain} 
\newtheorem{theorem}{Theorem} 
\newtheorem{conjecture}[theorem]{Conjecture}
\newtheorem{proposition}[theorem]{Proposition}
\newtheorem{corollary}[theorem]{Corollary}
\newtheorem{exercise}[theorem]{Exercise}
\theoremstyle{definition}
\newtheorem{definition}[theorem]{Definition}
\theoremstyle{remark}
\numberwithin{theorem}{section}
\numberwithin{figure}{section}
\newcommand{\TITLE}{An illustrated introduction to the arithmetic of Apollonian circle packings, continued fractions, and other thin orbits}
\newcommand{\TITLERUNNING}{Illustrated arithmetic of Apollonian circle packings, continued fractions, and thin orbits}
\newcommand{\DATE}{\today}
\newcommand{\VERSION}{1}
\newcommand{\dkron}[2]{\left(\dfrac{#1}{#2}\right)}
\newcommand{\kron}[2]{\left(\frac{#1}{#2}\right)}
\newcommand{\tightoverset}[2]{%
  \mathop{#2}\limits^{\vbox to -.5ex{\kern-1.05ex\hbox{$#1$}\vss}}}
\renewcommand{\a}{\alpha}
\def\Pcal{{\mathcal P}}
\def\Scal{{\mathcal S}}
\newcommand{\CC}{\mathbb{C}}
\newcommand{\DD}{\mathbb{D}}
\newcommand{\PP}{\mathbb{P}}
\newcommand{\QQ}{\mathbb{Q}}
\newcommand{\RR}{\mathbb{R}}
\newcommand{\ZZ}{\mathbb{Z}}
\renewcommand{\gcd}{{\operatorname{gcd}}}
\newcommand{\GL}{\operatorname{GL}}
\newcommand{\MOD}[1]{~(\textup{mod}~#1)}
\renewcommand{\pmod}{\MOD}
\newcommand{\ord}{\operatorname{ord}}
\renewcommand{\setminus}{\smallsetminus}
\newcommand\PSL{\operatorname{PSL}}
\newcommand\SL{\operatorname{SL}}
\newcommand\PGL{\operatorname{PGL}}
\newcommand\OK{\mathcal{O}_K}
\newcommand{\acosh}{\operatorname{acosh}}
\newif\ifcomments
\definecolor{myyellow}{rgb}{1.0, 0.75, 0.0}
\definecolor{mygreen}{rgb}{0.35, 0.71, 0.0}
\newcommand{\KS}[1]{\textcolor{blue}{{\sf (ToDo:} {\sl{#1})}}}
\newcommand{\KS}[1]{}
\newcommand{\HH}{\mathbb{H}}
\title[\TITLERUNNING]{\TITLE}
\author{Katherine E. Stange}
\date{\DATE, Draft \#\VERSION}
\address{%
Department of Mathematics, University of Colorado,
Campux Box 395, Boulder, Colorado 80309-0395}
\email{kstange@math.colorado.edu}
\keywords{}
\thanks{}
\begin{document}

\begin{abstract}
	These notes cover and expand upon the material for two summer schools:  
The first, which was held at CIRM, Marseille, France, July 10-14, 2023, as part of \emph{Renormalization and Visualization for packing, billiard and surfaces},  was titled \emph{Number theory as a door to geometry, dynamics and illustration}.  
The second was held at NSU IMS in Singapore, June 3-7, 2024, as part of \emph{Computational Aspects of Thin Groups}, and was titled \emph{Integral packings and number theory}.  Both courses were put together by a number theorist for students and researchers in other fields.  They cover a web of ideas relating to Apollonian circle packings, integral orbits, thin groups, hyperbolic geometry, continued fractions, and Diophantine approximation.  The connection of geometry and dynamics to number theory gives an opportunity to illustrate arithmetic by appealing to our visual intuition.
\end{abstract}

\maketitle

\begin{center}
\textcolor{red}{THIS IS A DRAFT; \\ PLEASE REPORT ERRORS}
\end{center}

\tableofcontents

\section{Introduction}

These notes were meant to serve for two separate summer schools.  The first, which was held at CIRM, Marseille, France, July 10-14, 2023, as part of \emph{Renormalization and Visualization for packing, billiard and surfaces},  was titled \emph{Number theory as a door to geometry, dynamics and illustration}.  The abstract was:

\begin{quote}
The course will explore several related topics in number theory with dynamical and/or geometric facets:  continued fractions, Diophantine approximation, and Apollonian circle packings.  We will focus on both theoretical and experimental tools; a parallel goal will be to experience the role of visualization and illustration in mathematical research.  

In covering background material, the approach will emphasize the visual and dynamical:

\begin{enumerate}
	\item Continued fractions, quadratic forms, and Diophantine approximation.
	\item Hyperbolic geometry, Minkowski space, and Kleinian groups.
\end{enumerate}

With these tools at hand, we will study some areas of current research:

\begin{enumerate}
	\item The geometry of Diophantine approximation and continued fractions in the complex plane, including algebraic starscapes and Schmidt arrangements.  
	\item Apollonian circle packings, with an emphasis on their surprising relationships to the preceding topics.
\end{enumerate}
\end{quote}

The second was held at NSU IMS in Singapore, June 3-7, 2024, as part of \emph{Computational Aspects of Thin Groups}, and is titled \emph{Integral packings and number theory}.  The abstract for this course was:

\begin{quote}
The course will use Apollonian circle packings as a central example for connections between number theory and thin groups.  The symmetries of such a packing are governed by a thin group called the Apollonian group, and the curvatures form an orbit of that group.  Our goal is to study such orbits, particularly in the case that the orbit consists entirely of integers.  Some of the topics that are entwined with the study of these packings include quadratic forms, hyperbolic geometry in 2 and 3 dimensions, arithmetic geometry, continued fractions, spectral theory of graphs and strong approximation.  I will give a tour of the area, with the goal of introducing the number theory perspective on such problems, highlighting the tools at hand, and finishing by considering the wider class of problems that can be phrased as questions about the arithmetic of thin orbits.
\end{quote}

In both cases, the audience did not consist of number theorists, and my goal is to highlight connections between number theory and other domains, and to share some of the number theorist's perspective on ideas familiar in other domains.  The notes will contain a number of examples of the ways that number theory leads to geometry and dynamics.   As a number theorist, I see these relationships as number theory questions with geometric answers, but this is surely not the only way to see them.  

Three motivating questions are:

\begin{enumerate}
	\item Which complex numbers can be well approximated by algebraic numbers (of various flavours)?
	\item What curvatures appear in a primitive integral Apollonian circle packing or another thin orbit?
	\item For imaginary quadratic fields $K$, how is the number theory of $\mathcal{O}_K$ visible in the Bianchi group $\PSL_2(\mathcal{O}_K)$?
\end{enumerate}
It turns out these are all connected to one another by their underlying geometry.  And they are all generalizations of very classical questions in number theory about the integer solutions to equations, the distribution of rationals in the real line, and the study of continued fraction expansions.

I must admit that a particular love of the author is the \emph{Apollonian circle packing}.  If one is interested in the classical number theoretic topics of $\PSL_2(\ZZ)$, continued fractions, or quadratic forms, and one begins to wonder about higher dimensional analogues, one comes very naturally to study $\PSL_2(\ZZ[i])$, Schmidt's continued fractions, and Hermitian forms.  Studying these geometrically, one discovers that the essential new riddle that lies wrapped in the mystery inside the enigma\ldots is the Apollonian circle packing.  It is a halfway house, an unavoidable geometric feature, but an orbit of a group that is not itself an arithmetic group.  It begs the same questions we ask for Diophantine equations, without being one itself.  This is its allure.

Finally, these topics are all examples of the interplay between research and the illustration of mathematics.  The use of computers to explore mathematics with our highly evolved visual cortices is not only productive, but also \emph{sensual}, in the language of Conway \cite{ConwayFung}.  One of my goals in these notes is to emphasize the utility and beauty of these explorations, and make a case for their wider use in research.

These notes are written by a number theorist.  I apologize in advance for the inevitable injustice in making connections to other domains which I cannot fully follow up.  I hope, however, that this insufficiency will serve as a motivation.  There are a great number of interesting unexplored species in this jungle.

One final note to the reader:  in the off chance that your enthusiasm causes you to rush headlong (as mine sometimes does), I would warn you that some important statements are contained in the exercises, so their statements are an important part of the narrative.

\subsubsection{Acknowledgements}  
I owe a great debt of gratitude to Jayadev Athreya for inspiration and support over many years, and for his invitation to participate in his Morlet semester at Centre international de rencontres math\'ematiques, which eventually led to these notes.
Thank you to all the organizers of the Research School, \emph{Renormalization and Visualization for packing, billiard and surfaces}, held July 10-14, 2023, for which these lecture notes were originally drafted, and to the organizers of the Programme \emph{Computational Aspects of Thin Groups}, held June 3-7, 2024, for which these lecture notes were adapted and expanded.  These organizers include Jayadev Athreya and Nicalas Bedaride in the first case; and Bettina Eick, Eamonn O'Brien, Alan W. Reid, and Ser Peow Tan in the second.
Thank you to all the students and researchers who attended and contributed feedback; and gratitude to the Centre international de rencontres math\'ematiques (CIRM) and the National University of Singapore Institute for Mathematical Sciences (NUS IMS) for hosting and supporting these conferences.  Thank you to James Rickards for TA help at the IMS school.
Thank you to Elena Fuchs, Daniel Martin and James Rickards for consultation, and to many for sharing permission to use their images (individually noted in each case).  Thank you to Glen Whitney for feedback.
Thank you to Summer Haag for her detailed reading and feedback, and willingness to dive into the minus signs.
And finally, thank you to all my intrepid collaborators on all the projects which are mentioned in these notes.

\section{A number theory perspective}

\subsection{Diophantine problems}

Given a system of polynomial equations, what are the integer or rational solutions?  Such problems are called \emph{Diophantine problems} and make up a significant part of modern and ancient number theory.

Some examples of Diophantine problems are:
\begin{enumerate}
	\item Solutions to a single equation $f(x_1,\ldots,x_n) = 0$, amongst the most famous of which are quadratic forms $ax^2 + bxy + cy^2 = m$ and elliptic curves $y^2 = x^3 + ax + b$.
	\item Generalizing the previous case, solutions to systems of equations in several variables, which represent algebraic varieties such as abelian varieties, curves and surfaces.  
	\item Many problems reduce to or can be phrased as relating to Diophantine problems, such as the famous congruent number problem, the units in a ring of integers (e.g., Pell's equation), and the arithmetic study of function iteration (arithmetic dynamics).
	\item Among the more significant examples are algebraic varieties arising as moduli spaces in the study of other problems (e.g., modular curves), where knowing the points amounts to classifying mathematical objects of some type.
	\item The study of matrix groups and algebraic groups more generally.  This includes most/many Lie groups.
	\item In the context of a group such as an algebraic group, the study of the elements of an orbit.
\end{enumerate}

The depth and breadth of the partial list above amply motivates the study of Diophantine equations.

As we shall see, in the process of trying to understand these problems, we are led naturally to look for solutions modulo $p$, in the $p$-adics $\QQ_p$, in the real numbers $\RR$ and in the complex numbers $\CC$.  Knowledge of the solutions in these other realms all have a role to play in the original realm -- $\QQ$ or $\ZZ$ -- which is in some sense the hardest place to look for solutions.  We are also led naturally to generalize these questions to number fields and their rings of integers.

\subsection{Quadratic forms}
\label{sec:QFone}

There are some natural starting points for the study of Diophantine equations.  After linear equations such as $ax+by=c$ (which one solves with the \emph{Euclidean algorithm}, which is central to everything), the historical and logical next step is quadratic equations and quadratic forms.  These will have an important role to play throughout the entirety of these notes.

An \emph{$n$-ary quadratic form} is a homogeneous polynomial in $n$ variables of degree $2$.  (The term \emph{form} refers to the homogeneity.)
Therefore, a \emph{binary quadratic form} is a homogeneous polynomial in two variables of degree $2$, which will necessarily have the shape
\[
	f(x,y) = ax^2 + bxy + cy^2
\]
for some coefficients $a,b,c$, which we will take to be in $\RR$, to begin.  

Binary quadratic forms have a beautiful Diophantine theory, whose geometric incarnations we will meet later.  The main question is to ask:  what are the values represented by a quadratic form?  For the number theorist, we ask especially for forms with integer coordinates, evaluated on integer inputs.  A famous example is the following:

\begin{theorem}[Fermat]
The quadratic form $x^2 + y^2$ represents exactly those odd primes which are $1$ modulo $4$.  
\end{theorem}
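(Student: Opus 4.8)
The plan is to prove Fermat's theorem on sums of two squares in two directions: first that a prime $p \equiv 1 \pmod 4$ is represented by $x^2 + y^2$, and second that primes represented (other than $2$) must be $\equiv 1 \pmod 4$. The easy direction is the necessity of the congruence condition: if an odd prime $p = x^2 + y^2$, then reducing modulo $4$ and using that squares are $0$ or $1$ modulo $4$ forces $p \equiv 1 \pmod 4$ (it cannot be $0$ or $2$ since $p$ is odd, and $3$ is impossible as a sum of two squares mod $4$). So the heart of the matter is the sufficiency: every prime $p \equiv 1 \pmod 4$ is a sum of two squares.

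For the sufficiency direction, the key first step is to produce a square root of $-1$ modulo $p$. When $p \equiv 1 \pmod 4$, the multiplicative group $(\ZZ/p\ZZ)^\times$ is cyclic of order $p-1$ divisible by $4$, so it contains an element of order $4$, whose square is $-1$; equivalently, one can invoke Wilson's theorem and take $u = \left(\frac{p-1}{2}\right)!$, which satisfies $u^2 \equiv -1 \pmod p$. This gives an integer $u$ with $p \mid u^2 + 1$.

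The second and central step is a descent or pigeonhole argument to pass from the congruence $u^2 \equiv -1 \pmod p$ to an actual representation $p = x^2 + y^2$. The cleanest approach I would take is Thue's lemma via the pigeonhole principle: consider the $(\lfloor \sqrt p \rfloor + 1)^2 > p$ pairs $(m,n)$ with $0 \le m, n \le \lfloor \sqrt p \rfloor$, and look at the values $m - u n \pmod p$. Two distinct pairs must collide, yielding $x = m_1 - m_2$ and $y = n_1 - n_2$, not both zero, with $x \equiv u y \pmod p$ and $|x|, |y| < \sqrt p$. Then $x^2 + y^2 \equiv u^2 y^2 + y^2 = (u^2 + 1) y^2 \equiv 0 \pmod p$, while $0 < x^2 + y^2 < 2p$, forcing $x^2 + y^2 = p$. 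An elegant alternative is the Gaussian-integer argument: in $\ZZ[i]$ the element $p$ divides $(u+i)(u-i)$ but divides neither factor, so $p$ is not prime in $\ZZ[i]$; since $\ZZ[i]$ is a Euclidean domain and hence a UFD, a nontrivial factorization $p = \pi \bar\pi$ exists, and taking norms gives $p = a^2 + b^2$.

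The main obstacle is precisely this second step: the existence of the congruential square root of $-1$ is elementary, but converting a divisibility statement into an equality of integers requires a genuinely nontrivial idea — either the pigeonhole bound in Thue's lemma or the unique factorization in $\ZZ[i]$. I would lean toward the Gaussian integer proof, since it fits the arithmetic-of-rings-of-integers philosophy emphasized in the surrounding text and foreshadows the later role of $\ZZ[i]$ and imaginary quadratic fields; but one must be careful to justify that $\ZZ[i]$ is a unique factorization domain (via its Euclidean norm) before invoking the factorization.
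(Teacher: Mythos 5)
Your proposal is correct, and your preferred route --- obtaining $u$ with $u^2 \equiv -1 \pmod{p}$ and then using unique factorization in $\ZZ[i]$ to show $p$ is reducible and hence a norm $a^2+b^2$ --- is essentially the paper's own argument, which appears as the proof of Corollary~\ref{cor:twosquares} (the paper derives the square root of $-1$ from the first supplementary law of quadratic reciprocity rather than from cyclicity of $(\ZZ/p\ZZ)^\times$, an immaterial difference, and your Thue-lemma pigeonhole variant is a sound alternative the paper only gestures at through other exercises). Both your necessity argument modulo $4$ and your descent to $p = a^2 + b^2$ are complete and match the paper's treatment.
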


There are many proofs of this fact, arising from many different tools.  It can be seen as a property of the Gaussian integers $\ZZ[i]$ and quadratic reciprocity (Corollary~\ref{cor:twosquares} below), a result of the fact that $\pi > 2$ via the geometry of lattices (Exercise~\ref{ex:pi2}), using dynamics (Exercise~\ref{ex:zagier}, or proven using Jacobi sums \cite{IrelandRosen} or even partition theory \cite{Partition}.

The \emph{discriminant} $\Delta_f$ of the form $f$ is $b^2 - 4ac$.
If the form takes both negative and positive values when evaluated at $(x,y) \in \ZZ$, we call it \emph{indefinite}.  Otherwise, there are two cases:  it can be \emph{semi-definite} (taking the value $0$ on some non-trivial input) or \emph{definite} (if it does not).  If it is definite, its values away from $(0,0)$ all have the same sign, so it is either \emph{positive} or \emph{negative}.  

As number theorists, our interest will be in the case $a,b,c \in \QQ$.  Up to a scalar multiple, it is convenient to take $a,b,c \in \ZZ$ having no common factor.  This is called a \emph{primitive integral binary quadratic form}.  
We are mainly interested in \emph{positive definite primitive integral binary quadratic forms}.  This cumbersome terminology teases the limit of human patience, so some authors abbreviate such forms as `PDPIBQFs' or something similarly awkward; we will just say `integral quadratic form' and imply all the abovementioned qualifiers unless stated otherwise.  If we say `real quadratic form' we mean a positive definite binary quadratic form with real coefficients.

We wish to abstract away the choice of coordinates inherent in this definition:  we would like to identify $f(x,y)$ and $g(X,Y)$ if they are related by a linear change of variables $X = \alpha x+ \beta y$, $Y =  \gamma x + \delta y$, where $\alpha, \beta, \delta, \gamma \in \ZZ$ and $\alpha \delta - \beta \gamma = 1$.   We write this as a matrix action of $\SL_2(\ZZ)$:
\[
	g(X,Y) = M \cdot f(x,y) = f( M \cdot (x,y)^t ), \quad
	M = \begin{pmatrix}
		\alpha & \beta \\ \gamma & \delta
	\end{pmatrix} \in \SL_2(\ZZ).
\]
We call two such forms \emph{properly equivalent} or just \emph{equivalent} (there is also a notion of $\GL_2(\ZZ)$-equivalence, but we will prefer this one for now).  Equivalent forms share much of their behaviour.
Two equivalent forms will represent the same set of integers upon integer inputs, and will have the same discriminant.  If $f$ is primitive, then $M \cdot f$ is also primitive.  
Finally, observe that the matrix $-I \in \SL_2(\ZZ)$ has no effect on a form $f$, so that it is just as reasonable to talk about the action of $\PSL_2(\ZZ) := \SL_2(\ZZ)/\pm I$.  

\begin{exercise}
	Verify the statement that proper equivalence preserves primitivity, discriminant and the set of integers represented.
\end{exercise}

\begin{exercise}
	\label{ex:pi2}
  This exercise outlines a geometric proof that $x^2 + y^2$ represents exactly those odd primes that are $1$ modulo $4$.  It turns on the fact that $\pi > 2$.
	\begin{enumerate}
		\item Observe that $x^2 + y^2$ cannot represent anything which is $3$ modulo $4$.
		\item Let $p \equiv 1 \pmod{4}$.  Show that $-1$ is a square modulo $p$ (this can be accomplished using the group theory of $(\ZZ/p\ZZ)^*$, which is cyclic).  Conclude that $p$ divides $m^2 + 1$ for some $m$.  
		\item Prove Minkowski's Theorem in dimension two:  Any convex set in $\RR^2$ which is symmetric about the origin and of volume exceeding $4$ contains a non-zero integer point. 
		\item Consider the lattice $\Lambda$ of $\RR^2$ generated as the $\ZZ$-span of $(1,m)$ and $(0,p)$.  Show that all elements $\mathbf{v} \in \Lambda$ have norm squared $|| \mathbf{v} ||^2$ divisible by $p$.  Compute the covolume of this lattice (the area of the fundamental parallelogram).
		\item Conclude the argument using Minkowski's Theorem to show there is an element $\mathbf{v} \in \Lambda$ having $||\mathbf{v}||^2 = p$.
	\end{enumerate}
\end{exercise}

\subsection{Local-to-global}

Consider the question of when a positive integer $n$ is a sum of three squares:  $n = x^2 + y^2 + z^2$.  It was first observed by Legendre that this has a solution $(x,y,z)$ if and only if $n$ is not $7$ modulo $8$.  We can rephrase this to say that the equation has a solution in $\ZZ$ if and only if it has a solution modulo every odd prime, and modulo $8$.  This characterizes $n$'s representability in terms of the `local' congruence conditions `at' each prime.  In fact, Fermat's Theorem, that an odd prime can be represented as a sum of squares if and only if it is $1 \pmod{4}$, can be viewed from this lens also.  This is called a \emph{local-to-global} principle.

To formalize this just slightly, let $f_i(x_1,\ldots,x_n)=0$ be a system of polynomial equations with coefficients in $\QQ$.  Suppose this system has a solution in $\QQ$, say $(x_1, \ldots, x_n) \in \QQ^n$.  Since $\QQ$ embeds in $\RR$ and in $\QQ_p$ for all $p$, the $\QQ$-solution, called a \emph{global solution}, will also provide \emph{local solutions} everywhere, i.e. solutions in $\QQ_p$ and $\RR$.  It is traditional in this setting to consider $\infty$ a prime of $\QQ$, along with the usual primes, calling all of these \emph{places} or \emph{valuations}; the primes are \emph{finite places} and $\infty$ is the \emph{infinite place}.  The field $\RR =: \QQ_\infty$ is the completion of $\QQ$ at the place $\infty$ and $\QQ_p$ is the completion at the place $p$, for each prime $p$.  These completions, called \emph{local fields}, are actually easier fields in which to study polynomial solutions.  All of this has generalizations for number fields, but we will stick to $\QQ$ for the moment.

Thus we have observed that global solutions imply local ones.  In the case of quadratic forms, the converse is true:

\begin{theorem}[Hasse-Minkowski Theorem]
	Let $Q(x_1,\ldots,x_n)$ be an $n$-ary quadratic form with coefficients in $\QQ$.  Then $Q = 0$ has a solution in $\QQ$ (i.e. $(x_1, \ldots x_n) \in \QQ^n$) if and only if it has a solution in $\QQ_p$ for all $p$ and in $\RR$.
\end{theorem}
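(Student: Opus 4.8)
The forward implication is immediate and was essentially noted already: a nontrivial rational zero is, in particular, a nontrivial zero over each $\QQ_p$ and over $\RR$. The content is the converse, that local solvability at every place forces a global solution, where throughout ``solution'' is understood to mean a nonzero one. The plan is to diagonalize and induct on the number of variables $n$. Since we are in characteristic zero, a linear change of variables over $\QQ$ puts $Q$ in the shape $a_1x_1^2 + \cdots + a_nx_n^2$ with nonzero $a_i$, and rescaling lets me take the $a_i$ to be squarefree integers; as local and global solvability are both preserved, it suffices to treat such diagonal forms. The cases $n=1,2$ are direct: for $n=1$ the form has no nontrivial zero over any field, so there is nothing to prove, while for $n=2$ the condition is that $-a_1/a_2$ be a square, and I would invoke the elementary fact that a rational number is a square in $\QQ$ exactly when it is positive and a square in every $\QQ_p$ --- a consequence of unique factorization, since $p$-adic squareness fixes the parity of each valuation and the sign is read off at $\infty$.

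The heart of the matter is $n=3$, a form $ax^2 + by^2 + cz^2 = 0$. Here I would attach to each place $v$ the Hilbert symbol $(\cdot,\cdot)_v$, which detects local solvability of the associated ternary form, and exploit its product formula $\prod_v (a,b)_v = 1$ --- a repackaging of quadratic reciprocity together with the supplementary laws. Local solvability everywhere makes all but one Hilbert symbol trivial, and the product formula then forces the last one to be trivial as well; a classical descent on the size of the (squarefree) coefficients, essentially Legendre's theorem and the three-variable cousin of the geometry-of-numbers argument in Exercise~\ref{ex:pi2}, converts this into an honest rational zero. This is the step I expect to be the main obstacle: the product formula carries the full arithmetic weight of reciprocity, and the descent must be run carefully against the squarefree normalization.

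For $n=4$ I would write the form as a difference $g-h$ of two binary forms and seek a rational value represented by each; local solvability of $f$ produces such a value over each $\QQ_v$, a weak-approximation argument glues these into a single global value, and the $n=3$ case applied to each binary piece augmented by that value makes both $g$ and $h$ represent it globally, yielding a common zero. For $n\geq 5$ I would induct downward by splitting $f = g-h$ with $g$ binary and $h$ in the remaining $n-2\geq 3$ variables: outside the finite set of places dividing $2$ and the coefficients, $h$ is isotropic and represents every value over $\QQ_p$ by a Hensel's-lemma (Chevalley--Warning) count, so using Dirichlet's theorem on primes in arithmetic progressions I can produce one rational value represented locally everywhere by both $g$ and $h$. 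The ternary case makes $g$ represent it globally and the inductive hypothesis makes $h$ do so as well, closing the induction and giving a global zero of $f$.
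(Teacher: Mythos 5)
The paper itself does not prove this theorem --- it defers to \cite[Theorem 14.3.3]{Voight} with the remark that the proof is non-trivial --- so your proposal can only be measured against the standard literature, and what you have sketched is indeed the classical route (Serre's \emph{Course in Arithmetic}): diagonalize, induct on $n$, settle $n=2$ by the even-valuation-plus-sign criterion for rational squares, settle $n=3$ by Legendre-style descent, and split into binary pieces for $n\geq 4$. The architecture is right, but there is one genuine gap, at $n=4$. Weak approximation controls $t$ at only \emph{finitely} many places, whereas you need $t$ to be represented by each binary piece at \emph{every} place before the $n=3$ case can be applied to $g - tz^2$ and $h - tz^2$. A binary form over $\QQ_p$ represents $t$ only when a Hilbert-symbol condition such as $(t,-a_1a_2)_p = (a_1,a_2)_p$ holds, and for the infinitely many $p$ outside your finite set this fails whenever $t$ has odd valuation at a prime $p$ at which the relevant discriminant is a nonsquare unit; the rational $t$ produced by weak approximation will in general have such prime factors. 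The standard repair is the theorem on existence of rationals with prescribed Hilbert symbols (Serre, Ch.~III, Thm.~4), whose proof combines Dirichlet's theorem on primes in progressions, the product formula, and weak approximation --- precisely the machinery you deferred to the $n\geq 5$ step.

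Your allocation of tools is in fact exactly reversed relative to where they are needed. At $n \geq 5$ Dirichlet is superfluous: one chooses $t = g(b)$ for a rational point $b$ approximating the local solutions \emph{coordinatewise}, so that $t$ is globally represented by the binary piece by construction; the remaining piece $h$, having $n-2 \geq 3$ variables and unit coefficients outside a finite set $S$, is isotropic over $\QQ_p$ for all $p \notin S$ (Chevalley--Warning plus Hensel, as you say), so the $(n-1)$-variable form $tz^2 - h$ is locally isotropic everywhere and the inductive hypothesis closes the step. By contrast, at $n=4$ both pieces are binary, the coordinatewise trick can rescue at most one of them, and Dirichlet plus the product formula is unavoidable. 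One smaller misstatement: in your $n=3$ discussion, if the form is isotropic at every place then \emph{all} Hilbert symbols are already trivial, so the product formula does not ``force the last one''; its classical role there is to let one omit verification at a single place, and the arithmetic weight of the step is carried entirely by the descent on the squarefree coefficients, as you correctly anticipate.
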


There is a version of this theorem over number fields more generally.  For a proof see \cite[Theorem 14.3.3]{Voight}; it is non-trivial.  When such a converse holds, i.e. there are global solutions if and only if there are solutions everywhere locally, then we say that a variety satisfies the \emph{Hasse principle} or a \emph{local-to-global principle}.  
The three squares example of Legendre at the beginning of this section demonstrates an integral flavour of the local-to-global principle, obtained by looking at the integers $\ZZ$ and $\ZZ_p$.

The Hasse principle (when it holds) is powerful.  In particular, determining if a variety $X$ has solutions over a local field is a finite computation (for $\QQ_p$, check for solutions modulo $p$ and then use Hensel's Lemma).  By contrast, determining the existence of global solutions for a variety without the Hasse principal can be quite difficult.  In fact, the general problem of determining if there are integer solutions to a Diophantine problem is undecideable; this is known as Hilbert's Tenth Problem \cite{H10}.

\begin{exercise}
 Show that $x^2 + y^2 +7z^2 = 0$ has no non-trivial $\RR$-solutions, no non-trivial $\QQ_7$-solutions and no non-trivial $\QQ$-solutions.
\end{exercise}

\subsection{Quadratic reciprocity}

Quadratic reciprocity was first observed by Legendre and Euler and proved by Gauss.  Whereas Sunzi's Theorem\footnote{More commonly known as the Chinese Remainder Theorem} can be viewed as a statement that coprime moduli are `independent' in a certain way, quadratic reciprocity describes a deep way in which $\ZZ/p\ZZ$ and $\ZZ/q\ZZ$, for primes $p \neq q$ \emph{do} interact.  

\begin{definition}\label{def:legendre}
	The \emph{Legendre symbol} is the defined for integer $a$ and prime $p$ as:
\[
	\left( \frac{a}{p} \right) = 
	\left\{
		\begin{array}{ll}
			1 & a \text{ is a non-zero square modulo } p \\
			-1 & a \text{ is not a square modulo } p \\
			0 & a \text{ is zero modulo } p \\
		\end{array}
		\right.
	\]
\end{definition}

\begin{theorem}[Quadratic reciprocity]
	\label{thm:quadrec}
	Let $p$ and $q$ be distinct odd primes.  Then
	\[
		\left( \frac{p}{q} \right)
		\left( \frac{q}{p} \right)
		=
		(-1)^{\frac{p-1}{2}\frac{q-1}{2}}
	\]
	Furthermore,
	\begin{enumerate}
		\item $-1$ is a square modulo $p$ if and only if $p \equiv 1 \pmod{4}$
		\item $2$ is a square modulo $p$ if and only if $p \equiv \pm 1 \pmod{8}$
	\end{enumerate}
\end{theorem}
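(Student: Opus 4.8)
The plan is to build everything from \emph{Euler's criterion} and then deduce the three claims in turn. First I would establish Euler's criterion: for $a$ coprime to $p$, $\left(\frac{a}{p}\right) \equiv a^{(p-1)/2} \pmod{p}$. Since $(\ZZ/p\ZZ)^*$ is cyclic of order $p-1$ (the group-theoretic fact already invoked in Exercise~\ref{ex:pi2}), the squaring homomorphism has image of index $2$, and an element $a$ is a square exactly when $a^{(p-1)/2}=1$; otherwise $a^{(p-1)/2}=-1$, since its square is $1$. The first supplementary law is then immediate: $\left(\frac{-1}{p}\right) = (-1)^{(p-1)/2}$, which equals $1$ precisely when $(p-1)/2$ is even, i.e. $p \equiv 1 \pmod{4}$.

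Next I would prove \emph{Gauss's Lemma}. Representing each of $a, 2a, \ldots, \frac{p-1}{2}a$ by its least absolute residue in $(-p/2, p/2)$, and letting $\mu$ be the number of these residues that are negative, one shows $\left(\frac{a}{p}\right) = (-1)^{\mu}$. The argument is a pairing: the absolute values of these residues form a permutation of $1, \ldots, \frac{p-1}{2}$, so multiplying all the congruences and cancelling $\left(\frac{p-1}{2}\right)!$ gives $a^{(p-1)/2} \equiv (-1)^{\mu}$, and Euler's criterion finishes it. Applying this with $a=2$ and counting how many of $2, 4, \ldots, p-1$ exceed $p/2$ gives $\mu = \frac{p-1}{2} - \lfloor p/4\rfloor$; reducing $(-1)^{\mu}$ case-by-case over the residue of $p$ modulo $8$ yields the second supplementary law.

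For the main law I would pass to a lattice-point count. For odd $a$ coprime to $p$, a short manipulation shows $\mu \equiv \sum_{k=1}^{(p-1)/2} \lfloor ka/p\rfloor \pmod{2}$, so that $\left(\frac{q}{p}\right) = (-1)^{S}$ with $S = \sum_{k=1}^{(p-1)/2}\lfloor kq/p\rfloor$, and symmetrically $\left(\frac{p}{q}\right) = (-1)^{T}$ with $T = \sum_{l=1}^{(q-1)/2}\lfloor lp/q\rfloor$. The key geometric step is that $S$ counts the integer points strictly below the line $py = qx$ inside the rectangle $0 < x < p/2$, $0 < y < q/2$, while $T$ counts those strictly above it; since $p$ and $q$ are coprime, no lattice point lies on the line, so $S+T$ is the total number of lattice points in the rectangle, namely $\frac{p-1}{2}\cdot\frac{q-1}{2}$. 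Hence $\left(\frac{p}{q}\right)\left(\frac{q}{p}\right) = (-1)^{S+T} = (-1)^{\frac{p-1}{2}\frac{q-1}{2}}$.

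I expect the main obstacle to be the two bookkeeping lemmas feeding the geometry: first the congruence $\mu \equiv \sum_k \lfloor ka/p\rfloor \pmod{2}$, which requires tracking whether each $ka$ lands in the upper or lower half of $(0,p)$ and using the oddness of $a$ to control parity; and second the clean identification of $S$ and $T$ with the lattice points on the two sides of the diagonal. Both are elementary but sign- and parity-sensitive, and getting the endpoints and the ``no points on the diagonal'' condition exactly right is where the care lies.
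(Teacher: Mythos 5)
The paper states Theorem~\ref{thm:quadrec} without proof: quadratic reciprocity is quoted as a classical result, with only a footnote remarking that the literature contains over 300 proofs. So there is no in-paper argument to compare yours against, and the question is simply whether your outline is sound --- it is. What you describe is the classical Euler--Gauss--Eisenstein proof: Euler's criterion from cyclicity of $(\ZZ/p\ZZ)^*$ (giving supplement (1) at once), Gauss's Lemma via the permutation of absolute least residues (giving supplement (2) from $\mu = \frac{p-1}{2} - \lfloor p/4 \rfloor$, which checks out in all four residue classes mod $8$), and then Eisenstein's parity congruence $\mu \equiv \sum_{k=1}^{(p-1)/2} \lfloor ka/p \rfloor \pmod{2}$ for odd $a$, feeding the lattice-point count $S + T = \frac{p-1}{2}\cdot\frac{q-1}{2}$ in the rectangle split by the line $py = qx$ (coprimality correctly rules out points on the diagonal, and $x < p/2$ automatically forces $y < q/2$, so your $S$ and $T$ do exhaust the rectangle). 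You have also correctly identified where the care is needed: the parity bookkeeping in the Eisenstein lemma, which uses $p$ and $a$ odd so that $p - |s| \equiv |s| \pmod 2$.

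It is worth noting that your choice fits the paper's aesthetic unusually well: the paper repeatedly proves arithmetic statements by counting lattice points (e.g., the Minkowski argument of Exercise~\ref{ex:pi2}), and it remarks that recurring themes in reciprocity proofs include signs of permutations --- Gauss's Lemma is exactly a sign-of-permutation statement in disguise. So while the paper deliberately leaves this theorem as a citation, your proposal is a legitimate, self-contained route that is consonant with the geometric viewpoint the notes promote.
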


It is said that there are hundreds of proofs of quadratic reciprocity in the literature\footnote{Indeed, Lemmermeyer maintains a list currently over 300 entries: \url{https://www.mathi.uni-heidelberg.de/~flemmermeyer/qrg_proofs.html}.}.  Among the themes recurring in these proofs we often see aspects of the Fourier transform, or the signs of permutations, among others.  We now provide a proof of Fermat's Theorem which depends on quadratic reciprocity.

\begin{corollary}
	\label{cor:twosquares}
	The integers $n$ which can be represented as a sum of two integer squares are exactly those for which, for each prime $p$ that divides $n$, either (a) $p$ divides $n$ to an even power; or (b) $p \equiv 1,2 \pmod{4}$.
\end{corollary}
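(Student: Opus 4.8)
The plan is to recognize $x^2+y^2$ as the norm form $N(x+yi)=(x+yi)(x-yi)$ of the Gaussian integers $\ZZ[i]$ and to exploit the multiplicativity $N(\alpha\beta)=N(\alpha)N(\beta)$, which at the level of integers is the Brahmagupta--Fibonacci identity $(a^2+b^2)(c^2+d^2)=(ac-bd)^2+(ad+bc)^2$. This shows the set of sums of two integer squares is closed under multiplication, so it reduces the ``sufficiency'' direction to understanding which prime powers are sums of two squares. I note first that the stated condition (a)/(b) is equivalent to the cleaner statement: \emph{every prime $q\equiv 3\pmod{4}$ divides $n$ to an even power} (a prime $\equiv 1,2\pmod 4$ satisfies (b) automatically, and a prime $\equiv 3\pmod 4$ must then satisfy (a)).

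First I would dispatch sufficiency. Under the hypothesis, write $n = 2^{a}\,\prod_{p} p^{e_p}\,\prod_{q} q^{2f_q}$, where the first product runs over primes $p\equiv 1\pmod 4$ dividing $n$ and the second over primes $q\equiv 3\pmod 4$ dividing $n$ (each necessarily to an even power $2f_q$ by hypothesis). Each factor is a sum of two squares: $2=1^2+1^2$; each $q^{2f_q}=(q^{f_q})^2+0^2$; and each $p\equiv 1\pmod 4$ is a sum of two squares by the prime case below. By multiplicativity the product $n$ is then a sum of two squares.

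The crux --- and the step I expect to be the main obstacle --- is the prime case: every $p\equiv 1\pmod 4$ is a sum of two squares, and this is where quadratic reciprocity enters. By Theorem~\ref{thm:quadrec}(1), $-1$ is a square modulo $p$, so there is an $m$ with $p\mid m^2+1=(m+i)(m-i)$ in $\ZZ[i]$. Since $\ZZ[i]$ is a Euclidean domain with respect to the norm, hence a UFD, I argue that $p$ cannot be a Gaussian prime: if it were, it would divide one of $m\pm i$, which is impossible because $\pm 1/p\notin\ZZ$. Therefore $p$ is a nonzero nonunit that is reducible, so it factors into at least two Gaussian primes; taking norms in $p^2=N(p)$ forces a factorization $p=\pi\overline{\pi}$ with $N(\pi)=p$, and writing $\pi=a+bi$ gives $p=a^2+b^2$. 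The delicate points here are the Euclidean property of $\ZZ[i]$ (which I would cite, or prove via the division algorithm that rounds a quotient to the nearest Gaussian integer) and the non-divisibility that rules out $p$ being prime.

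Finally, necessity. Suppose $n=x^2+y^2$ and some prime $q\equiv 3\pmod 4$ divides $n$. Set $d=\gcd(x,y)$ and write $x=dx'$, $y=dy'$ with $\gcd(x',y')=1$, so $n=d^2(x'^2+y'^2)$. If $q$ divided $m:=x'^2+y'^2$, then since $q$ cannot divide both coprime $x',y'$, one of them, say $y'$, is invertible modulo $q$, and $(x'\,y'^{-1})^2\equiv -1\pmod q$ would exhibit $-1$ as a square modulo $q$ --- contradicting Theorem~\ref{thm:quadrec}(1), as $q\equiv 3\pmod 4$. Hence $q\nmid m$, so $v_q(n)=2\,v_q(d)$ is even. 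Equivalently, any prime dividing $n$ to an odd power is $\equiv 1$ or $2\pmod 4$, which is exactly condition (a)/(b), completing the plan.
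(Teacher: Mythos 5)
Your proof is correct, and its crux --- representing a prime $p \equiv 1 \pmod{4}$ by using the first supplementary law of quadratic reciprocity to get $p \mid m^2+1 = (m+i)(m-i)$ and then unique factorization in $\ZZ[i]$ --- is the same argument the paper gives, differing only in bookkeeping: you rule out $p$ being a Gaussian prime because $(m \pm i)/p \notin \ZZ[i]$ and extract $\pi$ with $N(\pi)=p$ by taking norms, while the paper argues via the gcd of $x+i$ and $x-i$ and symmetry under conjugation. Where you genuinely diverge is in assembling the statement for general $n$. The paper handles both directions at once by observing that sums of two squares are exactly the norms of Gaussian integers and invoking unique factorization to classify which integers are products of norms of Gaussian primes --- a step it leaves partly implicit (``which we have now essentially classified''). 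You instead split the directions: sufficiency via the Brahmagupta--Fibonacci identity (which is just norm multiplicativity made explicit) applied to $n = 2^a \prod_p p^{e_p} \prod_q q^{2f_q}$, and necessity via a wholly elementary descent --- extract $d = \gcd(x,y)$, note that a prime $q \equiv 3 \pmod{4}$ cannot divide the primitive part $x'^2 + y'^2$ because $-1$ is not a square modulo $q$, and conclude $v_q(n) = 2v_q(d)$ is even. Your route buys a complete, self-contained proof of the necessity direction with no appeal to the classification of Gaussian primes, filling exactly the step the paper glosses over; the paper's route buys a uniform conceptual statement (the sums of two squares form the monoid of norms from $\ZZ[i]$, determined by how rational primes split) which generalizes to other imaginary quadratic norm forms. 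One small point of care in your necessity step: since $\gcd(x',y')=1$ and $q \mid x'^2+y'^2$, in fact $q$ divides \emph{neither} $x'$ nor $y'$ (if it divided one it would divide the other), so your ``say $y'$'' WLOG is harmless, but the argument does need $y'$ specifically to be invertible in order to form $(x'y'^{-1})^2 \equiv -1 \pmod{q}$.
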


\begin{proof}
	First one shows that a prime $p \equiv 3 \pmod{4}$ cannot be represented, simply because there's no solution modulo $4$.  

	Then one shows that primes equivalent to $1 \pmod{4}$ \emph{can} be represented, as follows.  By quadratic reciprocity, such a prime $p = 4k+1$ has the property that $-1$ is a square modulo $p$.  Therefore $4k \equiv -1 \equiv x^2 \pmod{p}$.  Thus $np = x^2 + 1$, a sum of squares.  Thus $(x+i)(x-i) = np$ in $\ZZ[i]$, which has unique factorization.  Since the gcd of $x+i$ and $x-i$ is at most a factor of $2i$, and $p$ is odd, we see that $p$ cannot be prime in $\ZZ[i]$ (since otherwise, it would divide both, by symmetry under conjugation).  Therefore $p = (a+bi)(c+di)$, where neither factor is a unit times an integer, so $abcd \neq 0$.  Then since $p \in \ZZ$, $c+di = k(a-bi)$ for some $k \in \ZZ$.  Since $p$ is prime, $k = \pm 1$.  So we have $p = a^2 +b^2$.

	Finally, observe that sums of squares are exactly the norms of Gaussian integers.  We have shown that every prime $p \equiv 1 \pmod{4}$ is such a norm, and no prime $p\equiv 3 \pmod{4}$ is such a norm.  Also, $2 = 1^2 + 1^2 = N(1+i)$.  By unique factorization, the norms are exactly those which can be produced as products of norms of Gaussian primes, which we have now essentially classified.
\end{proof}

\begin{exercise} (Zagier \cite{Zagier})
	\label{ex:zagier}
	There is a short proof of Corollary~\ref{cor:twosquares} using an argument about fixed points.   
	Suppose $p \equiv 1 \pmod{4}$.
	Let $S = \{ (x,y,z) \in \ZZ^{>0} : x^2 + 4yz = p \}$ be the set of natural numbers solving $x^2 + 4yz = p$.  
	\begin{enumerate}
		\item Show the following is an involution of $S$ and has exactly one fixed point:
			\[
				(x, y, z)\mapsto\begin{cases}(x+2z, z, y-x-z) &\text{if }  x < y-z;\\(2y-x, y, x-y+z)&\text{if } y-z < x < 2y;\\(x-2y, x-y+z, y)&\text{if } x > 2y.
 \end{cases}
				\]
			\item What other (more trivial) involutions are there?
		\item Prove $p$ can be written as a sum of two squares.
	\end{enumerate}
\end{exercise}

\subsection{Brauer-Manin obstructions}

A famous counterexample to the Hasse principle due to Selmer is given by
\[
	3x^3 + 4y^3 + 5z^3 = 0
\]
which has local solutions everywhere but no global solutions.
When the Hasse principle fails, many of the failures are captured by a \emph{Brauer-Manin obstruction}, which arises from reciprocity laws.  
We will illustrate the phenomenon by an example of a local-to-global failure due to Iskovskikh \cite{Iskovskikh}:
\[
	y^2 + z^2 = (3-x^2)(x^2-2)
\]

\begin{proposition}
	The equation above has local solutions everywhere, but no global solutions.
\end{proposition}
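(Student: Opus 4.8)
\section*{Proof proposal}

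The plan is to treat the two assertions separately: local solvability everywhere is routine, while the non-existence of global solutions is the heart of the matter and is precisely where the reciprocity-type (Brauer--Manin) obstruction surfaces. I will phrase the global obstruction entirely through the sum-of-two-squares criterion of Corollary~\ref{cor:twosquares}, which is the concrete shadow of the quaternionic Brauer class $(-1,\,3-x^2)$. Write $f(x) := (3-x^2)(x^2-2)$ throughout.

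For local solvability, note first that over $\RR$ one has $f(x)\ge 0$ exactly when $\sqrt2 \le |x| \le \sqrt3$, and every nonnegative real is a sum of two real squares, so real points exist. For a prime $p \equiv 1 \pmod 4$, $-1$ is a square in $\QQ_p$, hence $y^2+z^2$ represents all of $\QQ_p$ and any $x$ avoiding the roots of $f$ works. For $p \equiv 3 \pmod 4$ the form $y^2+z^2$ represents exactly the elements of even $p$-adic valuation, so it suffices to pick $x\in\ZZ_p$ with $f(x)$ a $p$-adic unit; this is possible since $f$ is not identically zero modulo $p$ (for $p=3$ take $x\equiv 1$, giving $f(1)=-2$, a unit). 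For $p=2$, taking $x=0$ gives $f(0)=-6=2\cdot(-3)$, whose odd part $-3\equiv 1\pmod 4$, so $-6$ is a sum of two squares in $\QQ_2$. Thus local points exist at every place.

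For the global claim, suppose $(x,y,z)\in\QQ^3$ is a solution and write $x=m/n$ in lowest terms. Clearing denominators turns the equation into $(n^2 y)^2 + (n^2 z)^2 = A B$, where $A := 3n^2 - m^2$ and $B := m^2 - 2n^2$. The crucial algebraic feature is the pair of identities $A+B=n^2$ and $2A+3B=m^2$: any common divisor of $A$ and $B$ divides both $m^2$ and $n^2$, so coprimality of $m,n$ forces $\gcd(A,B)=1$. Since a global solution is in particular real we have $f(x)\ge 0$, hence $AB\ge 0$; as $A<0$ and $B<0$ cannot both hold, and neither factor vanishes ($\sqrt2,\sqrt3$ being irrational), we conclude $A,B>0$. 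Now $AB$ is a sum of two rational squares, hence of two integer squares, so by Corollary~\ref{cor:twosquares} every prime $p\equiv 3\pmod 4$ divides $AB$ to an even power; coprimality then forces the same for $A$ and for $B$ individually, whence both $A$ and $B$ are themselves sums of two squares. The contradiction comes from a parity analysis modulo $4$: since $m,n$ are not both even, the surviving cases give $B\equiv 3\pmod 4$ when $m,n$ are both odd, and $A\equiv 3\pmod 4$ when exactly one of $m,n$ is even. In every case one of $A,B$ is $\equiv 3\pmod 4$ and so is not a sum of two squares, contradicting the previous sentence.

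I expect the main obstacle to be the bookkeeping that lets the sum-of-two-squares data be read off cleanly: specifically, justifying the passage from \emph{rational} to \emph{integral} representability of $AB$, and the step where coprimality upgrades ``$AB$ is a sum of two squares'' to ``$A$ and $B$ are each sums of two squares.'' Both rest on Corollary~\ref{cor:twosquares}. The whole argument is the elementary incarnation of the Brauer--Manin obstruction attached to the class $(-1,\,3-x^2)$, with the mod-$4$ dichotomy encoding its local invariants at the places $2$ and $\infty$.
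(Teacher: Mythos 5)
Your proof is correct, and it rests on the same underlying mechanism as the paper's proof --- the elementary mod-$4$ incarnation of the reciprocity obstruction, run through the two-squares theorem --- but the global bookkeeping is organized in a genuinely different way, and one that tightens points the paper leaves tacit. The paper homogenizes to a primitive integer quadruple $(x,y,z,t)$ and derives a contradiction by comparing two lists of residues of $(A,B)=(3t^2-x^2,\,x^2-2t^2)$ modulo $4$; since $x$ and $t$ may still share a prime factor in that setup, it must treat the case $p \mid \gcd(x,t)$ separately, invoking the first supplementary law directly ($-1 \equiv (y/z)^2$ forces $p \equiv 1 \pmod{4}$). Your lowest-terms substitution $x=m/n$ makes that case disappear: the identities $A+B=n^2$ and $2A+3B=m^2$ give $\gcd(A,B)=1$ outright, so the whole argument routes through Corollary~\ref{cor:twosquares}, applied once to $AB$ and then to $A$ and $B$ individually, with the contradiction cleanly localized in the fact that one of $A,B$ is $\equiv 3 \pmod{4}$ yet must be a sum of two squares. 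The price of dehomogenizing is the step you correctly flag: $n^2y$ and $n^2z$ are only rational, so you need the classical passage from rational to integral representability of $AB$ --- which is indeed a two-line consequence of the same corollary (clear denominators; primes $\equiv 3 \pmod{4}$ divide the square factor to even order). Two points where your write-up is actually more careful than the paper's: you establish $A,B>0$ explicitly (the paper's final deduction, that the maximal prime powers dividing $A$ and $B$ determine $(A,B) \bmod 4$, tacitly assumes positivity), and you supply the local solvability at every place, which the paper delegates to an exercise. One micro-nit there: for $p \equiv 3 \pmod{4}$, the phrase ``$f$ is not identically zero modulo $p$'' guarantees a non-root only when $p$ exceeds $\deg f = 4$; but since you check $p=3$ by hand, the local argument is complete as written.
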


\begin{proof}
	To show that it has a solution in $\RR$, take $x^2 > 2$ but $x^2 < 3$ so the right hand side is positive.  For $\QQ_p$, it suffices to find solutions modulo $p$ and lift by Hensel's lemma; we leave this as an exercise.

	To show there are no global $\QQ$-solutions, first, let us homogenize the equation, so we can look for $\ZZ$ solutions:
\[
	X: y^2 + z^2 = (3t^2-x^2)(x^2-2t^2) =: A(t,x) B(t,x).
\]
We may assume that there is no common factor to the quadruple $x,y,z,t$.

First we examine the equation modulo $4$.  By running through the possibilities for $x$ and $t$ being even or odd, we restrict the possible values of $(A,B)$ modulo $4$: $\{ (0,0), (2,3), (3,1), (3,2) \}$.
However, $(A,B) \equiv (0,0) \pmod{4}$ if and only if $x,y$ are both even if and only if $x,y,z,t$ are all even.  But we have ruled out such a common factor, so we have the list:
\[
	\{ (2,3), (3,1), (3,2) \}.
\]

We will now consider primes $p$ dividing at least one of $A$ or $B$.

First, suppose $p$ divides both $x$ and $t$.  Since $x,y,z,t$ cannot have a common factor, we may assume without loss of generality that $p$ does not divide $z$.  Then considering the equation modulo $p$, we find that $-1 \equiv y^2/z^2$ is a square modulo $p$.  Since our previous work ruled out $p=2$ (as $x$ and $t$ are not both even), we conclude that $p \equiv 1 \pmod{4}$ by the first supplementary law of quadratic reciprocity (Theorem~\ref{thm:quadrec}(1)).


Second, suppose $p$ divides at most one of $x$ and $t$.  Then $p$ cannot divide both $A$ and $B$.  Let $p^k$ be the maximum power to which it appears.  On the left, $p^k$ divides a sum of squares, so $p^k$ must be $0$, $1$ or $2 \pmod{4}$ (Corollary~\ref{cor:twosquares}).  

Hence in either case, all the maximal prime powers that divide $A$ or $B$ are $0$, $1$ or $2 \pmod{4}$ and therefore $(A,B) \pmod{4}$ can only be a member of the list
\[
	\{ (0,0), (1,1), (2,2), (0,1), (1,0), (2,0), (0,2), (1,2), (2,1) \}
\]
Comparing with the list from the first part, we discover that there are no solutions.
\end{proof}

The proof is completely elementary with the exception of the use of the first supplementary law of quadratic reciprocity (once directly and once in the form of Corollary~\ref{cor:twosquares}).  Therefore we think of this obstruction as arising from quadratic reciprocity.

\begin{exercise}Complete the proof above by finding solutions in $\QQ_p$ for each $p$.
\end{exercise}


\subsection{The modular group $\PSL_2(\ZZ)$}

We met $\SL_2(\ZZ)$ and $\PSL_2(\ZZ)$ as natural groups of symmetries on quadratic forms.  They have many other roles to play in mathematics, some of them geometric.  This leads to certain connections between quadratic forms and geometry.  Recalling these essential and beautiful facts is our next task.

We will begin with the action of $\SL_2(\ZZ)$ on the upper half plane $\HH^2_U$.  We define the \emph{upper half plane} as
\[
	\HH^2_U = \{ z \in  \CC : \Im(z) > 0 \} \subseteq \CC.
\]
The notation $\Im$ is for `imaginary part'\footnote{This symbol is \LaTeX's command \texttt{\textbackslash Im} and was traditionally used in typesetting old european books.}; later $\Re$ will be for real part.
The action is via M\"obius transformations:
\[
	\begin{pmatrix} a & b \\ c & d \end{pmatrix} \cdot z = \frac{az + b}{cz+d}
\]
in terms of the usual $\CC$ structure.
The fact that the matrix $-I$ acts as the trivial M\"obius transformation motivates our use of the projectivization $\PSL_2(\ZZ) := \SL_2(\ZZ)/ \pm I$.  

\begin{exercise}
	M\"obius transformations with coefficients from $\CC$, i.e. $\PSL_2(\CC)$, act on the extended complex plane $\widehat{\CC} := \CC \cup \{ \infty \}$.
	\begin{enumerate}
		\item Amongst these, show that the M\"obius transformations preserving $\HH^2_U$ are exactly those with real coefficients and positive determinant.
		\item In $\widehat{\CC}$, we consider straight lines to be circles through $\infty$.  Show that M\"obius transformations take circles to circles and preserve angles.
	\end{enumerate}
\end{exercise}

A standard fundamental region for the action of $\PSL_2(\ZZ)$ on $\HH^2_U$ is as follows.
\begin{theorem}[{for example, \cite[Proposition 1.5]{SilvermanAdvanced}}]
	\label{thm:F}
	Let
	\[
		\mathcal{F}  = \left\{ z \in \HH^2_U : |z| > 1, \frac{1}{2} < \Re(z) \le \frac{1}{2}, \left(|z|=1 \Rightarrow \Re(z) \ge 0\right) \right\}.
	\]
	Then $\mathcal{F}$ is a fundamental domain for the action of $\PSL_2(\ZZ)$ on $\HH^2_U$, meaning that for all $z$, exactly one $\PSL_2(\ZZ)$-translate of $z$ lies in $\mathcal{F}$.  
\end{theorem}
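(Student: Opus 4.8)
The plan is to establish the two defining properties of a fundamental domain separately: \emph{existence} (every $z$ has at least one $\PSL_2(\ZZ)$-translate in $\mathcal{F}$) and \emph{uniqueness} (no two distinct points of $\mathcal{F}$ lie in one orbit). Throughout I would lean on two standard facts. First, $\PSL_2(\ZZ)$ contains the elements $S\colon z\mapsto -1/z$ and $T\colon z \mapsto z+1$. Second, for $\gamma = \begin{pmatrix} a & b \\ c & d\end{pmatrix}$ one has the identity
\[
	\Im(\gamma z) = \frac{\Im(z)}{|cz+d|^2},
\]
which is a direct computation. (I read the region as $|z|\ge 1$, $-\tfrac12 < \Re(z)\le \tfrac12$, together with the stated convention on the arc $|z|=1$; the half-open choices are exactly what forces a single representative on the boundary.)

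For existence, fix $z \in \HH^2_U$. The values $cz + d$ with $(c,d)\in\ZZ^2$ form a lattice in $\CC$, so $|cz+d|$ attains a positive minimum over $(c,d)\ne(0,0)$; by the displayed identity this means $\Im(\gamma z)$ attains a \emph{maximum} over $\gamma \in \PSL_2(\ZZ)$. Choose $\gamma_0$ realizing this maximum. Applying a suitable integer power $T^n$ (which preserves imaginary part and only shifts the real part) I would arrange $w := T^n\gamma_0 z$ to satisfy $-\tfrac12 < \Re(w)\le \tfrac12$. It then remains to check $|w|\ge 1$: if instead $|w|<1$, then $Sw=-1/w$ has $\Im(Sw)=\Im(w)/|w|^2 > \Im(w)$, contradicting the maximality of the imaginary part on the orbit. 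Hence $w$ lands in $\mathcal{F}$ up to the boundary bookkeeping.

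For uniqueness, suppose $z' = \gamma z$ with both $z,z'\in\mathcal{F}$ and, without loss of generality, $\Im(z')\ge\Im(z)$, so $|cz+d|\le 1$. The crucial numerical input is that every point of $\mathcal{F}$ has $\Im \ge \tfrac{\sqrt3}{2}$ (the minimum, attained at $e^{i\pi/3}$). Combined with $|c|\,\Im(z) \le |cz+d| \le 1$ this forces $|c| \le 2/\sqrt3 < 2$, so $c \in \{0,\pm1\}$, and after using the sign freedom in $\PSL_2$ I can reduce to $c\in\{0,1\}$. The case $c=0$ makes $\gamma$ a translation $T^b$, and the half-open strip of width $1$ forces $b=0$, i.e.\ $z=z'$. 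The case $c=1$ forces $(\Re z + d)^2 \le \tfrac14$, pinning $d$ to a couple of values and, together with $|z|\ge 1$, pushing $z$ onto the unit-circle arc or the lines $\Re = \pm\tfrac12$.

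The hard (or at least fussiest) part will be precisely this last boundary analysis: verifying that on the arc $|z|=1$ the map $z\mapsto -1/z$ interchanges the two halves $\Re<0$ and $\Re>0$, and that $T$ identifies the two vertical edges, so that the stated conventions ($\Re \le \tfrac12$, and $\Re\ge 0$ when $|z|=1$) select exactly one representative from each boundary orbit. Everything else is bounded-case checking; the conceptual content lies entirely in the maximization-of-$\Im$ argument for existence and the $\Im\ge\sqrt3/2$ bound for uniqueness. I would note finally that this argument uses only the individual elements $S,T\in\PSL_2(\ZZ)$ together with full-group orbits, so it does not require knowing in advance that $S$ and $T$ generate $\PSL_2(\ZZ)$; that generation statement is in fact a by-product.
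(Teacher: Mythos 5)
Your proposal is correct and takes essentially the same approach as the paper, which delegates the proof to the cited reference and to the exercise immediately following the theorem: that exercise sketches exactly your existence argument (maximize $\Im(M\cdot z)$ via the identity $\Im(M\cdot z)=\Im(z)/|cz+d|^2$, then move into $\mathcal{F}$ using $S$ and $T$), while your uniqueness half, via the bound $\Im(z)\ge \tfrac{\sqrt{3}}{2}$ forcing $|c|\le 1$ and the ensuing boundary bookkeeping, is the standard argument of the cited source. You also correctly read the region through its typo as $|z|\ge 1$, $-\tfrac{1}{2}<\Re(z)\le\tfrac{1}{2}$ (otherwise the clause about $|z|=1$ would be vacuous), which is the intended convention.
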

To visualize this theorem, it is useful to consider two important elements of $\PSL_2(\ZZ)$:
\[
	S = \begin{pmatrix} 0 & -1 \\ 1 & 0 \end{pmatrix}, \quad T = \begin{pmatrix} 1 & 1 \\ 0 & 1 \end{pmatrix}.
\]
The action of $S$ and $T$ as M\"obius transformations is inversion $z \mapsto -1/z$ and translation $z \mapsto z + 1$, respectively.
See Figure~\ref{fig:sl2} for an image of $\mathcal{F}$ and some of its translates under $S$ and $T$.

\begin{exercise}
	\begin{enumerate}
		\item Show that if $M = \begin{pmatrix} a & b \\ c & d \end{pmatrix} \in \PSL_2(\ZZ)$, then 
		\[
			\Im( M \cdot z) = \frac{ \Im(z) }{ |cz+d|^2 }.
		\]
	\item 	Prove that any $\PSL_2(\ZZ)$ orbit intersects $\mathcal{F}$ (this proves a part of Theorem~\ref{thm:F}).  To do so, first show that there is an element $M \in \PSL_2(\ZZ)$ which maximizes the imaginary part of $M \cdot z$.  Then illustrate how to move this element of the orbit into $\mathcal{F}$ using $S$ and $T$.
	\end{enumerate}
\end{exercise}

The $\PSL_2(\ZZ)$-stabilizer of each point $z \in \mathcal{F}$ is trivial, with the exception of the following special points:
	\[
		\operatorname{Stab}(i) = \{ I, S \}, \quad
		\operatorname{Stab}(e^{2\pi i/6}) = \{ I, TS, (TS)^2 \}.
	\]

\begin{figure}
	\includegraphics[width=6in]{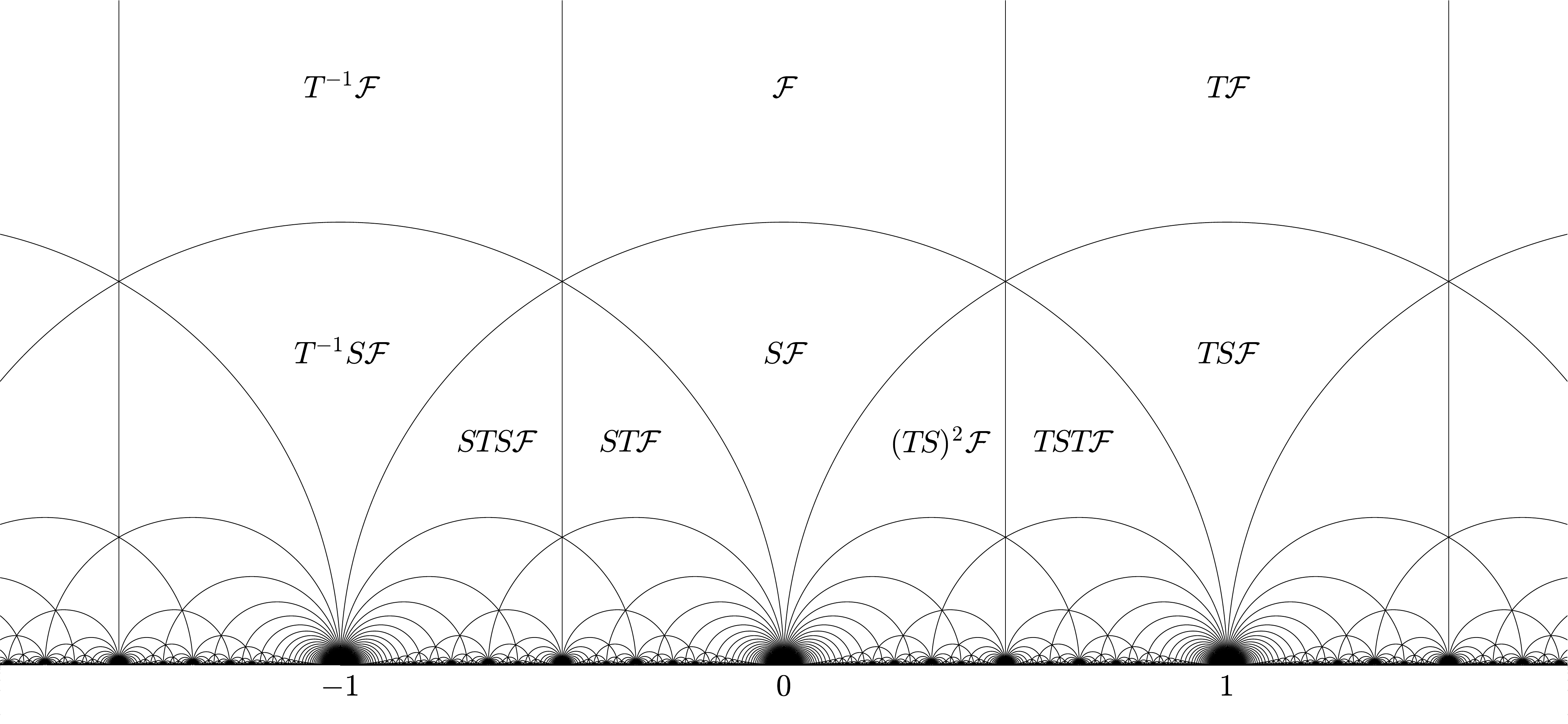}
	\caption{The fundamental region $\mathcal{F}$ of $\SL_2(\ZZ)$ and some of its translates.}
	\label{fig:sl2}
\end{figure}

In fact, these two elements $S$ and $T$ generate $\PSL_2(\ZZ)$.
\begin{theorem}[{for example, \cite[Corollary 1.6]{SilvermanAdvanced}}]
	The group $\PSL_2(\ZZ)$ is generated by $S$ and $T$.
	In fact, $\PSL_2(\ZZ)$ is the free product of the subgroups generated by $S$ and $ST$, of order $2$ and $3$ respectively.  We have
\[
	\PSL_2(\ZZ) \cong \langle S, T : S^2 = 1, (ST)^3 = 1 \rangle.
\]
\end{theorem}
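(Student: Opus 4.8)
The plan is to realize $\PSL_2(\ZZ)$ as the free product $\ZZ/2 * \ZZ/3$ by exhibiting an explicit homomorphism and checking it is both surjective and injective. Concretely, write $U = ST$ and let $\Gamma = \langle s \rangle * \langle u \rangle$ be the abstract free product of a cyclic group of order $2$ and a cyclic group of order $3$; I define $\phi\colon \Gamma \to \PSL_2(\ZZ)$ by $s \mapsto S$ and $u \mapsto U$. Since $\Gamma$ has presentation $\langle s, u \mid s^2, u^3\rangle$, and since the change of generator $t = su$ rewrites this as $\langle s, t \mid s^2, (st)^3\rangle$, proving that $\phi$ is an isomorphism simultaneously establishes that $S$ and $T$ generate, that the displayed presentation holds, and that the group is the asserted free product of $\langle S\rangle$ and $\langle ST\rangle$. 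Thus everything reduces to three tasks: $\phi$ is well defined, $\phi$ is surjective, and $\phi$ is injective.

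Well-definedness is a direct computation: in $\SL_2(\ZZ)$ one has $S^2 = -I$ and $(ST)^3 = -I$, both of which are the identity in $\PSL_2(\ZZ)$, so the defining relations of $\Gamma$ map to relations. For surjectivity I would give the classical Euclidean-algorithm argument: the image of $\phi$ contains $S$ and $SU = T$, and for arbitrary $M = \begin{pmatrix} a & b \\ c & d\end{pmatrix}$, left multiplication by powers of $T$ adds multiples of the bottom row to the top row while left multiplication by $S$ interchanges the rows up to sign, so one runs the Euclidean algorithm on the first column to reduce $M$ to a matrix with $c = 0$, which is then a power of $T$; unwinding writes $M$ as a word in $S$ and $T$. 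Alternatively, surjectivity follows immediately from the fundamental-domain material already in place: for an interior point $z_0 \in \mathcal F$ with trivial stabilizer and any $M$, the earlier exercise produces $g \in \langle S, T\rangle$ with $gMz_0 \in \mathcal F$, and uniqueness in Theorem~\ref{thm:F} forces $gMz_0 = z_0$, hence $gM = 1$ and $M \in \langle S, T\rangle$.

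The crux is injectivity, i.e.\ that these are the only relations. Here I would combine the normal form theorem for free products with an explicit matrix computation in the spirit of continued fractions. A nonempty reduced word of $\Gamma$ is conjugate in $\Gamma$ to a cyclically reduced word $w'$; since conjugation commutes with $\phi$, it suffices to show $\phi(w') \neq I$. If $w'$ is a single syllable it is $S$, $U$, or $U^2$, of order $2$ or $3$, hence nontrivial. Otherwise $w'$ has even syllable length and, after a cyclic rotation, takes the form $s u^{a_1} s u^{a_2}\cdots s u^{a_n}$ with $n \ge 1$ and each $a_i \in \{1,2\}$. The key computation is that, in $\PSL_2(\ZZ)$, $SU = \begin{pmatrix} 1 & 1 \\ 0 & 1 \end{pmatrix}$ and $SU^2 = \begin{pmatrix} 1 & 0 \\ 1 & 1 \end{pmatrix}$; thus $\phi(w')$ is a nonempty product of these two nonnegative matrices, and such a product is never the identity (equivalently, $\begin{pmatrix} 1 & 1 \\ 0 & 1 \end{pmatrix}$ and $\begin{pmatrix} 1 & 0 \\ 1 & 1 \end{pmatrix}$ generate a free monoid, which is the matrix form of the uniqueness of continued fraction expansions). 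Hence $\phi(w') \neq I$ and $\phi$ is injective.

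I expect this injectivity step to be the main obstacle, since it is where the ``no further relations'' content lives; the standard alternative is the ping-pong lemma applied to the action of $\langle S\rangle$ (order $2$) and $\langle ST\rangle$ (order $3$) on the boundary circle $\partial \HH^2_U = \RR \cup \{\infty\}$, where $S$ and $ST$ act as elliptic, rotation-like maps, and one must produce disjoint arcs $X_1, X_2$ with $(ST)^{\pm 1} X_2 \subseteq X_1$ and $S X_1 \subseteq X_2$; the delicate part of that route is choosing the arcs and verifying the containments. Either way, the remaining bookkeeping — that $\Gamma \cong \ZZ/2 * \ZZ/3$ abstractly and that the generators $S$ and $ST$ have the stated orders $2$ and $3$ — is routine once $\phi$ is known to be an isomorphism.
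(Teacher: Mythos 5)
Your proof is correct. Note that the paper states this theorem without proof, citing \cite[Corollary 1.6]{SilvermanAdvanced}, where the result is deduced from the fundamental domain description of Theorem~\ref{thm:F}; your second surjectivity argument is exactly that route (move $Mz_0$ into $\mathcal{F}$ by a word in $S,T$ using the earlier exercise, then invoke uniqueness of the translate in $\mathcal{F}$ and triviality of the stabilizer of an interior point such as $2i$). Where you genuinely depart from the cited treatment is injectivity: instead of ping-pong on $\partial\HH^2_U$ or rigidity of the tessellation, you reduce a cyclically reduced word $su^{a_1}\cdots su^{a_n}$ with $a_i\in\{1,2\}$ to a product of
\[
	SU = \begin{pmatrix} 1 & 1 \\ 0 & 1 \end{pmatrix} = M_L, \qquad SU^2 = \begin{pmatrix} 1 & 0 \\ 1 & 1 \end{pmatrix} = M_R \quad\text{in } \PSL_2(\ZZ),
\]
and invoke freeness of the monoid they generate. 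I checked the identities: in $\SL_2(\ZZ)$ one has $SU = S^2T = -T$ and $SU^2 = -TST$, both equal to the displayed nonnegative matrices up to sign, so they are as claimed in $\PSL_2(\ZZ)$. This dovetails nicely with the paper's own section on the matrix continued fraction expansion, where $M_L$, $M_R$ and the tree structure of $\SL_2^+(\ZZ)$ (Figure~\ref{fig:matrices}) already appear. Two small points would make your argument fully self-contained. First, the monoid fact deserves its one-line proof: applying $M_L$ or $M_R$ to $(1,1)^t$ strictly increases the sum of the (positive) entries, so a nonempty word in $M_L, M_R$ cannot equal $I$, and it cannot equal $-I$ since its entries are nonnegative. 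Second, you implicitly use that a cyclically reduced word of syllable length at least $2$ in a free product of two groups has even syllable length (its first and last syllables must lie in different free factors, else one could cyclically reduce); state this. With those additions the proof is complete, and it is arguably more elementary than the boundary ping-pong, at the modest price of invoking the normal form theorem for free products.
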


\subsection{Quadratic forms in the upper half plane}
\label{sec:quadratic-forms}

Our next goal is to parametrize real quadratic forms $f(x,y) := ax^2 + bxy + cy^2$ in some useful way.  
To a quadratic form $f(x,y) = ax^2 + bxy + cy^2$, we might associate the polynomial in one variable $f(x,1) = ax^2 + bx + c$; and to such a minimal polynomial we might naturally associate the quadratic form $ax^2 + bxy + cy^2$.  This association preserves discriminant.

A minimal polynomial with $\Delta < 0$ has two complex roots, one of which will lie in $\HH^2_U$.  Thus, to positive definite quadratic forms, we may associate points in the upper half plane, and this association is clearly a bijection, at least as long as we take the form $f$ up to scaling by an element of $\RR^*$.

However, associating the root $z$ (or the pair $(z, \overline{z})$) to the form $f$ breaks a symmetry, at the very least between $x$ and $y$, but more generally it is not invariant under our preferred $\SL_2(\ZZ)$-equivalence.  First of all, it is more natural to say that the solutions to $f(x,y) = ax^2 + bxy + cy^2 = 0$ are the two projective points $[z: 1]$ and $[\overline{z}: 1]$ in $\PP^1(\CC)$, and therefore to identify a solution $[z:1]$ with $[\lambda z: \lambda 1]$, for $\lambda \in \CC^*$.  Then, from the perspective of $\SL_2(\ZZ)$-equivalence of quadratic forms, we wish to identify the roots of $f(x,y)$ with those of $M \cdot f(x,y)$, for each $M = \begin{pmatrix} \alpha & \beta \\ \gamma & \delta \end{pmatrix} \in \SL_2(\ZZ)$, which are 
\[
	[\alpha z + \beta: \gamma z + \delta ],\quad [\alpha \overline{z} + \beta: \gamma \overline{z} + \delta ].
\]
Because we are in projective space, it is more natural to projectivize and identify the roots under $\PSL_2(\ZZ)$-equivalence, since $-I$ has no effect on the form at all\footnote{By which I mean, $f(-x,-y) = f(x,y)$.}.

This motivates the following theorem.

\begin{theorem}
	\label{thm:quad-equi}
	Define a map $\rho$ on the collection of primitive integral binary quadratic forms up to $\RR^*$ scaling, taking values in $\HH^2_U$, by letting $\rho(f) = z$, where $z$ is the root of $f(x,1)$ in $\HH^2_U$.  Then this map is $\PSL_2(\ZZ)$-invariant, where the action of $f(x,y)$ is by proper equivalence, and the action on $\HH^2_U$ is by M\"obius transformation.  In other words, $M \cdot \rho(f) = \rho(M \cdot f)$ for $M \in \PSL_2(\ZZ)$.
\end{theorem}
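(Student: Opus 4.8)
The plan is to reduce the statement to a single computation about the two roots of $f$ viewed as a point of $\PP^1(\CC)$, and then to check that the Möbius action selects the correct (upper half plane) root. Write $f(x,y) = ax^2+bxy+cy^2$ with $a>0$ (positive definiteness forces $f(1,0)=a>0$, and the overall $\RR^*$-scaling does not affect the roots of $f(x,1)$). Since $\Delta_f = b^2-4ac<0$, the polynomial $f(x,1)$ has two complex-conjugate roots $z,\bar z$, and $z = \rho(f)\in\HH^2_U$ is the one of positive imaginary part. The key structural fact I will use is that the zero locus of the \emph{form} $f$ in $\PP^1(\CC)$ is exactly $\{[z:1],[\bar z:1]\}$, so that understanding $\rho$ amounts to tracking this zero locus under the action.

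First I would record the homogeneity identity. For the action $g := M\cdot f$, expressed as the change of variables $g(x,y) = f\bigl(M^{-1}(x,y)^t\bigr)$ dictated by $X=\alpha x+\beta y,\ Y=\gamma x+\delta y$, I compute $g(w,1)$ by pulling the degree-two homogeneity of $f$ out of the linear forms making up $M^{-1}(w,1)^t$. This shows $g(w,1)=0$ if and only if $M^{-1}\cdot w$ (as a Möbius transformation) is a root of $f(\cdot,1)$, i.e. $M^{-1}\cdot w\in\{z,\bar z\}$, equivalently $w\in\{M\cdot z,\ M\cdot\bar z\}$. Thus the two roots of $g$ are precisely the Möbius images under $M$ of the two roots of $f$.

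Second I would pin down which of $M\cdot z$, $M\cdot\bar z$ lies in $\HH^2_U$. Here I invoke the earlier exercise that a real Möbius transformation of positive determinant preserves $\HH^2_U$: since $M\in\PSL_2(\ZZ)\subset\PSL_2(\RR)$, we have $M\cdot z\in\HH^2_U$. Moreover, because $M$ has real entries, $M\cdot\bar z = \overline{M\cdot z}$, which therefore lies in the lower half plane. Hence the upper half plane root of $g$ is exactly $M\cdot z$, giving $\rho(M\cdot f)=M\cdot z = M\cdot\rho(f)$, which is the claim. Finally, since $-I\in\SL_2(\ZZ)$ acts trivially both on forms and on $\HH^2_U$, the whole argument descends to $\PSL_2(\ZZ)$.

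The main obstacle is purely bookkeeping but genuinely delicate: getting the \emph{direction} of the action right, so that the homogeneity identity produces $M$ rather than $M^{-1}$. This hinges on reading the action on forms as the change of variables $g = f\circ M^{-1}$ (a genuine left action, compatible with the left Möbius action) rather than the bare substitution $f\circ M$ (which is a right action and would instead yield $\rho(M\cdot f)=M^{-1}\cdot\rho(f)$). I would therefore fix this convention explicitly at the outset and sanity-check it on a concrete pair, e.g. $f=x^2+y^2$ with $z=i$ and $M=T$, verifying that $\rho(T\cdot f)=1+i=T\cdot i$ before presenting the general computation.
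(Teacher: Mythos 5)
Your proof is correct. The paper leaves this theorem as an exercise, but your argument is precisely the one sketched in the paragraph preceding the statement: the zero locus of $f$ in $\PP^1(\CC)$ is $\{[z:1],[\bar z:1]\}$, homogeneity of degree two shows the roots of $M\cdot f$ are the M\"obius images of the roots of $f$, and since $M$ has real entries and positive determinant, $M\cdot z\in\HH^2_U$ while $M\cdot\bar z=\overline{M\cdot z}$ lies in the lower half plane, so the upper root is selected equivariantly; the descent to $\PSL_2(\ZZ)$ via the trivial action of $-I$ is handled correctly as well.

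Your explicit flagging of the direction-of-action convention is not mere bookkeeping but resolves a genuine ambiguity in the source: the paper's displayed definition $M\cdot f = f(M\cdot(x,y)^t)$, read literally as the substitution $f\circ M$, would yield the anti-equivariance $\rho(M\cdot f)=M^{-1}\cdot\rho(f)$, and the stated identity $M\cdot\rho(f)=\rho(M\cdot f)$ holds only under your reading $M\cdot f=f\circ M^{-1}$. That reading is also the one implicit in the paper's remark just before the theorem, which asserts the roots of $M\cdot f$ are $[\alpha z+\beta:\gamma z+\delta]$, i.e.\ the M\"obius images $M\cdot z$, $M\cdot\bar z$ rather than their preimages; your sanity check with $f=x^2+y^2$, $z=i$, $M=T$ confirms this is the only consistent choice.
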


\begin{exercise}
	Prove Theorem~\ref{thm:quad-equi}.
\end{exercise}

The following exercise addresses classical questions of Gauss and Legendre:  how many equivalence classes of primitive integral quadratic forms exist for a fixed discriminant?

\begin{exercise}
	\label{ex:reduc}
	\begin{enumerate}
		\item Show that every real quadratic form is equivalent to one of the form $Ax^2 + Bxy + Cy^2$ satisfying (i) $|B| \le A \le C$ and (ii) $B \ge 0$ whenever one of the $\le$ in part (i) is an equality.  Hint: use the upper half plane.
	\item Use the previous exercise to determine how many inequivalent primitive integral quadratic forms there are of discriminant $-4$ and $-20$.
	\item Fix $\Delta < 0$.  Prove that there are finitely many distinct equivalence classes of integral quadratic forms of discriminant $\Delta$.  Can you give a bound?
	\end{enumerate}
\end{exercise}

\subsection{Lattices in the upper half plane}

The upper half plane also parametrizes certain lattices\footnote{A common place to first meet this idea is in the complex theory of elliptic curves.}.  A lattice is a discrete\footnote{In the metric topology of $\CC$.} subgroup of the additive group of $\CC$.  We say it is rank two if it is isomorphic to $\ZZ^2$ as a $\ZZ$-module.

\begin{exercise}
	Show that a lattice in $\CC$ is of rank two if and only if it spans $\CC$ as an $\RR$-vector space.
\end{exercise}

The fundamental observation is that $\PP^1(\CC)$ is (almost) in bijection with rank two lattice bases (up to scaling) in $\CC$, via $[z:w] \leftrightarrow w\ZZ + z\ZZ$.  I say \emph{almost} because points for which $z/w \in \RR$ give rise to $\ZZ$-modules which do not span $\CC$ (being either rank one or not discrete).  If we restrict to the upper half plane $\HH^2_U \subseteq \widehat{\CC} \cong \PP^1(\CC)$, then this associates to each $z \in \HH^2_U$ the rank two lattice $\ZZ + z\ZZ$.  As mentioned, we must consider lattices only up to \emph{homothety}, i.e., scaling by $\CC^*$.  
That is, we say two lattices $\Lambda_1$ and $\Lambda_2$ are homothetic, writing $\Lambda_1 \sim \Lambda_2$, if $\Lambda_1 = \lambda \Lambda_2$ for some $\lambda \in \CC^*$.  It is not hard to verify this is an equivalence relation.

A rank two lattice in $\CC$ comes with an \emph{orientation}:  if the angle from the first basis vector to the second is less than $\pi$, then it is \emph{positively oriented} and otherwise it is \emph{negatively oriented}.  
That $\Im(z) > 0$ implies the associated lattices are positively oriented.  

\begin{theorem}
	The upper half plane $\HH^2_U$ is in bijection with homothety classes of positively oriented rank two lattice bases in $\CC$, by the following map:
	\[
		z \mapsto \ZZ + z\ZZ.
	\]
	Furthermore, the bijection is $\PSL_2(\ZZ)$-equivariant, where $\PSL_2(\ZZ)$ acts on lattices by change of basis:
	\[
		\begin{pmatrix} a & b \\ c & d \end{pmatrix} \cdot (\ZZ + z\ZZ) = (cz+d)\ZZ + (az+b)\ZZ.
	\]
\end{theorem}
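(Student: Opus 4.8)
The plan is to exhibit a distinguished (``normalized'') representative in each homothety class of positively oriented rank two bases, and to show that the map $z \mapsto \ZZ + z\ZZ$ — read as the ordered basis $(1,z)$ — meets each such representative exactly once. Throughout, I would use that an ordered basis $(\omega_1,\omega_2)$ is positively oriented precisely when $\Im(\omega_2/\omega_1) > 0$, the analytic form of the condition ``the angle from $\omega_1$ to $\omega_2$ lies in $(0,\pi)$''; this is the only place the orientation hypothesis enters. First I would check well-definedness: for $z \in \HH^2_U$ we have $z \notin \RR$, so $1$ and $z$ are $\RR$-linearly independent and span $\CC$, whence by the earlier exercise $\ZZ + z\ZZ$ is a rank two lattice with $(1,z)$ a genuine basis; since $\Im(z/1) = \Im(z) > 0$, that basis is positively oriented.

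For injectivity, suppose $(1,z)$ and $(1,z')$ lie in the same homothety class, i.e. $(1,z') = (\lambda\cdot 1,\, \lambda z)$ for some $\lambda \in \CC^*$. Comparing first coordinates gives $\lambda = 1$, hence $z = z'$. For surjectivity, take any positively oriented rank two basis $(\omega_1,\omega_2)$ and scale by $\lambda = 1/\omega_1 \in \CC^*$ to obtain the homothetic basis $(1,\, \omega_2/\omega_1)$; positive orientation says exactly $\Im(\omega_2/\omega_1) > 0$, so $z := \omega_2/\omega_1 \in \HH^2_U$ maps to this class. This gives the bijection.

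For equivariance, fix $M = \begin{pmatrix} a & b \\ c & d \end{pmatrix} \in \PSL_2(\ZZ)$. The action in the statement sends the ordered basis $(1,z)$ to $(cz+d,\, az+b)$, which (as $\det M = 1$) is another basis of the same lattice. To locate its class back in $\HH^2_U$ I would normalize as before, dividing by the first entry $cz+d$ — nonzero for $z\in\HH^2_U$, since otherwise $z\in\RR$ or $\det M = 0$ — obtaining $\left(1,\ \frac{az+b}{cz+d}\right) = (1,\, M\cdot z)$. Thus the homothety class of $M\cdot(\ZZ+z\ZZ)$ is exactly the image of the point $M\cdot z$ under the map of the theorem, which is the asserted $\PSL_2(\ZZ)$-equivariance. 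As a consistency check, the new basis is again positively oriented, since $\Im\!\left(\frac{az+b}{cz+d}\right) = \Im(M\cdot z) = \Im(z)/|cz+d|^2 > 0$ by the earlier exercise, so the action does preserve the target.

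The routine parts are genuinely routine; the one place demanding care is the bookkeeping distinguishing a \emph{lattice}, an ordered \emph{basis}, and a \emph{homothety class of basis}. The map is visibly not injective on homothety classes of lattices — that non-injectivity is precisely $\PSL_2(\ZZ)$ changing the basis while fixing the lattice — so one must consistently track ordered bases modulo the scalar action $(\omega_1,\omega_2)\mapsto(\lambda\omega_1,\lambda\omega_2)$ rather than lattices. Imposing the single normalization $\omega_1 = 1$ removes the scaling and the ordered-basis ambiguity simultaneously, which is exactly what makes injectivity and the equivariance computation drop out immediately.
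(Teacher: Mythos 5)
Your proof is correct and follows essentially the route the paper intends: the theorem is left there as an exercise, but the preceding discussion identifies $\PP^1(\CC)$ with rank two bases up to scaling via $[z:w] \leftrightarrow w\ZZ + z\ZZ$, and your normalization $\omega_1 = 1$ is exactly this projective chart, with the same M\"obius computation $\bigl(cz+d,\,az+b\bigr) \sim \bigl(1,\,M\cdot z\bigr)$ giving equivariance. Your closing point --- that the statement must be read as homothety classes of \emph{ordered bases} rather than of lattices, since $\PSL_2(\ZZ)$ changes the basis while fixing the lattice --- is precisely the correct reading and the one subtlety worth isolating.
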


\begin{exercise}
	Prove the theorem.
\end{exercise}

\subsection{Lattices and quadratic forms}

Let us return to the question raised above:  What is preserved under the action of $\PSL_2(\ZZ)$ on quadratic forms, if not the roots $[z:1]$ and $[\overline z:1]$?  The answer:  the collection of the totality of roots of all equivalent forms.  By the last section, one way to encapsulate this data is as a $\ZZ$-lattice, whose bases are in bijection with the roots.  If $[z:1] \in \PP^1(\CC)$ represents a root of the form $f(x,y)$, then we set $\Lambda_f := \ZZ + z\ZZ$.  
Since we always have two conjugate roots, we always have two conjugate lattices, exactly one of which is positively oriented, and we can choose that lattice to associate to our quadratic form.  

From an equivalence class of lattices up to homothety, we can also recover the quadratic form:  use a homothety to write the lattice $\Lambda$ as $\ZZ +  z\ZZ$ where $z \in \HH^2_U$, take the minimal polynomial $ax^2 + bx + c$ of $z$, and let $f(x,y) = ax^2 + bxy + cy^2$, defined up to scaling by $\RR^*$.  
Another way to recover the form from the lattice is to take $\alpha \ZZ + \beta \ZZ$ to the quadratic form $N(x,y) = N(y \alpha + x \beta)$ where $N(z) = |z|^2$, the square of the complex absolute value.  Thus, the lattice's relationship to the form can be seen in two interesting ways:  first, as the lattice whose bases form the totality of roots of all equivalent forms; and second, as the lattice whose vector lengths are the values of the quadratic form.  It is not perhaps entirely obvious that these are ideas are the same.

\begin{exercise}
	Let $z \in \HH^2_U$.  Define 
	\[
		f(x,y) := \begin{pmatrix} x & y \end{pmatrix} \begin{pmatrix} 1 \\ -z \end{pmatrix} \overline{
				\begin{pmatrix} 1 & -z \end{pmatrix} \begin{pmatrix} x \\ y \end{pmatrix} }.
			\]
			From this, recover the two interpretations given above of the lattice $\ZZ + z\ZZ$ (which is $\SL_2(\ZZ)$-equivalent to $z\ZZ - \ZZ$) as carrying information about the form $f$.
\end{exercise}

In light of this connection, we will use the following notation:
\[
	\mathcal{Q}_\RR := \left\{ \substack{ \text{positive definite real binary quadratic forms} \\ \text{up to scaling by $\RR^*$} } \right\}
\]
\[
       \mathcal{L}_\RR :=	\left\{ \substack{
		\text{homothety classes of positively oriented rank two $\ZZ$-lattices $\Lambda = \alpha \ZZ + \beta \ZZ$ in $\CC$}
} \right\}
\]

When we wish to focus on integral forms, we define
\[
	\mathcal{Q}_\ZZ := \left\{ \substack{ \text{positive definite primitive} \\ \text{integral binary quadratic forms} } \right\}
\]
\[
       \mathcal{L}_\ZZ :=	\left\{ \substack{
		\text{homothety classes of positively oriented rank two $\ZZ$-lattices $\Lambda = \alpha \ZZ + \beta \ZZ$ in $\CC$,} \\
	\text{where $\alpha/\beta$ is an algebraic number of degree $2$.}
} \right\}
\]

Then our description so far can be filled out to obtain the following classical result.
\begin{theorem}
	\label{thm:quadformbij}
	There is a $\PSL_2(\ZZ)$-equivariant bijection between $\mathcal{Q}_\RR$ and $\mathcal{L}_\RR$, restricting to a bijection between $\mathcal{Q}_\ZZ$ and $\mathcal{L}_\ZZ$.
\end{theorem}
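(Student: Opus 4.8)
The plan is to factor the desired correspondence through the upper half plane, exploiting the two $\PSL_2(\ZZ)$-equivariant parametrizations already established. First I would recall that the assignment $\rho(f)=z$, sending a positive definite real form to its root in $\HH^2_U$, is a bijection $\mathcal{Q}_\RR \to \HH^2_U$ (as noted in the discussion preceding Theorem~\ref{thm:quad-equi}) and is $\PSL_2(\ZZ)$-equivariant by that theorem. Second, the map $\ell\colon z \mapsto \ZZ + z\ZZ$ is a $\PSL_2(\ZZ)$-equivariant bijection from $\HH^2_U$ onto $\mathcal{L}_\RR$, by the lattice parametrization theorem of the previous subsection. Composing, $\Phi := \ell \circ \rho$ is then automatically a $\PSL_2(\ZZ)$-equivariant bijection $\mathcal{Q}_\RR \to \mathcal{L}_\RR$, which disposes of the first assertion with essentially no new work. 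For later use I would also record the inverse explicitly: $\Phi^{-1}$ sends a lattice $\ZZ + z\ZZ$ to the $\RR^*$-class of the form built from the minimal polynomial $ax^2+bx+c$ of $z$, matching the recipe described in the text.

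For the restriction the key observation is that a class in $\mathcal{Q}_\RR$ contains a primitive integral representative exactly when its root $z=\rho(f)$ is an algebraic number of degree $2$ over $\QQ$. Indeed, if $f$ is integral then $z$ satisfies $az^2+bz+c=0$ with $a,b,c\in\ZZ$ and $\Delta<0$, so $z$ has degree $2$; conversely, a degree-$2$ number $z$ has a rational minimal polynomial which, after clearing denominators and dividing out the gcd of the coefficients, yields a primitive integral form that is positive definite once the overall sign is fixed. This simultaneously shows that $\mathcal{Q}_\ZZ$ embeds into $\mathcal{Q}_\RR$ via its (unique) primitive integral representative, and that $\Phi(\mathcal{Q}_\ZZ)$ is precisely the set of homothety classes of lattices $\ZZ + z\ZZ$ with $z$ of degree $2$.

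It then remains to match this description with the definition of $\mathcal{L}_\ZZ$. For $\Lambda = \alpha\ZZ + \beta\ZZ$ the ratio $\alpha/\beta$ is invariant under homothety, and under a change of basis by $\SL_2(\ZZ)$ it transforms by an integer M\"obius transformation, which preserves degree over $\QQ$; hence the condition ``$\alpha/\beta$ has degree $2$'' is well defined on homothety classes, and for the normalized basis $\ZZ + z\ZZ$ it reads exactly ``$z$ has degree $2$.'' Combining this with the previous paragraph shows $\Phi(\mathcal{Q}_\ZZ) = \mathcal{L}_\ZZ$, so $\Phi$ restricts to the claimed bijection.

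I expect the main obstacle to lie not in the formal composition but in the bookkeeping of the restriction: one must verify carefully that ``primitive integral positive definite'' singles out a \emph{unique} representative in each $\RR^*$-scaling class that contains an integral form, and that the degree-$2$ condition on lattices is genuinely basis-independent. Everything else follows routinely from the two equivariant bijections already in hand.
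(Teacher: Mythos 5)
Your proposal is correct and follows essentially the same route as the paper: the paper's proof likewise assembles the two previously established $\PSL_2(\ZZ)$-equivariant parametrizations (forms to roots in $\HH^2_U$, and $\HH^2_U$ to homothety classes of lattices) and observes that integral forms correspond exactly to lattices $\ZZ + z\ZZ$ with $z$ quadratic algebraic, with the converse obtained by clearing denominators and normalizing. Your write-up is in fact somewhat more careful than the paper's terse proof, since you explicitly check uniqueness of the primitive integral representative in each $\RR^*$-class and the basis-independence of the degree-$2$ condition on $\alpha/\beta$.
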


\begin{proof}
	The $\RR$ case is essentially a collection of our work so far.  Observe that the map from forms to lattices, on $\mathcal{Q}_\ZZ$, returns lattices $\ZZ + z\ZZ$ where $z$ is a quadratic algebraic number.  Conversely, if $z$ is quadratic algebraic, then the form has $\QQ$ coefficients and can be normalized to lie in $\mathcal{Q}_\ZZ$.
\end{proof}

In fact, the lattices of $\mathcal{L}_\ZZ$ are exactly those which arise as fractional ideals of imaginary quadratic fields.

\begin{exercise}
The \emph{order} of a rank-two lattice $\Lambda = \alpha \ZZ + \beta \ZZ \subseteq \CC$ is $\ord(\Lambda):= \{ z \in \CC : z \Lambda \subseteq \Lambda \}$.  
	Suppose the lattice is associated under Theorem~\ref{thm:quadformbij} to an integral quadratic form.  Show that the order is a subring of an imaginary quadratic field.  Which field?
\end{exercise}

\subsection{Diophantine approximation}

We now return to one of the most basic questions of number theory, which can be asked about the real line, but answered with the geometry of the upper half plane.
How do real numbers lie in relation to rational numbers in $\RR$?  

One simple way to answer this question is to describe the decimal system, which is a sort of addressing system for real numbers by successive approximation by rationals with $10$-power denominator.  There are, however, many ways to approximate a real number by rationals.  \emph{Diophantine approximation} asks us to approximate a real number $\alpha$ by the `best' rationals $p/q$ in the sense that $|p/q - \alpha|$ is small while $|q|$ is simultaneously small.  One way to measure the quality of an approximation is to study the quantity
\[
   -\log_q\left( \min_{p} \left|\frac{p}{q} - \alpha \right|\right).
\]
Since we can always expect an approximation to within $\frac{1}{q}$, this quantity is bounded below by $1$.  But we can often do significantly better.  
Phrased in what is sometimes a more traditional way, we ask, for an exponent $\eta$, whether we can find, or can find infinitely many, $p/q$ satisfying
\[
	\left| \frac{p}{q} - \alpha \right| < \frac{1}{q^\eta}.
\]
Further evidence for the appropriateness of this measure of `goodness' of an approximation is given by Dirichlet's Theorem.
 
\begin{theorem}[Dirichlet]
  \label{thm:dirichlet}
  Let $\alpha \in \RR$.  Then $\alpha$ is irrational if and only if there exist infinitely many distinct $p/q \in \QQ$ such that
  \begin{equation}
	  \label{eqn:dirichlet}
	  \left| \frac{p}{q} - \alpha \right| < \frac{1}{q^2}.
  \end{equation}
\end{theorem}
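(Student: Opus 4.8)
The plan is to prove the two implications separately; the substantive content is a pigeonhole argument, and although the surrounding text hints at an upper-half-plane (Farey tessellation / continued fraction) proof, the cleanest self-contained route is elementary, so I would present that and remark on the geometric picture afterward.

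For the easy direction I would prove the contrapositive: if $\alpha = a/b \in \QQ$ (reduced, $b > 0$), then only finitely many distinct reduced fractions $p/q$ satisfy \eqref{eqn:dirichlet}. Indeed, for any reduced $p/q \neq \alpha$ the integer $pb - qa$ is nonzero, so $|pb - qa| \ge 1$ and hence $|p/q - \alpha| = |pb - qa|/(qb) \ge 1/(qb)$. Comparing with \eqref{eqn:dirichlet} forces $1/(qb) < 1/q^2$, i.e. $q < b$. Thus $q$ ranges over finitely many values, and for each $q$ only finitely many $p$ place $p/q$ within $1/q^2$ of $\alpha$; together with the single fraction $\alpha$ itself, this is a finite list.

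For the harder direction, assume $\alpha$ is irrational. I would first show that for every integer $Q \ge 1$ there is a fraction $p/q$ with $1 \le q \le Q$ and $|q\alpha - p| < 1/Q$. Partition $[0,1)$ into the $Q$ subintervals $[j/Q,(j+1)/Q)$ and drop the $Q+1$ fractional parts $\{0\}, \{\alpha\}, \ldots, \{Q\alpha\}$ into them; by pigeonhole two of them, say $\{q_1\alpha\}$ and $\{q_2\alpha\}$ with $q_1 < q_2$, share a subinterval. Setting $q = q_2 - q_1$, which lies between $1$ and $Q$, and $p = \lfloor q_2\alpha\rfloor - \lfloor q_1\alpha\rfloor$ gives $|q\alpha - p| < 1/Q$, whence $|\alpha - p/q| < 1/(qQ) \le 1/q^2$ because $q \le Q$.

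The main obstacle — really the only subtlety — is upgrading this single approximation into infinitely many distinct ones, and here irrationality is exactly what is needed. Given any finite collection of solutions, the minimum of the errors $|\alpha - p/q|$ over that collection is a strictly positive number $\delta$ (strict because $\alpha \notin \QQ$). Choosing $Q$ with $1/Q < \delta$ and rerunning the pigeonhole step produces a solution with error $< 1/(qQ) \le 1/Q < \delta$, which therefore cannot already belong to the collection. Iterating yields an infinite sequence of distinct solutions, completing the proof.
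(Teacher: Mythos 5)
Your proposal is correct and follows essentially the same route as the paper: the identical pigeonhole argument on the fractional parts of $0, \alpha, \ldots, Q\alpha$, the same iteration (choosing $Q$ to beat the smallest error so far, which is positive precisely because $\alpha$ is irrational) to obtain infinitely many distinct solutions, and the same ``rationals repel'' computation $|p/q - a/b| \ge 1/(qb)$ for the converse, which you merely package as a contrapositive with an explicit bound $q < b$. No gaps; the two write-ups differ only in presentation.
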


We interpret this as saying that rational numbers are ``poorly approximable'' and irrationals are ``well approximable.''  

\begin{proof}
	Consider an irrational $\alpha \in \RR$.  Divide the unit interval $[0,1]$ into $Q$ even subintervals, where $Q > 0$ is an integer.  Amongst the $Q+1$ real numbers $0, \alpha, 2\alpha, \ldots, Q\alpha$, there must be two whose fractional parts\footnote{The fractional part of $x$ is $x - \lfloor x \rfloor$, the number $x$ minus the largest integer less than or equal to $x$.  It lies in the unit inteval $[0,1)$.} fall into the same subinterval.  Call these $i\alpha$ and $j\alpha$, where $0 \le i < j \le Q$.
	Then
	\[
		\left| (j-i)\alpha - p \right| < \frac{1}{Q}
	\]
	for an appropriate choice of integer $p$.  Then let $q:= j-i \le Q$ and
	\[
		\left| \frac{p}{q} - \alpha \right| < \frac{1}{qQ} \le \frac{1}{q^2}.
	\]
	Thus we have found one example.  To generate another example, distinct from any $p_i/q_i$ that may have come before, we choose $Q'$ so that
	\[
		\left| p_i - q_i\alpha \right| > \frac{1}{Q'}
	\]
	for all $i$ (notice that this is possible because $\alpha$ is irrational, so $q_i\alpha$ is never an integer), and then repeat the argument above.  In this way, we generate infinitely many distinct $p/q \in \QQ$ having the desired property.

	On the other hand, rational numbers `repel' one another in the sense that for any distinct $p_1/q_1, p_2/q_2 \in \QQ$,
\[
	\left|  \frac{p_1}{q_1} - \frac{p_2}{q_2} \right| \ge \frac{1}{q_1q_2}.
      \]
	In particular, for $q_2 > q_1$,
\[
	\left|  \frac{p_1}{q_1} - \frac{p_2}{q_2} \right| > \frac{1}{q_2^2}.
      \]
      This means that if $\alpha$ is rational, \eqref{eqn:dirichlet} can only be satisfied finitely many times.
\end{proof}

The theorem is illustrated in Figure~\ref{fig:ratsDisks}.
Dirichlet's theorem is sharp in the sense that it fails for any exponent exceeding $2$ on the right side.  

\begin{figure}
	  \includegraphics[width=5.5in]{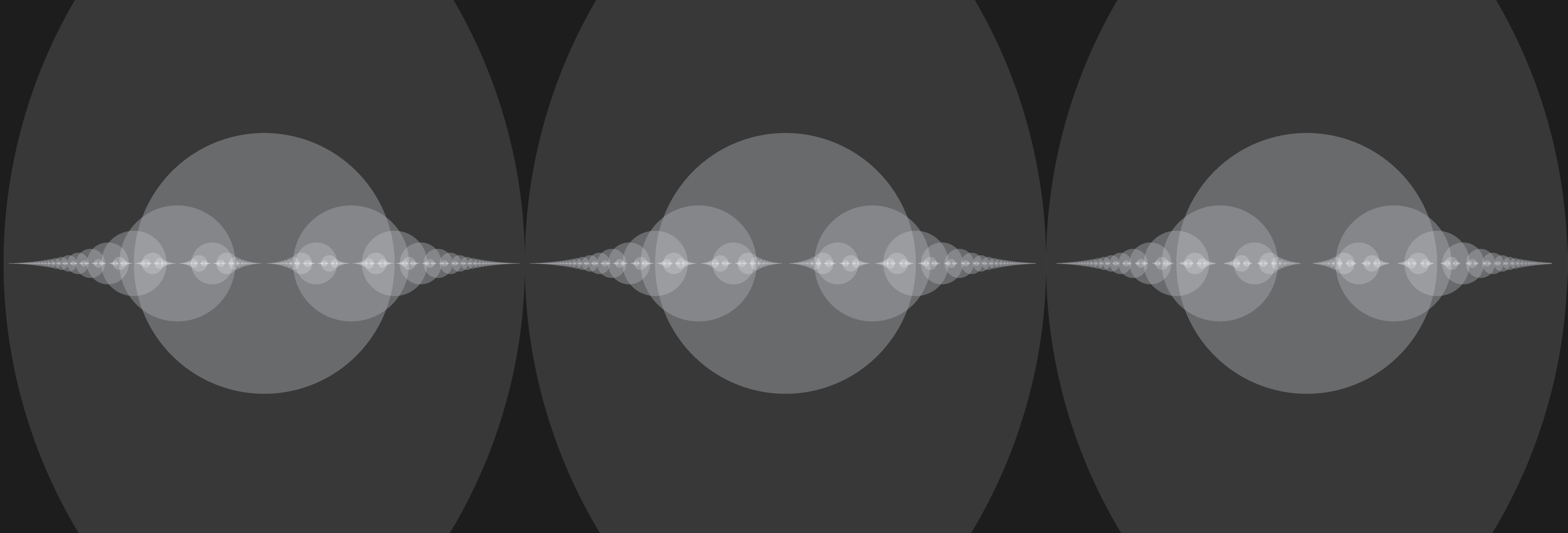}
	  \caption{Translucent white disks of radius $1/q^2$ at $p/q$ on a black background (so a lighter colour indicates an overlap of more disks).  The real line passing horizontally through the middle of the picture ranges from $-1$ to $2$; each integer appears as a distinct `pinch point.'  Dirichlet's theorem states that the points lying under infinitely many disks are exactly the irrationals.  (The illusory three `eggs' are actually pairwise overlaps of the disks of radius $1$ centred at each integer.) Image:  Edmund Harriss, Steve Trettel and Katherine E. Stange.}
	  \label{fig:ratsDisks}
\end{figure}

\begin{exercise}
	We will show that the golden ratio $\alpha = \frac{1+\sqrt{5}}{2}$ is particularly poorly approximable\footnote{Enjoy this: \url{https://slate.com/technology/2021/06/golden-ratio-phi-irrational-number-ellenberg-shape.html}}.  Let $f$ be the minimal polynomial for $\alpha$.  Let $p/q \in \QQ$.  Obtain a lower bound for $f(p/q)$ in terms of $q$.  Factoring $f(p/q) = (p/q - \alpha)(p/q - \overline\alpha)$, and bounding the second factor above, prove that there exists some constant $K$ such that $\left| \frac{p}{q} - \alpha \right| \ge \frac{1}{Kq^2}$ for all but finitely many rationals $p/q$.
Fun fact:  $\alpha$ is best approximated by ratios $F_n/F_{n-1}$ where $F_n$ is the Fibonacci sequence.
\end{exercise}

\subsection{The Farey subdivision}

How do we find the set of `good approximations,' that is to say, solutions to \eqref{eqn:dirichlet}?  The pigeonhole principle proof above provides an algorithm, albeit a slow one.  There is a geometric story, however, which gives rise to a very efficient algorithm:  the theory of continued fractions.  My favourite version of this story is due to Caroline Series \cite{SeriesIntel, Series}.

	\begin{figure}
	  \includegraphics[width=3.5in]{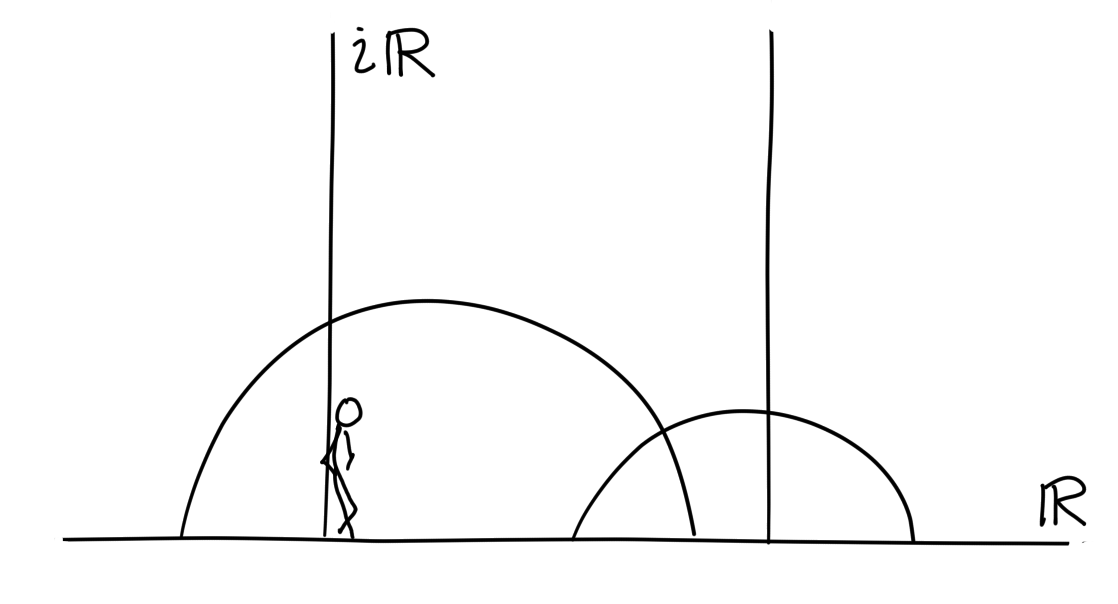}
	  \caption{The upper half plane, with example geodesics.}
	  \label{fig:upper}
  \end{figure}

We need a little of the geometry of $\HH^2_U$ here, which we will detail further in a subsequent section.  In particular, $\HH^2_U$ is a model of the hyperbolic plane in which geodesics are exactly the straight vertical lines, and the upper half-circles centered on the real line (see Figure~\ref{fig:upper}).  
For Series' story, we study the \emph{Farey Tesselation}, shown in Figure~\ref{fig:farey}.  To generate this image, do the following:
\begin{enumerate}
	\item draw vertical lines upward from each integer;
	\item connect each pair of consecutive integers by a geodesic;
	\item for each pair of rational numbers $p_1/q_1$ and $p_2/q_2$ connected by a geodesic, define their \emph{mediant} $(p_1+p_2)/(q_1+q_2)$ and draw a geodesic connecting $p_1/q_1$ to the mediant, as well as a geodesic connecting $p_2/q_2$ to the mediant;
	\item repeat the last step, ad infinitum.
\end{enumerate}


This mediant operation is actually quite natural if we view $\QQ \cup \{ \infty \}$ as $\PP^1(\ZZ)$, the projectivization\footnote{For any ring $R$, my notation for a projective space is: $\PP^n(R) := \{ \mathbf{x} := (x_1,\ldots,x_n) \neq 0 : x_i \in R \}/(\mathbf{x} = \lambda \mathbf{x}, \lambda \in R^*)$.  In topological or geometric contexts, authors write $F\PP^n$ for the projective space over the field $F$.} of the square lattice $\ZZ^2$.  Each rational number, by taking it in reduced form, corresponds to an element of $\ZZ^2$ with a sightline to the origin (i.e. a primitive vector).  Then the mediant operation is vector addition of these primitive vectors (reduced fractions).  One way to `see' this is to consider the lattice $\ZZ^2$ in the plane.  Standing at the origin and looking out, we see the vertices of the lattice as a copy of $\PP^1(\ZZ)$ (see Figure~\ref{fig:ratsLattice}). The `nearby' points are those with small vector entries, and these occur earlier in the Farey subdivision.  Visually, if the lattice points are indicated by spheres, they would appear as larger dots (compare Figures~\ref{fig:ratsLattice} and \ref{fig:ratsDots}).  

This process actually generates all the rational numbers, as the following exercise demonstrates.

\begin{figure}
	  \includegraphics[width=5.5in]{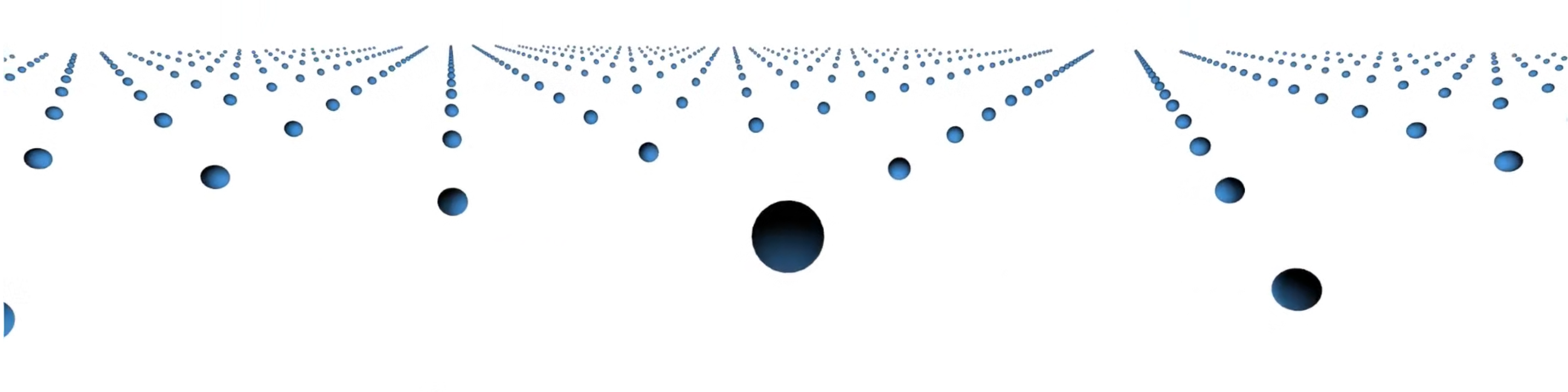}
	  \caption{The view floating slightly above the origin of $\ZZ^2$.}
	  \label{fig:ratsLattice}
\end{figure}

\begin{figure}
	  \includegraphics[width=5.5in]{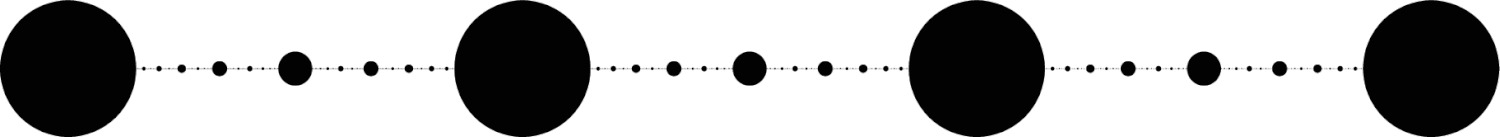}
	  \caption{The rationals $p/q$ indicated with disks of radius $\frac{3}{20q^2}$; similar to the view of $\ZZ^2$ from the origin in Figure~\ref{fig:ratsLattice} (Image: \cite[Figure 15]{HST}).}
	  \label{fig:ratsDots}
\end{figure}

\begin{exercise}
	\label{exercise:euclidgauss}
	Begin with any positive rational number in lowest form $p/q$.  Show that there exists $u/v \in \QQ$ such that $pv-uq = 1$.  Then perform a Euclidean-style algorithm on the vectors $\begin{pmatrix}p \\ q\end{pmatrix}$ and $\begin{pmatrix} u \\ v \end{pmatrix}$, repeatedly substracting one from the other until attaining the pair of standard basis vectors for $\ZZ^2$.  Explain why this is a proof that the Farey tesselation generates all rational numbers as geodesic endpoints.
\end{exercise}

	In this image, there is a `bubble,' or geodesic, lying above the unit interval $[0,1]$ and then it is further subdivided into two bubbles, each of which is further subdivided, etc. 
	This gives a way to `organize' the real line:  compare this to the more common decimal subdivision in Figure~\ref{fig:compare}.  In either system, we can specify the location of a real number by indicating which interval it lies in, then which subinterval of that, etc., iteratively.  In the decimal system, we choose the leftmost endpoint of each successive interval, and call the resulting sequence the decimal expansion.  

\begin{figure}
	\includegraphics[width=6in]{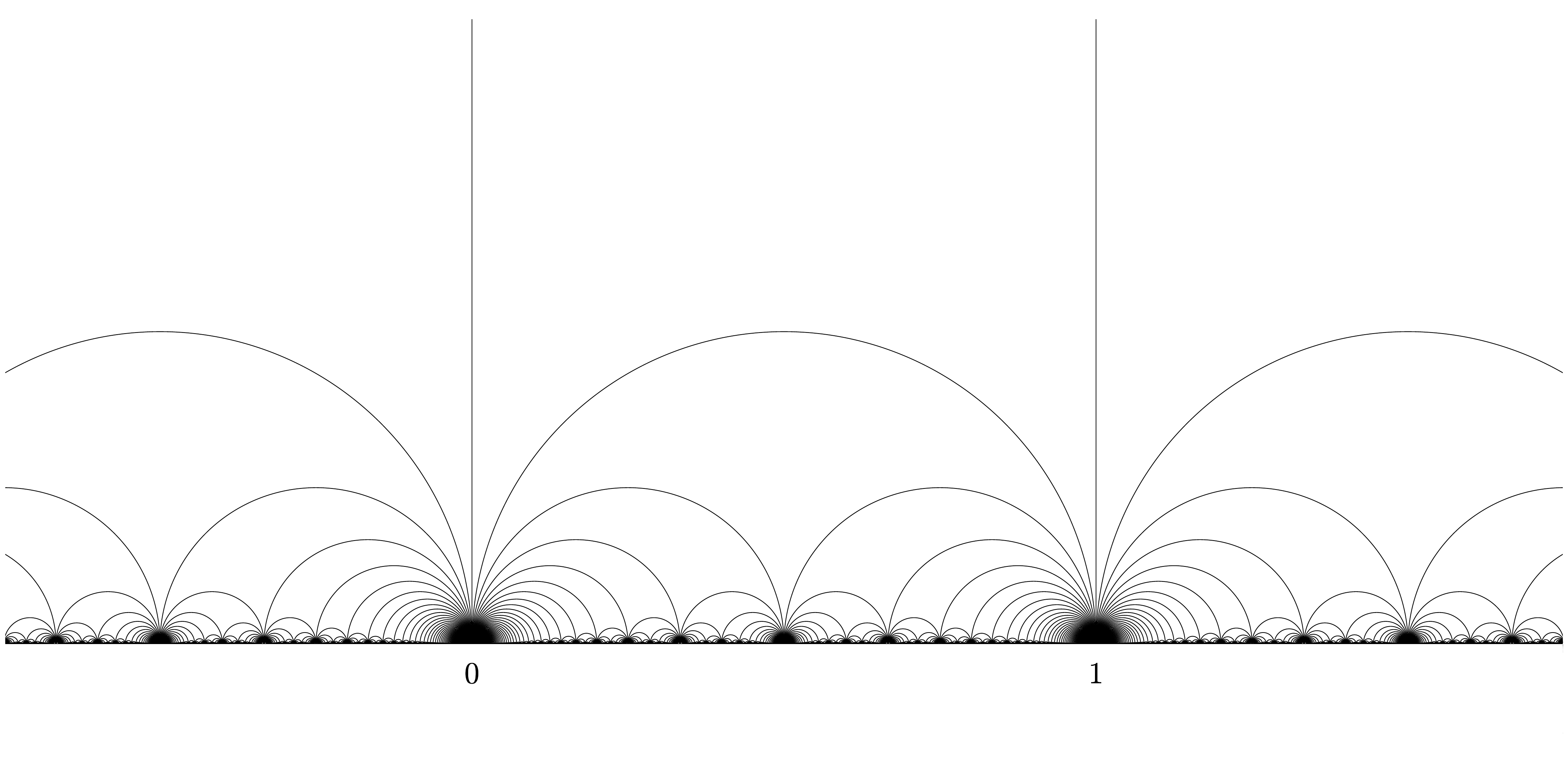}
	\caption{The Farey subdivision of the upper half plane, i.e. orbit of the geodesic from $0$ to $\infty$ under $\PSL_2(\ZZ)$.}
	\label{fig:farey}
\end{figure}

\begin{figure}
	\includegraphics[width=2.8in]{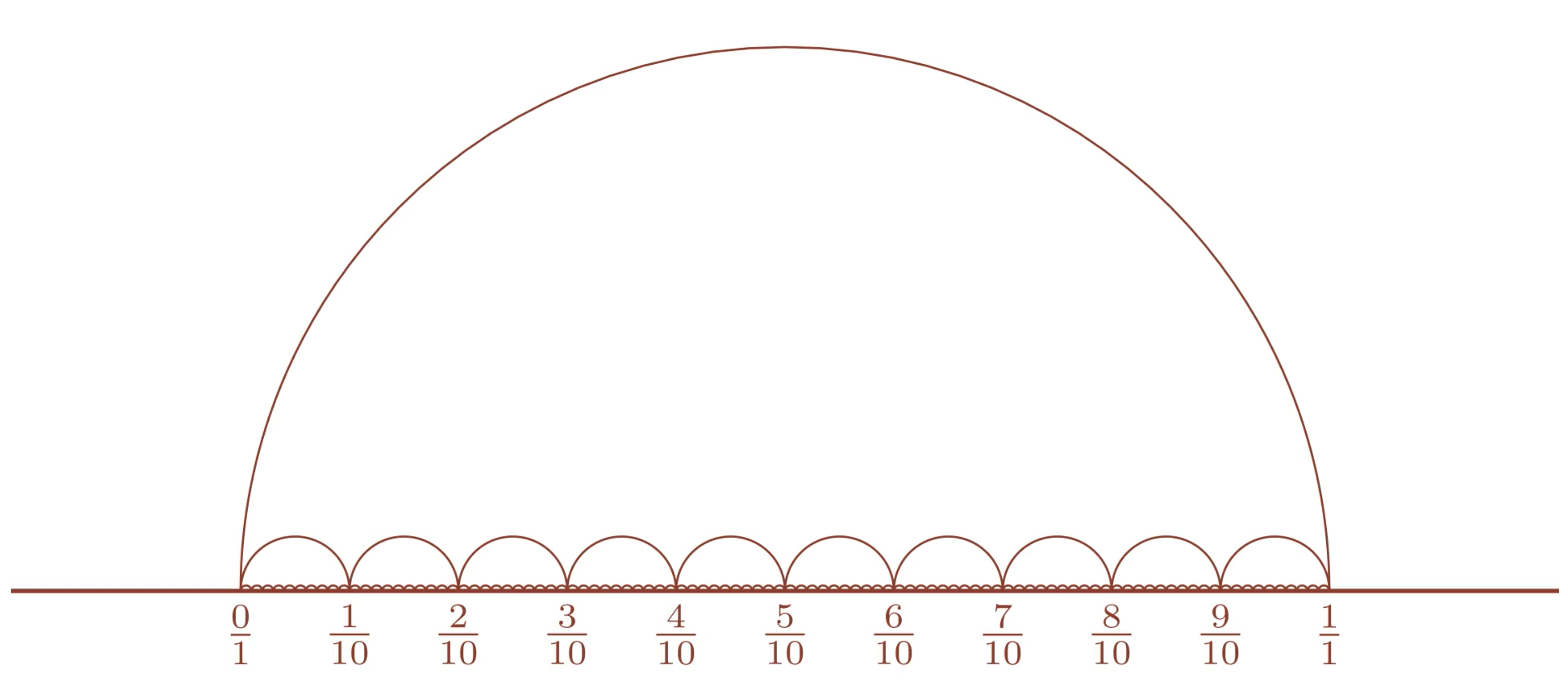} \;\;
	\includegraphics[width=2.8in]{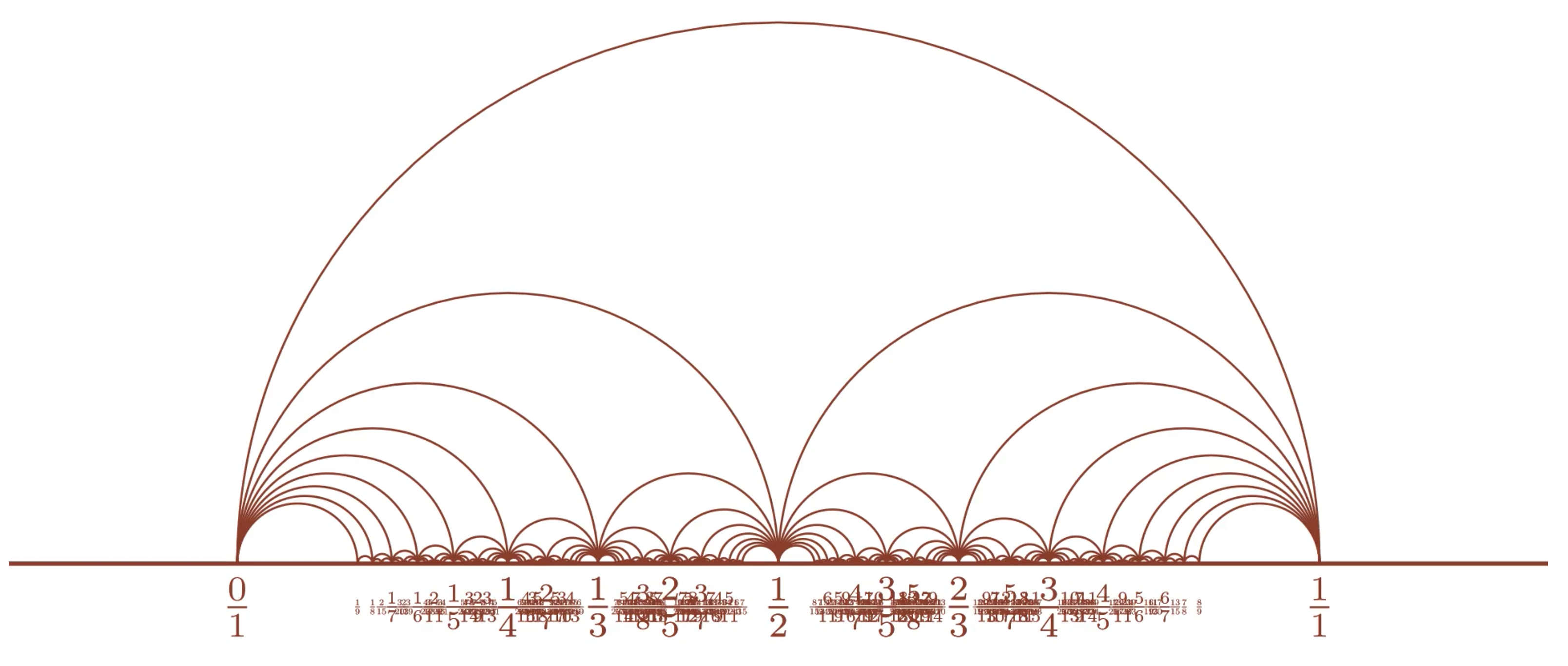}
	\caption{At left, the decimal subdivision of the unit interval, to three levels; at right the Farey subdivision, to nine levels.}
	\label{fig:compare}
\end{figure}

What if we use the Farey subdivision?  We obtain the \emph{continued fraction expansion}.  
To describe the `path' to $\alpha$ through the Farey `froth' of semicircles, we use the language of geodesics.  It was long known that continued fractions were connected to the paths of geodesics in the upper half plane and to $\PSL_2(\ZZ)$, but it was Caroline Series who realized that the Farey subdivision is perhaps the most natural way to describe this relationship.

\begin{figure}
	  \includegraphics[width=1.5in]{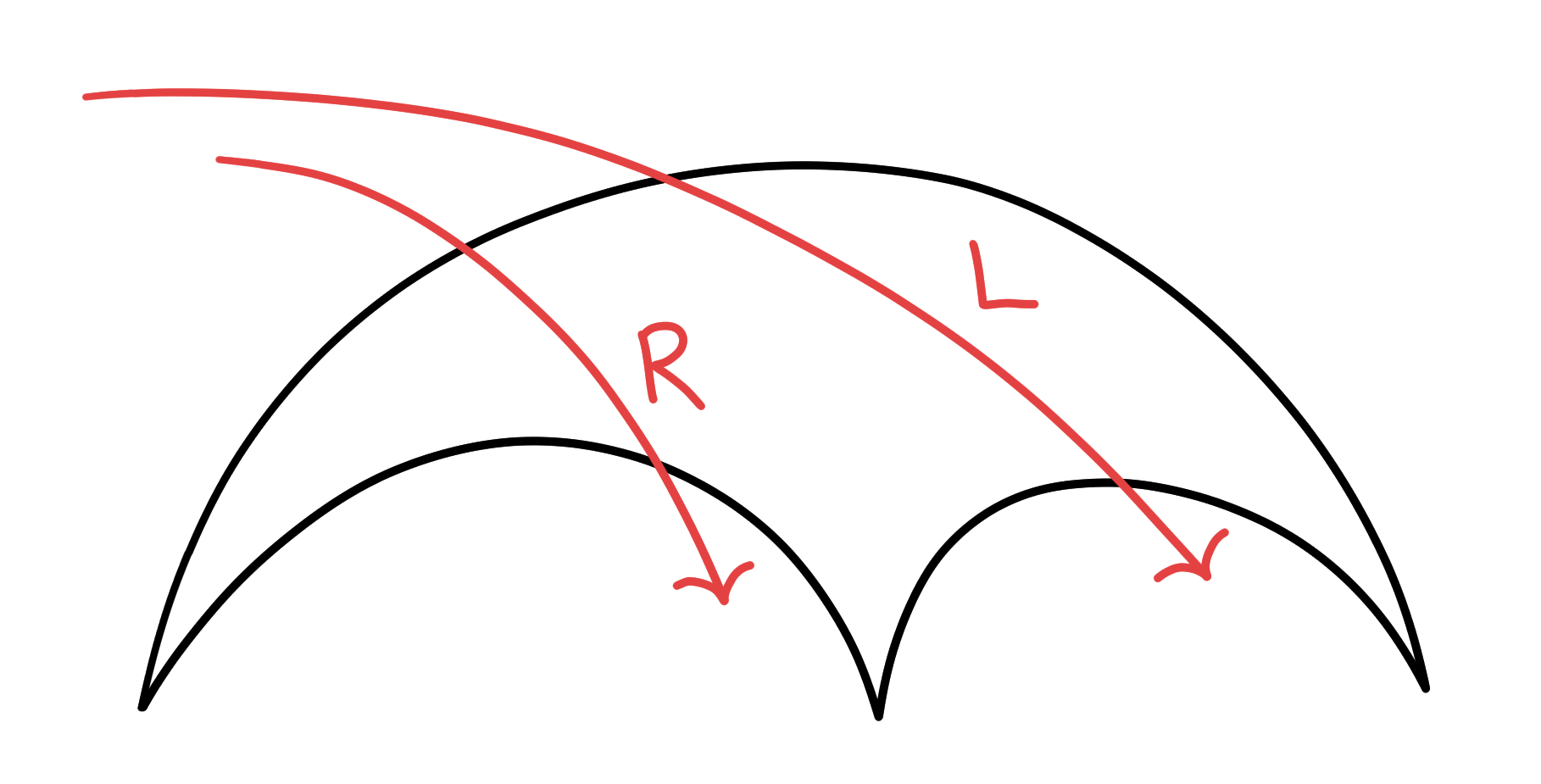}
	  \caption{The upper geodesic cut (of this hyperbolic triangle) is labelled L, since the region to the left (viewed by an ant riding the geodesic flow) has one cusp.  We similarly label the lower cut R.  As we look down at the page, this appears counter-intuitive, but to the ant crawling along the page in the direction indicated by the geodesic, this is the intuitive left or right forking choice.}
	  \label{fig:cutting}
\end{figure}

\begin{figure}
	  \includegraphics[width=5.5in]{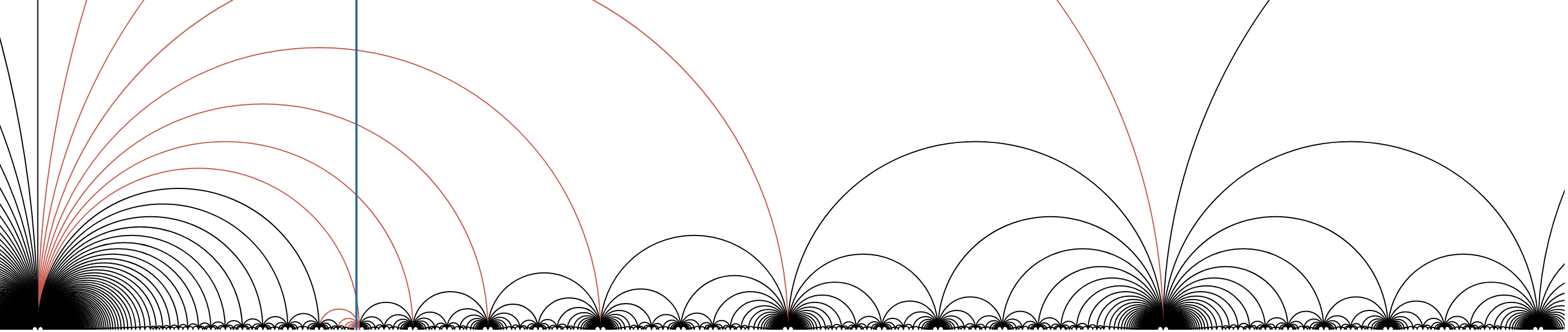}\\
	  \includegraphics[width=5.5in]{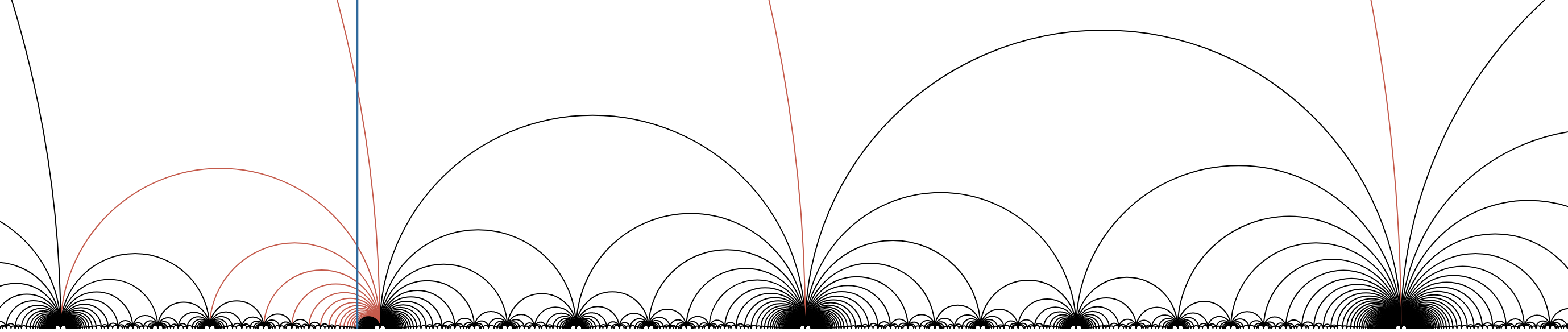}
	  \caption{An example cutting sequence for $\pi$, as in Theorem~\ref{thm:cutting}, showing the first two convergents, $3$ and $22/7$.  Top image:  the thick dark cluster at left is the integer $3$ and the vertical line is indicating $\pi$.  Bottom image:  a detail of the first image, where we see $\pi$ just to the left of $22/7$.  In both images, the red geodesics indicate the subdivisions crossed as we approach $\pi$.  The greater the number of red geodesics converging to a point, the better the approximation (and the larger the corresponding continued fraction coefficient).  After using $L^3$ to enter the region between $3$ and $4$, the top image shows $R^7$ (red lines emanating from $3$ are the walls crossed); the bottom image shows $L^{15}$.  The continued fraction expansion of $\pi$ begins $L^3R^7L^{15}R^{1}L^{292}R^{1}L^{1}R^{1} \cdots$.}
	  \label{fig:cutting-example}
\end{figure}

	\begin{definition}
		Consider any geodesic, heading out from the imaginary axis to a positive real number $\alpha$.  Label it by L or R as it passes through each region of the Farey tessellation (or subdivision), as follows.  The geodesic cuts each triangular fundamental region it passes through into two parts, one having one cusp, and one having two.  Label it L if the region to the left of the geodesic (from the perspective of its direction of travel) has one cusp, and R otherwise (Figure~\ref{fig:cutting}).  If the geodesic heads directly into a cusp, label with either an L or an R.  The resulting word (generated from left to right), is the \emph{Series continued fraction}.
	\end{definition}

The sequence $L^{a_0} R^{a_1} \cdots $ is an example of a \emph{cutting sequence}.   See Figure~\ref{fig:cutting-example} for an example.

\begin{exercise}
	\label{ex:expansions1}
	Determine the beginning of the continued fraction expansion of $e$, and the two full expansions for $17/5$.   
\end{exercise}

Returning to the lattice interpretation, the element $\alpha \in \RR$ corresponds to a ray from the origin at slope $\alpha$.  If $\alpha$ is irrational, it will never hit a lattice point.  However, there are `near misses:' lattice points that it passes close to as it heads out from the origin.  These are the corners of the fundamental triangles of the Farey tessellation at which we `turn' left or right. 

\subsection{The matrix continued fraction expansion}

To formalize the continued fraction process and the good rational approximations it will produce, we consider $\SL_2^+(\ZZ)$ (the $+$ indicates that we include only those matrices whose entries are non-negative) acting on $\PP^1(\QQ)$.  This monoid (group without inverses), or semigroup\footnote{A semigroup is a group without the requirement for inverses or an identity.  In the literature of continued fractions, $\SL_2^+(\ZZ)$ is often called a semigroup, although we would normally include the identity.}, is generated from the identity matrix by two matrices:
\[
	M_L = \begin{pmatrix} 1 & 1 \\ 0 & 1 \end{pmatrix}, \quad
	M_R = \begin{pmatrix} 1 & 0 \\ 1 & 1 \end{pmatrix}.
\]
Phrased another way, $\SL_2^+(\ZZ)$ is exactly the collection of matrices formed as finite words (including the empty word) in the words $M_L$ and $M_R$.
Form a tree whose root is the identity matrix, and for each matrix, the left child is obtained by multiplying on the right by $M_L$ and the right child is obtained by multiplying on the right by $M_R$, as illustrated in Figure~\ref{fig:matrices}.  

\begin{exercise}
	Prove that $\SL_2^+(\ZZ)$ is generated as a semigroup by $M_L$ and $M_R$.  (See Exercise~\ref{exercise:euclidgauss}.)
	Show that $\PSL_2(\ZZ)$ is generated as a group from $\SL_2^+(\ZZ)$ by the addition of $\begin{pmatrix} 0 & -1 \\ 1 & 0 \end{pmatrix}$.
\end{exercise}

\begin{figure}
\tiny
\[
        \xymatrix @R=1.25pc @C=-1.40pc {
                & & & & & & & &  {\begin{pmatrix} 1 & 0 \\ 0 & 1 \end{pmatrix}} \ar@{-}[lllld] \ar@{-}[rrrrd] & & & & & & & & \\
                & & & & {\begin{pmatrix} 1 & 1 \\ 0 & 1 \end{pmatrix}}  \ar@{-}[lld] \ar@{-}[rrd] & & & & & & & & {\begin{pmatrix} 1 & 0 \\ 1 & 1 \end{pmatrix}}   \ar@{-}[lld] \ar@{-}[rrd] & & & & & & & & & \\
                & &
                {\begin{pmatrix} 1 & 2 \\ 0 & 1 \end{pmatrix}} 
                \ar@{-}[ld] \ar@{-}[rd] 
                & & & &
                {\begin{pmatrix} 2 & 1 \\ 1 & 1 \end{pmatrix}}
                \ar@{-}[ld] \ar@{-}[rd] 
                                   & & & &
                {\begin{pmatrix} 1 & 1 \\ 1 & 2 \end{pmatrix}} 
                \ar@{-}[ld] \ar@{-}[rd] 
                                   & & & &
                {\begin{pmatrix} 1 & 0 \\ 2 & 1 \end{pmatrix}}
                \ar@{-}[ld] \ar@{-}[rd] 
                & & & & &\\
                & {\begin{pmatrix} 1 & 3 \\ 0 & 1 \end{pmatrix}} & &  
                {\begin{pmatrix} 3 & 2 \\ 1 & 1 \end{pmatrix}} & &  
                {\begin{pmatrix} 2 & 3 \\ 1 & 2 \end{pmatrix}} & &   
                {\begin{pmatrix} 3 & 1 \\ 2 & 1 \end{pmatrix}} & &    
                {\begin{pmatrix} 1 & 2 \\ 1 & 3 \end{pmatrix}} & &  
                {\begin{pmatrix} 2 & 1 \\ 3 & 2 \end{pmatrix}} & &  
                {\begin{pmatrix} 1 & 1 \\ 2 & 3 \end{pmatrix}} & &   
                {\begin{pmatrix} 1 & 0 \\ 3 & 1 \end{pmatrix}} & & \\
        }
\]
\normalsize
	\caption{The $\SL_2^+(\ZZ)$ matrix tree, shown to four levels.}
\label{fig:matrices}
\end{figure}
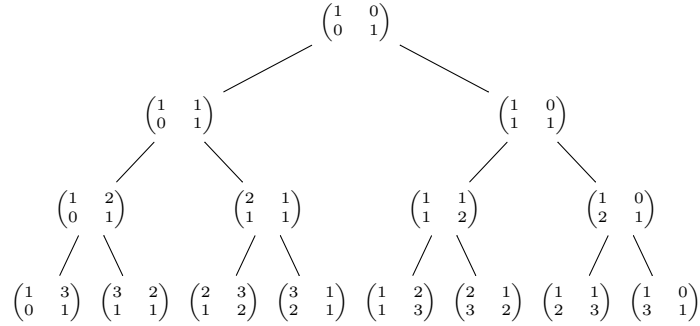

From the previous exercise, one can conclude that the Farey tesselation can equivalently be described as the orbit in $\HH^2_U$ of the geodesic from $0$ to $\infty$ under $\PSL_2(\ZZ)$.

Each geodesic in the picture, once we choose a direction, corresponds to a unique element $\begin{pmatrix} p_1 & p_2 \\ q_1 & q_2 \end{pmatrix}$ by writing its rational endpoints $p_1/q_1$ and $p_2/q_2$ as column vectors.  In fact, this matrix lies in $\PSL_2(\ZZ)$ as the image of the vertical imaginary axis (in a downward direction), which corresponds to $\begin{pmatrix} 1 & 0 \\ 0 & 1 \end{pmatrix}$, connecting $0$ to $\infty$.  Therefore, the pairs of rational numbers connected by a geodesic have the property that $p_1q_2 - p_2q_1 = 1$, which earns them the name of a \emph{unimodal pair}.

The continued fraction of a real number $\alpha$ is more commonly defined by an iterative algebraic process, generating an expression
\begin{equation}
	\label{eqn:ctd-trad}
		\alpha = a_0 + \dfrac{1}{ a_1 + \dfrac{1}{a_2 + \dfrac{1}{a_3 + \cdots }}}.
\end{equation}
The finite truncations of this expression give rational numbers
\[
		p_k/q_k = a_0 + \dfrac{1}{ a_1 + \dfrac{1}{a_2 + \dfrac{1}{a_3 + \cdots + 1/a_k}}},
\]
called the \emph{convergents} of $\alpha$.  The $a_i$ are called the \emph{partial quotients} or \emph{coefficients}.

Now a cutting sequence through the Farey tessellation can be interpreted as a word in $M_R$ and $M_L$, by simply replacing $R \leftarrow M_R$ and $L \leftarrow M_L$. 
This is justified by the observation that if a triangular region $T$ has endpoints $\a_1/b_1$, $(a_1+a_2)/(b_1+b_2)$, $a_2/b_2$, then the triangular region $T M_L$ (interpreting the matrix as a M\"obius transformation) has triangular endpoints $a_1/b_1$, $(2a_1+a_2)/(2b_1+b_2)$, $(a_1+a_2)/(b_1+b_2)$, i.e., it corresponds to the left sub-interval.

The following proposition now states that the `traditional' expansion \eqref{eqn:ctd-trad} is the same as Series' geodesic expansion interpreted as matrices.

\begin{theorem}[{Series \cite{Series}}]
	\label{thm:cutting}
	Suppose 
	\[
		L^{a_0} R^{a_1} L^{a_2} R^{a_3} \cdots
	\]
	is the L,R-sequence for a geodesic originating on the imaginary axis and ending at $\alpha \in \RR$.  Then a classical continued fraction expansion of $\alpha$ is
	\[
		a_0 + \dfrac{1}{ a_1 + \dfrac{1}{a_2 + \dfrac{1}{a_3 + \cdots }}}.
	\]
	In more detail, if $\alpha$ has cutting sequence $L^{a_0} R^{a_1} \cdots$, define $M_n := M_L^{a_0} M_R^{a_1} \cdots M_*^{a_n} \in \SL_2^+(\ZZ)$.  Then the convergents of the continued fraction expansion of $\alpha$ are given by
	\[
		\begin{pmatrix}
			p_n \\ q_n
		\end{pmatrix}
		= 
		M_n 
		\begin{pmatrix}
			1 \\ 0 
		\end{pmatrix}, 
		\text{ or }
	M_n 
		\begin{pmatrix}
			0 \\ 1 
		\end{pmatrix}, 
	\]
	depending whether the last letter was $L$ or $R$, respectively.
	Furthermore, if $\alpha$ is rational with lowest terms representation $p/q$, then there are exactly two expansions of $\alpha$ (since heading into a cusp is the last step), and we have
	\[
		\begin{pmatrix} p \\ q \end{pmatrix} =
		\left\{
			\begin{array}{ll}
		L^{a_0} R^{a_1} L^{a_2} R^{a_3} \cdots 
		\begin{pmatrix} 1 \\ 0 \end{pmatrix} & \text{if the expansion ends in R} \\ 
		L^{a_0} R^{a_1} L^{a_2} R^{a_3} \cdots 
		\begin{pmatrix} 0 \\ 1 \end{pmatrix} & \text{if the expansion ends in L} \\ 
	\end{array} \right.
	\]
\end{theorem}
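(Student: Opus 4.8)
The plan is to prove a geometric half and an algebraic half and then glue them. Throughout I encode a Farey triangle by the matrix in $\SL_2^+(\ZZ)$ whose two columns are the reduced fractions at two of its ideal vertices, the third vertex being their mediant; the identity matrix then encodes the root triangle with vertices $0,1,\infty$, whose base edge is the imaginary axis.

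\emph{Step 1: a matrix--triangle dictionary.} First I would upgrade the mediant computation recorded just before the statement to the assertion that, if a triangle is encoded by $T$, then $TM_L$ and $TM_R$ encode its left and right children in the Farey subdivision (preserving $\det=1$). Iterating, a word $M_L^{a_0}M_R^{a_1}\cdots M_*^{a_n}$ encodes a nested triangle whose base interval has the two columns as endpoints, giving an order isomorphism between words in $M_L,M_R$ and the binary tree of Farey subintervals. I would also note that consecutive columns are Farey neighbours $p/q,p'/q'$ with $|p/q-p'/q'|=1/(qq')$, so along any strictly descending path the base intervals shrink to a point.

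\emph{Step 2: the cutting sequence is the descent word.} The crux is to match the $L,R$-labels of the cutting-sequence definition with the left/right choices in this tree. The geodesic to $\alpha$ enters the root through the edge $0$--$\infty$ and crosses a sequence of edges, each crossing sending it into one of the two children; I must check that ``the region to the left of the geodesic has one cusp'' selects exactly the child encoded by $M_L$, and the complementary case the child encoded by $M_R$. This orientation/handedness bookkeeping is the step I expect to be the main obstacle: it is where the conventions (which vertex is the mediant, which child is ``left'', the direction of travel) must be reconciled so that the two notions of ``left'' coincide. I would settle it by verifying one base configuration by hand and then transporting it by the $\PSL_2(\ZZ)$-action, which permutes the triangles while respecting both the labelling and the encoding. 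Granting this, the cutting sequence of the geodesic to $\alpha$ is the word $M_L^{a_0}M_R^{a_1}\cdots$, its partial products $M_n$ encode the nested triangles bracketing $\alpha$, and by Step 1 these brackets close down to $\alpha$; the word is infinite with infinitely many of each letter exactly when $\alpha$ is irrational.

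\emph{Step 3: matching the classical expansion.} For the algebraic half I would use $\begin{pmatrix} a & 1\\ 1 & 0\end{pmatrix}=M_L^{a}J$ with $J=\begin{pmatrix}0&1\\1&0\end{pmatrix}$, together with $JM_L^{a}J=M_R^{a}$ and $J^2=I$, to telescope the classical convergent product:
\[
\begin{pmatrix} a_0 & 1\\ 1 & 0\end{pmatrix}\cdots\begin{pmatrix} a_n & 1\\ 1 & 0\end{pmatrix}
= M_L^{a_0}M_R^{a_1}\cdots M_*^{a_n}\,J^{\varepsilon}=M_n J^{\varepsilon},
\]
where $\varepsilon=1$ when the last letter is $L$ and $\varepsilon=0$ when it is $R$. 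The left-hand side is the classical matrix $\begin{pmatrix} p_n & p_{n-1}\\ q_n & q_{n-1}\end{pmatrix}$ of convergents of $[a_0;a_1,\dots]$, so taking first columns identifies $\binom{p_n}{q_n}$ with $M_n\binom10$ or $M_n\binom01$ according to the parity of $n$; this is exactly where the first-versus-second-column rule is fixed, and I would check it against the rational endpoint formula, which is the same assertion whenever $\alpha$ is rational. Finally, since the columns $p_n/q_n$ are the Farey brackets of Step 2 and those converge to $\alpha$, the classical value $[a_0;a_1,\dots]$ equals $\alpha$, giving the displayed continued fraction. The rational case is the terminating instance: the geodesic runs into a cusp, the last run is finite, and the two ways of reading that final vertex (ending the word in $L$ or in $R$) produce the two expansions and the two column formulas stated.
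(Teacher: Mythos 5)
Your proposal is correct, and it travels the same road the paper does, only further: your Step 1 is exactly the observation recorded just before the theorem (that if $T$ encodes a Farey triangle then $TM_L$ and $TM_R$ encode its left and right children), your Step 2 makes explicit the handedness bookkeeping that the paper leaves inside the definition of the cutting sequence, and your Step 3 upgrades the verification that the paper delegates to the exercise following the theorem (the computations $L^k\binom{0}{1}=\binom{k}{1}$, $L^kR^\ell\binom{1}{0}=\binom{1+\ell k}{\ell}$, etc.) from three small cases to a uniform identity. That identity, $\begin{pmatrix} a_0 & 1\\ 1 & 0\end{pmatrix}\cdots\begin{pmatrix} a_n & 1\\ 1 & 0\end{pmatrix}=M_nJ^{\varepsilon}$ via $\begin{pmatrix} a & 1\\ 1 & 0\end{pmatrix}=M_L^{a}J$ and $JM_L^{a}J=M_R^{a}$, is the genuine added value: it proves the convergent formula for all $n$ by a one-line induction rather than by inspection, and your plan for Step 2 (verify one base configuration, e.g.\ the root triangle $(0,1,\infty)$ entered through the edge from $0$ to $\infty$, where the one-cusp side $\infty$ lies to the left of a geodesic heading to $\alpha>1$, then transport by the orientation-preserving action of $\PSL_2(\ZZ)$) is precisely the right way to discharge the bookkeeping you flag. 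One caution so that you do not ``correct'' a correct step: your derived rule --- trailing $J$, hence $\binom{p_n}{q_n}=M_n\binom{0}{1}$, when the word ends in $L$, and $M_n\binom{1}{0}$ when it ends in $R$ --- contradicts the ``respectively'' in the theorem's middle display as printed, but it agrees with the rational-endpoint clause of the same theorem and with the paper's own exercise (note $L^{a_0}\binom{0}{1}=\binom{a_0}{1}$ gives $p_0/q_0=a_0$, whereas $L^{a_0}\binom{1}{0}=\binom{1}{0}$ is $\infty$); the pairing in that middle clause is a misprint in this draft, and your version is the correct one.
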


Observe that `heading into the mediant' to end a rational approximation would mean evaluating at the vector $(1,1)^t$, so instead we head one step lower to either left or right; this constitutes the ambiguity in the second part of the theorem.

The following exercise should convince you that Theorem~\ref{thm:cutting} is correct.

\begin{exercise}
	Computationally verify the following:
	\[
		L^k \begin{pmatrix} 0 \\ 1 \end{pmatrix} = \begin{pmatrix} k \\ 1 \end{pmatrix}, \quad
		L^kR^\ell \begin{pmatrix} 1 \\ 0 \end{pmatrix} = \begin{pmatrix} 1 + \ell k \\ \ell \end{pmatrix}, \quad
		L^kR^\ell L^t \begin{pmatrix} 0 \\ 1 \end{pmatrix} = \begin{pmatrix} t + k + k\ell t \\ \ell t + 1 \end{pmatrix}.
	\]
	Verify also that
	\[
		\frac{k}{1} = k, \quad
		\frac{1 + \ell k}{\ell} = k + \frac{1}{\ell}, \quad
		\frac{t + k + k\ell t}{\ell t + 1} = k + \frac{1}{\ell + \frac{1}{t}}.
	\]
\end{exercise}

\begin{exercise}
	\label{ex:expansions1}
	Verify Theorem~\ref{thm:cutting} for the beginning of the continued fraction expansion of $e$, and the two full expansions for $17/5$.  
\end{exercise}


We complete our study of continued fractions with one of the principal reasons they are studied:  that the continued fraction convergents capture good approximations.

\begin{theorem}
	\label{thm:approx}
	Any rational approximation $p/q \in \QQ$ to $\alpha \in \RR$ satisfying $\left| \frac{p}{q} - \alpha \right| < \frac{1}{2q^2}$ is a continued fraction convergent of $\alpha$.
\end{theorem}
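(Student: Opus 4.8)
The plan is to reduce everything to the classical theory of convergents, which Theorem~\ref{thm:cutting} has already identified with the Series/matrix expansion. Writing $p_n/q_n$ for the convergents of $\alpha$, I would first assemble the standard structural facts: the unimodular relation $p_{n+1}q_n - p_n q_{n+1} = \pm 1$ (immediate from $\det M_n = 1$ in $\SL_2^+(\ZZ)$), the fact that the denominators $q_n$ are positive and increasing, and the crucial \emph{sign alternation} — that the quantities $q_n\alpha - p_n$ alternate in sign as $n$ grows, so that the convergents straddle $\alpha$ from alternating sides. These follow from the recurrences encoded in the matrix products $M_n$ and can be stated as a preliminary lemma or cited as classical.

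The heart of the argument is the \textbf{law of best approximation of the second kind}: for every $p/q$ with $0 < q < q_{n+1}$ one has $|q_n\alpha - p_n| \le |q\alpha - p|$. I would prove this by writing the integer vector $(p,q)$ in the basis given by the two consecutive convergent vectors,
\[
  p = u\,p_n + v\,p_{n+1}, \qquad q = u\,q_n + v\,q_{n+1},
\]
which is solvable in integers precisely because $p_nq_{n+1}-p_{n+1}q_n = \pm 1$. A short sign analysis shows that when $(p,q)$ is not a multiple of $(p_n,q_n)$ the coefficients $u,v$ are nonzero of opposite signs, while $q_n\alpha-p_n$ and $q_{n+1}\alpha-p_{n+1}$ have opposite signs by alternation; hence $u(q_n\alpha-p_n)$ and $v(q_{n+1}\alpha-p_{n+1})$ have the \emph{same} sign, and
\[
  |q\alpha - p| = |u|\,|q_n\alpha - p_n| + |v|\,|q_{n+1}\alpha - p_{n+1}| \ge |q_n\alpha - p_n|
\]
since $|u|\ge 1$. (The edge case $u=0$ is excluded by $0<q<q_{n+1}$.) This is the step I expect to be the main obstacle, as it is where the alternation geometry does all the work and where the casework must be handled cleanly.

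With this in hand the conclusion is quick. Suppose $p/q$ in lowest terms satisfies $|p/q - \alpha| < \tfrac{1}{2q^2}$, and choose the largest $n$ with $q_n \le q$; then $q < q_{n+1}$ (the boundary situation where $q$ exceeds all denominators occurs only when $\alpha$ is rational and can be checked by hand). The best-approximation inequality gives
\[
  \left|\alpha - \frac{p_n}{q_n}\right| = \frac{|q_n\alpha - p_n|}{q_n} \le \frac{q\,|\alpha - p/q|}{q_n} < \frac{1}{2q\,q_n}.
\]
If $p/q \neq p_n/q_n$, then the ``rationals repel'' bound forces $\left|\tfrac{p}{q}-\tfrac{p_n}{q_n}\right| = \tfrac{|pq_n - p_nq|}{qq_n} \ge \tfrac{1}{qq_n}$, and combining via the triangle inequality,
\[
  \frac{1}{q q_n} \le \left|\frac{p}{q} - \frac{p_n}{q_n}\right| \le \left|\frac{p}{q} - \alpha\right| + \left|\alpha - \frac{p_n}{q_n}\right| < \frac{1}{2q^2} + \frac{1}{2q q_n},
\]
which simplifies to $\tfrac{1}{2qq_n} < \tfrac{1}{2q^2}$, i.e. $q < q_n$, contradicting $q_n \le q$. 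Hence $p/q = p_n/q_n$ is a convergent, completing the proof.
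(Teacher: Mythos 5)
Your proof is correct, but it takes a genuinely different route from the paper's. You give the classical Legendre-style argument: the key lemma is best approximation of the second kind ($|q_n\alpha-p_n|\le|q\alpha-p|$ for $0<q<q_{n+1}$), proved by expanding $(p,q)$ in the unimodular basis of two consecutive convergent vectors and exploiting the sign alternation of $q_n\alpha-p_n$; the theorem then falls out of the triangle inequality plus the ``rationals repel'' bound $|p/q-p_n/q_n|\ge 1/(qq_n)$. The paper instead proves the theorem geometrically, through the exercise immediately following it: one shows that $p/q$ and the closest rational $p'/q'$ on the other side of $\alpha$ with denominator at most $q$ form a unimodal pair, hence bound a bubble of the Farey tessellation containing $\alpha$; the hypothesis $|p/q-\alpha|<1/(2q^2)$ forces the mediant to land on the far side of $\alpha$, and the smaller-denominator endpoint of a bubble containing $\alpha$ is a convergent by the cutting-sequence description of Theorem~\ref{thm:cutting}. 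Interestingly, both arguments pivot on the same unimodularity $pq'-p'q=\pm1$ — in the paper it is the defining property of Farey bubble endpoints, in yours it is what makes the integer change of basis possible. The paper's route stays inside its geodesic narrative and makes the constant $\tfrac12$ visually transparent (it is exactly what pushes the mediant past $\alpha$); your route is self-contained, needs no tessellation machinery, and yields the best-approximation law as a stronger intermediate result of independent interest. Your casework is sound: $u=0$ is correctly excluded by $q<q_{n+1}$, opposite signs of $u,v$ follow from $0<q<q_{n+1}$, and the two loose ends you flag (rational $\alpha$ with $q$ exceeding all denominators, which reduces to the repulsion bound applied to the final convergent $\alpha$ itself, and the existence of $n$ with $q_n\le q<q_{n+1}$) are indeed routine.
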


\begin{exercise}
	To prove Theorem~\ref{thm:approx}, do the following:
	\begin{enumerate}
		\item Let $p/q$ be a good approximation in the sense of Theorem~\ref{thm:approx}.  Suppose without loss of generality that $p/q < \alpha$.  Show that $p/q$ must be the closest rational to $\alpha$, from below, with denominator $\le q$.
		\item Let $p'/q'$ be the closest rational to $\alpha$, from above, with denominator $\le q$.
		\item Conclude that $p/q$ and $p'/q'$ are a unimodal pair and therefore the endpoints of a bubble in the Farey tesselation containing $\alpha$.
		\item Now consider the mediant $M$ between these two rational numbers.  Show that it must lie to the \emph{right} of $\alpha$, by measuring the distance between $p/q$ and $M$ (and using the good approximation property of $p/q$).
		\item Show that if $\alpha$ lies in a bubble of the Farey tesselation, then the bubble endpoint with smallest denominator is a convergent.
	\end{enumerate}
\end{exercise}

\subsection{Indefinite quadratic forms and real quadratic irrationalities}

Recall that \emph{definite} quadratic forms have a reduction theory (Exercise~\ref{ex:reduc} and Figure~\ref{fig:sl2}).  In short, reduction of a quadratic form is a process of navigating through $\PSL_2(\ZZ)$ to obtain a reduced representative of the equivalence class. 
There is also a reduction theory of indefinite quadratic forms, and it can be explained with the Farey tesselation and is closely related to continued fractions.   We turn to this now.

Primitive integral indefinite quadratic forms $ax^2 + bxy + cy^2$, $a,b,c \in \ZZ$, correspond to a conjugate pair of roots $\alpha, \overline\alpha = \frac{-b \pm \sqrt{\Delta}}{2a}$ in the real line.  
Such a pair $\alpha$, $\overline\alpha$ is called \emph{reduced} if (up to swapping the two roots), 
\[
	\overline\alpha < -1 < 0 < \alpha < 1.
\]
It is an elementary exercise to show that this is equivalent to the conditions
\[
	0 \le b < \sqrt{\Delta}, \quad 0 < \sqrt{\Delta} - b \le 2a \le \sqrt{\Delta} + b,
\]
or, to the satisfyingly simple 
\[
	b > |a+c|, \quad ac < 0.
\]
\begin{exercise}
	Show the equivalences just mentioned.
\end{exercise}

In particular, fixing $\Delta$, there are only finitely many possible values for $b$, therefore only finitely many possible $a$, and for each such pair only one possible $c$.  So there are only finitely many reduced forms for each discriminant.

Next, observe that the transformation
\[
 	(\alpha, \overline\alpha) \mapsto
	\left(
	\frac{1}{\alpha} - \left\lfloor \frac{1}{\alpha} \right\rfloor,
	\frac{1}{\overline\alpha} - \left\lfloor \frac{1}{\alpha} \right\rfloor
	\right),
\]
takes $(\alpha, \overline\alpha)$ to another reduced pair.  
Furthermore, under the reduced assumption, this operation on pairs is a bijection, with inverse
\[
	(\alpha, \overline\alpha) \mapsto
		\left(
	\frac{1}{\alpha -1 + \lfloor \overline\alpha \rfloor}, 
	\frac{1}{\overline\alpha - 1 + \lfloor \overline\alpha \rfloor}
	\right).
\]
Thus, the reduced forms fall into cycles, where each step is realized by a bespoke M\"obius map (namely, $z \mapsto 1/z - \lfloor 1/\alpha \rfloor$).

\begin{proposition}
	\label{prop:reduc}
	For each primitive integral indefinite binary quadratic form $ax^2 + bxy + cy^2$ corresponding to pair $(\alpha, \overline\alpha)$, there is a reduced form with pair $(\alpha', \overline\alpha')$ such that $\alpha = M \alpha'$ where $M \in \PSL_2^+(\ZZ)$.
\end{proposition}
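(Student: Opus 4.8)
The plan is to run the continued-fraction (Farey) descent of Theorem~\ref{thm:cutting} on the positive root, peeling off one generator $M_L$ or $M_R$ at a time, accumulating them into $M$, until the remaining root pair is reduced. First observe that every element of $\PSL_2^+(\ZZ)$ acts on $\RR\cup\{\infty\}$ by a Möbius transformation with nonnegative entries, so it carries $[0,\infty]$ into itself; hence a relation $\alpha=M\alpha'$ with $\alpha'\in(0,1)$ forces $\alpha>0$. I therefore take $\alpha$ to be the positive root of the pair (this is the role of the ``up to swapping'' clause in the definition of reduced) and seek to realize $\alpha$ as a positive word applied to a reduced root.

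The descent step is the one implicit in the Farey subdivision: if the current root $\beta>1$, peel $M_L$ and replace $\beta$ by $M_L^{-1}\cdot\beta=\beta-1$; if $0<\beta<1$, peel $M_R$ and replace $\beta$ by $M_R^{-1}\cdot\beta=\beta/(1-\beta)$. Applying the same Möbius map to the conjugate $\overline\beta$ simultaneously, each step replaces the pair $(\beta,\overline\beta)$ by $(N^{-1}\cdot\beta,\,N^{-1}\cdot\overline\beta)$ for a single generator $N\in\{M_L,M_R\}$, keeps $\beta$ a positive quadratic irrational, and preserves the discriminant (since $M_L,M_R\in\SL_2(\ZZ)$). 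By construction, if $\alpha=M\cdot\beta$ before a step then $(MN)\cdot(N^{-1}\beta)=\alpha$ after it, so the product of the peeled generators, read in order, is an element $M\in\PSL_2^+(\ZZ)$ with $\alpha=M\cdot\beta$ at every stage; it remains to show the pair becomes reduced after finitely many steps.

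On the $\beta$-coordinate the descent is exactly the Gauss map, whose iterates are the complete quotients $\alpha_k=[a_k;a_{k+1},\dots]$ of the continued fraction of $\alpha$. The key input is that these complete quotients are eventually \emph{purely periodic}: this follows from the finiteness of integer triples $(a,b,c)$ of fixed discriminant $\Delta$ with bounded coefficients together with a pigeonhole argument (this is the statement that quadratic irrationals have eventually periodic expansions). A purely periodic tail satisfies $\alpha_k>1$ and $-1<\overline{\alpha_k}<0$, so its reciprocal $\alpha'=1/\alpha_k$ satisfies $\overline{\alpha'}<-1<0<\alpha'<1$, which is precisely the reduced condition. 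Since the expansion alternates $L$-blocks and $R$-blocks infinitely often, such a reciprocal tail occurs as the remaining root at the transition from an $R$-block to the following $L$-block; stopping the descent there leaves a reduced pair $(\alpha',\overline{\alpha'})$ with $\alpha=M\alpha'$, $M\in\PSL_2^+(\ZZ)$. The cycle of reduced forms described just before the statement is then exactly the recurrent set of the descent, confirming that once reduced we remain reduced.

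The main obstacle is the conjugate-side analysis in the last step. The $\beta$-coordinate visibly follows the Gauss map, but one must control $\overline\beta$ to guarantee that the descent actually drives the conjugate below $-1$; this is where periodicity does the real work. One must also reconcile the proposition's reduced convention---the reciprocal of a Galois-reduced complete quotient---with the correct truncation point of the positive word and with the constraint $\det M=+1$. Verifying that the reciprocal step lands exactly at an $L/R$ block boundary, so that no determinant $-1$ swap is needed, is the delicate bookkeeping I would treat with care.
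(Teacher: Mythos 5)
Your proposal is correct in outline, but it takes a genuinely different route from the paper's. The paper's proof is local and geometric: it chooses a single triangle of the Farey tessellation with corners $a<\overline\alpha<b<\alpha<c$ separating the two roots, uses transitivity of $\PSL_2^+(\ZZ)$ on Farey triangles to identify it with the base triangle $(0,1,\infty)$, translates, and then applies at most one correction $M_1=\begin{pmatrix}0&-1\\1&0\end{pmatrix}\begin{pmatrix}1&b\\0&1\end{pmatrix}$ chosen so that the composite $M_0M_1$ lands back in $\PSL_2^+(\ZZ)$. No periodicity or limiting argument is needed; indeed the paper then \emph{deduces} eventual periodicity of quadratic continued fractions as a corollary of this proposition. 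You invert that logic: you run the full descent and invoke Lagrange's theorem (eventual periodicity, via bounded coefficients plus pigeonhole) together with the easy direction of Galois' theorem (a purely periodic complete quotient $\alpha_k$ satisfies $\alpha_k>1$, $-1<\overline{\alpha_k}<0$) to know the descent eventually reaches a Galois-reduced tail. This is mathematically sound, since Lagrange's theorem has a classical proof independent of this proposition, but be aware that inside this paper's architecture your argument would render the subsequent corollary circular unless that independent proof is imported. What each approach buys: the paper's argument is short, needs only transitivity of the tessellation, and directly controls the conjugate root by the geometry of the separating triangle (the ``conjugate-side analysis'' you worry about is free there); yours yields more, namely the explicit pre-period $M$ as an initial segment of the $L,R$-word of $\alpha$, at the cost of heavier classical input.

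One concrete reassurance: the ``delicate bookkeeping'' you flag at the end does close, and more easily than you fear. Since $M_L$ and $M_R$ each have determinant $+1$, \emph{every} truncation of the positive word at a block boundary automatically lies in $\SL_2^+(\ZZ)$; the determinant $-1$ reciprocal never has to be multiplied into $M$. Writing $\begin{pmatrix}a&1\\1&0\end{pmatrix}=M_L^{a}S$ with $S=\begin{pmatrix}0&1\\1&0\end{pmatrix}$ and using $SM_L^{a}S=M_R^{a}$, one finds for even $n$ that
\[
	\begin{pmatrix}p_n&p_{n-1}\\q_n&q_{n-1}\end{pmatrix}
	= M_L^{a_0}M_R^{a_1}\cdots M_L^{a_n}\,S,
\]
so $\alpha = W\cdot\bigl(1/\alpha_{n+1}\bigr)$ with $W=M_L^{a_0}\cdots M_L^{a_n}$ positive: the stray $S$ is absorbed into defining $\alpha':=1/\alpha_{n+1}$ rather than into $M$. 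Hence the valid stopping points are the ends of $L$-blocks, i.e.\ where the remaining tail begins with an $R$-block --- the opposite of the parity you guessed --- and such boundaries recur infinitely often while the Galois condition holds for all large $n$, so a valid truncation exists. Finally, your observation that $\alpha=M\alpha'$ with $\alpha'\in(0,1)$ forces $\alpha>0$ correctly delimits the scope of the statement; this matches the paper, which likewise proves only the case $\alpha>\overline\alpha>0$ and remarks that the other cases reduce to it.
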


\begin{proof}
	We prove it only in the case that $\alpha > \overline\alpha > 0$ (other cases can be reduced to this case).
	There is some triangle of the Farey tesselation with corners $a$, $b$, $c$ such that $0 < a < \overline\alpha < b < \alpha < c$.  Then, by the transitivity of the Farey tesselation, this triangle is an image under some $M_0 \in \PSL_2^+(\ZZ)$ of the triangle with corners $0$, $1$ and $\infty$, in such a way that $0 < \overline\alpha_0 < 1 < \alpha_0 < \infty$, and $M_0(\alpha_0) = \alpha$, $M_0(\overline\alpha_0) = \overline\alpha$.  By composing with a translation, we can assume instead that $\overline\alpha_0 < 0 < \alpha_0 < 1$.

	If $\overline\alpha_0 < -1 < 0 < \alpha_0 < 1$, we are done.  If not, then $-1 < \overline\alpha_0 < 0 < \alpha_0 < 1$, and there is some $\alpha_1, \overline\alpha_1$ with $\overline\alpha_1 < -1 < 0 < \alpha < 1$ and $M_1 \in \PSL_2(\ZZ)$ such that $M_1(\alpha_1) = \alpha_0$, $M(\overline\alpha_1) = \overline\alpha_0$; in fact $M_1 = \begin{pmatrix} 0 & -1 \\ 1 & 0 \end{pmatrix} \begin{pmatrix} 1 & b \\ 0 & 1 \end{pmatrix}$, $b \ge 1$ will suffice.  Note that although $M_1 \notin \PSL_2^+(\ZZ)$, the composition $M_0M_1 \in \PSL_2^+(\ZZ)$.
		The proof is illustrated in Figure~\ref{fig:reduction}.
\end{proof}

	\begin{figure}
	  \includegraphics[width=0.95\textwidth]{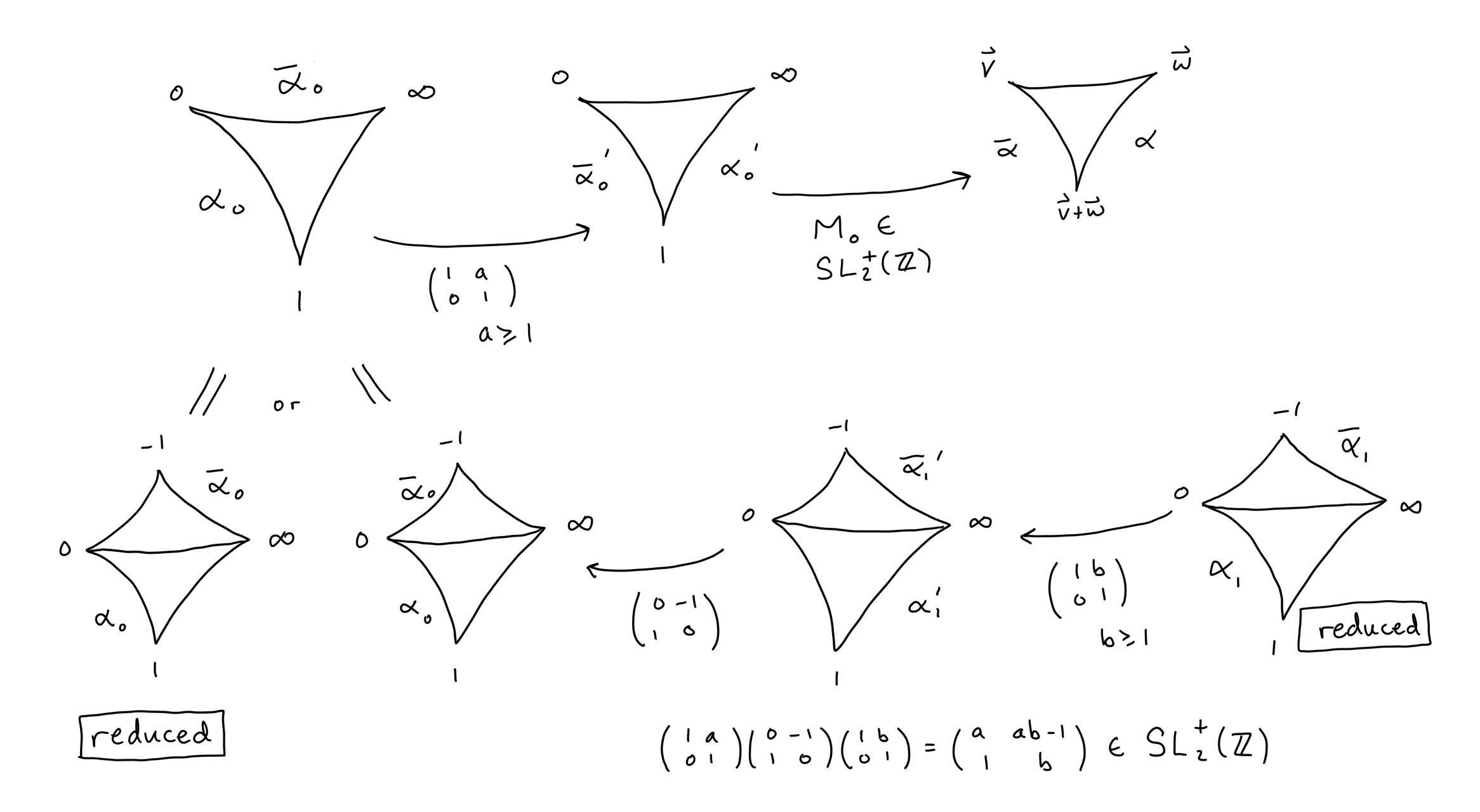}
		\caption{A flow chart that relates an indefinite form to a reduced one via a transformation from $\SL_2^+(\ZZ)$, illustrating the proof of Proposition~\ref{prop:reduc}.}
	  \label{fig:reduction}
  \end{figure}

Combining the proof above, which gives the `pre-periodic' portion of a continued fraction expansion, with the observed cycles of reduced forms, we obtain a corollary.

\begin{corollary}
	Let $\alpha$ be a real quadratic irrational.  Then the continued fraction expansion of $\alpha$ is eventually periodic, and perfectly periodic if $\alpha$ is reduced.
\end{corollary}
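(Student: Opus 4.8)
The plan is to treat the continued fraction algorithm as a dynamical system on the conjugate pairs $(\alpha,\overline\alpha)$ (equivalently, on the associated indefinite forms) of a single fixed discriminant, and to splice together the two ingredients already in hand: the reduction of Proposition~\ref{prop:reduc}, which supplies the pre-periodic part, and the bijective cycling of reduced pairs together with their finiteness, which supplies the period. First I would attach to $\alpha$ a primitive integral indefinite form $ax^2+bxy+cy^2$ having $\alpha$ as a root, and record that its discriminant $\Delta=b^2-4ac$ is invariant under the $\PSL_2^+(\ZZ)$-action and hence under every step of the continued fraction algorithm (each step being realized by a power of $M_L$ or $M_R$ in the sense of Theorem~\ref{thm:cutting}). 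The whole algorithm therefore takes place inside the finite universe of forms of discriminant $\Delta$.

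Next, for eventual periodicity, I would invoke Proposition~\ref{prop:reduc}: there is $M\in\PSL_2^+(\ZZ)$ with $M\alpha'=\alpha$ for a reduced pair $(\alpha',\overline{\alpha'})$. Reading $M$ as the initial block $M_L^{a_0}M_R^{a_1}\cdots$ of the word of partial quotients, this says precisely that after finitely many steps the running pair becomes reduced; this block is the (possibly empty) pre-period. From then on let $T$ denote the step map $(\alpha,\overline\alpha)\mapsto(\tfrac{1}{\alpha}-\lfloor\tfrac{1}{\alpha}\rfloor,\ \tfrac{1}{\overline\alpha}-\lfloor\tfrac{1}{\alpha}\rfloor)$, which by the discussion preceding the corollary is a bijection of the finite set of reduced pairs of discriminant $\Delta$ (with the displayed inverse). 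The standard orbit argument for a bijection of a finite set applies: by pigeonhole $T^m$ and $T^n$ agree on $(\alpha',\overline{\alpha'})$ for some $m<n$, and injectivity of $T$ pushes the first coincidence back to the start, so the $T$-orbit is \emph{purely} periodic. Since each successive partial quotient is the integer $\lfloor 1/\alpha\rfloor$ read off from the current reduced pair, periodicity of the pairs is exactly periodicity of the coefficients. Combined with the pre-period, this yields eventual periodicity of the expansion of $\alpha$; and when $\alpha$ is already reduced the pre-period is empty, giving a perfectly periodic expansion.

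The main obstacle I expect is bookkeeping rather than ideas: one must check that the reduction flow of Proposition~\ref{prop:reduc} and the cycling map $T$ are literally the successive steps of the continued fraction algorithm, so that the $\PSL_2^+(\ZZ)$ factorization records exactly the $L,R$-word of partial quotients and periodicity transfers from the sequence of forms to the sequence of coefficients. In particular I would verify that $T$ advances $\alpha$ by one partial quotient $\lfloor 1/\alpha\rfloor$ and that it genuinely preserves the reduced condition $\overline\alpha<-1<0<\alpha<1$ (this is what makes $T$ a self-map of a finite set and forces the purity of the period), and that the pre-periodic matrix from Proposition~\ref{prop:reduc} lands exactly on a reduced pair so that the two phases meet cleanly.
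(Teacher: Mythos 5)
Your proposal is correct and follows essentially the same route the paper intends: Proposition~\ref{prop:reduc} supplies the pre-periodic block, and the finiteness of reduced forms of a fixed discriminant together with the bijectivity of the step map $(\alpha,\overline\alpha)\mapsto\bigl(\tfrac{1}{\alpha}-\lfloor\tfrac{1}{\alpha}\rfloor,\ \tfrac{1}{\overline\alpha}-\lfloor\tfrac{1}{\alpha}\rfloor\bigr)$ forces pure periodicity from the first reduced pair onward. The paper leaves the details to an exercise, and your pigeonhole-plus-injectivity argument (pushing the first coincidence back to the start) fills them in exactly as intended.
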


\begin{exercise}
	Provide the details of the corollary above.  Amongst these details is the converse that any eventually periodic continued fraction converges to a real quadratic irrational.
\end{exercise}

\begin{exercise}
	Determine the automorphisms of an indefinite form, by using the fact that such an automorphism is a M\"obius transformation fixing the roots.  Show that such automorphisms are given by solutions to the Pell\footnote{As is the case with so much mathematical nomenclature, this equation has at best a tenuous connection to the person it is named after.  Pell revised a translation of a book that discussed it, approximately two millenia after it was first discussed.} equation $X^2 - \Delta Y^2 = 4$.  Next, observe that the M\"obius transformations that circumnavigate once around a cycle of indefinite forms (as described above) fix a reduced form, and therefore correspond to solutions to Pell's equation.  This leads to an algorithm to compute solutions to Pell's equation by continued fractions.  (Such automorphisms correspond to multiplying the associated ideal of $\QQ(\sqrt{\Delta})$ by a unit.)
\end{exercise}

The Farey diagram has a dual graph called the \emph{topograph}, put to lovely use by Conway \cite{ConwayFung} to prove all the basic facts about binary quadratic forms, both definite and indefinite.  A wonderful description of this point of view is Allen Hatcher's \emph{Topology of Numbers} \cite{Hatcher}.

\subsection{Lagrange spectrum}

Dirichlet's Theorem differentiates the approximation properties of rationals and irrationals.  We might ask a more nuanced question, which is, given a constant $C$, whether for an irrational $\alpha$, there are infinitely many $p/q \in \QQ$ such that
\[
	\left| \frac{p}{q} - \alpha \right| < \frac{C}{q^2}.
\]
For $C=1$, there are infinitely many such approximations.  This remains true as $C$ decreases to $1/\sqrt{5}$, where Hurwitz' Theorem states that it still holds.  But after that, there are only finitely many approximations for any $\alpha$ whose continued fraction ends in all $1$'s.  These are called `noble numbers' and include the golden ratio.  More formally, we can define
\[
	v(\alpha) = \inf \{ C : |\alpha - p/q| < C/q^2 \text{ for infinitely many } p/q \in \QQ \}.
\]
Markoff showed that there is a discrete set of values $v(\alpha)$ above $1/3$; the $\alpha$ that realize these discrete values are called \emph{Markoff irrationalities}, and the values themselves are the discrete part of the \emph{Lagrange spectrum} (sometimes called Markoff spectrum, as for example by Series \cite{SeriesIntel}; the terminology is occasionally murky).

It turns out that the discrete elements of the Markoff spectrum are associated to a simple loop on the punctured torus.  See \cite{Series, SeriesIntel} for the details of this story, but in short, one can obtain the punctured torus as a quotient of $\HH^2_U$ by a subgroup of $\PSL_2(\ZZ)$, and the geodesic in question is the geodesic whose endpoint is the irrational to be approximated, and the cutting sequence in this situation is closely related to the continued fraction expansion.

\subsection{Roth's Theorem}

The Diophantine approximation of algebraic numbers has been of particular interest.  It turns out that they are `poorly approximable' in the sense that the exponent 2 in Dirichlet's Theorem is best possible.

\begin{theorem}[{Roth \cite{Roth}}]
	Let $\alpha$ be an algebraic number of degree $d \ge 2$.  Then for every $\epsilon > 0$, there are only finitely many $p/q \in \QQ$ such that
	\[
		\left| \frac{p}{q} - \alpha \right| < \frac{1}{q^{2+\epsilon}}.
	\]
\end{theorem}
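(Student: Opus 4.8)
The plan is to argue by contradiction along the lines of the Thue--Siegel--Roth method, whose decisive ingredient (over the earlier work of Thue, Siegel, Dyson, and Gelfond, which gave weaker exponents) is a combinatorial lemma of Roth bounding the order of vanishing of a polynomial at a rational point. First I would suppose that there are infinitely many reduced fractions $p/q$ satisfying the inequality. After fixing a large integer $m$ and a small threshold depending on $\epsilon$ and $d$, I would select from these a finite list $p_1/q_1, \ldots, p_m/q_m$ whose denominators grow extremely rapidly, say $q_{j+1} > q_j^{C}$ for a large constant $C$, with $q_1$ itself enormous. The entire difficulty is to play off two competing estimates for an auxiliary polynomial: one coming from its construction near the diagonal point $(\alpha,\ldots,\alpha)$, and one coming from arithmetic at the rational point $(p_1/q_1,\ldots,p_m/q_m)$.

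Next I would construct, via Siegel's Lemma (a pigeonhole count on integer solutions of a homogeneous linear system), a nonzero polynomial $P(X_1,\ldots,X_m) \in \ZZ[X_1,\ldots,X_m]$ of partial degree at most $r_j$ in $X_j$, where the multidegrees are balanced so that $q_1^{r_1} \approx \cdots \approx q_m^{r_m}$, and with integer coefficients of controlled size. The point of the construction is to force $P$ to vanish to high weighted order at $(\alpha,\ldots,\alpha)$: there are enough free coefficients that one can impose vanishing of all suitably normalized partial derivatives $P_{(i_1,\ldots,i_m)}$ with $\sum_j i_j/r_j$ below a fixed fraction of $m$, while keeping the coefficient heights small because $\alpha$ is algebraic of bounded degree and height.

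Then I would define the index of $P$ at a point relative to the weights $(r_1,\ldots,r_m)$ as the minimum of $\sum_j i_j/r_j$ over indices with $P_{(i_1,\ldots,i_m)}$ nonvanishing there, and derive a lower bound for the index of $P$ at the rational point. Since each $p_j/q_j$ lies within $q_j^{-(2+\epsilon)}$ of $\alpha$ and $P$ vanishes to high order at $(\alpha,\ldots,\alpha)$, a Taylor expansion shows any derivative of small weighted order is extremely small at $(p_1/q_1,\ldots,p_m/q_m)$; but such a derivative, cleared of denominators, is an integer multiple of $(q_1^{r_1}\cdots q_m^{r_m})^{-1}$, hence either zero or not too small. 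Comparing these forces the index at the rational point to be large.

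Finally I would invoke Roth's Lemma, which bounds from above the index of any polynomial with bounded coefficients at a rational point whose denominators grow fast enough, contradicting the lower bound just established. I expect the main obstacle to be Roth's Lemma itself: it is proved by induction on the number of variables, using generalized Wronskians to factor $P$ so as to separate the variables, and it is precisely what demands the rapid growth of the $q_j$. Equally delicate is the bookkeeping -- the parameters $m$, the index thresholds, and the required denominator gaps must be chosen in the correct order so that the lower bound from the construction, the arithmetic lower bound on nonzero derivatives, and the upper bound from Roth's Lemma are all simultaneously compatible.
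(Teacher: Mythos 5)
The paper does not actually prove this theorem: it is quoted with a citation to Roth's original article, and the only related argument the notes contain is the exercise outlining Liouville's method, which yields the weaker exponent $d+\epsilon$ via the minimal polynomial of $\alpha$ and the mean value theorem. So there is no in-paper proof to compare against; what you have written is a faithful outline of the standard Thue--Siegel--Roth argument, and it is structurally sound: the auxiliary polynomial in $m$ variables produced by Siegel's Lemma with balanced weights $q_1^{r_1}\approx\cdots\approx q_m^{r_m}$, the index lower bound at $(p_1/q_1,\ldots,p_m/q_m)$ obtained by playing the Taylor estimate (smallness from the hypothesis $|p_j/q_j-\alpha|<q_j^{-(2+\epsilon)}$) against the arithmetic gap (a nonzero derivative, after clearing denominators, is a rational with denominator dividing $q_1^{r_1}\cdots q_m^{r_m}$, hence of absolute value at least $(q_1^{r_1}\cdots q_m^{r_m})^{-1}$), and finally Roth's Lemma, proved by induction on $m$ with generalized Wronskians, supplying the contradictory upper bound on the index. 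You correctly identify the two genuinely hard points: Roth's Lemma itself, which is what forces the rapid growth $q_{j+1}>q_j^{C}$, and the order of quantifier choices ($\epsilon$ and $d$ first, then $m$, then the index thresholds, then the gap condition and the size of $q_1$). Seen against the paper's Liouville exercise, your method is exactly the promised strengthening: Liouville uses the one-variable auxiliary polynomial given for free by the minimal polynomial, while the passage to many variables, high-order vanishing at the diagonal point $(\alpha,\ldots,\alpha)$, and the non-vanishing statement of Roth's Lemma are what trade the exponent $d+\epsilon$ for $2+\epsilon$ --- at the cost of ineffectivity, since the argument only derives a contradiction from $m$ hypothetical good approximations and gives no bound on where they lie. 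The one detail worth making explicit at sketch level is the counting step in Siegel's Lemma: each vanishing condition at $(\alpha,\ldots,\alpha)$ is $d$ linear conditions over $\QQ$, and one needs $m$ large (a concentration estimate on $\sum_j i_j/r_j$) so that the conditions number fewer than the available coefficients; your phrase ``enough free coefficients'' is hiding exactly this, and it is where the requirement that $m$ depend on $d$ and $\epsilon$ enters.
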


In fact, most numbers (almost all, in a measure-theoretic sense) are poorly approximable in the same way (Sprind\u zuk \cite{Sprin}).  So is it possible to find some that are well-approximable?  Liouville constructs such numbers, which can be done cleverly (Exercise~\ref{exercise:liouville}).

	\begin{exercise} \label{exercise:liouville}  Show that $\alpha := \sum_{k=0}^\infty \frac{1}{10^{k!}}$ is very well-approximable in the following sense.  There are infinitely many $p_n/q_n \in \QQ$ such that $\left| p_n/q_n - \alpha \right| < 1/q_n^n$.
	\end{exercise}

	\begin{exercise} This exercise outlines a proof due to Liouville that any algebraic number $\alpha$ of degree $d \ge 2$ has only finitely many approximations $p/q\in \QQ$ satisfying $\left| \frac{p}{q} - \alpha \right| < \frac{1}{q^{d+\epsilon}}$ (in other words, Roth's theorem with exponent $d+\epsilon$ instead of $2 + \epsilon$).  This method of proof contains the seeds of the proof of Roth's theorem.
		\begin{enumerate}
			\item Consider $g(x)$, the minimal polynomial of $\alpha$ but scaled to lie in $\ZZ[x]$ with coefficients having no primitive factor.  Give a lower bound on $|g(p/q)|$.
			\item Use the mean value theorem on the difference $g(\alpha) - g(p/q)$, to give a lower bound on $|\alpha - p/q|$.
			\item Argue that this lower bound is independent of $p$ and $q$.
			\item Comparing the two estimates above, complete the argument.
		\end{enumerate}
	\end{exercise}

\section{Hyperbolic and Minkowski geometry}
\label{sec:hyper}

Having seen the importance of the upper half plane and the M\"obius action in number theory, we will now turn to studying this geometry in earnest, including a higher-dimensional generalization.  
We assume general knowledge of hyperbolic space, but will need the details of several models and the isometries between them.  Our approach to defining a model of hyperbolic space is to emphasize the underlying space and its isometries, in the spirit of of Klein's \emph{Erlanger Programm} \cite{KleinErlanger}.
More specifically, we take an approach based on linear algebra, also in the spirit of Klein; for a more detailed treatment, see for example \cite{ParkerNotes}.  Here we depart from the standard treatment to give the isometries between several models of hyperbolic space in terms of roots and coefficients of polynomials; for this specific treatment, see \cite{HST}.  

\subsection{Minkowski space}

	\begin{figure}
	  \includegraphics[width=3.2in]{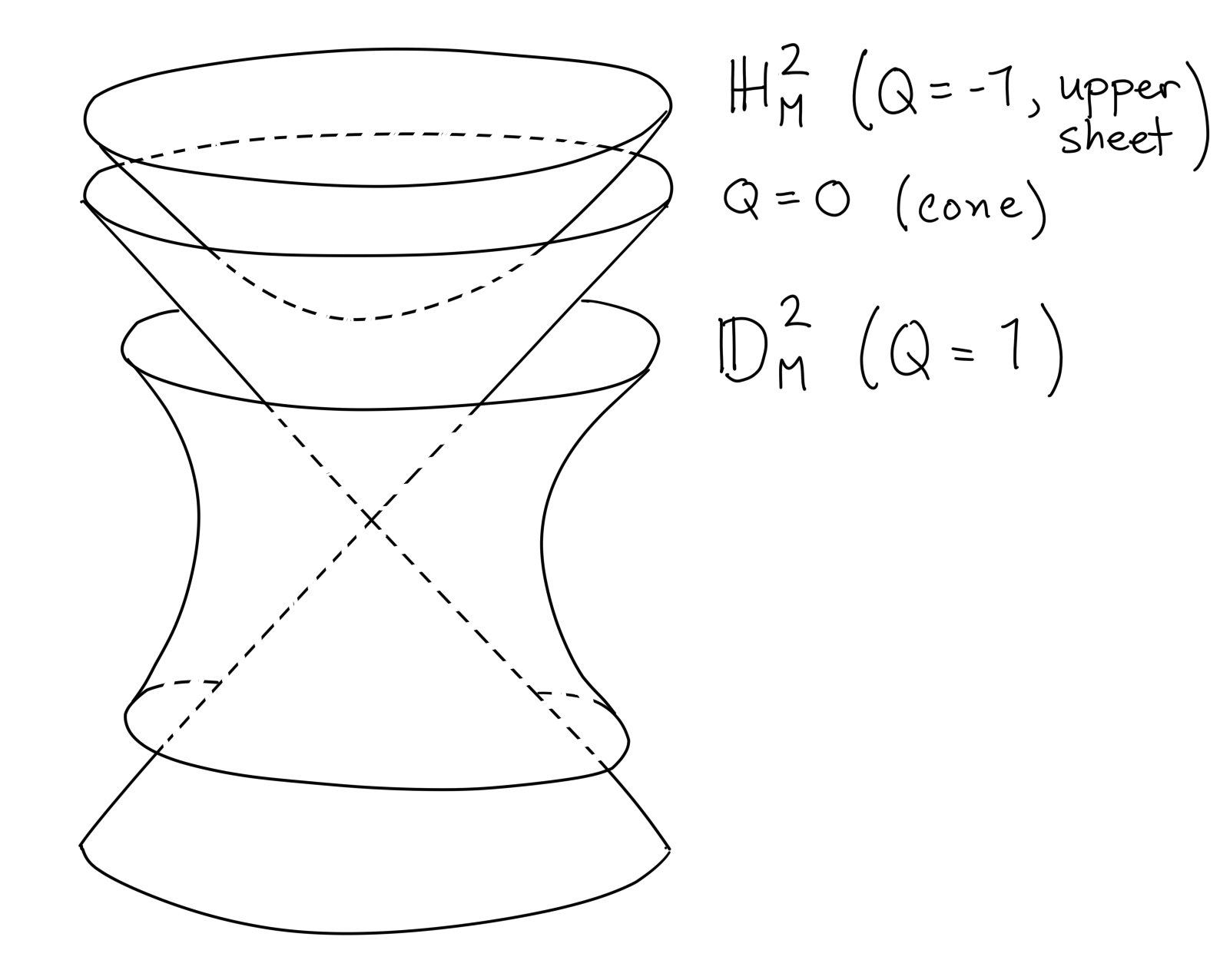}
	  \caption{The Minkowski space $M^3$.  To visualize $M^4$, imagine adding one spatial dimension.}
	  \label{fig:M3}
  \end{figure}

Consider an $n+1$ dimensional real vector space $M^{n,1} := \RR^{n+1}$.  
	Put a quadratic form $Q$ of signature $(n,1)$ and its associated bilinear form $\langle \cdot, \cdot \rangle_Q$ on this space:  we will now call it a \emph{Minkowski space}.  

	\begin{exercise}
		Suppose we are working over a field of characteristic not equal to $2$.  Show that if $Q(\mathbf{x})$ is a quadratic form on a vector space, then $\langle \mathbf{x}, \mathbf{y} \rangle := \frac{1}{2} \left( Q(\mathbf{x}+\mathbf{y}) - Q(\mathbf{x}) - Q(\mathbf{y}) \right)$ is a bilinear form.  Conversely, and inversely, show how to recover a quadratic form from a bilinear form.
	\end{exercise}

	The forms $Q$ and $\langle \cdot, \cdot \rangle_Q$ endow $M^{n,1}$ with geometry.  The zero locus $Q=0$ is a double cone emanating from the origin, called the \emph{light cone}.  Outside the cone, the level sets $Q = c$, $c > 0$ are one-sheeted hyperboloids.  Inside the cone, the level sets $Q = c$, $c < 0$ are two-sheeted hyperboloids.	

	Projectivizing this space to obtain $\PP M^{n,1}$, we can take the subset inside the cone:
	\[
		\HH^n_M := \{ [x_0 : x_1 : \cdots : x_n ] : Q(x_0,\ldots,x_n) < 0 \} \subset \PP M^{n,1};
	\]
	one may think of this as obtained by gluing the two sheets of the two-sheeted hyperboloid.  Then $\HH^n_M$ is a model of hyperbolic $n$-space, called the \emph{hyperboloid model}.  
	See Figure~\ref{fig:M3}.

The metric is given by the differential
\[
	ds^2 = Q(dx_0, dx_1, \ldots, dx_n), 
\]
or the distance function $d_M$ satisfying
\begin{equation}
	\label{eqn:minkowski-cosh}
	\cosh \left(  d_M(\mathbf{u},\mathbf{v})  \right) = \frac{ \langle \mathbf{u}, \mathbf{v} \rangle_Q }{\sqrt{ Q(\mathbf{u}) Q(\mathbf{v}) }}.
\end{equation}

	The quadratic space $M^{n,1}$ is acted upon by $SO_Q(\RR) \cong SO_{n,1}(\RR)$, the special orthogonal transformations preserving the form $Q$.  Their action takes $\HH^n_M$ to itself, acting as hyperbolic isometries. 

	Geodesics in $\HH^n_M$ are obtained as the intersections with lines of $\PP M^{n,1}$ (one may think of this as intersecting planes in $\RR^{n+1}$ with the two-sheeted hyperboloid).
	The light cone itself can be thought of as the `boundary at infinity' of $\HH^n_M$, and the intersection of the aforementioned projective line (or affine plane) with the cone gives the limit points of the geodesic.  In higher dimensions, we obtain geodesic surfaces, spaces, etc. by intersecting higher dimensional subspaces.


\subsection{The upper half plane}

We've seen that the quadratics have an affinity for the M\"obius action on the upper half plane.  The upper half plane $\HH^2_U$ is a model of the hyperbolic plane, whose isometries are $\PSL_2(\ZZ)$.  
The metric is given by the differential
\[
	ds^2 = \frac{ dz\; d\overline z }{\Im(z)^2}
\]
or the distance function $d_U$ satisfying
\begin{equation}
	\label{eqn:h2-dist}
	\cosh \left( d_U(z,w) \right) = 1 + \frac{ |z - w|^2 }{ 2 \Im(z) \Im(w) }.
\end{equation}
The hyperbolic isometries are the M\"obius transformations which we met earlier, $\PSL_2(\RR)$.  
The geodesics in the upper half plane consist of the restrictions to $\HH^2_U$ of lines $\Re(z) = a$ in $\CC$ and circles in $\CC$ centred on $\RR$ (Figure~\ref{fig:upper}).
The boundary of $\HH^2_U$ is
\[
	\partial\HH^2_U := \widehat{\RR} := \RR \cup \{ \infty \}
\]
where the point $\infty$ is an ideal (in other words, mathematically imagined) point `at infinity.'  The point $\infty$ is a limit point for any geodesic arising from a vertical line $\Re(z)=a$, the other limit point being $a$.  Half-circle geodesics have as their two limit points the intersection of the circle with $\RR$.  Observe that the isometries $\PSL_2(\RR)$, interpreted on the extended plane $\widehat{\CC} := \CC \cup \{ \infty \}$, take $\partial \HH^2_U$ to itself.

\subsection{Relating the upper half plane and hyperboloid models}
\label{sec:hyperupper}
	
There is a beautiful dictionary between the hyperboloid model and the upper half plane model of the hyperbolic plane, carried out by a hyperbolic isometry between the two, and it contains some surprises.  The essential observation is that $\Delta(A,B,C) = B^2 - 4AC$ is a signature $2,1$ form, but is also the discriminant form for quadratic polynomials $Ax^2 + Bx + C$, so we can think of the Minkowski space $\RR^{2,1}$ as parametrizing such polynomials by taking $Q = \Delta$.  Then the associated inner product is
\[
	\langle (A_1,B_1,C_1) , (A_2,B_2,C_2) \rangle_Q = B_1B_2 - 2A_1C_2 - 2A_2C_1.
\]
If we are interested in the roots of such polynomials, then it is natural to take a projectivization of this space, as the roots are unaffected by scaling the polynomial by a constant.

\begin{exercise}
	Demonstrate that $\Delta(A,B,C)$ is a signature $2,1$ form.
\end{exercise}

Viewed as a space of polynomials, the light cone cuts out those polynomials with complex conjugate roots as the interior of the light cone ($\Delta < 0$), leaving those with distinct real roots on the exterior ($\Delta > 0$).  The light cone itself corresponds to the quadratic polynomial having a double root ($\Delta = 0$).


	To be more formal, consider now the projectivized interior of the light cone:
		\[
			\mathbb{H}^{2}_M := \{ [A:B:C] : A,B,C \in \mathbb{R}, B^2 < 4AC \} = \{ Ax^2 + Bx + C : A,B,C \in \mathbb{R}, B^2 < 4AC \}.
	\]
	Then, the hyperbolic isometry between $\mathbb{H}^{2}_M$ and $\HH^2_U$ is essentially the quadratic formula!

	\begin{theorem}[{\cite[Theorem 4.9]{HST}}]
		\label{thm:quadratic-formula}
		The map from $\mathbb{H}^{2}_M$ to $\HH^2_U$ given on $[A : B : C]$ by taking the root of positive imaginary part to the polynomial $Ax^2 + Bx + C$, i.e., the quadratic formula, is a hyperbolic isometry.  The inverse to this map takes $\alpha \in \CC \setminus \RR$ to $[1 : - \alpha - \overline{\alpha} : \alpha \overline{\alpha} ]$.
	\end{theorem}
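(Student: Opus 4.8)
The plan is to verify directly that the quadratic-formula map preserves hyperbolic distance, using the two explicit distance formulas \eqref{eqn:minkowski-cosh} and \eqref{eqn:h2-dist} already recorded; since a distance-preserving bijection between the two models is automatically an isometry, this suffices. Throughout I would work projectively, normalizing each point of $\mathbb{H}^2_M$ by its leading coefficient.

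First I would check that the map is well defined and bijective with the stated inverse. Inside the light cone the condition $B^2 < 4AC$ forces $AC > 0$, hence $A \neq 0$, so every class $[A:B:C] \in \mathbb{H}^2_M$ has a unique representative with $A = 1$. For such a representative the polynomial $x^2 + Bx + C$ has two complex-conjugate roots, and the map selects the one with positive imaginary part, landing in $\HH^2_U$. Conversely, given $\alpha$ with $\Im(\alpha) > 0$, the monic polynomial with roots $\alpha, \overline\alpha$ is $(x-\alpha)(x-\overline\alpha) = x^2 - (\alpha+\overline\alpha)x + \alpha\overline\alpha$, which is exactly $[1:-\alpha-\overline\alpha:\alpha\overline\alpha]$; its discriminant is $(\alpha-\overline\alpha)^2 = -4\,\Im(\alpha)^2 < 0$, confirming the image lies inside the cone. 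These two assignments are visibly mutually inverse.

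The heart of the proof is the distance computation. For $\alpha, \beta \in \HH^2_U$ I would take the normalized lifts $\mathbf{u} = (1, -(\alpha+\overline\alpha), \alpha\overline\alpha)$ and $\mathbf{v} = (1, -(\beta+\overline\beta), \beta\overline\beta)$. From $Q(A,B,C) = B^2 - 4AC$ one gets $Q(\mathbf{u}) = -4\,\Im(\alpha)^2$ and $Q(\mathbf{v}) = -4\,\Im(\beta)^2$, so $\sqrt{Q(\mathbf{u})Q(\mathbf{v})} = 4\,\Im(\alpha)\Im(\beta)$. Expanding the bilinear form as $\langle \mathbf{u}, \mathbf{v}\rangle_Q = (\alpha+\overline\alpha)(\beta+\overline\beta) - 2|\alpha|^2 - 2|\beta|^2$ and using the identity $4\,\Im(\alpha)\Im(\beta) = -(\alpha-\overline\alpha)(\beta-\overline\beta)$, I would verify the algebraic identity $-\langle \mathbf{u},\mathbf{v}\rangle_Q = 4\,\Im(\alpha)\Im(\beta) + 2|\alpha-\beta|^2$. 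Dividing by $4\,\Im(\alpha)\Im(\beta)$ then turns the right-hand side of \eqref{eqn:minkowski-cosh} into $1 + \frac{|\alpha-\beta|^2}{2\,\Im(\alpha)\Im(\beta)}$, which is precisely $\cosh(d_U(\alpha,\beta))$ from \eqref{eqn:h2-dist}. Hence $d_M = d_U$ under the map.

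The one point requiring care --- and the only genuine obstacle --- is a sign. Inside the cone $Q < 0$, so the literal right-hand side of \eqref{eqn:minkowski-cosh} evaluated at $\mathbf{u} = \mathbf{v}$ gives $-1$ rather than $\cosh(0)=1$; the computation above shows the correct identification uses $-\langle \mathbf{u},\mathbf{v}\rangle_Q$ in the numerator, i.e. the timelike normalization in which negative-norm vectors carry the appropriate sign. I would make this convention explicit before running the computation. Finally, if one prefers not to invoke the fact that a distance-preserving bijection is automatically a Riemannian isometry, the same lifts can be used to pull back the differential $ds^2 = Q(dx_0,dx_1,dx_2)$ and match it with $dz\,d\overline z/\Im(z)^2$ directly; the $\cosh$ computation is simply the more economical route.
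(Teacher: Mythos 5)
Your proof is correct, but it follows a genuinely different route from the paper's. The paper does not actually prove Theorem~\ref{thm:quadratic-formula} in the text --- it cites \cite[Theorem 4.9]{HST} and leaves the verification as an exercise --- and its model argument is the proof it gives for the three-dimensional analogue, Theorem~\ref{thm:3isometry}: there one first invokes transitivity of the hyperbolic isometry group on the tangent bundle (which tacitly leans on the equivariance statement, Theorem~\ref{thm:HST2} in the planar setting) to reduce the distance check to a one-parameter family of pairs on a single vertical geodesic, and only then compares the two $\cosh$ formulas for that special pair. You instead prove the identity $-\langle\mathbf{u},\mathbf{v}\rangle_Q = 4\,\Im(\alpha)\Im(\beta) + 2|\alpha-\beta|^2$ for arbitrary normalized lifts; I have expanded both sides, and it holds, with $Q(\mathbf{u}) = -4\,\Im(\alpha)^2$ as you state, so $d_M = d_U$ for every pair at once. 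The trade-off: the transitivity route computes less but must be run after (or alongside) equivariance, whereas your route costs a longer expansion but is entirely self-contained, so Theorem~\ref{thm:HST2} can afterwards be proved with no circularity concern. Your sign remark is a genuine catch, not pedantry: inside the cone $Q<0$, so the right-hand side of \eqref{eqn:minkowski-cosh} as printed evaluates to $-1$ at $\mathbf{u}=\mathbf{v}$, and the numerator must be read as $-\langle\mathbf{u},\mathbf{v}\rangle_Q$ (equivalently $\left|\langle\mathbf{u},\mathbf{v}\rangle_Q\right|$ for timelike vectors) for the formula to define a distance; making that convention explicit before computing, as you do, is exactly right. Your final scruple is disposed of either by Myers--Steenrod (a distance-preserving bijection of Riemannian manifolds is automatically a smooth isometry) or, more simply, by noting that ``hyperbolic isometry'' here means metric isometry, so the $\cosh$ computation already finishes the proof; the pullback of $ds^2$ along the section $\alpha \mapsto (1, -\alpha-\overline{\alpha}, \alpha\overline{\alpha})$ is a fine alternative but not needed.
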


	We can now line up the hyperbolic isometries on either side of the identification between $\HH^2_U$ and $\HH^2_M$.

	\begin{theorem}[{\cite[Observation 4.6]{HST}}] \label{thm:HST2}
		Identify elements $[ A : B : C]$ of $\mathbb{H}^{2}_M$ with matrices $D_{A,B,C} := \begin{pmatrix} C & B/2 \\ B/2 & A \end{pmatrix}$.
			Then the isometry of Theorem~\ref{thm:quadratic-formula} between $\HH^2_U$ and $\mathbb{H}^{2}_M$ is $\PSL_2(\RR)$-equivariant, relating the action of $\PSL_2(\RR)$ via M\"obius transforms on $\HH^2_U$ to the action $M \cdot D_{A,B,C} := M^{-1} D_{A,B,C} M^{-t}$ on $\mathbb{H}^{2}_M$.
\end{theorem}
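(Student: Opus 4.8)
The plan is to verify the commuting square directly, using the explicit isometry and its inverse furnished by Theorem~\ref{thm:quadratic-formula}. Write $\phi\colon \HH^2_M \to \HH^2_U$ for the quadratic-formula map, and recall that $\phi^{-1}(\alpha) = [1 : -\alpha-\overline\alpha : \alpha\overline\alpha]$, which under the stated identification is the symmetric matrix $D_\alpha := D_{1,\,-\alpha-\overline\alpha,\,\alpha\overline\alpha}$. Equivariance is then the assertion that, for every $M = \begin{pmatrix} a & b \\ c & d\end{pmatrix} \in \PSL_2(\RR)$, one has $\phi\bigl(M^{-1} D_\alpha M^{-t}\bigr) = \tfrac{a\alpha+b}{c\alpha+d}$, equivalently $\phi^{-1}\!\bigl(\tfrac{a\alpha+b}{c\alpha+d}\bigr) = M^{-1} D_\alpha M^{-t}$ as points of $\HH^2_M$ (that is, up to positive scalar). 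Since closed forms for both $\phi$ and $\phi^{-1}$ are in hand, each side is completely explicit, and the statement collapses to a single identity between symmetric $2\times 2$ matrices.

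The conceptual engine that makes this painless is that the matrix $D_{A,B,C} = \begin{pmatrix} C & B/2 \\ B/2 & A\end{pmatrix}$ encodes its polynomial through the identity
\begin{equation*}
	Ax^2 + Bx + C = \begin{pmatrix} 1 & x \end{pmatrix} D_{A,B,C} \begin{pmatrix} 1 \\ x \end{pmatrix},
\end{equation*}
so that $\alpha = \phi([A:B:C])$ is precisely the vector $(1,\alpha)^t$ on which the $\CC$-bilinear extension of $D_{A,B,C}$ vanishes, its other null direction being $(1,\overline\alpha)^t$. Null directions transform contravariantly: a vector $w$ is isotropic for $M^{-1}D M^{-t}$ exactly when $M^{-t}w$ is isotropic for $D$, since $w^t(M^{-1}DM^{-t})w = (M^{-t}w)^t D (M^{-t}w)$. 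Hence the null directions of $M^{-1}D_\alpha M^{-t}$ are $M^t(1,\alpha)^t$ and $M^t(1,\overline\alpha)^t$, and rescaling $M^t(1,\alpha)^t$ to have first coordinate $1$ reads off $\phi(M^{-1}D_\alpha M^{-t})$ as a linear-fractional function of $\alpha$. This exhibits the transformation as a Möbius map, identifies which one, and handles all $M$ at once; alternatively, one may check the identity on a generating set and propagate it by the multiplicativity of both sides.

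The main obstacle is not any deep step but the bookkeeping of conventions, and it is here that all the care must go. Three independent choices interact: the transposition $A \leftrightarrow C$ built into $D_{A,B,C}$, the inverse-and-transpose placement in $M^{-1}(\,\cdot\,)M^{-t}$, and the selection of the root of positive imaginary part. These must be reconciled so that the net linear-fractional map produced by the null-vector calculation is exactly $z \mapsto \tfrac{az+b}{cz+d}$, rather than one of the companion maps — such as $z\mapsto\tfrac{dz+b}{cz+a}$, the inverse $M^{-1}\cdot z$, or a reflection-conjugate — that the same computation returns under a slightly different convention. I would therefore pin down the convention by first running the explicit computation on the generators $T = \begin{pmatrix} 1 & 1 \\ 0 & 1\end{pmatrix}$, $S = \begin{pmatrix} 0 & -1 \\ 1 & 0\end{pmatrix}$, and a diagonal element $\operatorname{diag}(\lambda,\lambda^{-1})$: the parabolic, elliptic, and hyperbolic cases separate all the competing conventions, so once these three agree with the claimed Möbius action the general matrix identity can be asserted with confidence. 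After the convention is fixed, the remaining verification is the one-line matrix manipulation supplied by the null-vector computation above. This is exactly the ``diving into the minus signs'' that the surrounding exposition flags as the delicate point.
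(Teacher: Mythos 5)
Your strategy is the natural one (the paper itself relegates the proof to an exercise, citing HST, and the intended computation is exactly your null-vector argument), but you stop one step short of where the difficulty actually lives, and your stated expectation for the outcome is wrong. Run your own computation to the end: the isotropic directions of $D_\alpha$ are $(1,\alpha)^t$ and $(1,\overline\alpha)^t$, so by your (correct) identity $w^t(M^{-1}DM^{-t})w=(M^{-t}w)^tD(M^{-t}w)$, the isotropic directions of $M^{-1}D_\alpha M^{-t}$ are $M^t(1,\alpha)^t=(a+c\alpha,\;b+d\alpha)^t$; normalizing the first coordinate gives the new root $\tfrac{d\alpha+b}{c\alpha+a}$ --- precisely the ``companion map'' you yourself list, \emph{not} $\tfrac{a\alpha+b}{c\alpha+d}$. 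Worse, your proposed safeguard is unreliable in exactly this situation: writing $J=\begin{pmatrix}0&1\\1&0\end{pmatrix}$, the map produced is the M\"obius action of $JM^tJ=\begin{pmatrix}d&b\\c&a\end{pmatrix}$, which coincides with $M$ whenever $a=d$; since $S$ and $T$ both satisfy $a=d$, the parabolic and elliptic checks falsely confirm the convention, and only your diagonal test exposes the problem. Concretely, for $M=\operatorname{diag}(\lambda,\lambda^{-1})$ and $D=I$ (the polynomial $x^2+1$, root $i$), one gets $M^{-1}IM^{-t}=\operatorname{diag}(\lambda^{-2},\lambda^{2})$, i.e.\ the polynomial $\lambda^{2}x^{2}+\lambda^{-2}$ with upper root $\lambda^{-2}i=M^{-1}\cdot i$, while $M\cdot i=\lambda^{2}i$.

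There is also a structural reason your ``propagate by multiplicativity'' step cannot close the argument as stated: $\rho(M)\colon D\mapsto M^{-1}DM^{-t}$ satisfies $\rho(M_1)\rho(M_2)=\rho(M_2M_1)$, i.e.\ it is a right action (anti-homomorphism), whereas the M\"obius action is a left action; a bijection intertwining them for all $M$ would force $\mu(M_1M_2)=\mu(M_2M_1)$, impossible for the faithful action of the nonabelian group $\PSL_2(\RR)$. So the displayed formula can only be correct after a transpose shuffle, and a complete solution must either prove the corrected statement or record the twist explicitly. Two equivalent repairs: identify $[A:B:C]$ with the Gram matrix $G_{A,B,C}=\begin{pmatrix}A&B/2\\B/2&C\end{pmatrix}$, for which $(x,1)\,G_{A,B,C}\,(x,1)^t=Ax^2+Bx+C$, and act by $G\mapsto M^{-t}GM^{-1}$ (an honest left action); or keep $D_{A,B,C}=JG_{A,B,C}J$ and act by $D\mapsto (JMJ)^{-t}\,D\,(JMJ)^{-1}$. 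With either convention your null-vector computation does finish the proof in one line, since $(1,\,M\cdot\alpha)^t$ is proportional to $JMJ\,(1,\alpha)^t$; as written, however, the proposal asserts success for an identity that your own method refutes, which is a genuine gap rather than a bookkeeping detail.
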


Writing this out explicitly gives a representation of $\PSL_2(\RR)$ in $O_Q(\RR)$, i.e. the effect of $M$ on $[A : B : C]$ is multiplication by the following $3 \times 3$ matrix:
\[
	\PSL_2(\RR) \rightarrow O_Q(\RR), \quad
	\begin{pmatrix} a & b \\ c & d \end{pmatrix} \mapsto
	\begin{pmatrix} a^2 & -ac & c^2 \\ -2ab & bc+ad & -2cd \\ b^2 & - bd & d^2 \end{pmatrix}.
\]

One advantage of the ambient Minkowski space containing $\HH^2_M$ is that there's a satisfying analogous relationship between the space \emph{outside} the light cone and the \emph{geodesics} of $\HH^2_U$.  For this, we might give a name to the projectivized exterior of the light cone:
		\[
			\mathbb{D}^{2}_M := \{ [A:B:C] : A,B,C \in \mathbb{R}, B^2 > 4AC \} = \{ Ax^2 + Bx + C : A,B,C \in \mathbb{R}, B^2 > 4AC \}.
	\]
	The $\DD$ is an \emph{anti de Sitter space}, which is a term from physics.   See Figure~\ref{fig:M3}.

	\begin{theorem}[{\cite[Observation 4.11]{HST}}]\label{thm:HST3}
		The following two maps from $\mathbb{D}^{2}_M$ to the space of geodesics of $\HH^2_U$ coincide:
		\begin{enumerate}
			\item given on $[A : B : C]$ by returning the geodesic whose endpoints are exactly the two real roots of $Ax^2 + Bx + C$;
			\item given on $[A : B : C ]$ by first taking the plane normal to $[A : B : C]$ in $M^{2,1}$ (with respect to the Minkowski norm), then intersecting it with $\HH^2_M$, and finally composing with the hyperbolic isometry of Theorem~\ref{thm:quadratic-formula}. 
		\end{enumerate}
	\end{theorem}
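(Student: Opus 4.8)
The plan is to reduce the statement to a computation on the boundary, using that both descriptions of a geodesic are determined by their pair of endpoints at infinity and that the isometry of Theorem~\ref{thm:quadratic-formula}, which I will call $\Psi \colon \HH^2_M \to \HH^2_U$, extends to those boundaries. Recall from the hyperboloid-model discussion that a geodesic of $\HH^2_M$ is the intersection of $\HH^2_M$ with a projective line, and that its two endpoints at infinity are the intersections of that line with the light cone $Q = 0$. A light-cone point is a class $[A:B:C]$ with $B^2 = 4AC$, i.e.\ a quadratic with a real double root $r$; normalizing, it is $[1 : -2r : r^2]$, with the limiting case $[0:0:1]$ corresponding to $r = \infty$. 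Under the quadratic formula the root of positive imaginary part of $(x-r)^2$ degenerates to the real number $r$, so $\Psi$ extends continuously to send $[1:-2r:r^2]$ to the boundary point $r \in \widehat{\RR} = \partial\HH^2_U$. I would first record this boundary extension carefully, including the bookkeeping at $r = \infty$.

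With that in hand the theorem collapses to a one-line bilinear-form computation. For Map~(2) the defining plane of the geodesic is the Minkowski-normal plane to $(A,B,C)$, that is, the vanishing locus of $\langle (A,B,C), \cdot \rangle_Q$; its endpoints are the light-cone points lying on this plane. Testing the candidate $[1:-2r:r^2]$ against the explicit inner product gives
\[
	\langle (A,B,C),(1,-2r,r^2)\rangle_Q = B(-2r) - 2A r^2 - 2C = -2\,(A r^2 + B r + C),
\]
so this light-cone point is Minkowski-normal to $(A,B,C)$ precisely when $r$ is a root of $Ax^2 + Bx + C$. (The case $A=0$, in which $\infty \leftrightarrow [0:0:1]$ is an endpoint, is covered by $\langle (A,B,C),(0,0,1)\rangle_Q = -2A$.) Hence the two light-cone endpoints of the geodesic of Map~(2) are exactly the classes $[1:-2r_i:r_i^2]$ attached to the two real roots $r_1,r_2$ of $Ax^2 + Bx + C$.

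Finally I would assemble the pieces: by the boundary extension of $\Psi$ these endpoints map to $r_1, r_2 \in \widehat{\RR}$, which are by definition the endpoints of the geodesic produced by Map~(1); since a geodesic of $\HH^2_U$ is determined by its two boundary points, the two maps agree. I expect the only genuinely delicate step to be the boundary analysis of $\Psi$ -- verifying the continuous extension to the light cone and handling the point $\infty$ (equivalently the case $A=0$) -- whereas the identification of endpoints with plane--cone intersections is already part of the hyperboloid setup and the central normality computation is immediate. As an alternative I note that both maps are $\PSL_2(\RR)$-equivariant (Map~(1) because M\"obius maps permute roots, Map~(2) by Theorem~\ref{thm:HST2} together with orthogonality preserving the normal-plane relation), and $\PSL_2(\RR)$ acts transitively on geodesics, so it would also suffice to check the single base case $[0:1:0]$, whose normal plane $B'=0$ meets $\HH^2_M$ in the purely imaginary points and maps to the imaginary axis, the geodesic from $0$ to $\infty$.
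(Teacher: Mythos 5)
The paper never writes out a proof of this statement: it cites \cite[Observation 4.11]{HST} and relegates the verification (together with Theorems~\ref{thm:quadratic-formula} and \ref{thm:HST2}) to an exercise, so there is no in-paper argument to compare against; your proposal must be judged on its own, and it is correct. The central computation checks out: with the paper's pairing $\langle (A_1,B_1,C_1),(A_2,B_2,C_2)\rangle_Q = B_1B_2 - 2A_1C_2 - 2A_2C_1$ one indeed gets $\langle (A,B,C),(1,-2r,r^2)\rangle_Q = -2(Ar^2+Br+C)$ and $\langle (A,B,C),(0,0,1)\rangle_Q = -2A$, and the projectivized light cone is exactly $\{[1:-2r:r^2] : r\in\RR\}\cup\{[0:0:1]\}$, so the cone points on the normal plane are precisely the (projective) roots, with $\infty$ appearing exactly when $A=0$. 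Since the paper's hyperboloid-model setup already identifies the endpoints of a geodesic with the intersections of its defining projective line with the cone, and a geodesic of $\HH^2_U$ is determined by its boundary pair, your reduction to the boundary is legitimate. Two points you flag or elide deserve a line each in a written-up version: (i) the boundary extension of the isometry $\Psi$ of Theorem~\ref{thm:quadratic-formula} is a direct limit computation --- normalizing $A=1$ near $[1:-2r:r^2]$, the root $\bigl(-B+i\sqrt{4C-B^2}\bigr)/2$ tends to $r$, while near $[0:0:1]$ (normalize $C=1$) the root has modulus $\sqrt{C/A}\to\infty$, so it tends to $\infty\in\widehat{\RR}$; (ii) because $Q(A,B,C)>0$, the orthogonal complement has signature $(1,1)$, which is what guarantees the normal plane genuinely meets $\HH^2_M$ in a geodesic with two \emph{distinct} cone points, matching the two distinct real roots forced by $B^2>4AC$. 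Your alternative route (equivariance of both maps plus transitivity of $\PSL_2(\RR)$ on geodesics, checked on the base case $[0:1:0]$, whose normal plane $B'=0$ maps to the imaginary axis) is also sound and arguably cleaner given that Theorem~\ref{thm:HST2} is available; just mind the inverse-transpose bookkeeping --- under the action $M\cdot D_{A,B,C} = M^{-1}D_{A,B,C}M^{-t}$ the roots transform by $M$ itself --- and note that transitivity on $\mathbb{D}^2_M$ follows from transitivity on geodesics because Map~(1) is a bijection (a degree-two polynomial up to scaling is determined by its projective root pair).
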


	\begin{exercise}
		Prove Theorems~\ref{thm:quadratic-formula}, \ref{thm:HST2}, \ref{thm:HST3}.
	\end{exercise}

	\begin{figure}
	  \includegraphics[width=2.5in]{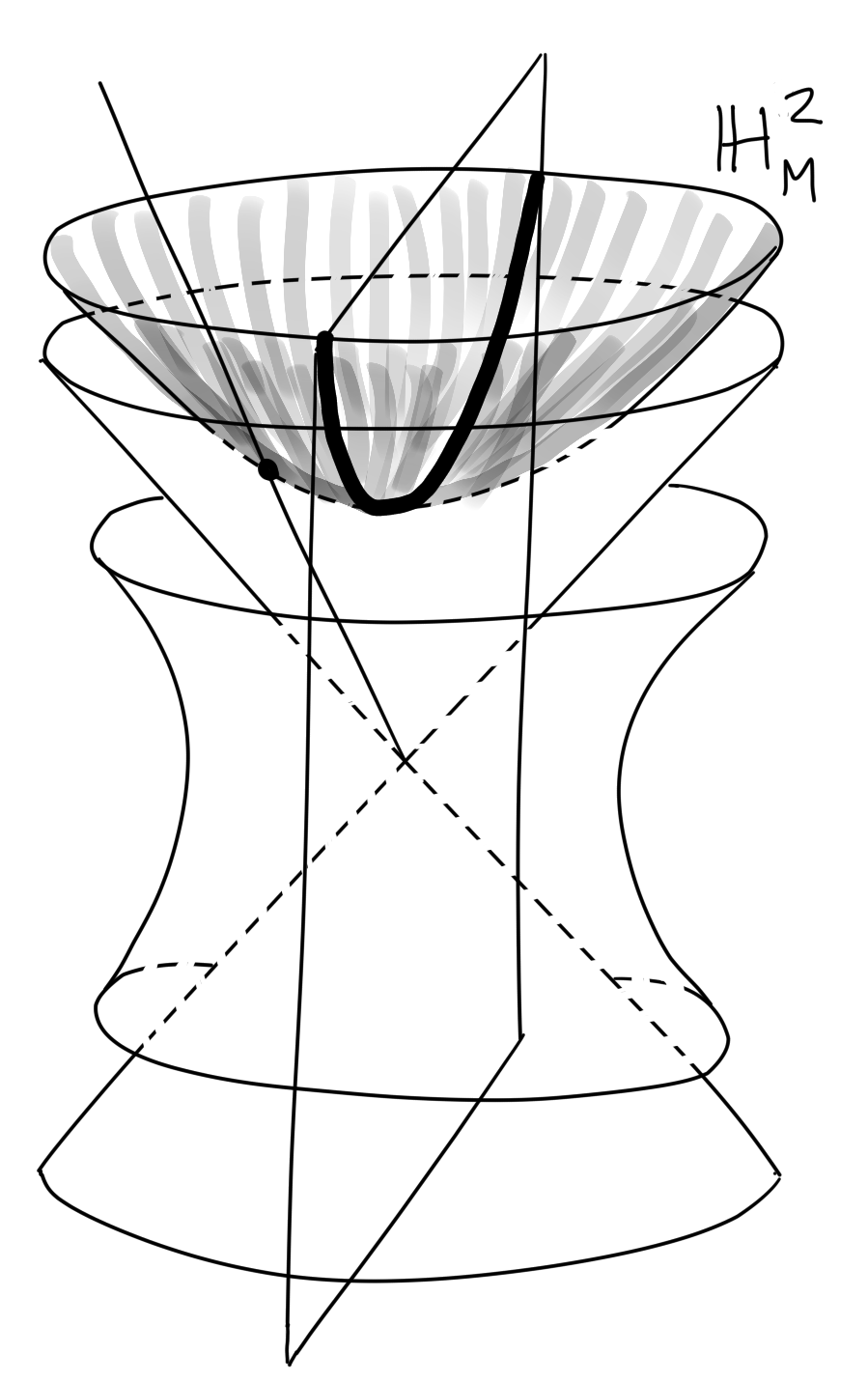}
	  \caption{$M^3$ with example plane cutting $\HH^2_M$ (giving a hyperbolic geodesic, in thick pen) and ray piercing $\HH^2_M$ (giving a hyperbolic point, in thick pen).}
	  \label{fig:cone-cut}
  \end{figure}

  We summarize the situation in a table:
\renewcommand{\arraystretch}{1.5}
\begin{center}
	\begin{tabular}{ m{0.4\textwidth} || m{0.4\textwidth} }
		upper half plane $\HH^2_U$ & Minkowski space $M^{2,1}$  \\
	\hline
	\hline
	points & vectors inside the light cone \\
	\hline
	root of $Ax^2 + Bx + C$, $Q<0$ & vector $[A : B : C]$, $Q<0$,
	or matrix $D_{A,B,C} := \begin{pmatrix} C & B/2 \\ B/2 & A \end{pmatrix}$  \\
	\hline
 geodesics & planes cutting the light cone \\
	\hline
 geodesic joining roots of $Ax^2 + Bx + C$, $Q > 0$ & plane normal to vector $[A : B : C]$, $Q > 0$ \\
	\hline
		M\"obius action of $M \in \PSL_2(\RR)$ & action $D_{A,B,C} \mapsto M^{-1} D_{A,B,C} M^{-t}$, $M \in \PSL_2(\RR)$ \\
\end{tabular}
\end{center}
\renewcommand{\arraystretch}{1}

	It makes sense to think of the one-sheeted hyperboloid as the \emph{space of geodesics} of the upper half plane, since any point in that space is a normal vector normal to a plane cutting out a geodesic.

%
	\subsection{The Hamilton quaternions and upper half space}

	\begin{figure}
	  \includegraphics[width=5.5in]{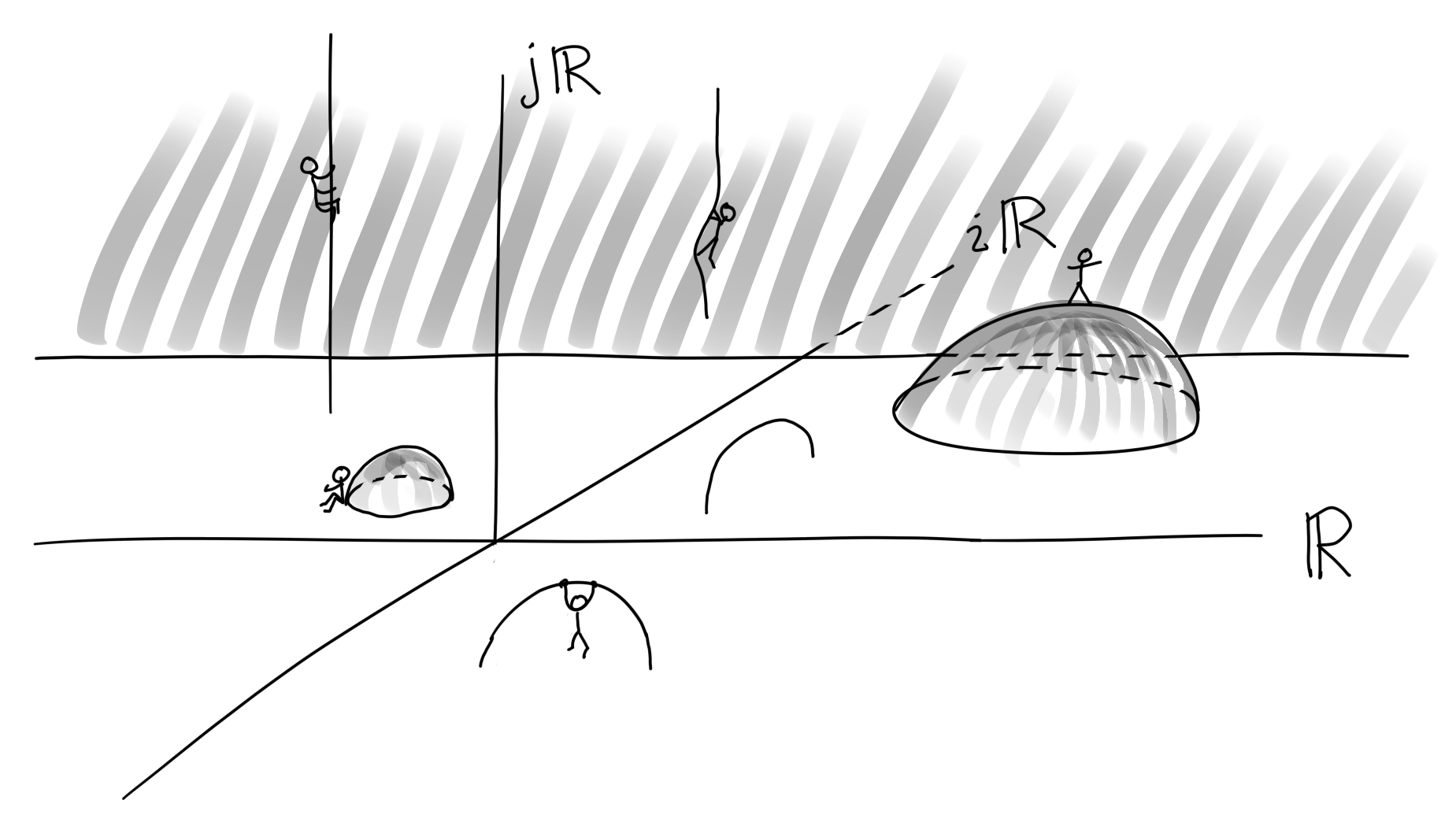}
	  \caption{The upper half space, with example geodesic planes and lines.}
	  \label{fig:upper3}
  \end{figure}

	The ring of Hamilton quaternions is the ring 
	\[
		H = \{ x + yi + zj + wk: x,y,z,w \in \RR \}
	\]
	with relations $i^2 = j^2 = k^2 = -1$ and $k = ij = -ji$.  There is a quaternionic conjugation:
	\[
		\overline{x + yi + zj + wk} = z - yi - zj - wk.
	\]

	Analogously to the upper half plane, we can define the upper half space
	\[
		\HH^3_U := \{ x + yi + zj : z > 0 \} \subseteq H.
	\]
	This is a standard model of hyperbolic $3$-space, thought of as a halfspace of $\RR^3 = \{ x + yi + zj \}$, whose boundary is $\partial \HH^3_U := \widehat{\CC} := \CC \cup \{ \infty \}$, consisting of a copy of $\CC$ (thought of as $z=0$ in $\RR^3$ or $z=w=0$ in H), augmented by a point at $\infty$.  We can also write $\alpha := x+yi \in \CC$ and an element of $\HH^3_U$ as $\alpha + zj$, as in Figure~\ref{fig:upper3}.

	The differential is
	\[
		ds^2 = \frac{d\alpha d\overline{\alpha} + dz^2 }{z^2},
	\]
	and the distance function $d_U$ satisfies
	\begin{equation}
		\label{eqn:h3dist}
%
		\cosh \left( d_U(\alpha + zj, \beta + wj) \right) = 1 + \frac{ |\alpha - \beta|^2 + (z-w)^2 }{ 2zw}.
		\end{equation}

	There is an action of $\PSL_2(\CC)$ via M\"obius transformations on the boundary of the upper half space, $\widehat{\CC}$.  This action extends to a unique action on $\HH^3_U$ by hyperbolic isometries, namely
	\[
		\begin{pmatrix}
			a & b \\ c & d 
		\end{pmatrix} \cdot
		(\alpha + zj) =
		(a(\alpha + zj) + b) ( c (\alpha + zj) + d)^{-1}, \quad
			\begin{pmatrix}
			a & b \\ c & d 
		\end{pmatrix} \in \PSL_2(\CC), \quad \alpha \in \CC, \quad z \in \RR,
	\]
	where we must keep in mind the non-commutativity of $i$ and $j$, so that order matters; and the notion of inverse takes place in the quaternions, so that
	\begin{equation}
		\label{eqn:quat-inverse}
		(c (\alpha + zj) + d)^{-1} = \frac{(\overline\alpha - zj) \overline{c} + \overline{d}}
		{|c\alpha + d|^2 + |c z|^2}.
	\end{equation}
	Again, order matters. 
	Note that for $z=0$ the given action restricts to the usual M\"obius action on $\partial \HH^3_U$.

	\begin{exercise}
		\begin{enumerate}
			\item Prove that any element of $\PSL_2(\CC)$ can be expressed as a composition of translation ($z \mapsto z + \beta$), scaling ($z \mapsto \beta z$) and circle inversion ($z \mapsto 1/z$).
			\item Prove equation \eqref{eqn:quat-inverse}.
			\item Prove that the given M\"obius action takes $\HH^3_U$ to itself.
			\item Prove that the given M\"obius action on $\HH^3_U$ acts by isometry. 
		\end{enumerate}
	\end{exercise}

	We define the upper half space model of $\HH^3$ to be $\HH^3_U$ with isometries given by the M\"obius action of $\PSL_2(\CC)$ as above.

	The geodesics are given by the restriction to $\HH^3_U$ of any circle contained in a vertical plane with centre on $\partial \HH^3_U$, together with vertical lines $\alpha = \alpha_0$ (Figure~\ref{fig:upper3}).


%

	\subsection{Relating the hyperboloid and upper half-space models for hyperbolic $3$-space}

	The story in Section~\ref{sec:hyperupper} has an analog for hyperbolic $3$-space in place of the hyperbolic plane.  Recall that we viewed $M^{2,1}$ as a space of symmetric matrices $D_{A,B,C} := \begin{pmatrix} C & B/2 \\ B/2 & A \end{pmatrix}$ associated to quadratic forms $Ax^2 + Bxy + Cy^2$ and quadratic polynomials $Ax^2 + Bx + C$.  

		Moving up from $M^{2,1}$ to $M^{3,1}$, we can now consider $M^{3,1}$ to be a space of Hermitian matrices
		\[
		\begin{pmatrix}
			q & -r+si \\ -r-si & p
		\end{pmatrix}, \quad p,q,r,s \in \RR,
	\]
	with determinant $Q(p,q,r,s) = r^2 + s^2 - pq$ playing the role of the signature $3,1$ form.
	The term \emph{Hermitian matrix} means that $M = M^\dagger$ where $\dagger$ represents the complex conjugate transpose.  
	The associated bilinear form is
\[
	\langle (p_1,q_1,r_1,s_1) , (p_2,q_2,r_2,s_2) \rangle_Q = r_1r_2 + s_1s_2 - \frac{p_1q_2}{2} - \frac{p_2q_1}{2}. 
\]
	The form $Q$ breaks the space into the interior and exterior of the light cone $Q=0$, as before.

	If we projectivize, then we obtain Hermitian matrices up to scaling.  Let us define, as before,
	\begin{align*}
		\mathbb{D}^{3}_{M} &:= \{ [p:q:r:s] : p,q,r,s \in \mathbb{R}, r^2 + s^2 > pq \}, \\
		\mathbb{H}^{3}_{M} &:= \{ [p:q:r:s] : p,q,r,s \in \mathbb{R}, r^2 + s^2 < pq \}. 
	\end{align*}
	Revisit Figure~\ref{fig:M3} and imagine adding one spatial dimension.

	Then we have an isometry between our two models of hyperbolic $3$-space, reminiscent of the quadratic formula.

	\begin{theorem}
		\label{thm:3isometry}
		The following map is an isometry from $\mathbb{H}^3_M$ to $\mathbb{H}^3_U$:
		\[
			[ p : q : r : s ] \mapsto \frac{r}{p} + \frac{s}{p} i + \frac{\sqrt{ r^2+s^2 - pq }}{p}j.
		\]
		The inverse is
		\[
			a + bi + cj \mapsto [ 1 : a^2 + b^2 - c^2  : a : b ].
		\]
	\end{theorem}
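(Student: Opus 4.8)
The plan is to prove the two displayed maps are mutually inverse bijections between $\mathbb{H}^3_M$ and $\mathbb{H}^3_U$, and then that the forward map preserves distance by comparing the two explicit distance formulas \eqref{eqn:minkowski-cosh} and \eqref{eqn:h3dist}. Because every quantity appearing in the upper half-space image is invariant under rescaling $[p:q:r:s]$, and the right-hand side of \eqref{eqn:minkowski-cosh} is homogeneous of degree zero in each argument, the whole computation is projectively well posed, and I would exploit this to normalize $p_1 = p_2 = 1$. This is legitimate since on the interior $pq > r^2 + s^2 \ge 0$ forces $p \ne 0$, and the projective sign can be chosen so that $p > 0$.

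First I would check well-definedness and the bijection. On the interior of the light cone the determinant of the associated Hermitian matrix is $pq - r^2 - s^2 > 0$, so the coefficient of $j$ in the image is the positive real $\sqrt{pq-r^2-s^2}/p$; this determinant is the quantity that must sit under the radical for the image to lie in $\mathbb{H}^3_U$ (on the interior it is $r^2 + s^2 - pq$ that is negative, so one reads the radicand as $pq - r^2 - s^2$). Writing $a = r/p$, $b = s/p$, $c = \sqrt{pq - r^2 - s^2}/p$, a direct substitution gives $a^2 + b^2 + c^2 = q/p$, so the inverse map returns $[\,1 : q/p : r/p : s/p\,] = [p:q:r:s]$; composing in the other order is the same one-line check. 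This establishes the bijection.

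Next, the isometry. With $p_1 = p_2 = 1$, the images are $\alpha + zj$ and $\beta + wj$ where $\alpha = r_1 + s_1 i$, $\beta = r_2 + s_2 i$, $z^2 = q_1 - r_1^2 - s_1^2$, and $w^2 = q_2 - r_2^2 - s_2^2$. Clearing the $1$ into the fraction, \eqref{eqn:h3dist} gives
\[
\cosh d_U = \frac{|\alpha-\beta|^2 + z^2 + w^2}{2zw}.
\]
On the other hand $Q(\mathbf u) = -z^2$ and $Q(\mathbf v) = -w^2$, so $\sqrt{Q(\mathbf u)Q(\mathbf v)} = zw$, while the bilinear form evaluates to
\[
\langle \mathbf u, \mathbf v\rangle_Q = r_1 r_2 + s_1 s_2 - \tfrac12(q_1 + q_2) = -\tfrac12\bigl(z^2 + w^2 + |\alpha - \beta|^2\bigr),
\]
the last equality being the single substantive step: substitute $q_1 = z^2 + r_1^2 + s_1^2$ and $q_2 = w^2 + r_2^2 + s_2^2$ and complete the square via $r_1 r_2 + s_1 s_2 - \tfrac12(r_1^2+s_1^2+r_2^2+s_2^2) = -\tfrac12|\alpha-\beta|^2$. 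Hence the two expressions agree, up to the standard sign convention in \eqref{eqn:minkowski-cosh}: since $Q < 0$ on timelike vectors one takes $\cosh d_M = -\langle\mathbf u,\mathbf v\rangle_Q/\sqrt{Q(\mathbf u)Q(\mathbf v)}$ (equivalently an absolute value), which is exactly the positive quantity above. Thus the map is an isometry.

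The computation is routine once set up; the genuine obstacles are bookkeeping rather than conceptual. The main one is keeping the projective scaling and the sign conventions straight — in particular reconciling the radicand and the inverse's middle coordinate with the requirement that the image lie in the upper half space, and fixing the sign in \eqref{eqn:minkowski-cosh} so that $\cosh d_M \ge 1$. Once $p_1 = p_2 = 1$ is imposed, the only real content is the completion of the square displayed above. As alternatives I could instead pull back the differential $ds^2 = (d\alpha\,d\overline\alpha + dz^2)/z^2$ and match it to the Minkowski metric on the hyperboloid, or deduce the statement from the two-dimensional isometry of Theorem~\ref{thm:quadratic-formula} together with the $\PSL_2(\CC)$-equivariance analogous to Theorem~\ref{thm:HST2}; but the global distance comparison above is the most direct route.
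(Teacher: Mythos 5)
Your proposal is correct, but it takes a genuinely different route from the paper's. The paper dismisses the inverse-check as easy and proves the distance claim by reduction: invoking transitivity of the hyperbolic isometry group on the tangent bundle (with details deferred to the analogous \cite[Theorem 4.9]{HST}), it suffices to compare the one-parameter pair $j$, $aj$ in $\HH^3_U$ with the corresponding pair in $\mathbb{H}^3_M$, where both \eqref{eqn:minkowski-cosh} and \eqref{eqn:h3dist} evaluate to $(1+a^2)/(2a)$. You instead verify $\cosh d_M = \cosh d_U$ for an \emph{arbitrary} pair of points, normalizing $p_1 = p_2 = 1$ and completing the square in the bilinear form; your computation is correct, and it avoids any appeal to transitivity or to the $\PSL_2(\CC)$-equivariance (Theorem~\ref{thm:equiv3}, stated only afterwards and left as an exercise) that implicitly underwrites the paper's reduction -- so your argument is more self-contained, at the cost of a slightly longer calculation, and it also supplies the bijection check the paper omits. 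You were moreover right to flag the sign conventions rather than follow the printed formulas blindly: as stated, the radicand $r^2+s^2-pq$ is negative on $\mathbb{H}^3_M$, and the stated inverse sends $a+bi+cj$ to a point with $Q = -c^2$ relative to the middle coordinate $a^2+b^2-c^2$ only after the sign flip you made (with $a^2+b^2+c^2$ one gets $Q = -c^2 < 0$ as required, whereas the printed $a^2+b^2-c^2$ gives $Q = c^2 > 0$, landing in $\mathbb{D}^3_M$); likewise \eqref{eqn:minkowski-cosh} needs the minus sign you inserted, which the paper itself uses later in the formula $d_M(f_\alpha,f_\beta) = \acosh\left( -\langle f_\alpha, f_\beta\rangle/\sqrt{\Delta_\alpha\Delta_\beta}\right)$. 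Indeed the paper's own proof quietly works with the points $[1:-1:0:0]$ and $[1:-a^2:0:0]$, which under the printed definition of $\mathbb{H}^3_M$ are not in $\mathbb{H}^3_M$ at all; your corrected reading is the internally consistent one.
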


	\begin{proof}
		That they are inverses is easy.  We will show distances are preserved.  By the transitivity of the hyperbolic isometries of hyperbolic $3$-space on the tangent bundle, we need only compare points $j$ and $aj$ in $\HH^3_U$, which correspond to points $[1,-1,0,0]$ and $[1,-a^2,0,0]$ in $\HH^3_M$.  Specifically, we can move the first point to $j$ by an isometry (by transitivity on points), and then move the tangent direction to the second point to point upward along the $j$ line (by transitivity on the tangent bundle).  For more details, consult the analogous \cite[Theorem 4.9]{HST}.  Then we need only compute the distance before and after the isometry.
		Using \eqref{eqn:minkowski-cosh}, we find
		\[
			\cosh( d_M( [1:-1:0:0], [1:-a^2:0:0] )) = \frac{ 1+a^2}{ 2a}.
		\]
		Using \eqref{eqn:h3dist}, we find
		\[
		\cosh \left( d_U( j, aj ) \right)
			=
			\frac{
		 1 +a^2 }{ 2a}.
		\]
	\end{proof}

	This is somewhat analogous to the `quadratic formula' isometry between `coefficient space' $\HH^2_M$ and `root space' $\HH^2_U$ discussed previously, in that the map takes $[p : q : r : s]$ to a quaternionic root of the polynomial
	\begin{equation}
		\label{eqn:hermitianpoly}
		p \left( Z - \frac{r + si}{p} \right)^2 - \frac{r^2 +s^2 - pq}{p}.
	\end{equation}
	The only quaternionic root to this polynomial that lies in $\HH^2_U$ is the root mentioned.
We also obtain an equivariance statement.

	\begin{theorem}
		\label{thm:equiv3}
		Identify elements $[ p : q : r : s ]$ of $\mathbb{H}^{3}_{M}$ with matrices $H_{p,q,r,s} := \begin{pmatrix} q & -r+si \\ -r-si & p \end{pmatrix}$.
			Then the map of Theorem~\ref{thm:3isometry} between upper half space $\HH^3_U$ and $\mathbb{H}^{3}_{M}$ is $\PSL_2(\CC)$-equivariant, relating the action of $\PSL_2(\CC)$ via M\"obius transformations on $\HH^3_U$ to the action $M \cdot H_{p,q,r,s} := M^{-1} H_{p,q,r,s} M^{-\dagger}$ on $\mathbb{H}^{3}_{M}$.
\end{theorem}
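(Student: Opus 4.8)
The plan is to prove the intertwining identity $\phi(M \cdot H_{p,q,r,s}) = M \cdot \phi(H_{p,q,r,s})$, where on the left $M \cdot H := M^{-1} H M^{-\dagger}$ and on the right the action is by M\"obius transformation. It is more convenient to verify the equivalent inverse form $\phi^{-1}(M \cdot_{\text{M\"ob}} P) = M^{-1}\,\phi^{-1}(P)\,M^{-\dagger}$, since we have the explicit formula for $\phi^{-1}$ from Theorem~\ref{thm:3isometry}. Before anything else I would confirm that $M \mapsto (H \mapsto M^{-1} H M^{-\dagger})$ is a genuine left action (a homomorphism) and not merely an anti-homomorphism: because both $\dagger$ and matrix inversion reverse the order of products, one must check that the two reversals cancel. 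This is essential, since the reduction to generators below, and the compatibility with the \emph{left} M\"obius action, both require it.

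Granting this, I would invoke the earlier exercise that $\PSL_2(\CC)$ is generated by the translations $\begin{pmatrix} 1 & \beta \\ 0 & 1 \end{pmatrix}$, the dilation--rotations $\begin{pmatrix} \lambda & 0 \\ 0 & \lambda^{-1} \end{pmatrix}$, and the inversion $S = \begin{pmatrix} 0 & -1 \\ 1 & 0 \end{pmatrix}$, so that it suffices to check equivariance on these three families. Translations and dilations are the easy cases: on $\HH^3_U$ a translation sends $\alpha + cj \mapsto (\alpha+\beta) + cj$ and a dilation scales both the $\CC$-coordinate and the height, while on the Hermitian side they act by unipotent and diagonal congruences. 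Feeding $\phi^{-1}$ through $M^{-1}(\cdot)M^{-\dagger}$ and reading off the resulting matrix is then a short computation. The one thing to watch is that the Hermitian conjugate $\dagger$ produces $\bar\beta$ (resp.\ $\bar\lambda$) where a naive real computation would produce $\beta$ (resp.\ $\lambda$), and that these conjugates land in exactly the slots demanded by the complex M\"obius image; this is the feature that distinguishes the genuinely complex statement from its real-coefficient ancestor in Theorem~\ref{thm:HST2}.

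The substantive case, and the step I expect to be the main obstacle, is the inversion $S$. Here the M\"obius image of $\alpha + cj$ is the quaternionic $-(\alpha+cj)^{-1}$, which must be expanded via \eqref{eqn:quat-inverse} with scrupulous attention to the noncommutativity of $i$ and $j$; simultaneously one computes the congruence $S^{-1} H S^{-\dagger}$ and applies $\phi$ to the result. Matching the two sides on the nose is precisely where all sign and conjugation conventions must be pinned down consistently, since inversion (unlike the unipotent and diagonal generators) genuinely mixes the coordinates and so detects whether the Hermitian-transpose and inverse conventions have been chosen correctly.

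As a cleaner conceptual alternative, I would reduce the entire statement to the ideal boundary. The rank-one (null) Hermitian matrices, where $Q(p,q,r,s)=0$, correspond under the boundary limit of $\phi$ to $\partial \HH^3_U = \widehat{\CC}$, and each such matrix factors as $v v^{\dagger}$; the congruence action then restricts to a linear action on the representing vector $v$, which on $\PP^1(\CC)$ is exactly the M\"obius action on $\widehat{\CC}$. Because the congruence by an $\SL_2$-matrix preserves the form $Q$ and hence acts by an orientation-preserving isometry of $\HH^3_M$, while M\"obius transformations are orientation-preserving isometries of $\HH^3_U$, and because a hyperbolic isometry is determined by its values on the boundary, equivariance on $\partial\HH^3$ forces equivariance on all of $\HH^3_M$. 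This route sidesteps the inversion computation entirely and explains structurally why the theorem holds, at the cost of invoking the boundary rigidity of hyperbolic isometries.
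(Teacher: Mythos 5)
First, a point of comparison: the paper does not actually prove Theorem~\ref{thm:equiv3} at all --- it is left as an exercise (the $2$-dimensional analogue, Theorem~\ref{thm:HST2}, is quoted from \cite{HST}) --- so your proposal must stand on its own merits rather than against a model proof.

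Your architecture (reduce to generators, or better, factor the rank-one boundary matrices and invoke boundary rigidity of isometries) is the right one, but the two verifications you explicitly defer are exactly where the content lives, and both fail under the theorem's stated conventions. First, the action check does not come out as you assert: because the daggered factor sits on the right, $(MN)^{-1}H(MN)^{-\dagger}=N^{-1}\bigl(M^{-1}HM^{-\dagger}\bigr)N^{-\dagger}$, so $M\mapsto\bigl(H\mapsto M^{-1}HM^{-\dagger}\bigr)$ is an \emph{anti}-homomorphism, i.e.\ a right action; the homomorphic sandwiches are $H\mapsto MHM^{\dagger}$ and $H\mapsto M^{-\dagger}HM^{-1}$. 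Since the M\"obius action composes covariantly and this one contravariantly, equivariance checked on translations, dilations and the inversion does \emph{not} propagate to products, so the reduction to generators is logically invalid as stated. Nor is this pedantry: in the $2$D analogue, $\rho(M):=M^{-1}DM^{-t}$ agrees with the M\"obius action of $M$ for $M=T$ and $M=S$ individually, yet $\rho(TS)=\rho(S)\rho(T)$ is the M\"obius action of $ST$ --- all generator checks pass while the identity fails for a product.

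Second, your claim that the conjugates ``land in exactly the slots demanded'' is false for the paper's formulas, and your own ``easy case'' already detects it. Take $M=\begin{pmatrix}1&i\\0&1\end{pmatrix}$ and the boundary point $0$, i.e.\ $[p:q:r:s]=[1:0:0:0]$ and $H=\begin{pmatrix}0&0\\0&1\end{pmatrix}$; then $M^{-1}HM^{-\dagger}=\begin{pmatrix}1&-i\\i&1\end{pmatrix}=H_{1,1,0,-1}$, whose image under the map of Theorem~\ref{thm:3isometry} is $-i$, whereas $M\cdot 0=i$. In general the boundary matrices factor as $H_w=u_w^{\vphantom{\dagger}}u_w^{\dagger}$ with $u_w=\begin{pmatrix}-\bar w\\1\end{pmatrix}$, the action sends $u_w\mapsto M^{-1}u_w$, and the induced map on $\widehat{\CC}$ is $w\mapsto(\bar d w+\bar b)/(\bar c w+\bar a)$ --- the M\"obius action of $\overline{EM^{-1}E}$, $E=\operatorname{diag}(1,-1)$, not of $M$. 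So an honest run of your computation would refute the identity rather than verify it: the statement's conventions must be repaired first (these notes are a draft, and the same sign trouble shows in Theorem~\ref{thm:3isometry} itself, whose inverse formula outputs points with $r^2+s^2>pq$ although $\HH^3_M$ is defined by $r^2+s^2<pq$). The clean repair is the untwisted identification $H'_{p,q,r,s}:=\begin{pmatrix}q&r+si\\r-si&p\end{pmatrix}$ with action $M\cdot H':=MH'M^{\dagger}$: then $H'_w=v_w^{\vphantom{\dagger}}v_w^{\dagger}$ for $v_w=(w,1)^t$, the left-action property and the boundary M\"obius match are immediate, and your boundary-rigidity argument --- which is otherwise sound, since the congruence action preserves $\det$ (as $|\det M|=1$) and the isometry of Theorem~\ref{thm:3isometry} extends continuously to $\widehat{\CC}$ --- does close the interior case with no inversion computation. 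In short: right strategy, but the two deferred checks \emph{are} the theorem, and as asserted they are false under the stated conventions.
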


\begin{exercise}
	Prove Theorem~\ref{thm:equiv3} and give the corresponding representation of $\PSL_2(\CC)$ in $O_Q(\RR)$.
\end{exercise}

\subsection{The space of circles}

By a circle in $\widehat{\CC}$, we mean any circle in $\CC$ or the union of any straight line in $\CC$ with $\infty$.  These latter we think of as circles through $\infty$, and they have curvature $0$ (infinite radius).

	Associated to a Hermitian matrix such as $M = \begin{pmatrix} q & -r+si \\ -r-si & p \end{pmatrix}$ in the last section, we have a Hermitian form
	\[
		H_M(Z,W) := \begin{pmatrix} W & Z \end{pmatrix} \overline{M} \begin{pmatrix} \overline W \\ \overline{Z} \end{pmatrix} = pZ\overline Z + (-r+si)Z \overline W + (-r-si)\overline{Z} W + q W \overline{W},\quad p,q,r,s \in \RR.
	\]
	When $\Delta > 0$ (i.e. $M \in \mathbb{D}^3_M$), then the locus $H_M(Z,W)$ in $\PP^1(\CC)$ (or $H_M(z,1)$ in $\CC \cup \{ \infty \}$) gives a circle.

	\begin{theorem}
		\label{thm:spaceofcircles}
			 Let $[p : q : r : s] \in \mathbb{D}^3_M$.  
		Let $H_M(Z,W)$ be the associated Hermitian form.  
			 Then the roots $Z \in \CC$ of $H_M(Z,1)$ form a circle in $\widehat\CC$.  The circle is the boundary of a unique geodesic plane in the upper half space.   Conversely, every circle arises uniquely in this way.  
	\end{theorem}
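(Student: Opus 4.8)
The plan is to analyze the Hermitian form $H_M(Z,W) = pZ\overline{Z} + (-r+si)Z\overline{W} + (-r-si)\overline{Z}W + qW\overline{W}$ directly in the affine chart $W=1$, and to verify each of the three claims---that the locus is a circle, that it bounds a unique geodesic plane, and that the correspondence is a bijection---in turn. First I would write $Z = u + vi$ with $u,v \in \RR$ and expand $H_M(Z,1) = 0$, obtaining
\begin{equation}
	\label{eqn:circleproof}
	p(u^2 + v^2) - 2ru - 2sv + q = 0.
\end{equation}
When $p \neq 0$ this is visibly the equation of a Euclidean circle, with centre $(r/p, s/p)$ and radius-squared $(r^2 + s^2 - pq)/p^2$, which is positive precisely because $[p:q:r:s] \in \mathbb{D}^3_M$ means $r^2 + s^2 > pq$. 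When $p = 0$, equation \eqref{eqn:circleproof} degenerates to the line $-2ru - 2sv + q = 0$ (the condition $r^2 + s^2 > 0$ guarantees this is a genuine line, not vacuous), which we regard as a circle through $\infty$ with curvature $0$, consistent with the paper's convention. This handles the first claim in both cases.

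Next I would produce the geodesic plane. The cleanest route is to invoke the ambient Minkowski structure: I would appeal to Theorem~\ref{thm:equiv3}, under which $[p:q:r:s]$ lies in the anti-de Sitter space $\mathbb{D}^3_M$ exterior to the light cone, and to the analog of Theorem~\ref{thm:HST3}. That is, the vector $[p:q:r:s]$ with $Q > 0$ determines its Minkowski-orthogonal hyperplane, whose intersection with $\mathbb{H}^3_M$ is a geodesic plane; transporting this through the isometry of Theorem~\ref{thm:3isometry} gives a geodesic plane in $\HH^3_U$. I would then check that the boundary at infinity of this geodesic plane is exactly the circle cut out by \eqref{eqn:circleproof}. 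This is most transparently done by noting that a point $\alpha + 0\cdot j$ on the boundary corresponds under the inverse map $a+bi+cj \mapsto [1 : a^2+b^2-c^2 : a : b]$ (with $c \to 0$) to the light-cone vector $[1 : u^2+v^2 : u : v]$, and that Minkowski-orthogonality of this vector to $[p:q:r:s]$---using $\langle\cdot,\cdot\rangle_Q$ with $r_1r_2 + s_1s_2 - \tfrac{1}{2}p_1q_2 - \tfrac{1}{2}p_2q_1$---reproduces precisely \eqref{eqn:circleproof}. So the light-cone points orthogonal to $[p:q:r:s]$ are exactly the circle, and these are the ideal endpoints of the geodesic plane.

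For uniqueness and the converse, I would argue projectively. A geodesic plane in $\HH^3_U$ is determined by its ideal boundary circle, and conversely a circle determines the plane it bounds; meanwhile the hyperplane orthogonal to a nonzero vector in $\PP M^{3,1}$ depends only on the projective class $[p:q:r:s]$, and distinct projective classes give distinct orthogonal hyperplanes because $\langle\cdot,\cdot\rangle_Q$ is nondegenerate. Thus the assignment $[p:q:r:s] \mapsto \{\text{circle}\}$ is injective. For surjectivity, given any circle in $\widehat{\CC}$ of the form \eqref{eqn:circleproof} (including lines, the $p=0$ case), I would read off $(p,q,r,s)$ up to scale from its equation, which manifestly satisfies $r^2 + s^2 > pq$, hence lies in $\mathbb{D}^3_M$ and maps back to the given circle. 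The main obstacle I anticipate is bookkeeping the degenerate $p=0$ (line/circle-through-$\infty$) case consistently across all three claims---ensuring the ``geodesic plane'' and the orthogonality computation behave correctly at the boundary point $\infty$---and taking care that the projective normalization is handled uniformly so that the bijection is genuinely well-defined on $\mathbb{D}^3_M$ rather than on representative vectors.
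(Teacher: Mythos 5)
Your proposal is correct and follows essentially the same route as the paper: you expand $H_M(Z,1)=0$ into $p(u^2+v^2)-2ru-2sv+q=0$ (the paper instead completes the square to exhibit centre $(r+si)/p$ and radius $\sqrt{r^2+s^2-pq}/p$, the positivity of $r^2+s^2-pq$ coming from $[p:q:r:s]\in\mathbb{D}^3_M$), and you obtain the converse exactly as the paper does, by recovering the projective class $[p:q:r:s]$ from the circle's equation. The two places where you supply more detail are places the paper glosses: the degenerate $p=0$ line case (which the paper's completed-square form silently excludes), and the identification of the circle with the ideal boundary of the Minkowski-orthogonal geodesic plane, which is precisely the exercise the paper poses immediately after the theorem and which your computation $\left\langle [1:u^2+v^2:u:v],\,[p:q:r:s]\right\rangle_Q = 0 \Leftrightarrow p(u^2+v^2)-2ru-2sv+q=0$ settles correctly.
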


	\begin{proof}
		The equation $H_M(Z,1) = 0$ can be expressed as
		\[
\left(Z - \frac{r + si}{p}\right)\left(\overline{Z} - \frac{r - si}{p}\right) = \frac{r^2 + s^2-pq}{p^2},
		\]
%
		so represents a circle with centre $(r+si)/p$ and radius $\sqrt{r^2+s^2-pq}/p$ whenever $r^2 + s^2 > pq$.  This last inequality follows from $[p: q : r : s] \in \mathbb{D}^3_M$.
%
%
		For the converse, observe that the circle determines $r/p, s/p, q/p$, hence $[p : q : r : s] \in \PP M^4$. 
	\end{proof}

	Thus, it makes sense to consider the projectivization of the one-sheeted hyperboloid $\mathbb{D}^{3}_{M}$ to be a parameter space for the circles in $\widehat{\CC}$, i.e. \emph{the space of circles}.  Normalizing so that $r^2 + s^2 - pq = 1$, the coordinates have the following somewhat standard names:
	\begin{enumerate}
		\item $p$ = \emph{curvature}, which is the inverse of radius;
		\item $r$ = real part of curvature times center (the curvature times centre is sometimes more simply called the \emph{curvature-centre});
		\item $s$ = imaginary part of curvature times center;
		\item $q$ = \emph{co-curvature}, which is the curvature of the inversion of the circle through the unit circle.
	\end{enumerate}

	\begin{exercise}
		Show that the circle associated to each point of $\mathbb{D}^3_M$ via Theorem~\ref{thm:spaceofcircles} is the same circle obtained by the following process.  For a vector $\mathbf{v} \in \mathbb{D}^3_M$, take the hyperplane $P$ perpendicular to $\mathbf{v}$ in the Minkowski geometry (i.e., with respect to $\langle \cdot , \cdot \rangle_Q$).  Take the intersection of $P$ with $\mathbb{H}^3_M$ and call the intersection $I$.  Then use the hyperbolic isometry of Theorem~\ref{thm:3isometry} to map $I$ into $\HH^3_U$.  This should provide a geodesic plane $G$.  Take the circle $\mathcal{C}$ at the boundary of $G$.  Revisit Figure~\ref{fig:cone-cut}, imagining an extra spatial dimension.
	\end{exercise}

	Recall that $\PSL_2(\CC)$ acts on $\widehat{\CC}$.  
It will be helpful later to understand the image of $\widehat{\RR}$ under a M\"obius transformation; this is a straightforward computation.

	\begin{proposition}[{\cite[Proposition 3.7]{StangeVis}}]
		\label{prop:mobR}
		Let $z \mapsto \frac{\alpha z + \beta}{ \gamma z + \delta}$ be a M\"obius transformation.  Then the image of $\widehat{\RR}$ under the transformation is a circle with curvature equal to $2\Im(\overline\gamma {\delta}) = i(\gamma\overline{\delta} - \overline{\gamma}\delta)$, and curvature times center equal to $i(\alpha \overline{\delta} - \overline{\gamma}\beta)$.  Finally, the co-curvature is $2\Im(\overline\alpha \beta) = i(\alpha \overline\beta - \overline\alpha \beta)$.
	\end{proposition}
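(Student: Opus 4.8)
The plan is to recognize $\widehat{\RR}$ itself as a point of the space of circles $\mathbb{D}^3_M$ and then transport it by the Möbius map, reading the answer off a single matrix conjugation. First I would homogenize the defining condition of $\widehat{\RR}$. A point $z = Z/W$ lies on $\widehat{\RR}$ exactly when $\Im(z) = 0$, which after clearing denominators is the Hermitian condition $i(Z\overline{W} - \overline{Z}W) = 0$. Writing a Hermitian form as $\mathbf{v}^\dagger A \mathbf{v}$ with $\mathbf{v} = (Z,W)^t$ and matching against the form $H_M(Z,W) = pZ\overline{Z} + (-r+si)Z\overline{W} + (-r-si)\overline{Z}W + qW\overline{W}$ of Theorem~\ref{thm:spaceofcircles} yields the dictionary $A = \begin{pmatrix} p & -r-si \\ -r+si & q \end{pmatrix}$; under it, $\widehat{\RR}$ corresponds to the coordinates $(p,q,r,s) = (0,0,0,1)$, i.e. to the matrix $A_0 = \begin{pmatrix} 0 & -i \\ i & 0 \end{pmatrix}$.

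Next I would derive the transformation rule directly from the defining property of the image. For $M = \begin{pmatrix} \alpha & \beta \\ \gamma & \delta \end{pmatrix}$ (normalized so that $\alpha\delta - \beta\gamma = 1$), a point $\mathbf{w} = (Z',W')^t$ lies on the image of $\widehat{\RR}$ precisely when its preimage $M^{-1}\mathbf{w}$ lies on $\widehat{\RR}$, i.e. when $(M^{-1}\mathbf{w})^\dagger A_0 (M^{-1}\mathbf{w}) = 0$. Hence the image is again a circle, with Hermitian matrix
\[
	A' = (M^{-1})^\dagger A_0 M^{-1}.
\]
This is the conjugation action on Hermitian forms governing $\mathbb{D}^3_M$, in the same spirit as the equivariance of Theorem~\ref{thm:equiv3}. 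Rescaling $M$ only rescales $A'$, so the projective point it determines, and thus the circle, is well defined; the proposition is now reduced to computing $A'$.

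The computation is a single $2\times 2$ product using $M^{-1} = \begin{pmatrix} \delta & -\beta \\ -\gamma & \alpha \end{pmatrix}$. Reading the entries of $A'$ against the dictionary $A' = \begin{pmatrix} p' & -r'-s'i \\ -r'+s'i & q' \end{pmatrix}$, the $(1,1)$ entry gives the curvature $p' = i(\gamma\overline{\delta} - \overline{\gamma}\delta)$, the $(2,2)$ entry gives the co-curvature $q' = i(\alpha\overline{\beta} - \overline{\alpha}\beta)$, and the off-diagonal entry gives the curvature-centre $r' + s'i = i(\alpha\overline{\delta} - \overline{\gamma}\beta)$. Finally I would invoke $w - \overline{w} = 2i\,\Im(w)$ to rewrite $i(\gamma\overline{\delta} - \overline{\gamma}\delta) = 2\Im(\overline{\gamma}\delta)$ and $i(\alpha\overline{\beta} - \overline{\alpha}\beta) = 2\Im(\overline{\alpha}\beta)$, matching the stated formulas exactly.

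The only genuine obstacle is bookkeeping of conventions. One must pin down the ordering of $(Z,W)$ versus $(W,Z)$ in the definition of $H_M$, and correspondingly whether the Möbius action conjugates by $M$ or by $M^{-1}$ and on which side; a wrong choice transposes or conjugates the entries and produces, for instance, $2\Im(\overline{\beta}\delta)$ where the co-curvature should appear. For this reason it is safer to obtain $A'$ from the ``preimage lies on $\widehat{\RR}$'' condition above than to quote the point-action of Theorem~\ref{thm:equiv3} verbatim. Once the conventions are fixed the algebra is entirely routine. (A more pedestrian alternative avoids the Hermitian machinery altogether: parametrize $\widehat{\RR}$ by $t \in \RR \cup \{\infty\}$, compute $w(t) = (\alpha t + \beta)/(\gamma t + \delta)$, and fit a circle to this family; this works but is messier and less illuminating.)
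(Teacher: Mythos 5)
Your proof is correct, and I verified the matrix computation: with $\det M = 1$ one gets $A' = M^{-\dagger}A_0M^{-1}$ with entries $p' = i(\gamma\overline{\delta}-\overline{\gamma}\delta)$, $q' = i(\alpha\overline{\beta}-\overline{\alpha}\beta)$, and $r'+s'i = i(\alpha\overline{\delta}-\overline{\gamma}\beta)$, exactly as claimed; your warning about the wrong-sided conjugation producing $2\Im(\overline{\beta}\delta)$ is also accurate (that is the \emph{preimage} of $\widehat{\RR}$, not the image). Note there is no in-paper proof to compare against: the paper states the result with a citation and immediately assigns it as Exercise~\ref{ex:mobR}, so your write-up is precisely the ``straightforward computation'' the surrounding section (Theorems~\ref{thm:spaceofcircles} and~\ref{thm:equiv3}) sets up, and your decision to rederive the conjugation action from the preimage condition rather than quote Theorem~\ref{thm:equiv3} verbatim is the right call given the $(Z,W)$-versus-$(W,Z)$ ordering in the paper's definition of $H_M$. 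One small step worth making explicit: reading the entries of $A'$ as the literal curvature, co-curvature, and curvature-centre (rather than these quantities up to a positive scalar) uses the normalization $r'^2+s'^2-p'q' = 1$, which follows from $\det A' = |\det M^{-1}|^2\det A_0 = -1$ when $\det M = 1$; without that remark, ``the $(1,1)$ entry gives the curvature'' is only true projectively. Finally, your identification of $\widehat{\RR}$ with $(0,0,0,1)$ is self-consistent and matches the proposition at $M = I$, though you may notice the paper's later lattice argument takes $\widehat{\RR}$ to be $(0,0,0,-1)$; since Hermitian data is only defined up to real scaling, this is an orientation convention and does not affect your argument.
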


	\begin{exercise}
		\label{ex:mobR}
		Prove Proposition~\ref{prop:mobR}.  Furthermore, assume the M\"obius transformation is in $\PSL_2(\ZZ[i])$, and show that for such a circle, the resulting vector $(p,q,r,s) \in \ZZ^4$ in the space of circles is congruent to $(0,0,0,1)$ modulo $2$.
	\end{exercise}

	\begin{exercise}
		What are the correct definitions of curvature and curvature-center for straight lines?
	\end{exercise}


\section{Diophantine approximation in the complex plane}

It is natural to ask some of our fundamental Diophantine approximation questions for complex numbers.  This is where we can reap the benefits of the geometric perspective of the last section.  Since the rationals cannot approximate complex numbers off the real line, we must begin by asking what we are hoping to approximate by.  One natural answer is to ask to approximate by algebraic numbers of a fixed degree.  Another is to approximate by elements of a fixed number field.  Here, we will consider the first (later, we will consider the second).

The next question is how to measure the size of an approximation.  In the real/rational case, we used the denominator of $p/q \in \QQ$ as a natural measure of size.  For a general algebraic number $\alpha$, having minimal polynomial $a_d x^d + \cdots + a_0 \in \ZZ[x]$, where $\gcd(a_i) = 1$, the \emph{na\"ive height} is defined as
\[
	H(\alpha) := \max_i |a_i|.
\]
Then, in analogy to Dirichlet's and Roth's theorems, Koksma defines \cite{Koksma} 
\[
	k_d (\alpha) :=  \sup \{ k : \text{ there exist infinitely many algebraic $\beta$ of degree $\le d$ such that } | \alpha - \beta | < 1/H(\beta)^k \}.
\]
In this language, Dirichlet's Theorem states that $k_1(\alpha) \le 2$ for rational $\alpha$ and $\ge 2$ for $\alpha \in \RR \setminus \QQ$.  Roth's theorem says that $k_1(\alpha) = 2$ for algebraic $\alpha$, and Sprind\u zuk says $k_1(\alpha) = 2$ for almost all real $\alpha$.

\begin{theorem}[{Sprind\u zuk, \cite{Sprindzuk}}] 
	$k_d(\alpha) = d + 1$ for almost all $\alpha \in \RR$ and $k_d(\alpha) = \frac{d+1}{2}$ for almost all $\alpha \in \CC \setminus \RR$.
\end{theorem}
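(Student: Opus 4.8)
Since the statement is Lebesgue-almost-everywhere, I would prove each claimed equality by matching an upper and a lower bound on $k_d(\alpha)$ valid for almost every $\alpha$. Write $\tau$ for the target exponent, so $\tau = d+1$ on $\RR$ and $\tau = (d+1)/2$ on $\CC \setminus \RR$. The bound $k_d(\alpha) \le \tau$ a.e. will be a convergence (Borel--Cantelli) argument, while $k_d(\alpha) \ge \tau$ a.e. is a divergence argument and the harder half. The engine of both directions is the counting of algebraic numbers of degree $\le d$ and bounded height: the integer polynomials of degree $\le d$ and height $\le H$ are the lattice points of a box of side $\asymp H$ in $\ZZ^{d+1}$, so there are $\asymp H^{d+1}$ of them, a positive proportion irreducible, each contributing $O(d)$ roots with disjoint root sets across distinct irreducibles. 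Hence a dyadic height block contains $\asymp H^{d+1}$ such algebraic numbers. In the real case these populate a bounded interval, giving typical spacing $\asymp H^{-(d+1)}$; in the complex case they populate a bounded planar region, giving spacing $\asymp H^{-(d+1)/2}$. This already explains the factor of two: a disk of radius $r$ in $\CC$ has area $\asymp r^2$, whereas an interval of radius $r$ in $\RR$ has length $\asymp r$. Throughout I would also need two standard technical facts about polynomials --- a bound on the height of a factor in terms of the height of the product (Gelfond--Mahler), and a root-separation estimate --- to pass freely between an integer polynomial $P$ with $|P(\alpha)|$ small and a genuine algebraic approximant $\beta$ with $H(\beta)$ controlled.

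\textbf{Upper bound ($k_d(\alpha) \le \tau$ a.e.).} Fix $\epsilon > 0$ and let $E_\epsilon$ be the set of $\alpha$ admitting infinitely many $\beta$ of degree $\le d$ with $|\alpha - \beta| < H(\beta)^{-(\tau+\epsilon)}$. Intersected with any bounded region, $E_\epsilon$ is contained in the $\limsup$ of the balls $B(\beta, H(\beta)^{-(\tau+\epsilon)})$. Organizing the sum of measures into dyadic height blocks and using the count $\asymp H^{d+1}$ per block, the real case gives
\[
\sum_{H = 2^n} H^{d+1} \cdot H^{-(d+1+\epsilon)} = \sum_n 2^{-n\epsilon} < \infty,
\]
while the complex case, where each disk has area $\asymp r^2$, gives
\[
\sum_{H = 2^n} H^{d+1} \cdot \left( H^{-((d+1)/2 + \epsilon)} \right)^2 = \sum_n 2^{-2n\epsilon} < \infty.
\]
In both cases Borel--Cantelli yields measure zero for $E_\epsilon$, and taking the union over $\epsilon = 1/m$ shows $k_d(\alpha) \le \tau$ for almost every $\alpha$.

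\textbf{Lower bound ($k_d(\alpha) \ge \tau$ a.e.).} Here the naive Borel--Cantelli direction fails because the approximating balls are far from independent, so I would instead verify that the algebraic numbers of degree $\le d$ form a \emph{regular system} (in the sense of Baker--Schmidt), equivalently a \emph{ubiquitous system}, with counting function comparable to $H^{d+1}$: in any subinterval (resp.\ subdisk) and for large $T$ there are $\asymp T^{d+1}$ such numbers of height $\lesssim T$ that are pairwise separated by $\gtrsim T^{-(d+1)}$ (resp.\ the planar analogue with separation $\gtrsim T^{-(d+1)/2}$). Granting this, the general metric theorem for regular systems produces, for almost every $\alpha$, infinitely many $\beta$ with $|\alpha - \beta| < H(\beta)^{-(\tau - \epsilon)}$, giving $k_d(\alpha) \ge \tau$.

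\textbf{Main obstacle.} The genuinely hard step is the lower bound, and within it the verification of the regular-system property: one needs not merely that there are $\asymp H^{d+1}$ algebraic numbers of bounded height, but that a positive proportion of them are \emph{well separated}, so that they actually fill the line or plane at the predicted scale rather than clustering. Controlling this separation is exactly where the quantitative root-separation and Gelfond-type height estimates enter, and one must carefully discard the rare configurations of nearly-equal conjugates; in the complex case the two-dimensional geometry of conjugate pairs adds further bookkeeping. The upper bound, by contrast, is essentially routine once the dyadic counting is in hand.
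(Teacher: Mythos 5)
The paper does not prove this statement: it is quoted directly from the literature with a citation to Sprind\u zuk, so there is no internal proof to compare yours against, and your proposal must be judged on its own terms. Your convergence half is sound and essentially complete in outline: the lattice-point count $\asymp H^{d+1}$ for integer polynomials of degree $\le d$ and height $\le H$, the dyadic Borel--Cantelli sums, and in particular your explanation of the factor of two (an interval of radius $r$ has length $\asymp r$ while a disk of radius $r$ has area $\asymp r^2$) are all correct; this gives $k_d(\alpha) \le d+1$ a.e.\ on $\RR$ and $k_d(\alpha) \le (d+1)/2$ a.e.\ on $\CC \setminus \RR$ exactly as you say (with the minor caveat that the complex statement requires $d \ge 2$, since non-real algebraic numbers of degree $\le d$ do not exist otherwise).

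The genuine gap is in the divergence half, and it is more serious than the ``bookkeeping'' you describe. You reduce $k_d(\alpha) \ge \tau$ a.e.\ to the assertion that the algebraic numbers of degree $\le d$ form a regular (ubiquitous) system of optimal size, and you propose to verify this using Gelfond-type height bounds and root-separation estimates. But those tools alone cannot deliver the separation property: the classical proof that real algebraic numbers form an optimal regular system, due to Baker and Schmidt, takes Sprind\u zuk's theorem itself --- the almost-everywhere solution of Mahler's problem, $w_d(\alpha) = d$ a.e., i.e.\ the statement that the set where integer polynomials take abnormally small values at $\alpha$ is null --- as an \emph{input}. So as written, your ``granting this'' step conceals the entire content of the theorem, and the plan is circular. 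A non-circular execution needs one of: Sprind\u zuk's own method (his inductive measure estimates via essential and inessential domains, applied to polynomial values $|P(\alpha)|$ rather than root distances, followed by a transference to Koksma's $k_d$ using precisely the derivative control $|\alpha - \beta| \ll |P(\alpha)|/|P'(\alpha)|$ you allude to --- note $w_d^* \le w_d$ in general, so the a.e.\ equality is itself part of the work); or Beresnevich's later independent establishment of ubiquity for algebraic numbers; or quantitative nondivergence methods in the style of Kleinbock--Margulis. Your instinct that the lower bound is the hard half, and that near-multiple roots are the enemy, is exactly right --- but the proposal as it stands defers rather than supplies the decisive estimate.
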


This theorem shows that approximation by algebraic numbers is fundamentally different on the real line than off of it.  Next, here is a tantalyzingly precise description of $k_d(\alpha)$ for $\alpha$ algebraic.

\begin{theorem}[Bugeaud-Evertse \cite{BugeaudEvertse}]
	\label{thm:bugeaud-evertse}
	Let $\alpha \in \CC \setminus \RR$ be algebraic.  Then
	\[
		k_d(\alpha) = \left\{
			\begin{array}{ll}
				\frac{d+1}{2} \text{ or } \frac{d+2}{2}  & \text{ if } \deg(\alpha) \ge d+2, d \text{ even} \\
				\min\left\{ \frac{deg(\alpha)}{2}, \frac{d+1}{2} \right\} & \text{ otherwise }
			\end{array}
			\right.
		\]
		For $d=2$, $\deg(\alpha) > 2$:
		\[
			k_d(\alpha) = \left\{
				\begin{array}{ll}
					2 & \text{ if }1,\alpha \overline{\alpha}, \alpha + \overline{\alpha} \text{ are $\QQ$-independent} \\
					3/2 & \text{otherwise} 
				\end{array}
				\right.
			\]
\end{theorem}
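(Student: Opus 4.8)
The plan is to translate approximation of $\alpha$ by algebraic $\beta$ of degree $\le d$ into approximation of $0$ by the values $P(\alpha)$ of integer polynomials $P$ of degree $\le d$, and then read off both directions (existence and impossibility of very good approximations) from this dictionary. The elementary lemma I would establish first is that if $P\in\ZZ[x]$ has degree $\le d$ and $\beta$ is the root of $P$ nearest $\alpha$, assumed simple and isolated, then
\[
	|\alpha-\beta|\asymp\frac{|P(\alpha)|}{|P'(\alpha)|},
\]
while $H(\beta)\le H(P)$ (equality up to the content, which only helps). Since $\alpha\in\CC\setminus\RR$ and $P$ is real, roots come in conjugate pairs: a root $\beta$ near $\alpha$ forces $\overline\beta$ near $\overline\alpha$, with $|\alpha-\overline\beta|\asymp|\Im\alpha|$ bounded. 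Thus the only small factors of $P(\alpha)$ and of $P(\overline\alpha)=\overline{P(\alpha)}$ are $(\alpha-\beta)$ and its conjugate; this pairing is the source of every factor of $2$ in the exponents.

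\textbf{Lower bounds and the resultant upper bound.} For existence I would apply Minkowski's theorem to the lattice $\ZZ^{d+1}$ of coefficient vectors. A degree-$\le d$ polynomial has $d+1$ coefficients, while ``$P(\alpha)\in\CC$ small'' is two real linear conditions; balancing these in a box of height $H$ produces a nonzero $P$ with $H(P)\le H$ and $|P(\alpha)|\ll H^{-(d-1)/2}$. For $\alpha$ of large degree one checks $|P'(\alpha)|\asymp H$ and simplicity of the nearest root, giving $|\alpha-\beta|\ll H^{-(d+1)/2}$, hence $k_d(\alpha)\ge\tfrac{d+1}{2}$; for small $n=\deg\alpha$ the multiples of the minimal polynomial of $\alpha$ interfere (they only produce $\beta=\alpha$) and Liouville's inequality caps $|P(\alpha)|$ from below at $H^{-(n-2)/2}$, yielding $k_d(\alpha)\ge\tfrac{n}{2}$. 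For impossibility, the elementary input is a resultant estimate: if $g,f$ are the minimal polynomials of $\beta$ and $\alpha$ then $\mathrm{Res}(f,g)$ is a nonzero integer, so $1\le|\mathrm{Res}(f,g)|\ll_\alpha|\alpha-\beta|^2H(\beta)^{\deg\alpha}$, i.e. $k_d(\alpha)\le\tfrac{\deg\alpha}{2}$. Together these pin the answer to $\min\{\tfrac{\deg\alpha}{2},\tfrac{d+1}{2}\}$ \emph{except} that capping the exponent at $\tfrac{d+1}{2}$ for large $\deg\alpha$ requires the deep input, the Schmidt Subspace Theorem: from infinitely many $\beta$ beating the claimed exponent one builds several independent small-linear-form inequalities in the coefficients of $P$, forcing them into a proper rational subspace where the excess is impossible. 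I expect the reason this argument cannot always be pushed to $\tfrac{d+1}{2}$ when $d$ is even — the conjugate-symmetric pairing then leaves one unused even slot — to be exactly what produces the ambiguity ``$\tfrac{d+1}{2}$ or $\tfrac{d+2}{2}$.''

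\textbf{The case $d=2$.} Here I would resolve the ambiguity explicitly by a sum/product reparametrisation. Writing the approximant's minimal polynomial as $ax^2+bx+c$ with roots $\beta,\overline\beta$, closeness to $\alpha$ is governed by $-b/a\approx\alpha+\overline\alpha=:s$ and $c/a\approx\alpha\overline\alpha=:p$, and one computes $|\alpha-\beta|\asymp a^{-1}\max(\|as\|,\|ap\|)$ and $|P'(\alpha)|\asymp a$, so with $H(\beta)\asymp a$ the exponent is $k_2(\alpha)=1+\mu$, where $\mu$ is the exponent of \emph{simultaneous} rational approximation of $(s,p)=(\alpha+\overline\alpha,\alpha\overline\alpha)$ with common denominator $a$. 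If $1,s,p$ are $\QQ$-independent this is a genuine two-dimensional problem: Dirichlet gives $\mu\ge\tfrac12$ and, as $s,p$ are real algebraic, the Subspace Theorem gives $\mu\le\tfrac12$, so $k_2(\alpha)=\tfrac32$ — consistent with the almost-everywhere value $(d+1)/2$ from the theorem of Sprind\v zuk quoted above, which serves as a useful internal check on which case carries which value. If instead $1,s,p$ satisfy a rational relation, the problem collapses to one dimension (small $\|as\|$ forces small $\|ap\|$), Dirichlet gives $\mu=1$ and Roth caps it there, so $k_2(\alpha)=2$. Throughout, $b^2-4ac\approx a^2(\alpha-\overline\alpha)^2<0$ keeps the discriminant negative, so the approximants are genuine quadratic irrationals, and $\deg\alpha>2$ keeps them distinct from $\alpha$.

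\textbf{Main obstacle.} The crux is the upper bound via the Schmidt Subspace Theorem: both choosing the correct family of linear forms so that $\tfrac{d+1}{2}$ emerges, and, for even $d$, the delicate bookkeeping of the conjugate-symmetric slot separating $\tfrac{d+1}{2}$ from $\tfrac{d+2}{2}$ — which only the $d=2$ sum/product reduction makes fully transparent. By comparison the geometry-of-numbers lower bounds and the resultant bound are routine once the root-isolation and content lemmas are in hand.
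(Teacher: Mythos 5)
The paper does not prove this theorem: it is quoted from Bugeaud--Evertse with only the remark that the methods are algebraic and rest on Schmidt's subspace theorem, so there is no internal proof to compare yours against. Your outline does match the skeleton of the argument in the literature: the dictionary $|\alpha-\beta|\asymp |P(\alpha)|/|P'(\alpha)|$, the conjugate pairing $(\alpha-\beta)(\overline\alpha-\overline\beta)$ that halves all exponents relative to the real case, Minkowski in the $(d+1)$-dimensional coefficient lattice against two real linear conditions for the lower bound $\frac{d+1}{2}$, the resultant--Liouville bound $1\le |\operatorname{Res}(f,g)| \ll_\alpha H(\beta)^{\deg\alpha}\,|\alpha-\beta|^2$ for the cap $\frac{\deg\alpha}{2}$, and the subspace theorem for the cap at $\frac{d+1}{2}$, with the parity of the $d+1$ coefficient slots as the source of the even-$d$ ambiguity. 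Judged as a proof, however, it has genuine gaps at exactly the two points you flag: in the Minkowski step, guaranteeing $|P'(\alpha)|\asymp H$ and a simple, genuinely new nearest root for infinitely many of the constructed polynomials is a Wirsing-type argument that constitutes most of the work (and $H(\beta)\le H(P)$ should be $H(\beta)\ll_d H(P)$ via Gelfond's inequality, since a factor of $P$ can have slightly larger height); and the subspace-theorem upper bound --- the choice of linear forms and the even-$d$ bookkeeping --- is named but not executed, which is precisely the substance of Bugeaud--Evertse's paper. So what you have is an accurate roadmap rather than a proof.

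Your $d=2$ analysis, on the other hand, is correct in detail, and it contradicts the statement as printed: you obtain $k_2(\alpha)=3/2$ when $1,\alpha\overline\alpha,\alpha+\overline\alpha$ are $\QQ$-independent and $k_2(\alpha)=2$ when they are dependent, the opposite assignment to the display above. You are right; the draft has the two cases transposed. Your computation ($b\approx -as$, $c\approx ap$, $H(\beta)\asymp a$, $|\alpha-\beta|\asymp a^{-1}\max(|as+b|,|ap-c|)$, so $k_2=1+\mu$ with $\mu$ the simultaneous approximation exponent of $(s,p)$, equal to $1/2$ in the independent case by Schmidt and to $1$ in the dependent case by Dirichlet--Roth) is corroborated inside the paper itself: Sprind\u{z}uk's theorem quoted just above gives $k_2(\alpha)=3/2$ for almost all non-real $\alpha$, and almost all $\alpha$ avoid the countably many rational geodesics, i.e.\ have $1,\alpha\overline\alpha,\alpha+\overline\alpha$ independent (Proposition~\ref{prop:rational-geodesic}); likewise the quoted \cite[Theorems 6.3 and 6.6]{HST} produce exponent-$2$ approximations exactly along rational geodesics (the dependent case) and cap the exponent at $3/2$ off them. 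One can also see the printed version is internally inconsistent: a non-real cubic $\alpha$ always has $1,\alpha\overline\alpha,\alpha+\overline\alpha$ independent (a dependence forces $\deg\alpha$ even), so the printed second display would give $k_2(\alpha)=2$ while the first display gives $\min\{3/2,3/2\}=3/2$; with your assignment both displays agree. Your Sprind\u{z}uk internal check did exactly what it was designed to do, and this transposition should be reported as an erratum for the draft.
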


The underlying methods for this are algebraic.  This theorem, like Roth's Theorem, follows from a far-reaching theorem called Schmidt's subspace theorem.

\subsection{Quadratics}

We are interested in finding geometric connections or explanations for Theorem~\ref{thm:bugeaud-evertse}.  
In particular, the result shows that approximation by quadratics is quite different for certain types of complex numbers compared to others.  Our starting point is an analog to Figure~\ref{fig:ratsDots} showing the rational numbers.  In Figure~\ref{fig:quadratics}, we see the quadratic algebraic numbers in the upper half plane, sized by their discriminants.
	\begin{figure}
                \includegraphics[width=5.7in]{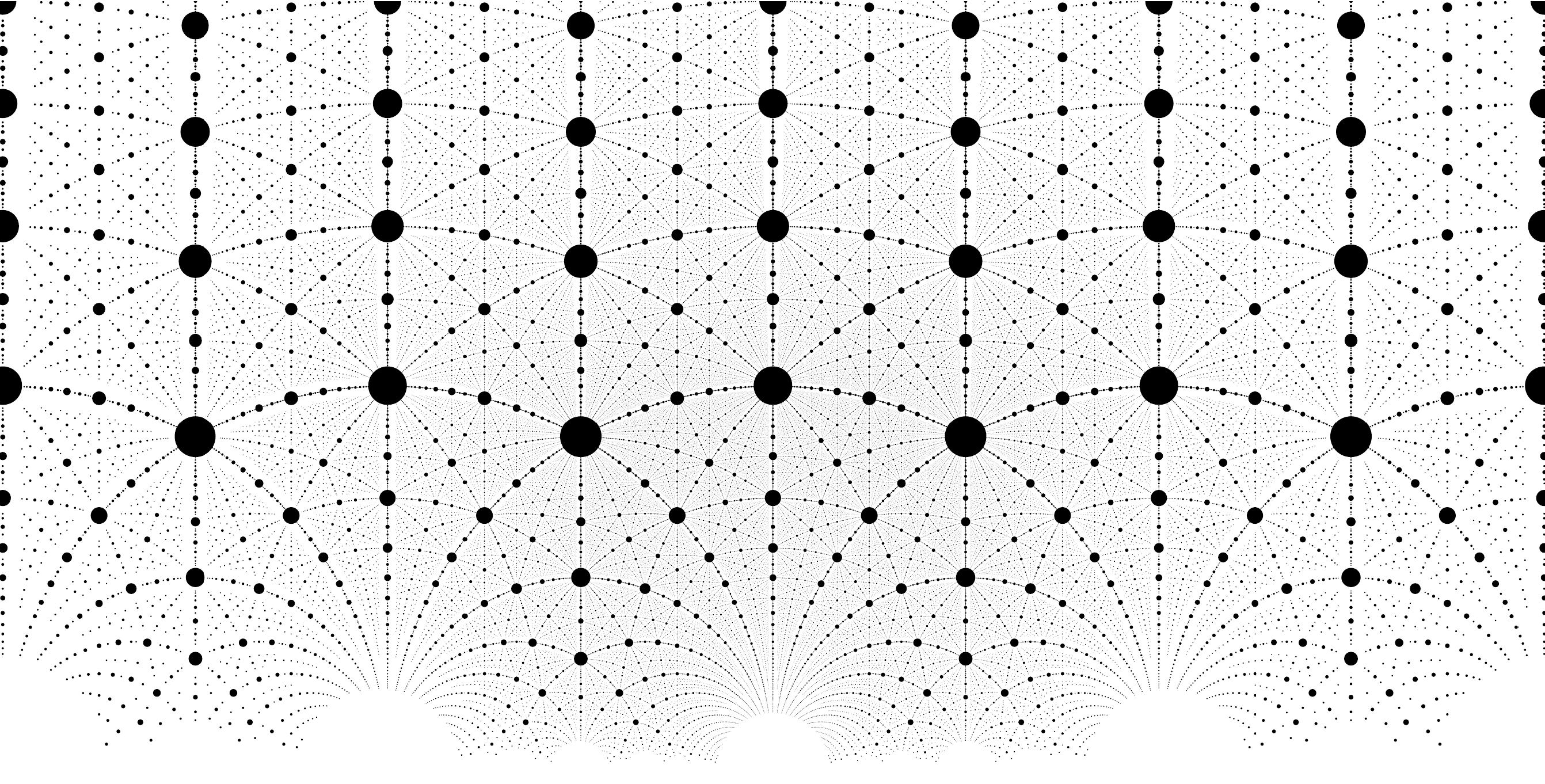}
		\caption{The quadratic algebraic numbers in the upper half plane, sized by discriminant.  Image:  Edmund Harriss, Steve Trettel, Katherine E. Stange.}
		\label{fig:quadratics}
	\end{figure}
	In this picture, our eyes infer the $\SL_2(\ZZ)-$tessellation from Figure~\ref{fig:sl2}, but with the geodesic boundaries -- and many more geodesics besides -- filled in with the pearly necklaces of Figure~\ref{fig:ratsDots}.  In light of the $\ZZ^2$ hiding in Figure~\ref{fig:ratsDots}, this image asks us a question:  are we viewing a higher dimensional lattice under some transformation?

	To answer this question, we return to the coefficient and roots spaces $\HH^2_M$ and $\HH^2_U$ of Section~\ref{sec:hyper}.
	Within the coefficient space $\HH^2_M$, we have the quadratic algebraics: $\HH^2_{M}(\ZZ) := \{ [a:b:c] : a,b,c \in \mathbb{Z}, b^2 < 4ac \}$.  This is the projectivization of the portion of the 3D lattice $\ZZ^3$ which lies inside the light cone.  The na\"ive height is, roughly speaking, a measure of how far the representative $[a:b:c]$ with $\gcd(a,b,c)=1$ is from the origin.  So we are in a situation not unlike our description of continued fractions in the real line as the projectivization of the lattice $\ZZ^2$.
	That is, the relationship between $\HH^2_M(\ZZ)$ and Figure~\ref{fig:quadratics} is described by the transformation of Theorem ~\ref{thm:quadratic-formula}, which describes Figure~\ref{fig:quadratics} as an image of a 3D lattice under a certain geometric transformation.
		See Figure~\ref{fig:lattice-to-alg}.

	\begin{figure}
	  \includegraphics[width=5.5in]{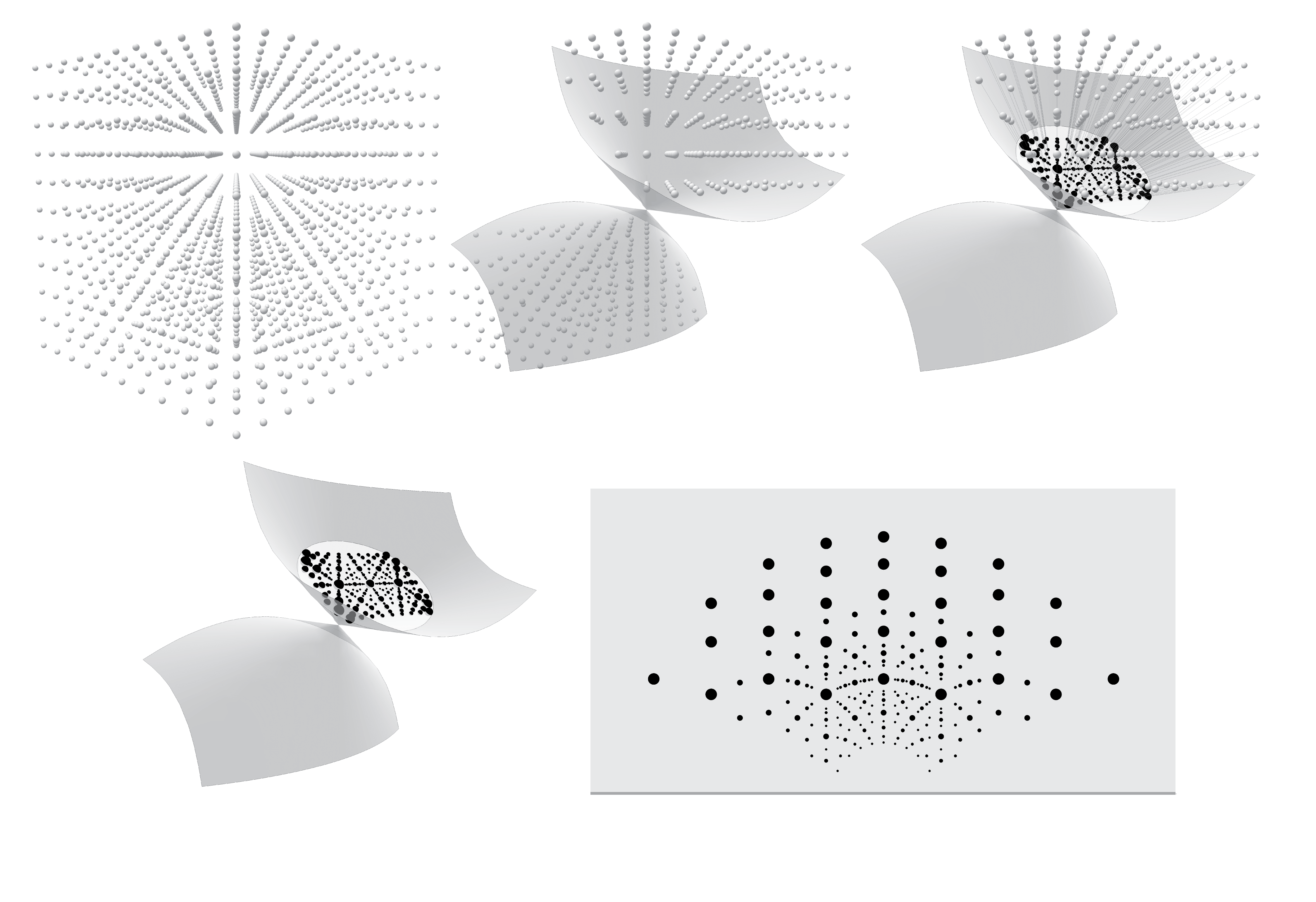}
		\caption{The transformation of polynomials with integer coefficients from `coefficient space' $\mathbb{H}^2_M$ to `root space' $\HH^2_U$.   From top left, left-to-right and top-to-bottom:  The lattice $\ZZ^3 \subseteq M^{2,1}$ representing polynomials with integer coefficients (i.e., having algebraic roots); the cone $B^2-4AC = 0$ and the portion of the lattice within it, representing those polynomials with complex conjugate algebraic roots; the projection onto a plane perpendicular to the axis of symmetry of the cone (the Klein disc model); the image of the lattice under the hyperbolic isometry of Theorem~\ref{thm:quadratic-formula} in the upper half plane. Image: \cite[Figure 24]{HST}. }
	  \label{fig:lattice-to-alg}
  \end{figure}

	Under this transformation, the planes inside the lattice $\ZZ^3$ projectivize to geodesics on the Poincar\'e disc model and hence in the upper half plane; these are the visually striking necklaces in Figure~\ref{fig:quadratics}.  We have the following consequence:

	\begin{proposition}[{\cite[Observation 4.12]{HST}}]
		\label{prop:rational-geodesic}
		Let $\alpha \in \CC \setminus \RR$.  Then $1, \alpha \overline{\alpha}, \alpha + \overline{\alpha}$ are $\QQ$-dependent if and only if $\alpha$ lies on a geodesic whose limit points form a conjugate pair of points in a real quadratic field, or a pair of rational points.
	\end{proposition}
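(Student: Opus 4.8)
The plan is to transport the entire statement into the Minkowski/coefficient picture of Section~\ref{sec:hyperupper}, where both sides of the biconditional become linear-algebraic conditions on the same ambient $M^{2,1}$. By the inverse map of Theorem~\ref{thm:quadratic-formula}, the point $\alpha \in \HH^2_U$ corresponds to the interior vector
\[
	\mathbf{v}_\alpha = [\,1 : -\alpha - \overline\alpha : \alpha\overline\alpha\,] \in \HH^2_M .
\]
On the other side, I would first record that a geodesic of $\HH^2_U$ has endpoints forming either a pair of rationals or a conjugate pair in a real quadratic field precisely when it is the geodesic attached, via Theorem~\ref{thm:HST3}, to a \emph{rational} exterior vector $[A:B:C] \in \DD^2_M$. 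Indeed, the endpoints are the roots of $Ax^2 + Bx + C$, and for $A,B,C \in \QQ$ with $B^2 > 4AC$ these are either both rational (when $B^2-4AC$ is a rational square) or Galois-conjugate generators of $\QQ(\sqrt{B^2-4AC})$; conversely, the monic polynomial vanishing at a rational pair or at a real-quadratic conjugate pair has rational coefficients.

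The heart of the argument is then to translate ``$\alpha$ lies on the geodesic of $[A:B:C]$'' into orthogonality. By the normal-plane description in part (2) of Theorem~\ref{thm:HST3}, $\alpha$ lies on this geodesic exactly when $\mathbf{v}_\alpha$ lies in the plane normal to $[A:B:C]$, i.e.
\[
	\langle\, \mathbf{v}_\alpha,\ [A:B:C]\, \rangle_Q = 0 .
\]
Expanding with the inner product $\langle (A_1,B_1,C_1),(A_2,B_2,C_2)\rangle_Q = B_1B_2 - 2A_1C_2 - 2A_2C_1$ gives
\[
	-B(\alpha+\overline\alpha) - 2A\,\alpha\overline\alpha - 2C = 0,
\]
which is visibly a $\QQ$-linear relation among $1,\ \alpha\overline\alpha,\ \alpha+\overline\alpha$ with rational coefficients $(-2C,-2A,-B)$. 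Reading this identity in both directions furnishes the two implications: a rational exterior vector through whose geodesic $\alpha$ passes yields a nontrivial dependence, and conversely any integral (hence rational) dependence $\lambda + \mu\,\alpha\overline\alpha + \nu(\alpha+\overline\alpha)=0$ produces the candidate vector $[A:B:C] = [\,-\mu/2 : -\nu : -\lambda/2\,]$.

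The one genuine obstacle is confirming that this candidate vector really lies in the \emph{exterior} $\DD^2_M$, so that it names an honest geodesic (two distinct real endpoints) rather than a point of $\HH^2_M$ or a null direction. Here the hypothesis $\alpha \notin \RR$ does the work: a short computation gives $B^2 - 4AC = \nu^2 - \lambda\mu$, and writing $\alpha = x+iy$ with $y \neq 0$ the relation becomes $\lambda + \mu(x^2+y^2) + 2\nu x = 0$. Completing the square when $\mu\neq 0$ (the case $\mu=0$ forcing $\nu\neq 0$ and hence $\nu^2-\lambda\mu=\nu^2>0$ is handled directly) yields
\[
	\frac{\nu^2 - \lambda\mu}{\mu^2} = \left(x + \frac{\nu}{\mu}\right)^2 + y^2 \ \ge\ y^2 \ >\ 0 ,
\]
so $B^2 - 4AC > 0$ automatically, closing the equivalence. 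Beyond this, I expect only routine bookkeeping: clearing denominators to integral coefficients and observing that scaling $[A:B:C]$ leaves the geodesic unchanged, so the correspondence between nontrivial dependences and qualifying geodesics is well defined modulo the projective identifications already built into $\DD^2_M$.
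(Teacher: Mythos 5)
Your proof is correct and is exactly the argument the paper intends: the paper leaves this statement as Exercise~\ref{exercise:rational-geodesic}, having already assembled precisely the machinery you invoke (the inverse map $\alpha \mapsto [1 : -\alpha-\overline\alpha : \alpha\overline\alpha]$ of Theorem~\ref{thm:quadratic-formula} and the normal-plane description of geodesics in Theorem~\ref{thm:HST3}), so that the $\QQ$-dependence becomes the orthogonality $\langle \mathbf{v}_\alpha, [A:B:C] \rangle_Q = 0$ and your discriminant computation $\nu^2 - \lambda\mu \ge \mu^2 y^2 > 0$ correctly rules out interior and null vectors. The only point worth a remark is that in your $\mu = 0$ case the resulting geodesic is a vertical line with one endpoint at $\infty$, which must be counted as a rational point of $\widehat{\RR}$ (as the paper's conventions on $\partial \HH^2_U$ allow) for the stated dichotomy to apply verbatim.
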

	We call such geodesics \emph{rational geodesics}.  
The main reference for the remainder of this section is \cite{HST}.
\begin{exercise}
	\label{exercise:rational-geodesic} Prove Proposition~\ref{prop:rational-geodesic}.
\end{exercise}

	\subsection{The Diophantine approximation of quadratics}

	Having this geometric interpretation of the way the quadratic algebraic numbers fill out the complex upper half plane, we make some geometrically motivated choices for the notions of \emph{distance}, \emph{complexity} and \emph{goodness} in Diophantine approximation, that differ somewhat from the algebraically motivated ones made classically:
	\begin{enumerate}
		\item We size quadratics by their discriminant.  Unlike the na\"ive height and other classical measures of arithmetic complexity, the discriminant is invariant under $\PSL_2(\ZZ)$, the natural symmetries of the space.  In the fundamental region, it tracks with the na\"ive height, so this is not a violent change (see \cite[\S 5.2.4]{HST}). 
		\item We use the hyperbolic metric in the upper half plane, not the Euclidean one.  Again, this respects the action of $\PSL_2(\ZZ)$, without doing great violence in any bounded region.  Let $\alpha$ and $\beta$ correspond to points $f_\alpha$ and $f_\beta$ in $\HH^2_{M}$.  The distance in $\HH^2_{M}$ dictated by the Minkowski geometry is given by
			\[
				d_{M}(f_\alpha,f_\beta) = \acosh \left( \frac{ -\langle f_\alpha, f_\beta \rangle }{\sqrt{\Delta_\alpha \Delta_\beta}} \right).
			\]
	\end{enumerate}

	\begin{figure}
	  \includegraphics[width=4.5in]{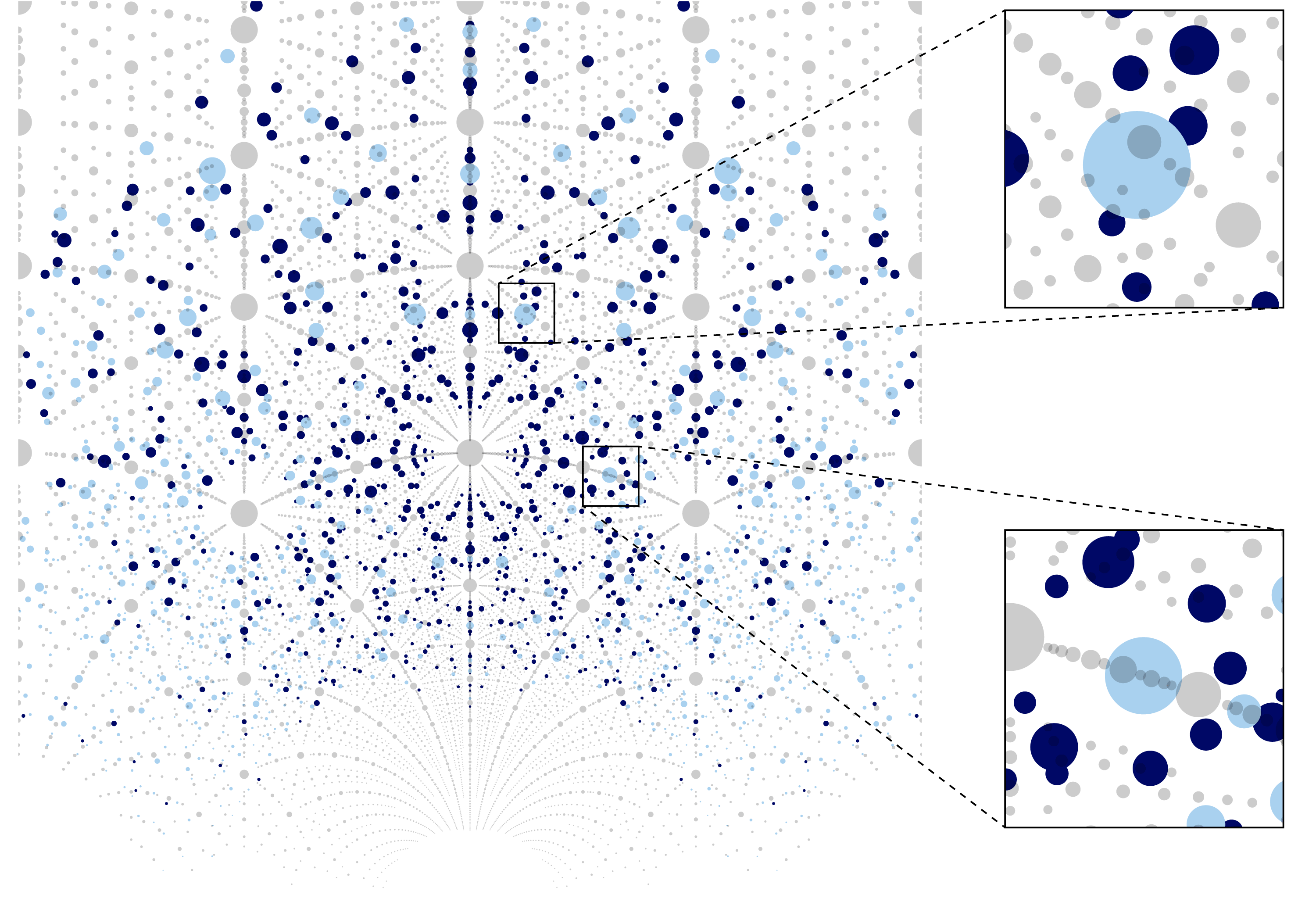}
		\caption{Quadratics are shown in grey.  Quartics are shown in two shades of blue.  Dots are sized by discriminant. Image: \cite[Figure 33]{HST}.}
	  \label{fig:quad-quar}
  \end{figure}

	As a consequence, we can re-interpret complex approximation results under these choices.

  \begin{theorem}[{\cite[Theorem 6.3]{HST}}]
		\label{thm:quad-dirichlet}
		Let $\alpha \in \CC \setminus \RR$ not be quadratic algebraic, but lying on a rational geodesic.  Then there exists $K_\alpha > 0$, depending on $\PSL_2(\ZZ)$ orbit of $\alpha$, so that there are infinitely many quadratic $\beta$ on that geodesic with
		\[
			d_{U}(\alpha,\beta) \le
				\acosh\left( 1 + \frac{K_\alpha}{|\Delta_\beta|^2} \right).
		\]
	\end{theorem}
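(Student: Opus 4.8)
The plan is to transport everything into the coefficient space $\HH^2_M$ of Section~\ref{sec:hyper}, where the rational geodesic becomes a rational plane and the quadratic points on it become honest lattice points. By Theorem~\ref{thm:HST3} the geodesic $G$ is cut out by a primitive integer normal vector $\mathbf{n}=[A_0:B_0:C_0]\in\mathbb{D}^2_M$, a real-rooted polynomial whose two roots are the endpoints of $G$; rationality of $G$, via Proposition~\ref{prop:rational-geodesic}, is exactly what makes $\mathbf{n}$ rational. Writing $P=\mathbf{n}^{\perp}$ for the Minkowski-orthogonal plane, the quadratic points $\beta$ lying on $G$ correspond (Theorems~\ref{thm:quadratic-formula} and~\ref{thm:HST2}) precisely to the primitive vectors of the rank-two lattice $\Lambda=\ZZ^3\cap P$ lying inside the light cone, while $\alpha$ corresponds to an irrational ray $f_\alpha$ inside the cone on $P$. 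The restriction $g:=Q|_P$ is an indefinite integral binary quadratic form; choosing a lattice basis so that $g(x,y)=c\,(y-\theta_1x)(y-\theta_2x)$, the null slopes $\theta_1,\theta_2$ are the endpoints of $G$ and so lie in a real quadratic field or in $\QQ$. In light-cone coordinates $U=y-\theta_1x$, $V=\theta_2x-y$ (signs chosen so the relevant timelike wedge is $U,V>0$) one has $g=-c\,UV$, and substituting into \eqref{eqn:minkowski-cosh} collapses the Minkowski distance to an arclength, $d_U(\,\cdot\,,\,\cdot\,)=\tfrac12\left|\log(U_1/V_1)-\log(U_2/V_2)\right|$. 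Thus $G$ is isometric to $\RR$ via $\tau=\tfrac12\log(U/V)$, each quadratic point sitting at a position $\tau_\beta$ carrying the weight $D_\beta=|g|=|c|\,U_\beta V_\beta$.

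With this dictionary the theorem becomes a one-dimensional approximation statement. Writing $\lambda=e^{2\tau_\alpha}$ for the slope $U/V$ of $\alpha$, a short computation gives $U-\lambda V=(1+\lambda)(y-\mu x)$, where $\mu=(\theta_1+\lambda\theta_2)/(1+\lambda)$ is the ordinary slope of the ray $f_\alpha$ in the lattice coordinates $(x,y)$. For a lattice point $\beta=(m,n)$ near the ray, expanding $\tau_\beta-\tau_\alpha=\tfrac12\log\!\bigl(1+\tfrac{U-\lambda V}{\lambda V}\bigr)$ shows that $d_U(\alpha,\beta)\,D_\beta$ equals, up to a bounded multiplicative error, $\tfrac{|c|}{2}\,|V(m,n)|\,|U(m,n)-\lambda V(m,n)|$: the product of the linear form $V$ (null slope $\theta_2$, an endpoint of $G$) and the linear form $U-\lambda V$ (null slope $\mu$, the direction of $\alpha$). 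Hence producing quadratic $\beta$ with $D_\beta\to\infty$ and $d_U(\alpha,\beta)\lesssim 1/D_\beta$ amounts to producing infinitely many lattice points with $|V|\,|U-\lambda V|$ bounded, equivalently $|n-\mu m|<1/m$ for infinitely many $m$.

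This last statement is precisely Dirichlet's Theorem~\ref{thm:dirichlet} applied to $\mu$, and it is available exactly under our hypothesis: $\alpha$ fails to be quadratic algebraic if and only if $f_\alpha$ is an irrational direction in $P$, i.e. $\mu\notin\QQ$. Taking the Dirichlet convergents $n/m\to\mu$ and reading off the corresponding $\beta$, one finds $|V|\asymp|\theta_2-\mu|\,m\to\infty$ and $|U-\lambda V|<|1+\lambda|/m$, so that $D_\beta\asymp|c|\lambda|\theta_2-\mu|^2m^2\to\infty$ while $d_U(\alpha,\beta)\,D_\beta$ stays below a constant $K'$ depending only on $c,\lambda,\theta_1,\theta_2,\mu$. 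Since $\cosh(K'/D)\le 1+K'^2/D^2$ once $D$ is large, the bound $d_U(\alpha,\beta)\le K'/D_\beta$ converts into $d_U(\alpha,\beta)\le\acosh(1+K_\alpha/D_\beta^2)$ with $K_\alpha$ a fixed multiple of $K'^2$, for all but finitely many convergents; absorbing the finitely many exceptions only enlarges $K_\alpha$. Finally, because the whole construction is $\PSL_2(\ZZ)$-equivariant and both $d_U$ and $D_\beta$ are $\PSL_2(\ZZ)$-invariant, replacing $\alpha$ by $M\alpha$ merely relabels the $\beta$'s, so $K_\alpha$ may be taken to depend only on the orbit. The main obstacle is not the idea but the bookkeeping: normalizing the projective representatives $f_\alpha,f_\beta$ consistently, controlling the multiplicative error in $d_U(\alpha,\beta)\,D_\beta\approx\tfrac{|c|}2|V||U-\lambda V|$ uniformly so that it survives being squared inside $\acosh$, and verifying that the Dirichlet approximants eventually land in the correct timelike wedge (which holds once $m$ is large, since $U/V\to\lambda>0$).
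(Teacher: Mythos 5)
Your proposal is correct, and it is in fact more detailed than the paper's own argument, which is explicitly only a sketch. Both arguments transport the problem to the coefficient space $\HH^2_M$ and adapt Dirichlet, but the mechanics genuinely differ. The paper pigeonholes in the ambient three-dimensional lattice: it normalizes $f_\alpha = [\alpha_1 : 1 : \alpha_2]$, considers multiples of this vector modulo $\ZZ^3$, and extracts a nearby integer vector $[p_1 : n : p_2]$ so that the linear forms $|n\alpha_1 - p_1|$, $|n\alpha_2 - p_2|$, $|\alpha_1 p_2 - \alpha_2 p_1|$ are simultaneously small, whence the pairing $\langle f_\alpha, f_\beta \rangle$, and so the hyperbolic distance, is small. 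You instead intersect $\ZZ^3$ with the rational plane $P$, restrict $Q$ to an indefinite integral binary form, pass to null coordinates in which the geodesic is parametrized isometrically by $\tau = \tfrac12\log(U/V)$, and invoke the classical one-variable Dirichlet theorem for the single slope $\mu$. Your route buys two things: the requirement that $\beta$ lie on the \emph{same} geodesic is automatic (in the paper's sketch the nearby lattice point is not visibly on the plane $P$; arranging this is exactly where the rational-geodesic hypothesis must be spent, a point the sketch leaves implicit), and the exact arclength formula makes the exponent transparent via $d\cdot|\Delta_\beta| \asymp \tfrac{|c|}{2}|V|\,|U - \lambda V|$, so the $\acosh\bigl(1 + K_\alpha/|\Delta_\beta|^2\bigr)$ shape falls out of $\cosh t \le 1 + t^2$ for small $t$. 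The paper's route buys uniformity of framework: the same ambient linear-forms setup is what feeds the Schmidt subspace theorem in the finiteness result stated immediately afterward. Two cosmetic repairs: the null slopes $\theta_1, \theta_2$ \emph{correspond to} the endpoints of $G$ under the identification of Theorem~\ref{thm:HST3} rather than equal them (your real conclusion, that they are conjugate real quadratics or rationals, is what matters); and \eqref{eqn:minkowski-cosh} requires absolute values or a sign convention inside the cone, where $Q < 0$, which your use of $|g|$ handles implicitly. Note also that $\mu$ lies strictly between $\theta_1$ and $\theta_2$ since $\lambda > 0$, which is what justifies $\theta_2 \neq \mu$ and hence the asymptotic $|V| \asymp |\theta_2 - \mu|\,m$ in your final estimate.
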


	\begin{proof} We provide only a sketch, but the proof is an adaptation of the proof of Dirichlet's Theorem.  We begin by writing the element of coefficient space corresponding to $\alpha$ as
		\[
			[1/(\alpha + \overline{\alpha}) : 1 : \alpha \overline{\alpha}/(\alpha + \overline{\alpha})] = [\alpha_1 : 1 : \alpha_2].
		\]
		Then, we consider the multiples of this vector modulo $[\ZZ : \ZZ : \ZZ]$.  Dirichlet's box principle\footnote{More commonly known as the pigeonhole principle, although I've heard it claimed that this very proof is the first recorded use of the idea in the mathematical literature.} tells us two multiples are close to one another, so their difference, which we write as $n[\alpha_1 : 1 : \alpha_2]$ is close to a lattice element, which we denote $[p_1:n:p_2]$; this is a candidate good approximation $f_\beta$.
		This tells us that certain linear forms are small:
		\[
		|n\alpha_1 - p_1|, \quad 
		|n\alpha_2 - p_2|, \quad 
		|\alpha_1 p_2 - \alpha_2 p_1|.
	\]
	This gives a small discriminant pairing 
		$\langle f_\alpha , f_\beta \rangle$, which in turn gives a small hyperbolic distance.
\end{proof}

\begin{figure}
	  \includegraphics[width=5.5in]{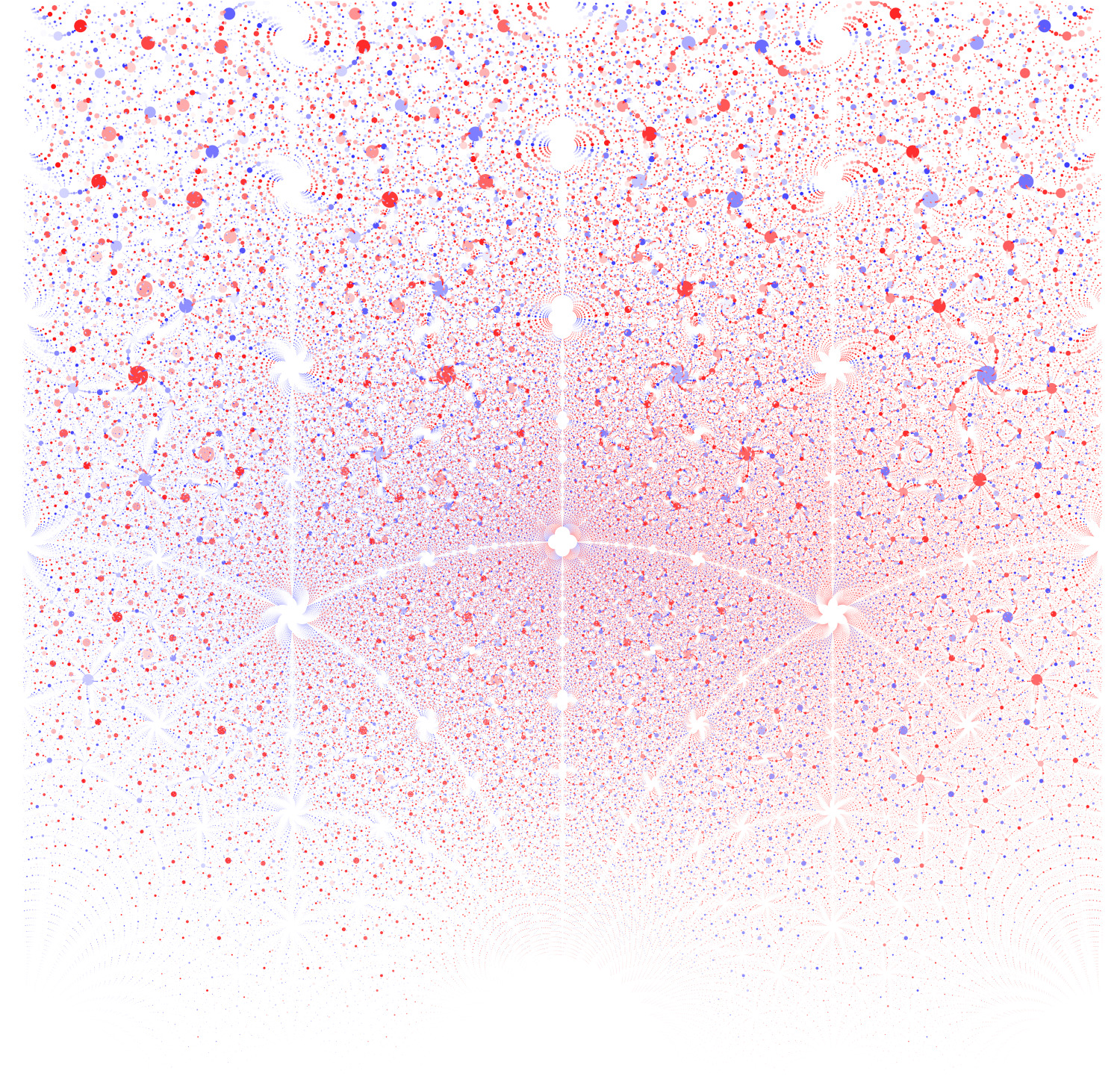} 
	  \caption{The cubics in the upper half plane \cite[Figure 1]{HST}.}
	  \label{fig:cubicsUHP}
  \end{figure}

Next, we re-interpret the result of Bugeaud and Evertse (see Figure~\ref{fig:quad-quar}).
\begin{theorem}[{\cite[Theorem 6.6]{HST}}]
		Let $\alpha \in \CC \setminus \RR$ be algebraic but not quadratic.  Let $\epsilon > 0$.  {If $\alpha$ lies on a rational geodesic}, then there are only finitely many quadratic $\beta$ such that
		\[
			d_{U}(\alpha,\beta) \le
			\acosh\left( 1 + \frac{1}{|\Delta_\beta|^{{2+\epsilon}}} \right).
		\]
		{Whether $\alpha$ is on a rational geodesic or not}, amongst $\beta$ not sharing a rational geodesic with $\alpha$, there are only finitely many such that
		\[
			d_{U}(\alpha,\beta) \le
			\acosh\left( 1 + \frac{1}{|\Delta_\beta|^{{3/2+\epsilon}}} \right).
		\]
	\end{theorem}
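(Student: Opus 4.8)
The plan is to prove this as a geometric re-interpretation of Theorem~\ref{thm:bugeaud-evertse}, by first translating the hyperbolic/discriminant data into the classical Euclidean/na\"ive-height language of Koksma's exponent $k_2$, and then reading off the two cases from Bugeaud--Evertse. First I would note that the hypothesis forces $\beta$ to lie near $\alpha$: since any admissible $\beta$ has $|\Delta_\beta| \ge 1$, it satisfies $d_U(\alpha,\beta) \le \acosh(2)$, a fixed hyperbolic ball, and within this ball only finitely many $\beta$ have $|\Delta_\beta|$ below any fixed bound (these being quadratics of bounded discriminant with bounded coefficients). Hence we may assume $|\Delta_\beta|$ is large, so that $d_U(\alpha,\beta)$ is small and in particular $\Im(\beta) \asymp \Im(\alpha)$; thus $\beta$ ranges over a fixed compact region of $\HH^2_U$.

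Next I would set up the dictionary on this compact region. Writing the minimal polynomial of $\beta$ as $ax^2 + bx + c$, we have $\Im(\beta) = \sqrt{|\Delta_\beta|}/(2|a|)$, so $\Im(\beta) \asymp 1$ forces $|a| \asymp \sqrt{|\Delta_\beta|}$, and then $|b|, |c| \asymp \sqrt{|\Delta_\beta|}$ as well; hence the na\"ive height satisfies $H(\beta) \asymp |\Delta_\beta|^{1/2}$. By the distance formula \eqref{eqn:h2-dist}, $\cosh(d_U(\alpha,\beta)) - 1 = |\alpha - \beta|^2/(2\Im(\alpha)\Im(\beta)) \asymp |\alpha - \beta|^2$. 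Combining these, the condition $d_U(\alpha,\beta) \le \acosh(1 + 1/|\Delta_\beta|^{k'})$ is equivalent, up to multiplicative constants, to $|\alpha - \beta| \lesssim 1/|\Delta_\beta|^{k'/2} \asymp 1/H(\beta)^{k'}$. Thus an exponent $k'$ on the discriminant in the hyperbolic statement matches the exponent $k'$ on the height in Koksma's condition, and the bounded constant is harmless for a finiteness conclusion: for any fixed $k'$ strictly above the threshold exponent, only finitely many $\beta$ survive.

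With the dictionary in place, I would match the geodesic dichotomy to the cases of Theorem~\ref{thm:bugeaud-evertse} via Proposition~\ref{prop:rational-geodesic}, which identifies $\QQ$-dependence of $1, \alpha\overline\alpha, \alpha + \overline\alpha$ with $\alpha$ lying on a rational geodesic. When $\alpha$ lies on a rational geodesic, the attainable exponent is $2$: that it is at least $2$ is the content of the Dirichlet-type Theorem~\ref{thm:quad-dirichlet} (which produces infinitely many approximants \emph{along} the geodesic, where the problem degenerates to one-dimensional rational approximation), and the corresponding case of Theorem~\ref{thm:bugeaud-evertse} supplies the matching upper bound. Hence $2 + \epsilon$ lies strictly above threshold, proving the first statement. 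When $\alpha$ lies on no rational geodesic, every quadratic $\beta$ automatically fails to share a rational geodesic with $\alpha$, the attainable exponent is $3/2$, and $3/2 + \epsilon$ again lies above threshold, giving the second statement in this case.

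The main obstacle is the second statement when $\alpha$ \emph{does} lie on a rational geodesic. Here the global exponent is $2$, realized by approximants along the geodesic through $\alpha$, so Theorem~\ref{thm:bugeaud-evertse} used as a black box permits exponents up to $2$; yet the claim is that restricting to $\beta$ \emph{off} that geodesic drops the attainable exponent to $3/2$. To see this one must enter the proof of Bugeaud--Evertse rather than quote it. Following the sketch of Theorem~\ref{thm:quad-dirichlet}, an approximation of quality $1/|\Delta_\beta|^{k'}$ makes the three linear forms $n\alpha_1 - p_1$, $n\alpha_2 - p_2$, $\alpha_1 p_2 - \alpha_2 p_1$ simultaneously small at the integer vector $f_\beta = (p_1, n, p_2)$ representing $\beta$ in coefficient space. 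Schmidt's subspace theorem then confines the integer vectors achieving an exponent beyond $3/2$ to finitely many proper rational subspaces of $\PP M^{2,1}$; a subspace containing $f_\alpha$ is precisely a rational geodesic through $\alpha$, on which the estimate improves to exponent $2$ by one-dimensional Roth, whereas an $f_\beta$ lying in none of these subspaces (that is, $\beta$ sharing no rational geodesic with $\alpha$) is bounded below at exponent $3/2$. Isolating this subspace contribution, and verifying that the exceptional subspaces are exactly the rational geodesics of Proposition~\ref{prop:rational-geodesic}, is the crux of the argument.
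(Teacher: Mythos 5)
Your proposal is correct in outline, and at its crux it coincides with the paper's own proof, which is only a one-line sketch: move to the same three linear forms as in the proof of Theorem~\ref{thm:quad-dirichlet} and apply Schmidt's subspace theorem. What you add in front is a genuine (and sound) alternative for the easier parts: the compact-region dictionary $H(\beta) \asymp |\Delta_\beta|^{1/2}$ and $\cosh d_U(\alpha,\beta) - 1 \asymp |\alpha - \beta|^2$, which converts the hyperbolic/discriminant statement into Koksma's $k_2$ and lets you quote Theorem~\ref{thm:bugeaud-evertse} as a black box for the first claim and for the case where $\alpha$ lies on no rational geodesic. This matches the paper's own remark that the discriminant tracks the na\"ive height in a fundamental region, and the $\epsilon$ of room does absorb all the implied constants. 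You also correctly identify the one case the black box cannot reach --- the second claim when $\alpha$ \emph{does} lie on a rational geodesic, where $k_2(\alpha)=2$ but the exponent-$2$ approximants must be shown to live on geodesics through $\alpha$ --- and there you fall back on exactly the argument the paper indicates: the exceptional subspaces of the subspace theorem are rational planes in coefficient space, i.e.\ rational geodesics, and off them the exponent $3/2+\epsilon$ admits only finitely many solutions.

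Two cautions. First, your phrase ``a subspace containing $f_\alpha$ is precisely a rational geodesic through $\alpha$'' is not quite the right mechanism: the finitely many exceptional subspaces are rational, so they generically do \emph{not} contain the irrational vector $f_\alpha$, and nothing in the subspace theorem forces them to. The correct finishing step is an accumulation argument: good approximants have $|\Delta_\beta| \to \infty$ and hence converge to $\alpha$, so any exceptional rational geodesic carrying infinitely many of them must pass through $\alpha$ (geodesics being closed), and those $\beta$ are excluded by the hypothesis of the second claim, while the remaining exceptional geodesics lie at positive distance from $\alpha$ and contribute only finitely many $\beta$. Relatedly, you assert rather than derive the $3/2$ threshold from the product-of-linear-forms inequality; that quantitative step is the real content, though the paper's sketch does no more. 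Second, note that you have silently used the version of Bugeaud--Evertse in which $k_2(\alpha) = 2$ in the $\QQ$-\emph{dependent} (rational geodesic) case and $3/2$ otherwise; this is the correct reading --- it is forced by Theorem~\ref{thm:quad-dirichlet}, which produces infinitely many exponent-$2$ approximants along a rational geodesic --- even though the statement of Theorem~\ref{thm:bugeaud-evertse} as printed in these notes attaches the value $2$ to the independent case, an apparent transcription slip worth flagging rather than propagating.
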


	The proof proceeds in a similar way, by moving to the same linear forms as in Theorem~\ref{thm:quad-dirichlet}, and then one applies Schmidt's subspace theorem.

\subsection{Cubics and beyond}

The cubics having two complex conjugate roots can be treated in a similar way, although the discriminant locus is more complicated.  The lattice of coefficients is 4-dimensional, so the image is much more complicated and layered when drawn as roots in the upper half plane.  It can help to embed it in a 3-dimensional space made up of the upper half plane root together with the real root.  It is natural to use a disc model for the upper half plane, and a circle for the real line, resulting in a torus as their product.  Some images are shown in Figures~\ref{fig:cubicsUHP}--\ref{fig:cubicsSLView}.\\

	\begin{minipage}{0.5\textwidth}
		\begin{center}
	  \includegraphics[width=2.5in]{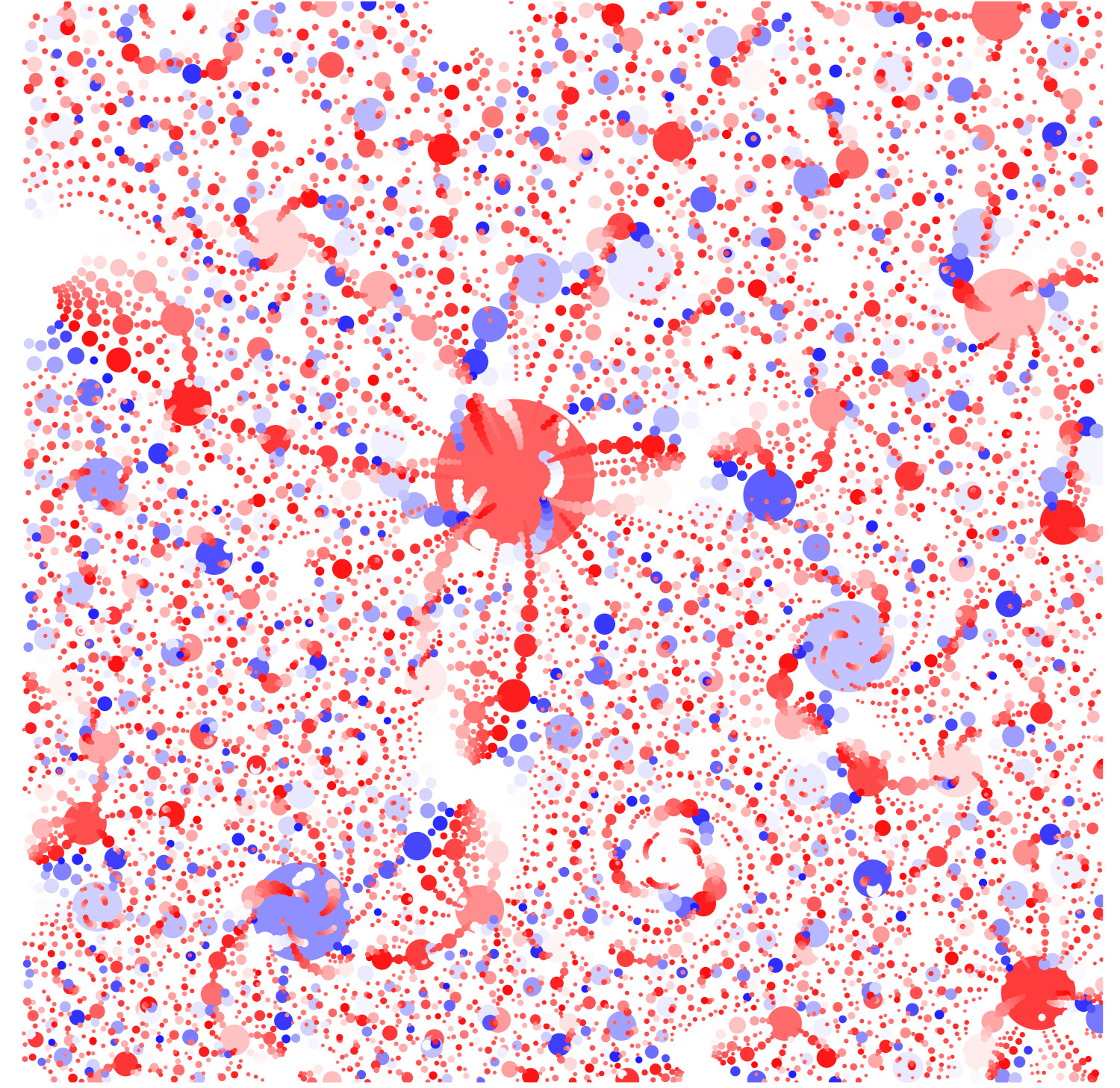}
			\captionof{figure}{A detail of the cubics of Figure~\ref{fig:cubicsUHP}.}
		\label{fig:cubicsDetail}
		\end{center}
	\end{minipage}\hfill
	\begin{minipage}{0.5\textwidth}
		\begin{center}
	  \includegraphics[width=2.5in]{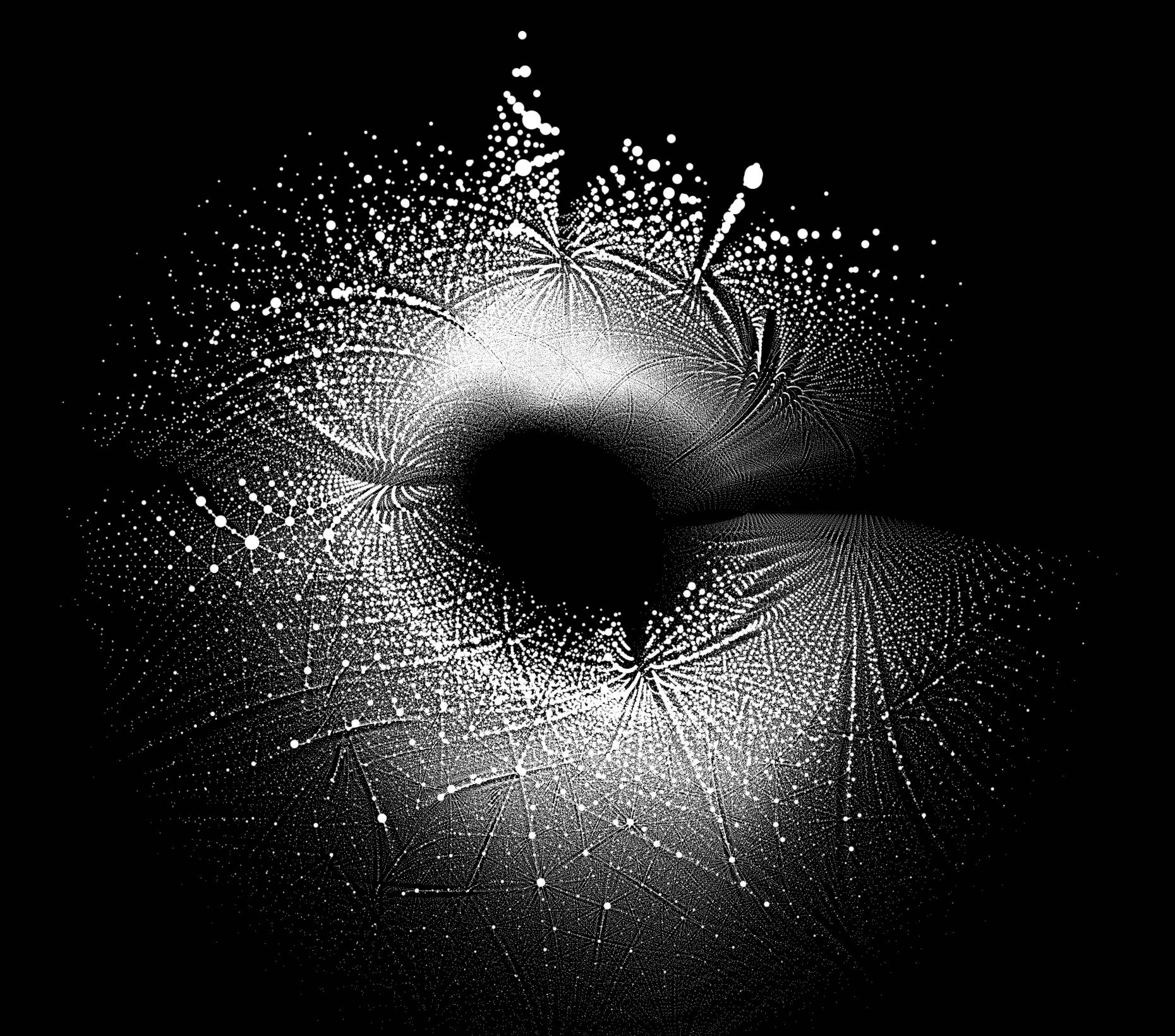}
		\captionof{figure}{A 3d view of the cubic roots embedded in a torus (this image was created using Emily Dumas' SL(View) software \url{https://www.dumas.io/}) \cite[Figure 10]{HST}.}
		\label{fig:cubicsSLView}
		\end{center}
		\end{minipage}

\subsection{Open problems}

  There are a great many open problems motivated by this perspective.
  \begin{enumerate}
	  \item Is there a geodesic flow / continued fraction theory for good approximations by quadratics?
	  \item Is there a natural analog to a Lagrange spectrum?
	  \item Is there an analog to the Farey subdivision?
	  \item And many more; see \cite{HST}.
  \end{enumerate}

\section{Apollonian circle packings: geometric aspects}

\subsection{Schmidt subdivision}

Asmus Schmidt developed a beautiful complex analogue to the Farey subdivision \cite{SchmidtComplex}.  Recall that the Farey subdivision on $\widehat{\RR} = \partial \HH^2_U$ is the boundary of a nested set of geodesics in $\HH^2_U$.   Similarly, Schmidt's subdivision views $\widehat{\CC}$ as the boundary of the upper half space, and the subdivision is formed as the boundary of geodesic planes in the upper half space.   The initial subdivision divides the plane by the use of two lines and two circles into eight regions (see Figure~\ref{fig:schmidt-farey}, top).  Each triangle is subdivided as in the lower portion of Figure~\ref{fig:schmidt-farey}, by inserting a new circle tangent to all three sides.  Repeating this, we obtain nested regions like in Figure~\ref{fig:schmidt-farey-gauss}.  (Schmidt's subdivision also incorporates dual circles orthogonal to these, but we will ignore them for now.)

	\begin{figure}
		\caption{A screenshot of Asmus Schmidt's paper, \emph{Diophantine Approximation of Complex Numbers} \cite[Figure 1, 1*]{SchmidtComplex}.  \textcolor{red}{Currently suppressed because permission has not yet been requested.}}
	  \label{fig:schmidt-farey}
  \end{figure}

  \begin{figure}
	  \includegraphics[width=4.5in]{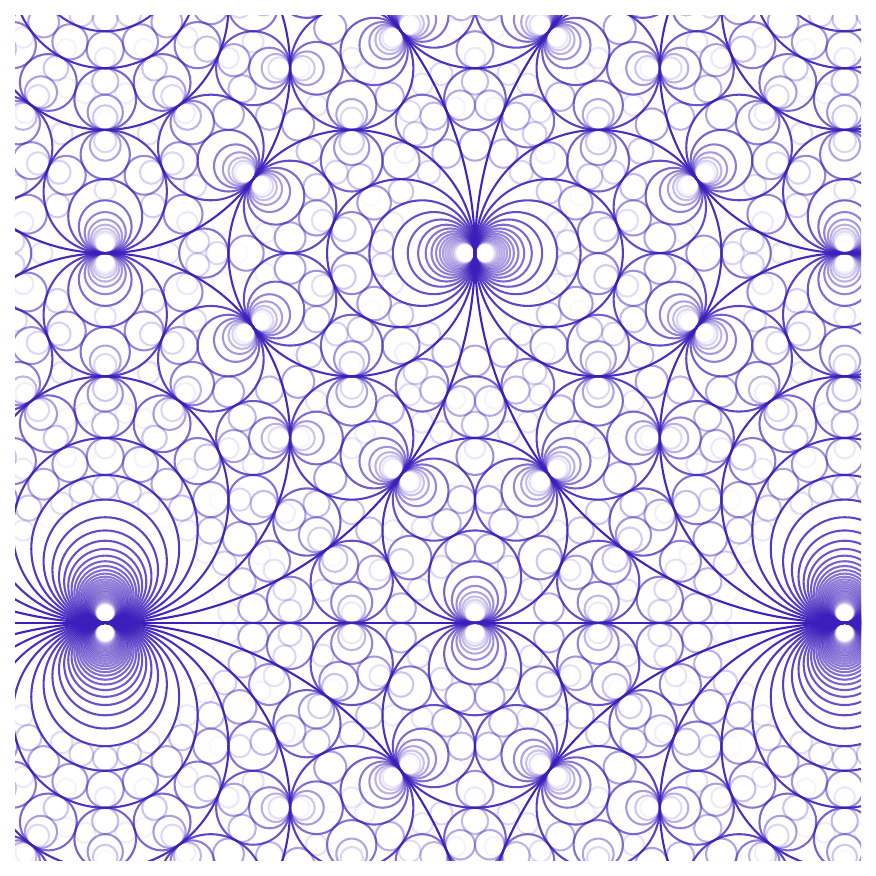}
	  \caption{Schmidt's subdivisions, iterated (without the dual circles).}
	  \label{fig:schmidt-farey-gauss}
  \end{figure}

  The analogy is as follows:
  \vspace{.2em}

\begin{center}
{\renewcommand{\arraystretch}{1.5}%
\begin{tabular}{ c | c | c }
	Continued fractions & Farey subdivision & Schmidt subdivision \\ 
	\hline
			    & $\widehat{\RR}$ & $\widehat{\CC}$ \\
			    & $\PSL_2(\ZZ)$ & $\PSL_2(\ZZ[i])$ \\
			    &  intervals & circles and triangles \\
	convergents & endpoints & tangency points \\
	coefficients & series of nested geodesics & series of nested geodesic planes \\
			  & --- & Apollonian circle packings \\
\end{tabular}}
\end{center}
  \vspace{.2em}

The last line of the table is a new and very rich phenomenon that only exists in the $\widehat{\CC}$ case.  Unlike the Farey subdivision, the Schmidt subdivisions are built out of intermediate `pieces,' called Apollonian circle packings.  It is this new object that is so fascinating.

\subsection{Descartes quadruples and Apollonian circle packings}

A \emph{Descartes quadruple} is a collection of four circles, all pairwise mutually tangent, of disjoint interiors.  Given three mutually tangent circles, there are exactly two circles, called \emph{Soddy circles}, which can complete the triple to a Descartes quadruple.  
To construct an Apollonian circle packing, begin with three mutually tangent circles.  At each stage, add in any missing Soddy circles for any mutually tangent triple in the packing, and repeat ad infinitum (Figure \ref{fig:build}).  

Famously proven by Descartes and Princess Elisabeth of Bohemia\footnote{What we know is as follows.  Descartes proposed the problem (of finding the radii of the circles tangent to a given triple of circles) to Elisabeth in 1643, and she provided a solution, which is lost, but left Descartes, in his own words, `filled with joy.'  (Keep in mind he was writing to a Princess!)  Descartes' side of the correspondence having survived, we have two of his solutions, the second of which assumes the three starting circles are mutually tangent, and most closely matches what is now often called \emph{Descartes' Theorem} in this context.} in correspondence \cite[pp. 73--81; AT 4:37, 4:44, 4:45]{Elisabeth}, four circles form a Descartes quadruple if and only if their curvatures $a,b,c,d$ satisfy $Q(a,b,c,d)=0$ for the quadratic form
\begin{equation}
	\label{eqn:desc}
	Q(a,b,c,d) = (a + b + c + d)^2 - 2 (a^2 + b^2 + c^2 + d^2).
\end{equation}
In particular, given a mutually tangent triple of curvatures $a,b,c$, the Soddy circles have curvatures $d_1$ and $d_2$ satisfying $Q(a,b,c,d_i)=0$; these are related by
\[
        d_1 + d_2 = 2(a + b + c).
\]
There are various generalizations of this theorem, including to spheres in higher dimension \cite{DescartesMonthly, Rasskin}.  We will sometimes refer to the quadruple of curvatures as a Descartes quadruple also, at the risk of some minor confusion.

As we will exploit later, this has the beautiful consequence that if one begins with a Descartes quadruple of integer curvatures $a,b,c,d$, then the entire packing will consist of integer curvatures.  Such an \emph{integral} packing is called \emph{primitive} if it has no common factor among its curvatures.
Some examples are shown in Figure~\ref{fig:examples}.

\begin{figure}
        \raisebox{-.49\height}{
                \includegraphics[width=1.5in]{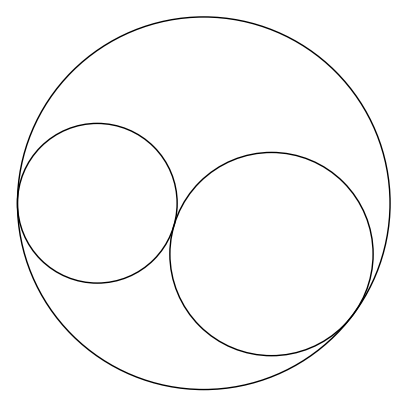}
\includegraphics[width=1.5in]{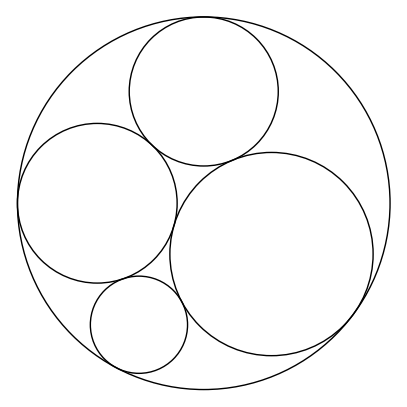}
\includegraphics[width=1.5in]{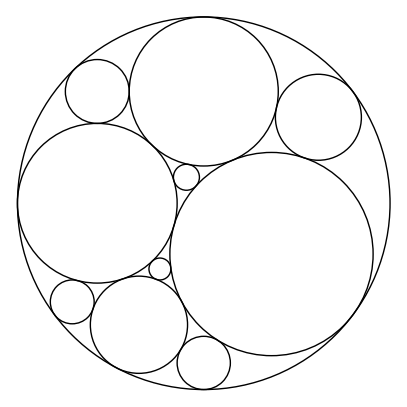}
}
$\;\ldots\;$
\raisebox{-.49\height}{
\includegraphics[width=3.5in]{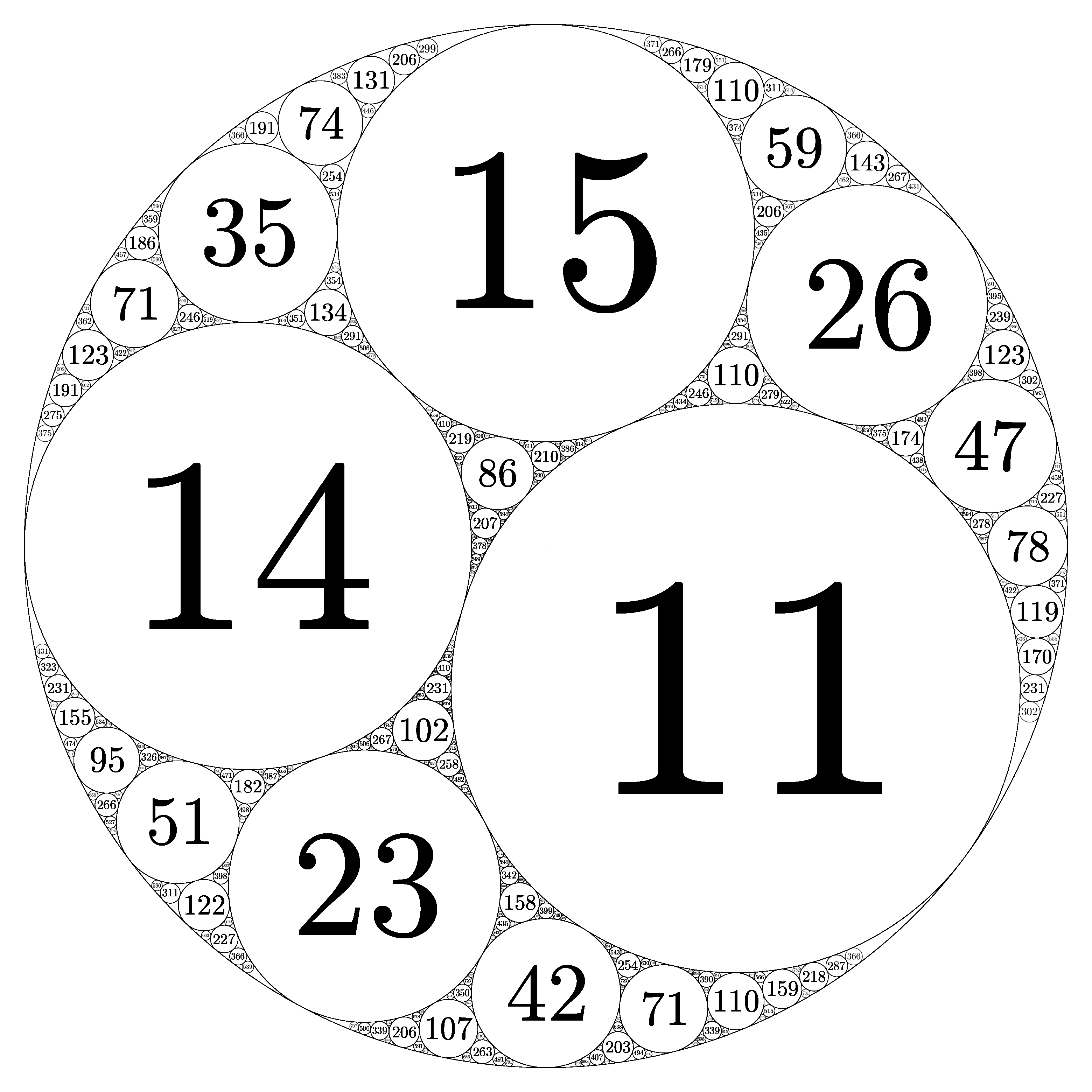}
}
\caption{\small Generating an Apollonian circle packing; at each stage, curvilinear triangles are filled with tangent circles.  In the final packing, curvatures are indicated.} 
\label{fig:build}
\end{figure}

\begin{figure}
\includegraphics[width=2.8in]{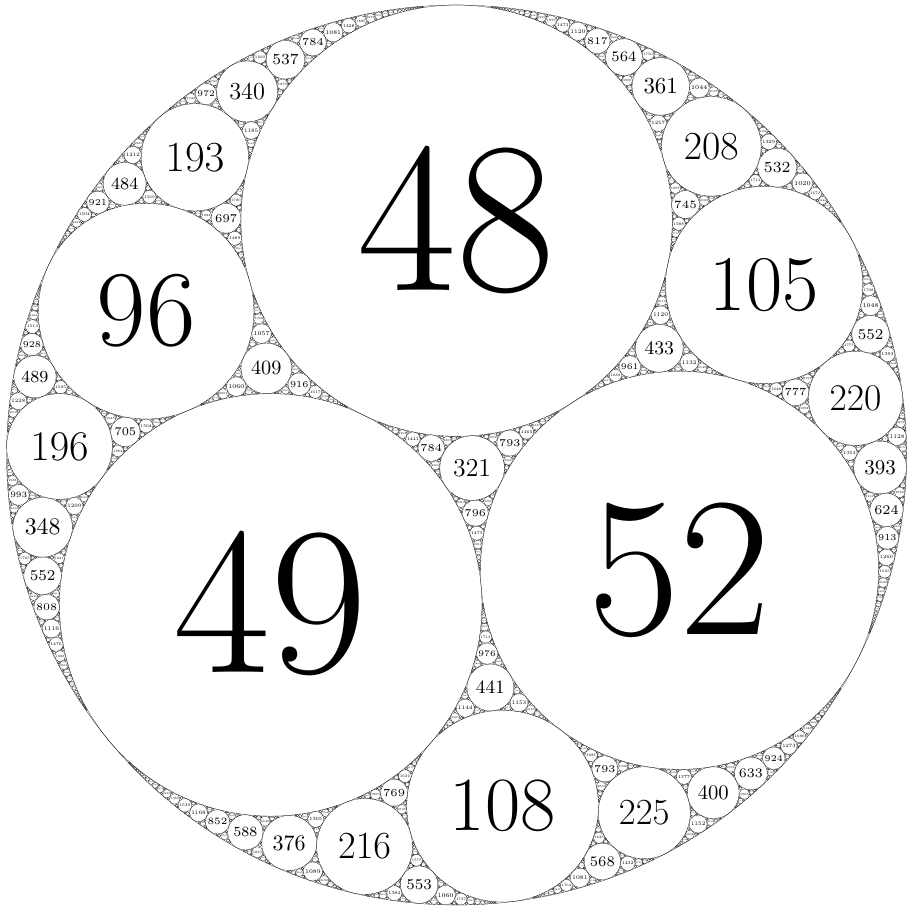}
\includegraphics[width=2.8in]{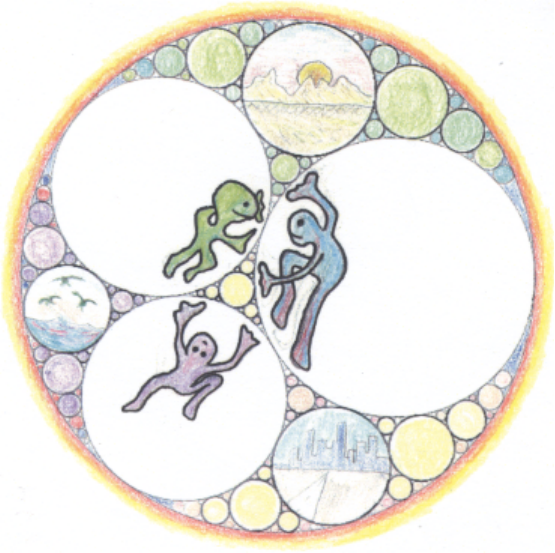}
	\caption{Several examples of Apollonian circle packings.  The packing at left can be zoomed in quite amazingly far if you are on a computer screen, and was produced with user-friendly public software created by James Rickards \cite{RickardsSoftware}.  The packing with people in it was drawn by Katherine Sanden; a few other fanciful packings can be enjoyed in \cite{FuchsSanden}.}
	\label{fig:examples}
\end{figure}

\subsection{Apollonian group}

The structure of the Apollonian packing is governed by the \emph{Apollonian group} $\mathcal{A}$, which acts freely and transitively on the collection of its unordered Descartes quadruples \cite[Theorem 4.3]{GLMWY-geometry}.  In particular, it acts on quadruples of curvatures $(a,b,c,d)$ satisfying $Q(a,b,c,d)=0$, and therefore is considered a subgroup of the orthogonal group $O_Q(\ZZ)$ preserving the form $Q$.  The Descartes form $Q$ has signature $(3,1)$.
Specifically, $\mathcal{A}$ is generated by the four matrices
\begin{equation}
        \tiny
        \label{eqn:iappgens}
	S_1 :=
        \begin{pmatrix}
                -1 & 0 & 0 & 0 \\
                2 & 1 & 0 & 0 \\
               2 & 0 & 1 & 0 \\
                2 & 0 & 0 & 1 \\
        \end{pmatrix},
	S_2 :=
        \begin{pmatrix}
                1 & 2 & 0 & 0 \\
                0 & -1 & 0 & 0 \\
                0 & 2 & 1 & 0 \\
                0 & 2 & 0 & 1 \\
        \end{pmatrix},
	S_3 :=
        \begin{pmatrix}
                1 & 0 & 2 & 0 \\
                0 & 1 & 2 & 0 \\
                0 & 0 & -1 & 0 \\
                0 & 0 & 2 & 1 \\
        \end{pmatrix},
	S_4 :=
        \begin{pmatrix}
                1 & 0 & 0 & 2 \\
                0 & 1 & 0 & 2 \\
                0 & 0 & 1 & 2 \\
                0 & 0 & 0 & -1 \\
        \end{pmatrix},
\end{equation}
acting on row vectors of curvatures from the right.   
Each generator corresponds to fixing three of the circles in a Descartes quadruple and `swapping' out one Soddy circle for its alternative (see Figure~\ref{fig:swaps}).  The Descartes quadruples in any one packing constitute one orbit of the Apollonian group.

	\begin{minipage}{0.40\textwidth}
		\begin{center}
                \includegraphics[height=1.3in]{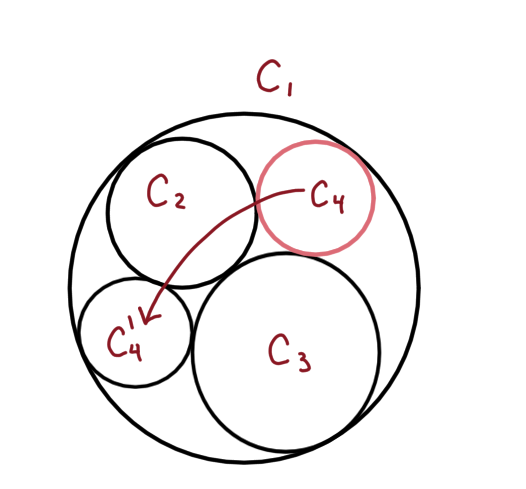}
		\captionof{figure}{An Apollonian swap, replacing the fourth circle $C_4$ with its alternate $C_4'$.}
		\label{fig:swaps}
		\end{center}
	\end{minipage}\hfill
	\begin{minipage}{0.55\textwidth}
		\begin{center}\includegraphics[height=2.0in]{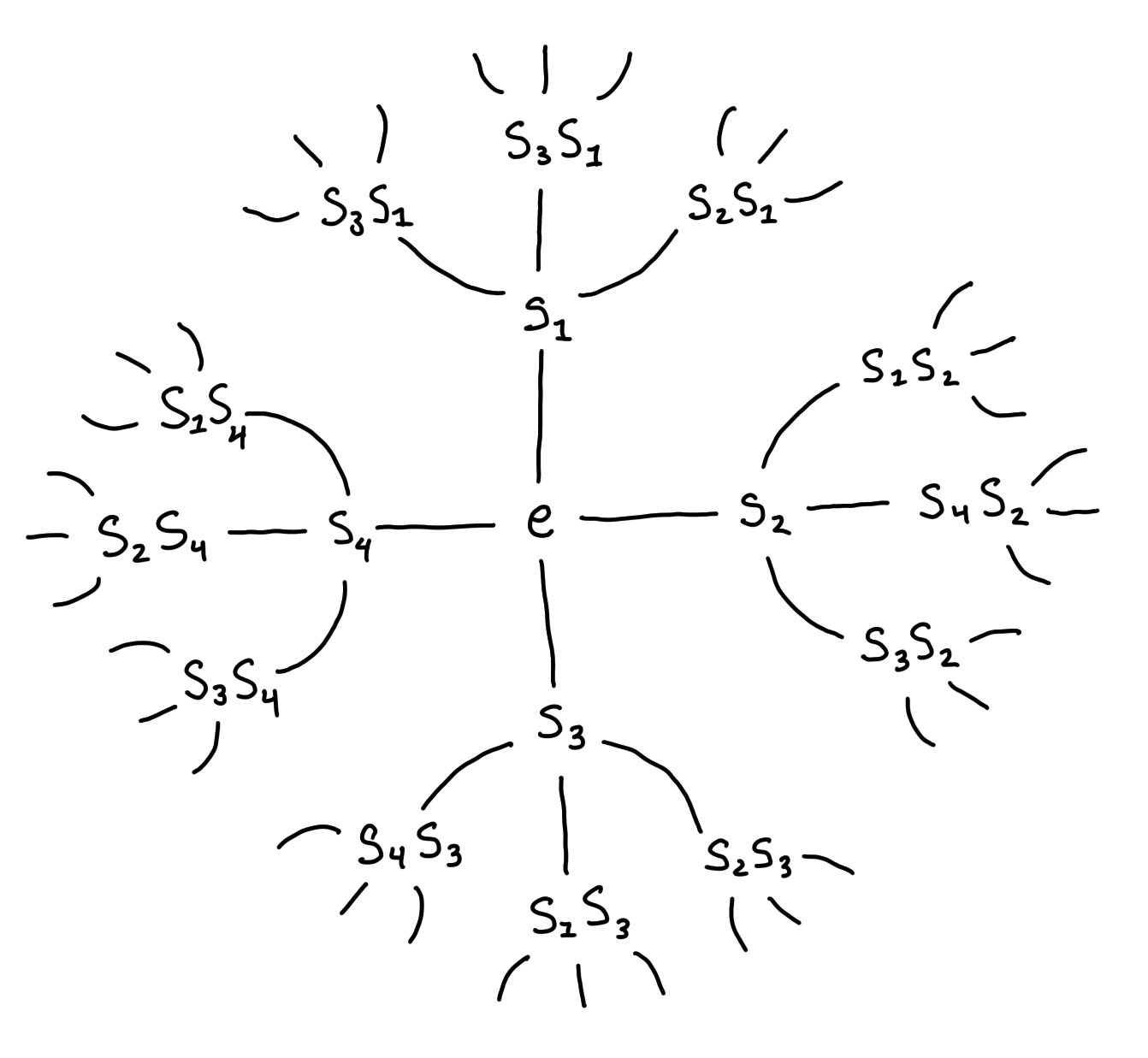}
		\captionof{figure}{Cayley graph of $\mathcal{A}$, shown to two levels.}
		\label{fig:Cayley-graph}
		\end{center}
		\end{minipage}

	This group, whose definition goes back to Hirst \cite{MR0209981}, is our point of access to the rich arithmetic structure of the curvatures.  Fuchs showed that it was a \emph{thin group} \cite{FuchsStrong}, meaning that it has infinite index in $O_Q(\ZZ)$ and yet is Zariski dense in the algebraic group $O_Q$.  Zariski density means that if a polynomial $f(\ldots,x_{ij},\ldots)$ vanishes on elements of $\mathcal{A}$ (the $x_{ij}$ representing the entries of the matrix), then it vanishes for all matrices in $O_Q(\RR)$.  In other words, elements of $\mathcal{A}$ cannot be detected by any polynomial condition on its matrix entries.

These matrices satisfy the relations $S_i^2 = I$, and in fact there are no other relations \cite[Proof of Theorem 4.3]{GLMWY-geometry}, so that
 \[
	  \mathcal{A} = \left\langle S_1, S_2, S_3, S_4 : S_i^2 = 1 \right\rangle < \operatorname{O}_{Q}(\ZZ).
        \]
	This means that the Cayley graph is particularly nice.  The \emph{Cayley graph} of a group $G$ with respect to a generating set $S$ is the graph whose vertices are the elements of $G$ and which has a directed edge from $g$ to $sg$ for all $s \in S$ and $g \in G$.  In the case of the Apollonian group, we take $S$ to be the set of generators $S_1, S_2, S_3, S_4$.  Since these are involutions, we can consider the Cayley graph to be an undirected graph of degree $4$.  It will be a tree, as in Figure~\ref{fig:Cayley-graph}.

	\begin{minipage}{0.45\textwidth}
				\begin{center}
                \includegraphics[height=2.0in]{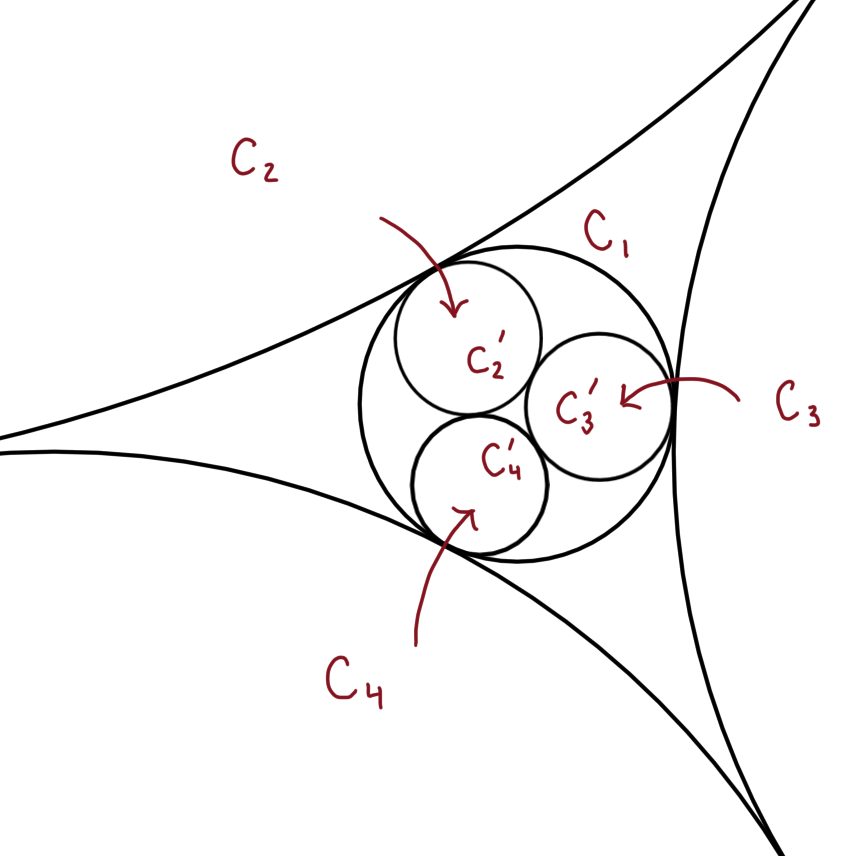}
		\captionof{figure}{Inversion $S_1^\perp$.}
		\label{fig:inversion}
		\end{center}

	\end{minipage}\hfill
	\begin{minipage}{0.5\textwidth}
		\begin{center}
         \includegraphics[height=1.0in]{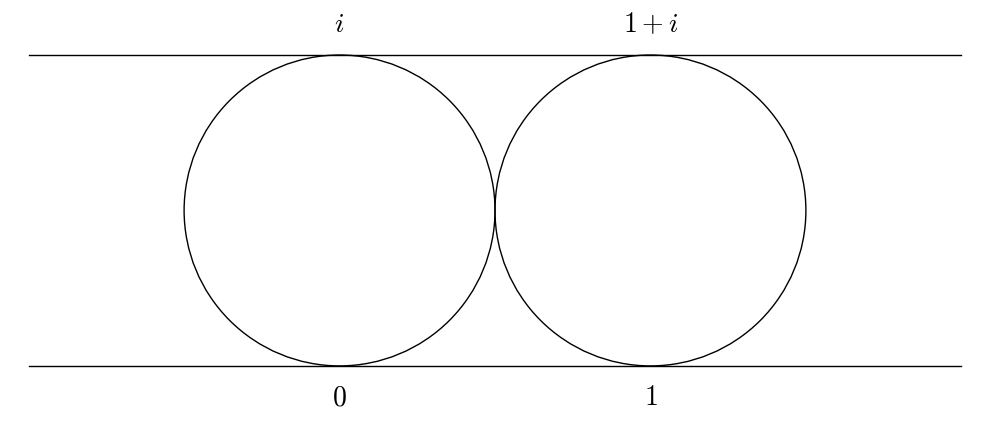}
			\captionof{figure}{The base quadruple.}
		\label{fig:basequad}
		\end{center}
		\end{minipage}

\subsection{Super-Apollonian group}

Graham, Lagarias, Mallows, Wilks and Yan defined the \emph{super-Apollonian group} by adding four circle inversions $S_i^\perp$ to the Apollonian group, one for inverting into each of the four circles of the Descartes quadruple (Figure~\ref{fig:inversion}).  The matrix for $S_i^\perp$ is the transpose of the matrix $S_i$, which is a consequence of a type of duality \cite{GLMWY-geometry}.  The presentation is
\[
	\langle S_1, S_2, S_3, S_4, S_1^\perp, S_2^\perp, S_3^\perp, S_4^\perp : S_i^2 = (S_i^\perp)^2, S_j S_k^{\perp} = S_k^\perp S_j, j \neq k \rangle.
\]
This has finite index in $O_Q(\ZZ)$, so it is no longer thin.  The words of length $5$ taken in normal form (eliminating $S_i^2$, $(S_i^\perp)^2$, $S_i^\perp S_j$) are shown in Figure~\ref{fig:5word}.  In fact, the full orbit coincides with Schmidt's subdivision shown in Figure~\ref{fig:schmidt-farey-gauss}.
This gives a little bit of perspective on the way in which the Apollonian circle packings form an essential `piece' of Schmidt's vision of $\widehat{\CC}$.

	\begin{figure}
                \includegraphics[height=5.0in]{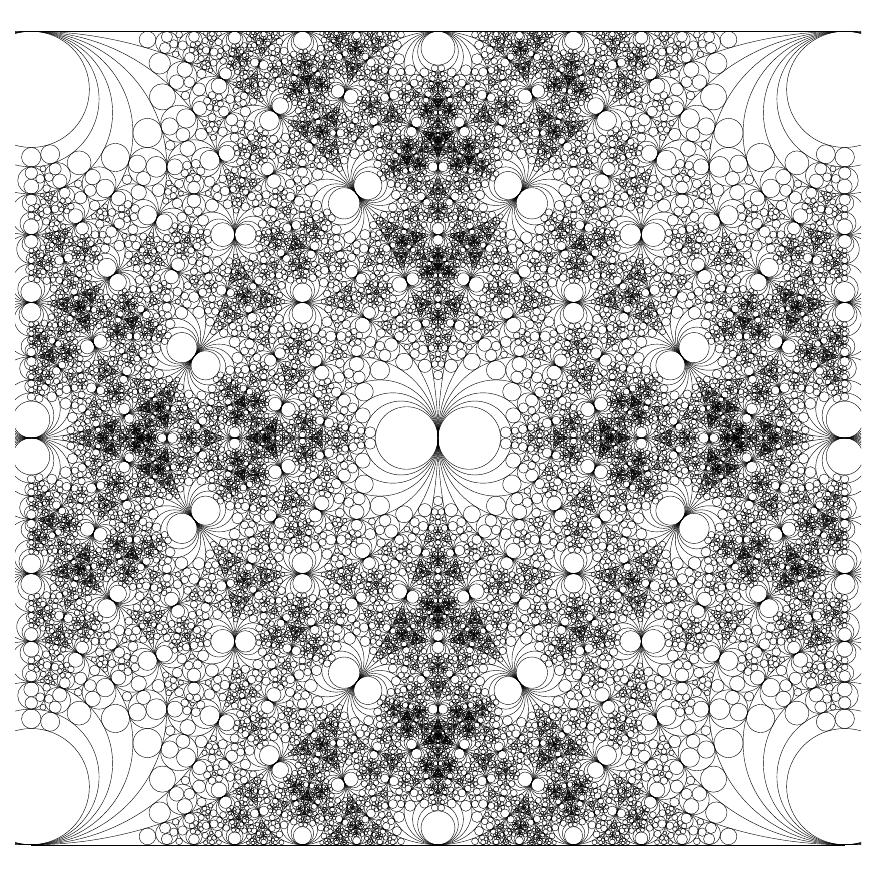}
		\caption{Circles obtained from the base quadruple of Figure~\ref{fig:basequad} using words of length six in normal form from the super-Apollonian group.  This illustrates the invariant measure associated to super-Apollonian continued fractions.  Image:  Robert Hines; see also \cite[Figure 7]{dynamics}.}
		\label{fig:5word}
	\end{figure}

	Using this super-Apollonian perspective, one can define continued fractions for the complex plane, for approximating elements of $\CC$ by Gaussian rationals \cite{dynamics}; this is distinct from, but related to, Schmidt's method \cite{SchmidtComplex}.

\subsection{Geometric Apollonian group}

	The Apollonian group has another incarnation, sometimes called the \emph{geometric} Apollonian group (as distinct from its \emph{algebraic} version above).  

	From this perspective, we view the strip packing as the orbit of the base quadruple shown in Figure~\ref{fig:basequad}, under a group of M\"obius transformations called the \emph{geometric Apollonian group}.  In fact, these transformations can be taken to have entries from $\ZZ[i]$, the Gaussian integers, so that we obtain a group $\mathcal{A}^{geo} < \operatorname{PSL}_2(\ZZ[i]) \rtimes \langle \tau \rangle$, where $\tau$ is complex conjugation.  This larger group of conformal maps on $\widehat{\CC}$ is sometimes called the \emph{generalized M\"obius transformations} \cite{GLMWY-geometry}.  One set of generators that suffices is \cite[equation (8)]{StangeBianchi}
\begin{equation*}
        \label{eqn:appmob}
        z \mapsto \frac{(2i+1)\overline{z}-2}{2\overline{z}+2i-1}, \quad
        z \mapsto -\overline{z}+2, \quad
        z \mapsto \frac{\overline{z}}{2\overline{z}-1}, \quad
        z \mapsto -\overline{z}.
\end{equation*}
This generates the strip packing (Figure~\ref{fig:bounded-types}), although one must then scale by a factor of $2$ to obtain a primitive integral packing (note that the base quadruple in Figure~\ref{fig:basequad} has curvatures $0,0,2,2$).  All other Apollonian packings are images of the strip packing under some M\"obius transformation.  

\begin{exercise}
	Using the fact that the M\"obius transformations act triply transitively on $\widehat{\CC}$, show that any two Apollonian packings are related by a M\"obius transformation.
\end{exercise}

	To relate the algebraic and geometric Apollonian groups, one can use the \emph{exceptional isomorphism}
	\begin{equation}
		\label{eqn:exceptional}
		\PGL_2(\CC) \rightarrow \operatorname{SO}^+_{1,3}(\RR).
	\end{equation}
	For now, we leave aside the details (which are finicky).  This is sometimes discussed in the language of the \emph{spin homomorphism} $\SL_2(\CC) \rightarrow \operatorname{SO}_{1,3}(\RR)$ or \emph{spin double cover}.  See \cite{FuchsStrong, GLMWY-geometry}.

	The group $\PGL_2(\CC)$ can be identified with $\operatorname{Isom}(\HH^3_U)$, the isometries of the hyperbolic 3-space realized as the upper half space whose boundary is $\widehat{\CC}$.  Under this interpretation, $\mathcal{A}^{geo}$ is a \emph{Kleinian group} (that is, a discrete subgroup of $\PGL_2(\CC)$), with an infinite volume quotient hyperbolic $3$-manifold $\mathcal{A}^{geo} \backslash \HH^3_U$.  It is geometrically finite (in this context, this means the fundamental domain can be taken to have finitely many sides).

	\subsection{Limit set and circle growth}

	Viewing an Apollonian packing $\mathcal{P}$ as a subset of $\widehat{\CC}$, we define the \emph{residual set} $\Lambda(\mathcal{P})$ as the closure of the union of the circles of the packing.  There are countably many circles and countably many tangency points, but there are uncountably many points in $\Lambda(\mathcal{P})$ which are added in the closure process.  The complement of $\Lambda(\mathcal{P})$ consists of the interiors of all the circles.

	For $\mathcal{P}$ the strip packing, $\Lambda(\mathcal{P})$ is equal to the limit set of $\mathcal{A}^{geo}$ (i.e., the set of accumulation points for the orbit of a point under $\mathcal{A}^{geo}$) (a similar statement holds for other packings, taking an appropriate conjugate of $\mathcal{A}^{geo}$).

	\begin{theorem}[{\cite{McMullen}}]
		\label{thm:Hausdorff-dim}
		The Hausdorff dimension of $\Lambda(\mathcal{P})$ is $\alpha := 1.30568\ldots$. 
	\end{theorem}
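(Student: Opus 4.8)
The plan is to reduce the computation of $\dim_H \Lambda(\mathcal{P})$ to a dynamical eigenvalue problem and then to carry out a rigorous numerical computation. Since the preceding discussion identifies $\Lambda(\mathcal{P})$ with the limit set of the geometrically finite Kleinian group $\mathcal{A}^{geo} < \PGL_2(\CC) \cong \operatorname{Isom}(\HH^3_U)$, the first step is to invoke Sullivan's theorem: for a geometrically finite Kleinian group the Hausdorff dimension of the limit set equals the critical exponent $\delta$ of the Poincar\'e series $\sum_{g} e^{-s\, d_U(o,\, g o)}$. This converts a question about a fractal subset of $\widehat{\CC}$ into the computation of a single real invariant $\delta = \delta(\mathcal{A}^{geo})$, and it simultaneously furnishes a Patterson--Sullivan measure of dimension $\delta$ supported on $\Lambda(\mathcal{P})$.

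The second step is to present $\Lambda(\mathcal{P})$ as the attractor of a conformal iterated function system and apply Bowen's formula, which states that $\dim_H \Lambda$ is the unique zero $s = \delta$ of the topological pressure $P(s)$ of the geometric potential $-s \log |f'|$; equivalently, $s$ is the value at which the leading eigenvalue of the Ruelle transfer operator $\mathcal{L}_s \varphi(x) = \sum_{f} |f'(x)|^{s}\, \varphi(f(x))$ equals $1$. The natural generating maps here are the inverse branches of the circle inversions $S_i^\perp$ (equivalently, the reflections generating $\mathcal{A}^{geo}$), whose iterates contract each curvilinear triangle of the packing.

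Third, following McMullen's eigenvalue algorithm, I would discretize: choose a Markov partition of $\Lambda(\mathcal{P})$ adapted to the tangency combinatorics, approximate $\mathcal{L}_s$ by a finite matrix whose entries are $|f'|^s$ evaluated at suitably chosen periodic points, and solve $\lambda_{\max}(s) = 1$ for $s$. Exploiting the analyticity and strict monotonicity of $P(s)$, successive refinements of the partition then yield rigorous two-sided bounds that pin down $s = 1.30568\ldots$.

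The main obstacle is the non-uniform hyperbolicity caused by the tangency points. These are parabolic fixed points (rank-one cusps) of $\mathcal{A}^{geo}$, so the inverse branches have derivative approaching $1$ there, and the naive transfer operator fails to be compact while the system fails to contract uniformly. To proceed rigorously one passes to the parabolic (infinite) iterated function system formalism of Mauldin--Urba\'nski, or induces a first-return map that is uniformly expanding; one must then verify that the cusp threshold (here $1/2$, the rank-one convergence exponent for limit sets in $\partial \HH^3_U$) is strictly below $\delta$, so that the parabolic points are subcritical and the Patterson--Sullivan measure is finite and non-atomic. Establishing this domination and controlling the truncation error of the transfer operator tightly enough to certify the stated digits is the technical heart of the argument.
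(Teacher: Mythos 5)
You should know that the paper contains no proof of this statement at all: it is quoted directly from \cite{McMullen}, so the only meaningful comparison is with McMullen's own argument, and your outline is a faithful reconstruction of it --- Sullivan's identification of $\dim_H \Lambda(\mathcal{P})$ with the critical exponent of the geometrically finite group $\mathcal{A}^{geo}$, then Bowen's pressure equation solved via McMullen's Markov-partition eigenvalue algorithm for the transfer operator $\mathcal{L}_s$, with the tangency points correctly diagnosed as rank-one parabolic fixed points (parabolic critical exponent $1/2 < \delta$, so the Patterson--Sullivan measure behaves well and the parabolics are subcritical). The one caveat: your final step of certifying the stated digits by rigorous two-sided truncation bounds goes beyond what McMullen's original computation delivered; the fully rigorous high-precision determination of $\alpha$ is the later work of Vytnova and Wormell \cite{Polina}, which the paper cites separately, so if you intend the numerics to be certified you are really proving the theorem in the sense of that paper rather than of \cite{McMullen}.
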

	This constant, for which no closed form is known, was recently rigorously computed to an impressive 128 digits by Vytnova and Wormell \cite{Polina}.

	\begin{exercise}
		Let $\Lambda^*(\mathcal{P})$ temporarily denote the complement of the interiors of $\mathcal{P}$.  Hirst \cite{MR0209981} showed this has Hausdorff dimension less than $2$, hence measure zero.  Use this to show that $\Lambda^*(\mathcal{P}) = \Lambda(\mathcal{P})$.
	\end{exercise}

	The Hausdorff dimension controls the growth of the number of circles in the packing in terms of size:
	\begin{theorem}[{\cite{KontorovichOh}}]
		Let $N_\mathcal{P}(X) = \# \{ C \in \mathcal{P} : \operatorname{curv}(C) < X \}$.  Then
\[
		N_\mathcal{P}(X) \sim c_\mathcal{P} X^{\alpha}.
	\]
\end{theorem}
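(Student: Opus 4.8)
The plan is to convert the Euclidean counting statistic, curvature, into a hyperbolic orbit-counting problem for the geometric Apollonian group $\mathcal{A}^{geo}$, and then to apply the equidistribution machinery available for geometrically finite Kleinian groups. By the space-of-circles dictionary (Theorem~\ref{thm:spaceofcircles}), every circle $C$ corresponds to a vector $v_C \in \mathbb{D}^3_M$, normalized so that $r^2 + s^2 - pq = 1$, whose first coordinate $p$ is exactly $\operatorname{curv}(C)$. Since $\mathcal{A}^{geo}$ acts on the packing with finitely many circle-orbits (the circles of a base Descartes quadruple serving as orbit representatives), the set $\{ v_C : C \in \mathcal{P} \}$ is a finite union of orbits $\Gamma v_0$ with $\Gamma = \mathcal{A}^{geo}$, and the count $\operatorname{curv}(C) < X$ becomes a count of $\gamma$ in $\Gamma / \operatorname{Stab}(v_0)$ for which the curvature coordinate of $\gamma v_0$ is less than $X$. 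Geometrically, $C$ bounds a geodesic hemisphere in $\HH^3_U$; if $C$ has Euclidean radius $\rho$ then $\operatorname{curv}(C) = 1/\rho$, and the hyperbolic distance from a fixed basepoint $o$ to this hemisphere differs from $\log \operatorname{curv}(C)$ by a bounded, explicitly computable function of the direction toward $C$ (a Busemann cocycle). Thus, up to direction-dependent weights, the condition $\operatorname{curv}(C) < X$ is the condition $d(o, \gamma o) < \log X$.

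Next I would bring in the structural facts about $\Gamma$. Because $\mathcal{A}^{geo}$ is geometrically finite (as stated in the excerpt), Sullivan's theory gives that its critical exponent equals the Hausdorff dimension $\alpha$ of the limit set $\Lambda(\mathcal{P})$ (Theorem~\ref{thm:Hausdorff-dim}), and that $\Lambda(\mathcal{P})$ carries a Patterson-Sullivan density of dimension $\alpha$. The tangency points of the packing are parabolic fixed points, so $\Gamma$ has rank-one cusps; since $\alpha = 1.305\ldots$ strictly exceeds this rank, the associated Bowen-Margulis-Sullivan measure on the frame bundle of $\Gamma \backslash \HH^3_U$ is finite. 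Finiteness of this measure is exactly what makes the whole method run.

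The analytic heart of the argument is to invoke the mixing of the frame (geodesic) flow with respect to the finite Bowen-Margulis-Sullivan measure and to run the standard thickening/equidistribution argument converting mixing into an asymptotic count (an alternative is the spectral route, extracting the leading term from the base eigenvalue $\alpha(2-\alpha)$ of the Laplacian on the infinite-volume quotient). Integrating the Patterson-Sullivan (skinning) measure of the base circle against the direction-dependent curvature weight, and setting $T = \log X$, yields $N_\mathcal{P}(X) \sim c_\mathcal{P}\, e^{\alpha T} = c_\mathcal{P} X^{\alpha}$, the constant $c_\mathcal{P}$ emerging as that integral.

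The main obstacle is twofold. First, one must faithfully convert the linear curvature functional on the space of circles into the hyperbolic displacement statistic, tracking the direction-dependent weights so that they integrate against the Patterson-Sullivan measure to a finite, nonzero constant. Second, and more delicate, is controlling the cusps: geometric finiteness together with $\alpha > 1$ guarantees finiteness of the Bowen-Margulis-Sullivan measure, but obtaining a clean power law $c_\mathcal{P} X^\alpha$ rather than a count corrupted by logarithmic factors requires Sullivan's shadow lemma to bound the Patterson-Sullivan measure of shadows of cusp neighborhoods. This cusp estimate, together with the verification that the leading constant is finite and strictly positive, is where the real work lies.
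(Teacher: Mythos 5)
The paper offers no proof of this statement: it is quoted directly from Kontorovich--Oh \cite{KontorovichOh}, with the error term and refinements attributed to \cite{LeeOh, OhShah}, so there is no in-text argument to compare against. Judged on its own merits, your sketch is a faithful outline of the strategy actually used in that literature. The reduction is as you say: circles form finitely many orbits of the geometrically finite group $\mathcal{A}^{geo}$ (one orbit per circle of a root quadruple), each circle bounds a geodesic hemisphere in $\HH^3_U$, and $\operatorname{curv}(C)<X$ translates, up to bounded direction-dependent corrections, into a hyperbolic displacement condition at scale $\log X$. From there one runs Patterson--Sullivan theory with critical exponent equal to the limit-set dimension $\alpha$ (Sullivan, valid since $\mathcal{A}^{geo}$ is geometrically finite) and extracts the asymptotic either from mixing of the frame flow with respect to the Bowen--Margulis--Sullivan measure (the Oh--Shah/Lee--Oh route) or from the base Laplace eigenvalue $\alpha(2-\alpha)$ supplied by Lax--Phillips theory, counting via equidistribution of expanding closed horospheres (the original Kontorovich--Oh route). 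Your parenthetical ``spectral route'' is in fact the historically first proof.

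Two calibration remarks. First, finiteness of the BMS measure does not require your comparison of $\alpha$ with the cusp rank: for geometrically finite groups it is automatic by Sullivan. Where $\alpha>1$ genuinely matters is in the spectral approach, since $\alpha(2-\alpha)$ is an $L^2$-eigenvalue below the continuous spectrum only when $\alpha>1$. Second, the statistic being counted is the distance from a basepoint to the orbit of a totally geodesic hemisphere, i.e., a count over $\Gamma/\Gamma_H$ where $\Gamma_H$ stabilizes the plane --- which is precisely why the constant $c_\mathcal{P}$ arises as the total mass of a skinning-type measure; your ``direction-dependent weights'' are the informal version of this, and their finiteness and positivity, together with shadow-lemma control of cusp excursions, is indeed where the real analytic work lies, exactly as you flag. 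With those inputs made precise, your outline matches the published proof.
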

An error term and other refinements came afterward \cite{LeeOh, OhShah}.  The constant $c_\mathcal{P}$ is called the Apollonian constant for the packing $\mathcal{P}$ and a formula is given in \cite[Remark 2.9]{Vinogradov}.

	For further details in this direction, consult \cite[Theorem 4.1]{GLMWY-geometry}, \cite{Oh} and \cite{OhICM}.

\subsection{Quadratic forms}

The geometric Apollonian group has as a subgroup the \emph{$2$-congruence subgroup}, the kernel in $\PSL_2(\ZZ)$ under reduction modulo $2$:
\[
	\Gamma(2) := \{ M \in \PSL_2(\ZZ) : M \equiv I \pmod{2} \} \subseteq \PSL_2(\ZZ).
\]
This is an arithmetic group (much more is known for arithmetic groups than for thin groups), and is a \emph{Fuchsian group}, that is, a discrete subgroup of $\PSL_2(\RR)$.  The group $\Gamma(2)$ is the subgroup of $\mathcal{A}^{geo}$ stabilizing $\widehat{\RR}$.

Take the three circles of the base quadruple tangent to $\widehat{\RR}$ (Figure~\ref{fig:basequad}); those of radius $1/2$ centred on $i/2$ and $1+i/2$, and the circle $i+\widehat{\RR}$, i.e. the horizontal line through $i$ (this is indeed tangent to $\widehat{\RR}$ at $\infty$; exercise).
The group $\Gamma(2)$ preserves tangencies, and so the orbit of the base quadruple under $\Gamma(2)$ gives a family of circles tangent to $\widehat{\RR}$.
This is the family of
the Ford circles:  the circles tangent to $\widehat{\RR}$ at each rational number $p/q \in \QQ$ (in lowest form) of radius $1/2q^2$ (plus the horizontal line through $i$).  See Figure~\ref{fig:ford}.  
Their curvatures, as a family, are exactly twice the perfect squares.

\begin{exercise}
	Prove this statement (that the Ford circles are the indicated orbit).
\end{exercise}

	\begin{figure}
		\includegraphics[height=2.0in]{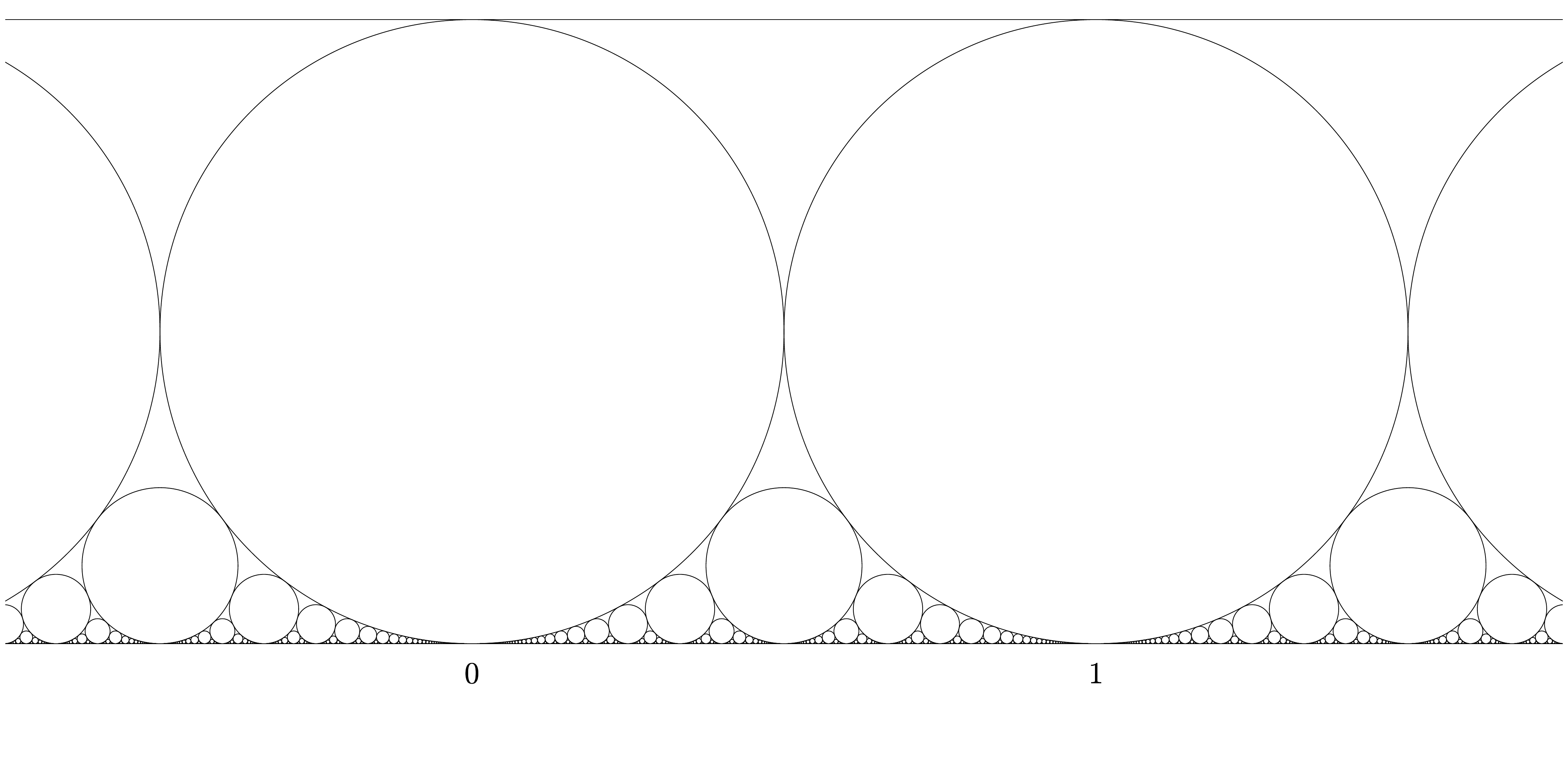}
		\caption{The Ford circles.}
		\label{fig:ford}
	\end{figure}

	\begin{figure}
		\includegraphics[height=4.0in]{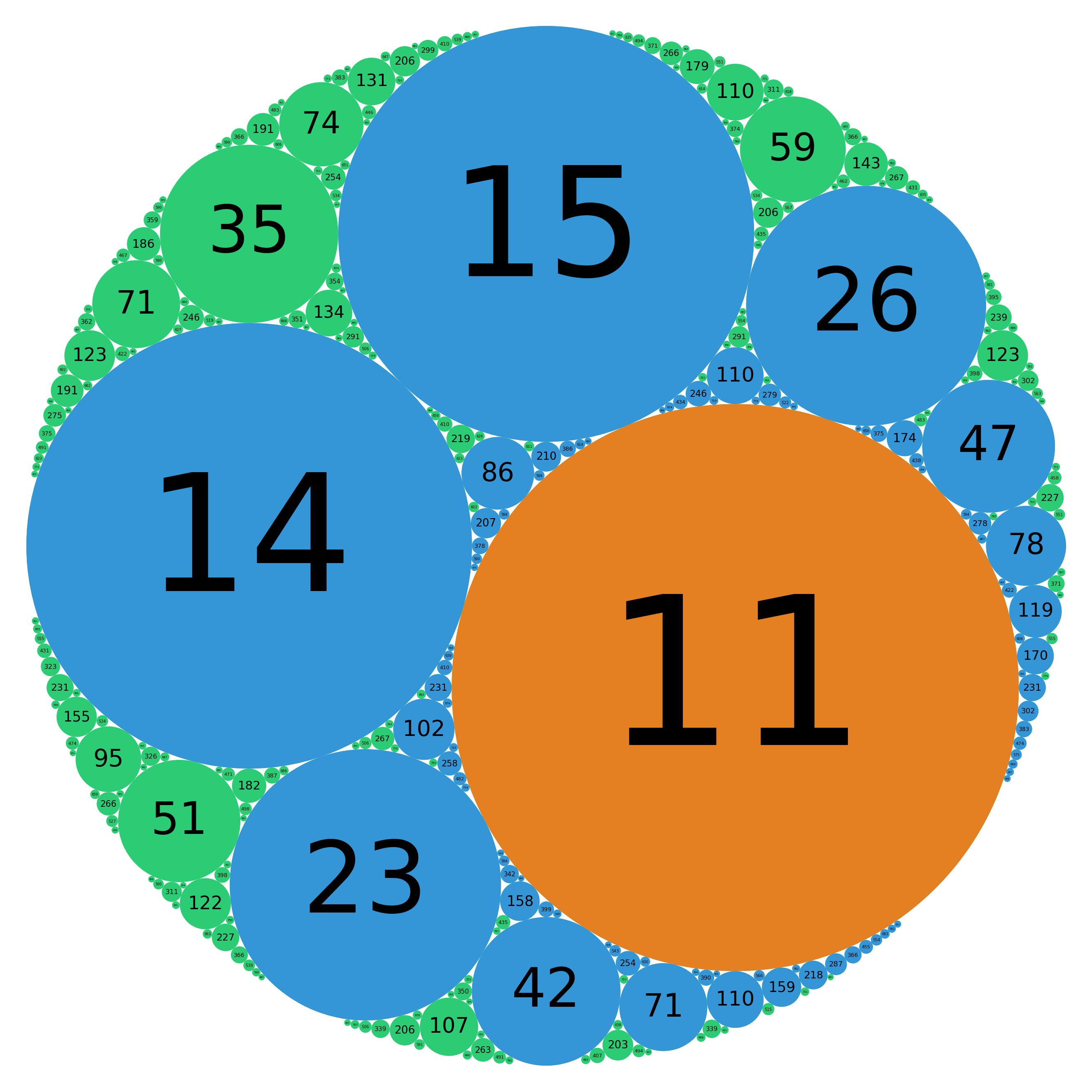}
		\caption{A mother circle (orange) and the family of circle tangent to it (blue).  Other circles in the packing are green.}
		\label{fig:motherfamily}
	\end{figure}

	More generally, fix a circle $\mathcal{C}$, which we will call the \emph{mother circle}.  Letting $M \cdot \widehat{\RR} = \mathcal{C}$, the subgroup $M \Gamma(2) M^{-1}$ is the stabilizer of $\mathcal{C}$ in $M \mathcal{A}^{geo} M^{-1}$ and gives rise to a collection of circles tangent to $\mathcal{C}$ (see Figure~\ref{fig:motherfamily}) as the image of the Ford circles under $M$.  The curvatures of this family of circles is exactly the set of primitively represented values of a translated quadratic form.  Filling out the details of this relationship proves the following theorem, first observed in this form by Sarnak \cite{Sarnakletter}, but present in another form in \cite{GLMWY-number}.

\begin{theorem}
	\label{thm:curv-form}
	Let $\mathcal{C}$ be a circle of curvature $c$ within an Apollonian circle packing $\mathcal{P} \subseteq \widehat{\CC}$.  Then there is a real binary quadratic form $f_\mathcal{C}(x,y)$ of discriminant $-4c^2$ such that the set of curvatures of circles tangent to $\mathcal{C}$ within $\mathcal{P}$ is the set
	\[
		\{ f_\mathcal{C}(x,y) - c : x,y \in \ZZ, (x,y) = 1 \}.
	\]
\end{theorem}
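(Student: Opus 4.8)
The plan is to reduce the general statement to the strip packing, where the circles tangent to $\widehat{\RR}$ are precisely the Ford circles, and then transport everything by a single M\"obius map. Every Apollonian packing is a M\"obius image of the strip packing, so I would fix a generalized M\"obius transformation $M$ (allowing complex conjugation, as in the definition of $\mathcal{A}^{geo}$) carrying $\widehat{\RR}$ to the mother circle $\mathcal{C}$. Then $M$ conjugates the strip packing to $\mathcal{P}$, carries the stabilizer $\Gamma(2)$ of $\widehat{\RR}$ to the stabilizer $M\Gamma(2)M^{-1}$ of $\mathcal{C}$, and induces a bijection between circles tangent to $\widehat{\RR}$ in the strip packing and circles tangent to $\mathcal{C}$ in $\mathcal{P}$. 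By the Ford-circle description established above, the former are exactly the Ford circles, indexed by reduced fractions $p/q$ (including $\infty=1/0$), i.e.\ by coprime pairs $(p,q)$; in particular primitivity $(x,y)=1$ is automatic. Thus it suffices to track what $M$ does to the curvature of the Ford circle $F_{p/q}$.

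Second, I would coordinatize the Ford circles inside the space of circles of Theorem~\ref{thm:spaceofcircles}. A short computation with the normalization $r^2+s^2-pq=1$ shows that $F_{p/q}$ (curvature $2q^2$, center $p/q+i/(2q^2)$) corresponds to the vector
\[
F_{p/q} \;\longleftrightarrow\; (2q^2,\, 2p^2,\, 2pq,\, 1) \in M^{3,1},
\]
in the coordinates (curvature, co-curvature, real and imaginary parts of curvature-center). The structural observation I would exploit is that the degree-two part $(2q^2,2p^2,2pq,0)$ is a light-cone vector for every $(p,q)$ --- reflecting that shrinking Ford circles converge to boundary tangency points --- while the affine part $(0,0,0,1)$ is the vector of $\widehat{\RR}$ itself.

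Third, I would invoke the linear equivariance of Theorem~\ref{thm:equiv3}: $\PSL_2(\CC)$ acts on $M^{3,1}$ through a homomorphism into $O_Q(\RR)$, by isometries of $Q$. Writing $w_\infty:=(0,-2,0,0)$ for the null vector whose $Q$-pairing extracts the curvature coordinate, the curvature of $M\cdot F_{p/q}$ equals $\langle M\cdot F_{p/q},\,w_\infty\rangle_Q=\langle F_{p/q},\,w\rangle_Q$ with $w:=M^{-1}\cdot w_\infty$, a null vector since $M^{-1}\in O_Q(\RR)$. Expanding the pairing against $(2q^2,2p^2,2pq,1)$ gives
\[
\operatorname{curv}(M\cdot F_{p/q}) = -w_1 p^2 + 2 w_3\, pq - w_2 q^2 + w_4,
\]
so the curvatures are the values $f_\mathcal{C}(p,q)+w_4$ of the real binary quadratic form $f_\mathcal{C}(x,y):=-w_1 x^2 + 2w_3\, xy - w_2 y^2$. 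Its discriminant is $(2w_3)^2-4w_1w_2 = 4(w_3^2-w_1w_2) = -4w_4^2$, where the last equality is exactly the null condition $Q(w)=w_3^2+w_4^2-w_1w_2=0$. Pairing the $\widehat{\RR}$-vector $(0,0,0,1)$ against $w$ recovers the curvature of $\mathcal{C}=M\cdot\widehat{\RR}$, so $w_4=\pm c$ and the discriminant is $-4c^2$.

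The remaining, and genuinely delicate, step is the sign bookkeeping that pins $w_4=-c$ rather than $+c$ --- that is, the orientation convention on $\mathcal{C}$ relative to its tangent family. This is the expected main obstacle: the magnitude $|w_4|=c$ (hence the discriminant $-4c^2$) is forced by the null condition regardless of orientation, but recovering the exact shift $-c$ in the statement requires orienting the mother circle's representing vector so that its tangent circles lie to the side of the inward normal, consistent with the Apollonian convention in which a bounding circle carries negative curvature. I would settle this directly via Proposition~\ref{prop:mobR}, which gives the curvature, curvature-center and co-curvature of $M\cdot\widehat{\RR}$ in terms of the entries of $M$, checking the sign on a single normalized representative and then propagating it using the transitivity of $\mathcal{A}^{geo}$ on the circles of the packing.
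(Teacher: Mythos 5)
Your proposal is correct, and it reaches the theorem by a genuinely different mechanism than the paper, even though both reduce to the strip packing and the Ford circles. The paper parametrizes the tangent family as a coset of M\"obius transformations, $M\left(\begin{smallmatrix}0&1\\1&0\end{smallmatrix}\right)\Gamma(2)\left(\begin{smallmatrix}1&i\\0&1\end{smallmatrix}\right)$, and applies Proposition~\ref{prop:mobR} to each element: writing $\left(\begin{smallmatrix}x&r\\y&s\end{smallmatrix}\right)\in\Gamma(2)$, the curvature comes out as $-2\Im(\overline{\gamma}\delta)+2N(x\delta+y\gamma)$, so the quadratic form is exhibited concretely as (twice) the norm form of the lattice $\gamma\ZZ+\delta\ZZ$ attached to the mother circle, and the discriminant $-4c^2$ follows by direct computation. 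You instead linearize the whole problem in the space of circles: you coordinatize the Ford circles as the explicit integer vectors $(2q^2,2p^2,2pq,1)$, extract curvature by $Q$-pairing against the null vector $w_\infty=(0,-2,0,0)$, and pull $w_\infty$ back through the orthogonal representation, so that the discriminant identity is forced conceptually by the null condition $Q(M^{-1}w_\infty)=0$ --- a cleaner explanation of \emph{why} the discriminant is $-4c^2$ than the paper's computation. Your route has a second quiet advantage: indexing by all reduced fractions $p/q$ gives every coprime pair $(x,y)$ at once, whereas the paper's single $\Gamma(2)$-coset based at the cusp $\infty$ only produces pairs $(x,y)\equiv(1,0)\pmod 2$ (the cusps $0$ and $1$ of $\Gamma(2)$ account for the other congruence classes, a point the paper glosses). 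What the paper's method buys in exchange is the arithmetic identification of $f_\mathcal{C}$ with the norm form of the lattice of denominators, which is exactly what gets used later (Theorem~\ref{thm:lattice-of-circ} and the Schmidt arrangement results). Both arguments leave the same orientation bookkeeping --- the paper disposes of it by post-composing with $\left(\begin{smallmatrix}0&1\\1&0\end{smallmatrix}\right)$ and asserting the constant term is the (signed) curvature of $\mathcal{C}$, while you isolate $w_4=\pm c$ and propose to pin the sign on one normalized representative via Proposition~\ref{prop:mobR} and propagate by transitivity; that is a legitimate and, frankly, more honest treatment of the one step the paper handles breezily.
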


\begin{proof}
	The original observation was derived as a consequence of Descartes' relation.  Here we give a proof using $\mathcal{A}^{geo}$, beginning with the strip packing.  Recall that $\mathcal{A}^{geo}$ generates the strip packing from the base quadruple $(0,0,2,2)$ of Figure~\ref{fig:basequad}. 
	Consider the circle which is the horizontal line through $i$.  The corresponding M\"obius transformation from $\widehat{\RR}$ is $\begin{pmatrix} 1 & i \\ 0 & 1 \end{pmatrix}$.  If we post-compose by $\Gamma(2)$ (which, we recall, acts to permute the circles tangent to $\widehat{\RR}$), we have a coset
	\[
		 \Gamma(2) \begin{pmatrix} 1 & i \\ 0 & 1 \end{pmatrix}
	\]
	of M\"obius transformations representing the circles tangent to $\widehat{\RR}$.  However, these are all oriented opposite to how they should be (having negative curvatures), so we must post-compose by, say, $\begin{pmatrix} 0 & 1 \\ 1 & 0 \end{pmatrix}$.  

		Now more generally, consider a packing that is the image of the strip packing under some $M \in \PSL_2(\CC)$, where we wish to parametrize the family of circles tangent to $\mathcal{C} = M \cdot \widehat{\RR}$.  Then this family is given by:
	\[
		M \begin{pmatrix} 0 & 1 \\ 1 & 0 \end{pmatrix} \Gamma(2) \begin{pmatrix} 1 & i \\ 0 & 1 \end{pmatrix}.
	\]
	Now we apply Proposition~\ref{prop:mobR} to compute the curvatures of this family.

	Taking an element $\begin{pmatrix} x & r \\ y & s \end{pmatrix} \in \Gamma(2)$, we obtain
	\begin{align*}
		2\Im( \overline {(x\delta + y \gamma )} {\left( (r\delta + s\gamma) + i(x\delta + y\gamma) \right)} ) 
		&= 2\Im( \overline{(x\delta + y \gamma)} { (r\delta + s\gamma) } )
		 +2\Im( \overline{(x\delta + y \gamma)} { i(x\delta + y\gamma) } ) \\
		& = -2\Im( \overline\gamma {\delta} ) + 2N( x\delta + y\gamma).
	\end{align*}
	The first term is the curvature of $\mathcal{C}$.  The second is a quadratic form in integral variables $x,y$.  
\end{proof}


\begin{exercise}
	Using the the proof above, recover the fact that the Ford circles have curvatures $2x^2$.
\end{exercise}

\begin{exercise}
	Prove the theorem from the Descartes relation.
\end{exercise}

The proof hints at how to compute the form $f_\mathcal{C}$.  Fix a Descartes quadruple $\mathcal{C}_1, \mathcal{C}_2, \mathcal{C}_3, \mathcal{C}_4$ containing $\mathcal{C}_1 = \mathcal{C}$, the mother circle.  Write $[n,a,b,c]$ for the quadruple of curvatures in the same order.  Choose $M$ to take $\infty$, $0$, $1$ to $a,b,c$.  Then, the form is
\[
	f_\mathcal{C}(x,y) = (n+a)x^2 + (n+a+b-c) xy + (n+b)y^2;
\]
because then we recover the curvatures $a,b,c$ from $f_\mathcal{C}(x,y)-n$ where $(x,y) = (1,0), (0,1), (1,1)$ respectively.  Notice that a different choice of quadruple including $\mathcal{C}_1$ corresponds to a different $M$ and a change of variables on $f_\mathcal{C}$ within the $\PGL_2(\ZZ)-$equivalence class.  The following strong statement encompasses this.

\begin{proposition}[{ \cite[Theorem 4.2]{GLMWY-number}, \cite[Proposition 3.1.2]{Staircase} }]
	\label{prop:quad-space}
	Let $n \in \RR$.  The quadruples $[n,a,b,c] \in \RR^4$ satisfying the Descartes quadratic form \eqref{eqn:desc} biject with the set of positive semi-definite (this means not taking negative values) binary quadratic forms $Ax^2 + Bxy + Cy^2$ of discriminant $-4n^2$.  The map is
\[
	\phi: [n,a,b,c] \mapsto (n+a)x^2 + (n+a+b-c) xy + (n+b)y^2.
\]
Furthermore, if we identify quadruples $[n,a,b,c]$ under the action of $S_2$, $S_3$, $S_4$ and under permutation of the last three entries, then equivalence classes of Descartes quadruples $[n,a,b,c]$ are in bijection with $\GL_2(\ZZ)-$equivalence classes of positive semi-definite binary quadratic forms of discriminant $-4n^2$.
\end{proposition}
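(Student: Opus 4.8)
The plan is to treat $\phi$, for each fixed value of $n$, as the composition of an invertible \emph{linear} change of coordinates $(a,b,c)\leftrightarrow(A,B,C)$ with the Descartes constraint, and then to track how the Apollonian swaps and permutations act after passing to forms. First I would verify that $\phi$ lands in the stated target by computing the discriminant directly. Writing $A=n+a$, $B=n+a+b-c$, $C=n+b$, a short expansion yields the identity
\[
B^2-4AC = -4n^2 - Q(n,a,b,c),
\]
so the Descartes condition $Q(n,a,b,c)=0$ from \eqref{eqn:desc} holds if and only if the associated form has discriminant exactly $-4n^2$. For $n\neq 0$ this discriminant is negative, so the form is definite; and since $4AC=B^2+4n^2>0$ the coefficients $A$ and $C$ automatically share a sign, so positive semi-definiteness is precisely the normalization $A=n+a\ge 0$ (the degenerate case $n=0$ gives a perfect-square form, matching the Ford-circle picture of Theorem~\ref{thm:curv-form}). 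This establishes well-definedness, and reading the relations $a=A-n$, $b=C-n$, $c=A+C-n-B$ backwards shows $\phi$ is a bijection: the linear map $(a,b,c)\mapsto(A,B,C)$ is invertible and carries $\{Q=0\}$ onto the discriminant-$(-4n^2)$ locus by the displayed identity.

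For the second statement I would compute, one operation at a time, the substitution on forms induced by each generator of the equivalence, using the Soddy relation $d_1+d_2=2(a+b+c)$ to write each swap explicitly. The swap $S_4\colon c\mapsto 2(n+a+b)-c$ fixes $A,C$ and sends $B\mapsto -B$, i.e.\ it is the substitution with matrix $\begin{pmatrix}1&0\\0&-1\end{pmatrix}$; the transposition $a\leftrightarrow b$ is $\begin{pmatrix}0&1\\1&0\end{pmatrix}$; the transposition $b\leftrightarrow c$ works out to $\begin{pmatrix}1&1\\0&-1\end{pmatrix}$; and $S_2\colon a\mapsto 2(n+b+c)-a$ gives $\begin{pmatrix}1&0\\-2&-1\end{pmatrix}$, with $S_3$ the conjugate of $S_2$ by $a\leftrightarrow b$. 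Thus $\phi$ intertwines the action of the group generated by $\{S_2,S_3,S_4,\text{permutations of }a,b,c\}$ on quadruples with an action of a subgroup of $\GL_2(\ZZ)$ on forms by change of variables.

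The final point is to check that this subgroup is all of $\GL_2(\ZZ)$. The quickest route is mod $2$: the permutations of $a,b,c$ act as $\GL_2(\FF_2)\cong S_3$ on the three primitive directions $(1,0),(0,1),(1,1)$, so we already surject onto $\GL_2(\FF_2)$, while the even part is supplied by a shear, e.g.\ $\begin{pmatrix}1&1\\0&-1\end{pmatrix}\begin{pmatrix}1&0\\0&-1\end{pmatrix}=\begin{pmatrix}1&-1\\0&1\end{pmatrix}$. A shear together with $\begin{pmatrix}0&1\\1&0\end{pmatrix}$ generates $\SL_2(\ZZ)$, and adjoining the determinant-$(-1)$ element $\begin{pmatrix}1&0\\0&-1\end{pmatrix}$ (the image of $S_4$) gives all of $\GL_2(\ZZ)$. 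Since these operations realize \emph{every} change of variables, two quadruples with the same $n$ map to $\GL_2(\ZZ)$-equivalent forms if and only if they lie in one orbit, so the part-one bijection descends to the claimed bijection of equivalence classes (note that every listed operation fixes the first entry $n$, so $n$ is preserved throughout).

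The hard part will be the bookkeeping in the second paragraph: pinning down the six swap/transposition matrices under a single consistent convention for how $\GL_2$ acts on forms (row versus column vectors, $P$ versus $P^{-t}$), since a sign slip there can make the generated subgroup \emph{appear} to be a proper subgroup (all of whose shears are even, hence of index $3$) rather than all of $\GL_2(\ZZ)$. A secondary subtlety is the orientation sign, ensuring that ``positive semi-definite'' selects the correct one of the two Soddy completions; this is exactly where the normalization $A=n+a\ge 0$ together with the observation $AC>0$ does the work.
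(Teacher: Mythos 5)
The paper itself gives no proof of this proposition --- it defers entirely to \cite[Theorem 4.2]{GLMWY-number} and \cite[Proposition 3.1.2]{Staircase} --- so there is nothing internal to compare against; your argument stands as a self-contained verification, and it is essentially correct. I checked your key identity: with $A=n+a$, $B=n+a+b-c$, $C=n+b$, both sides of $B^2-4AC=-4n^2-Q(n,a,b,c)$ expand to $-3n^2+a^2+b^2+c^2-2na-2nb-2nc-2ab-2ac-2bc$, so the Descartes locus maps exactly to discriminant $-4n^2$, and the affine inverse $a=A-n$, $b=C-n$, $c=A+C-B-n$ gives bijectivity. Your five substitution matrices are also correct under the convention $(x,y)\mapsto(\alpha x+\beta y,\gamma x+\delta y)$: e.g.\ for $S_2$ one gets $(A,B,C)\mapsto(A-2B+4C,\,4C-B,\,C)$, which is indeed $f(x,-2x-y)$, matching your $\begin{pmatrix}1&0\\-2&-1\end{pmatrix}$, and your generation argument (a shear from the product of two involutions, the swap, and a determinant $-1$ element) does yield all of $\GL_2(\ZZ)$, after which the orbit bijection follows formally from equivariance plus surjectivity onto $\GL_2(\ZZ)$ and injectivity of $\phi$, exactly as you say.

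The one point you should make explicit rather than leave as a ``normalization'': as literally stated, the first bijection is false without a sign restriction, and your own computation shows why. The full real locus $\{Q=0\}$ with first entry $n$ maps \emph{onto all} forms of discriminant $-4n^2$, including negative definite ones --- e.g.\ $[1,-2,-2,-3]$ satisfies $Q=0$ but $\phi$ sends it to $-x^2-y^2$. So ``positive semi-definite'' corresponds to the half of the locus with $n+a\ge 0$ (equivalently $n+b\ge 0$ when $n=0$), which is automatic for curvature quadruples of genuine circle configurations but not for arbitrary real solutions of \eqref{eqn:desc}; this imprecision is inherited from the proposition's phrasing, and the cited sources resolve it by taking ``Descartes quadruple'' in the oriented geometric sense. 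For the orbit statement you should also record (one line, via equivariance) that the group generated by $S_2,S_3,S_4$ and the permutations preserves this half-locus, since $\GL_2(\ZZ)$ substitutions preserve semidefiniteness --- with that, your proof is complete. The mod-$2$ observation about $\GL_2(\FF_2)\cong S_3$ is harmless but not needed; the shear-plus-swap generation already does the work.
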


We have seen that these forms are a reflection of the Fuchsian group $\Gamma(2)$ and its conjugates inside the Apollonian group.  These form one of the main tools used to study Apollonian circle packings (more generally, these are a special feature of certain thin Kleinian groups that cause them to behave similarly to the Apollonian case \cite{FuchsStangeZhang}).

\subsubsection{The space of Descartes quadruples over $\RR$}

Proposition~\ref{prop:quad-space} allows us to map Descartes quadruples into the upper half plane $\HH^2_U$, using the correspondence $Ax^2 + Bxy + Cy^2 \mapsto \frac{-B \pm \sqrt{B^2 - 4AC}}{2A}$ as studied in Section~\ref{sec:hyperupper}.  Thus, $\HH^2_U$ is the space of pairs $(\mathcal{C}, A)$ where $\mathcal{C}$ is a circle and $A$ an Apollonian circle packing containing it, where these pairs are identified under affine transformations (i.e. we only think of the geometry of the packing, not its position in space).

	\begin{figure}
		\includegraphics[height=1in]{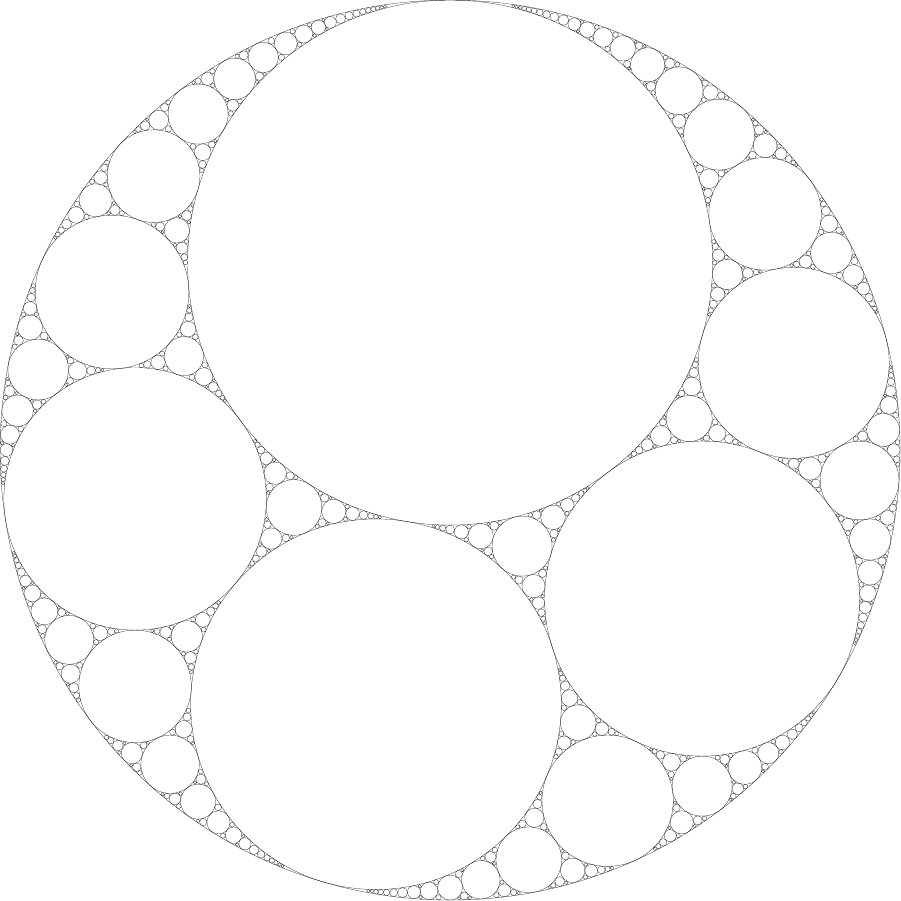}
		\includegraphics[height=1in]{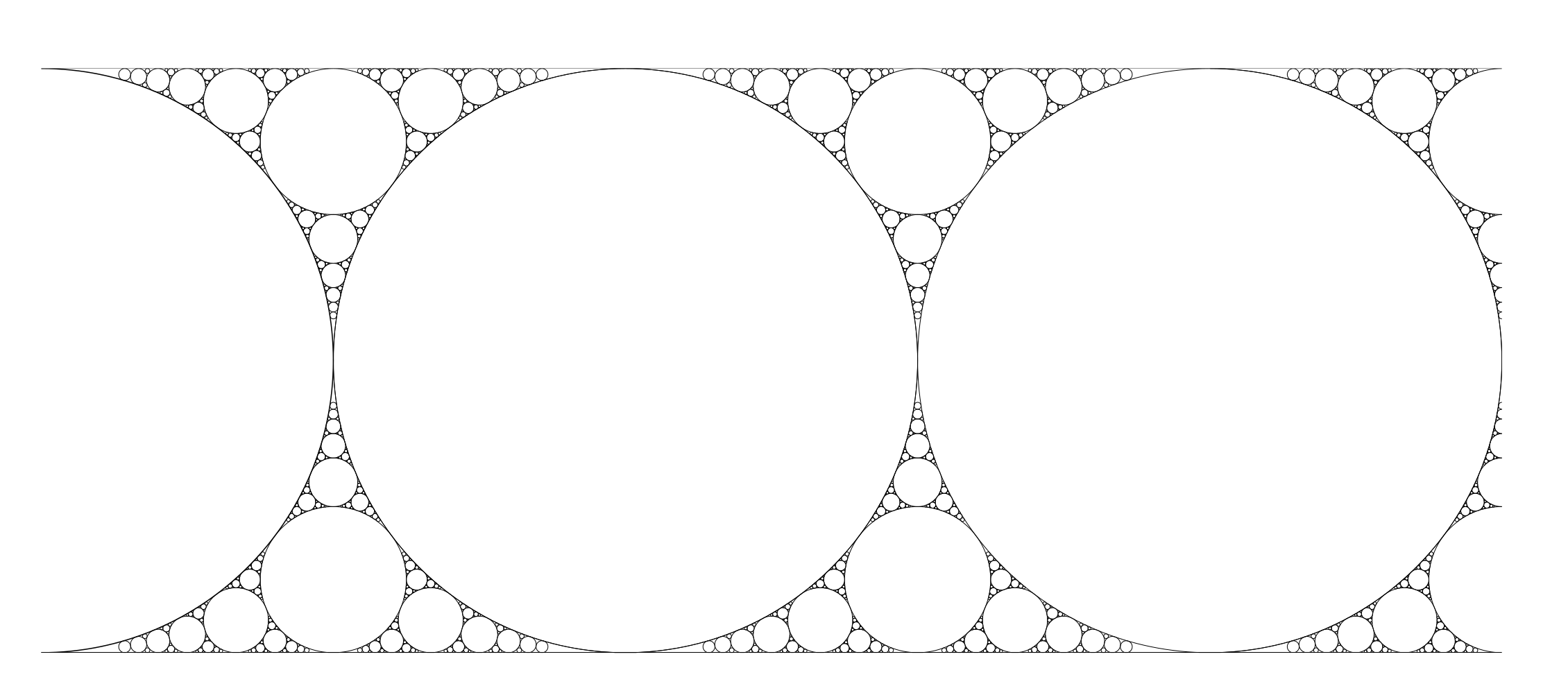} \\
		\includegraphics[height=1.0in]{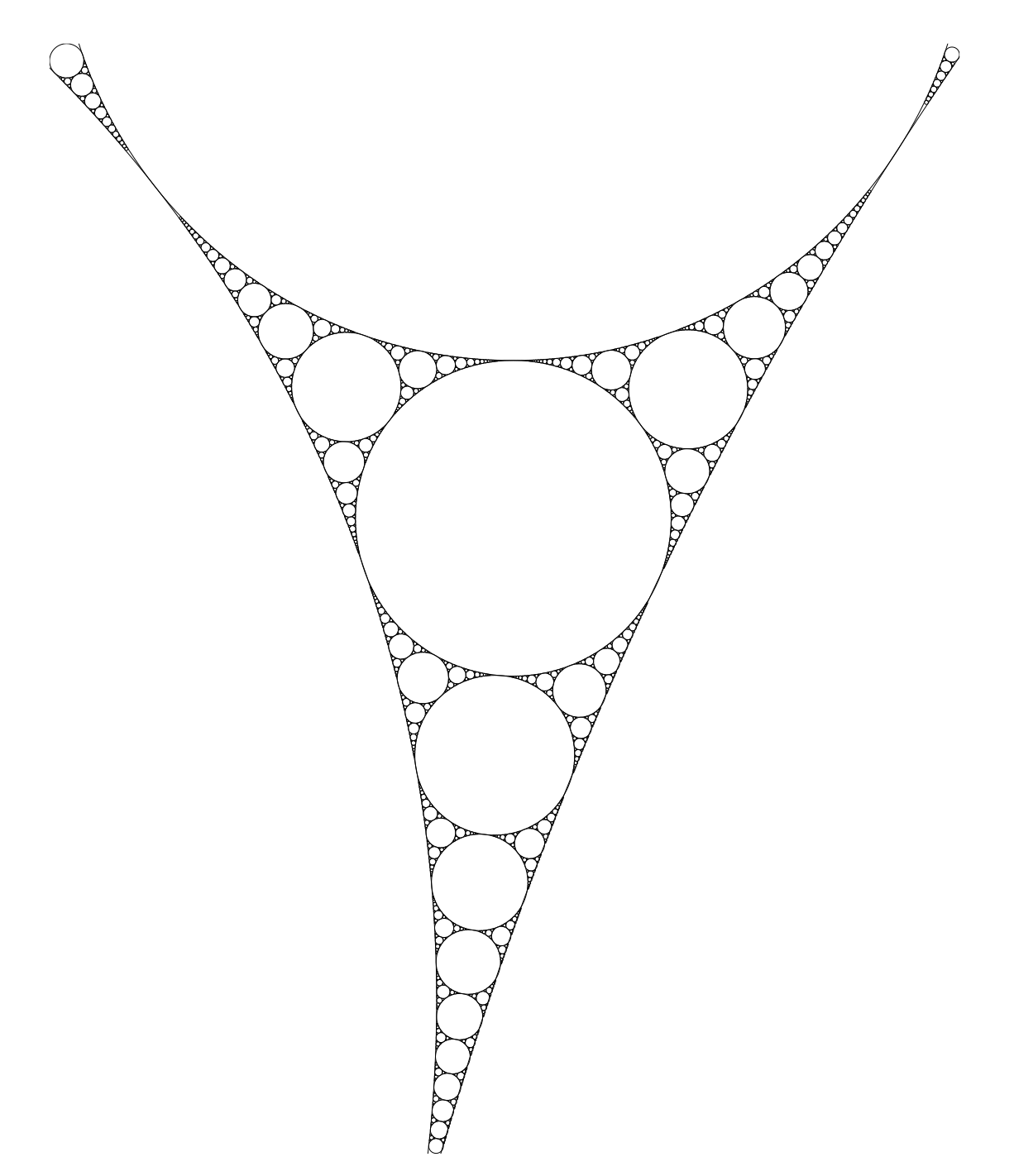}
		\includegraphics[height=1in]{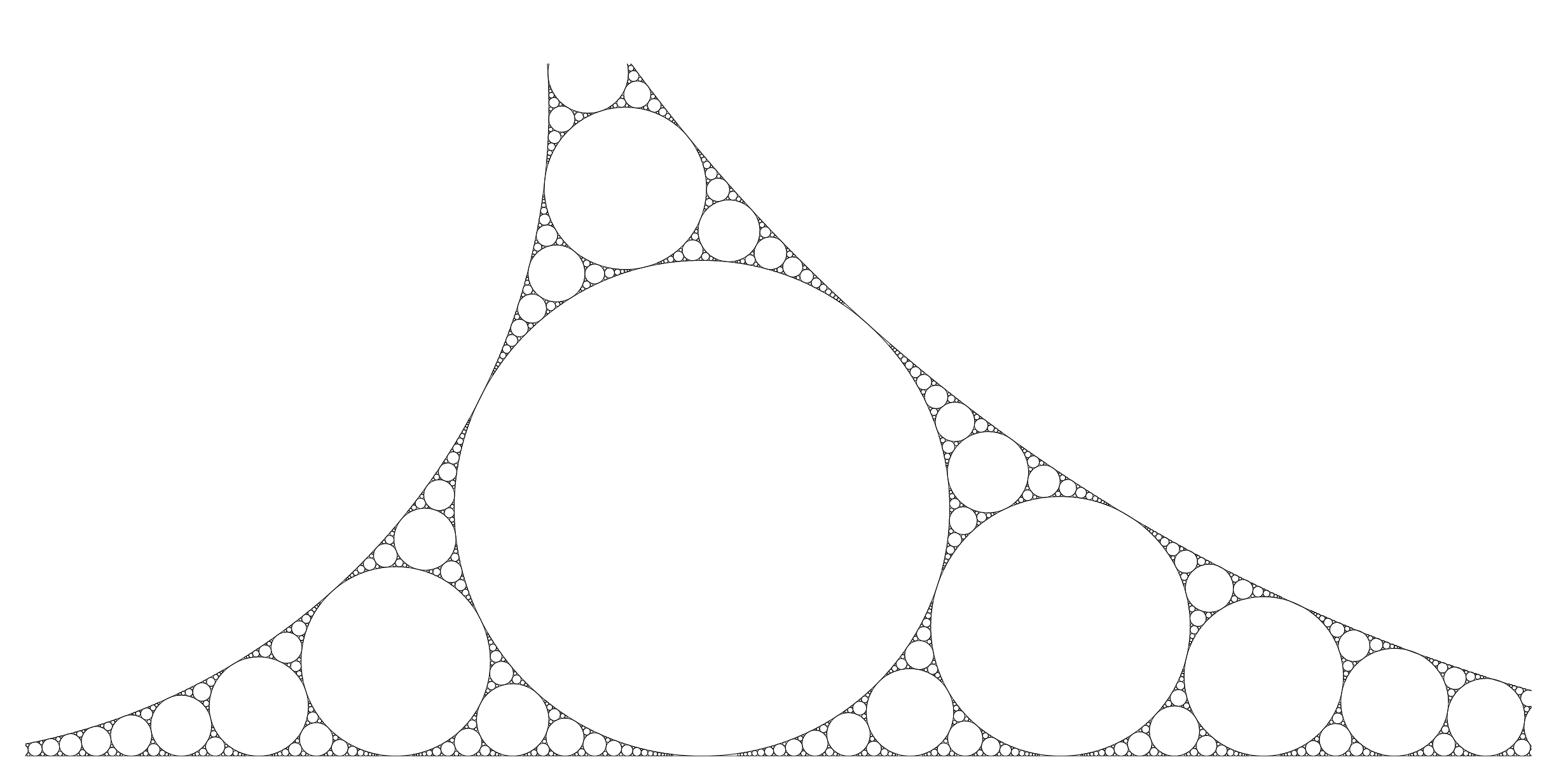}
		\caption{The various types of packings, clockwise from upper left:  bounded, strip, half-plane, full-plane.  Images:  James Rickards.}
		\label{fig:bounded-types}
	\end{figure}

	\begin{figure}
		\includegraphics[width=4.0in]{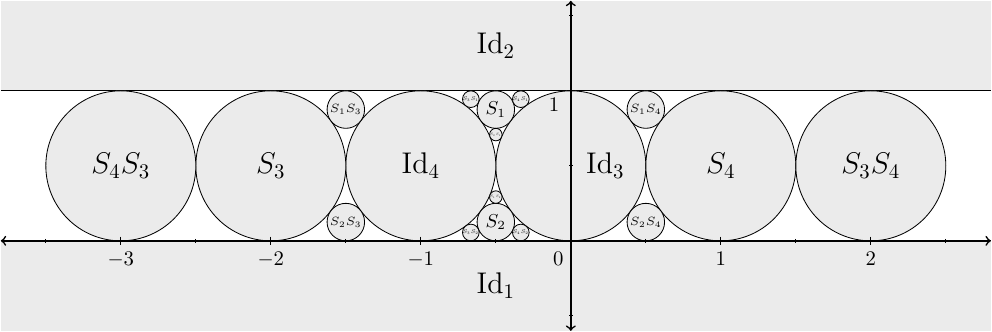}
		\caption{The upper half plane as a parameter space of Descartes quadruples.  That's right, the \emph{parameter space} of Apollonian circle packings, marked by type, is itself an Apollonian circle packing!  In this image, the interior of each circle contains bounded packings, and they are labelled by their \emph{depth element}, i.e., the elements of the Apollonian group which must be applied to obtain the `nearest' quadruple involving the outer bounding circle.  Image: \cite[Figure 6]{Staircase}.}
		\label{fig:quadruple-space}
	\end{figure}

This raises an interesting question:  can we explore this parameter space?  Since we are in the context of real curvatures (not necessarily integral or rational), the packing generated by a quadruple has more variety than we have so far discussed:  there are \emph{bounded} packings, \emph{half-plane} packings, and \emph{full-plane} packings (see Figure~\ref{fig:bounded-types}).  

It is natural to ask which type of packing one obtains -- for example, what is the locus, in the upper half plane, of the bounded packings?  The answer is striking:  colouring the parameter space according to the type of packing results in the strip packing itself in the upper half plane!  Let $\HH^2_U$ denote the upper half plane considered as a parameter space of quadruples, and let $\mathcal{P}$ be the strip packing inside $\HH^2_U$.  Then:
\begin{enumerate}
	\item the bounded packings occur exactly in the interiors of the circles of $\mathcal{P}$;
	\item the strip packing occurs at each tangency point of $\mathcal{P}$;
	\item the half-plane packings occur on the circles of $\mathcal{P}$, away from tangency points;
	\item the full-plane packings occur at all remaining points, which are the points in $\Lambda(\mathcal{P}) \setminus \mathcal{P}$. 
\end{enumerate}
In Figure~\ref{fig:quadruple-space}, we label the circles of $\mathcal{P}$ by their \emph{depth}:  the number of Apollonian swaps required to reach the outer circle.
This is explained in \cite{Staircase}; see also variations on this parameter space in \cite{Holly} and \cite{Kocik}.

\section{Apollonian circle packings:  number theory aspects}

\subsection{What is known about curvatures?}

Which integer curvatures appear as curvatures in a primitive integral Apollonian circle packing?  It is evident that not all curvatures can appear, since the packing doesn't have enough geometric `room' to fit all the small curvatures.  However, once we start collating the larger curvatures, we begin to see many repeats.  So it may be reasonable to believe that we eventually begin to see every sufficiently large curvature.

\begin{figure}
    \includegraphics[height=2.1in]{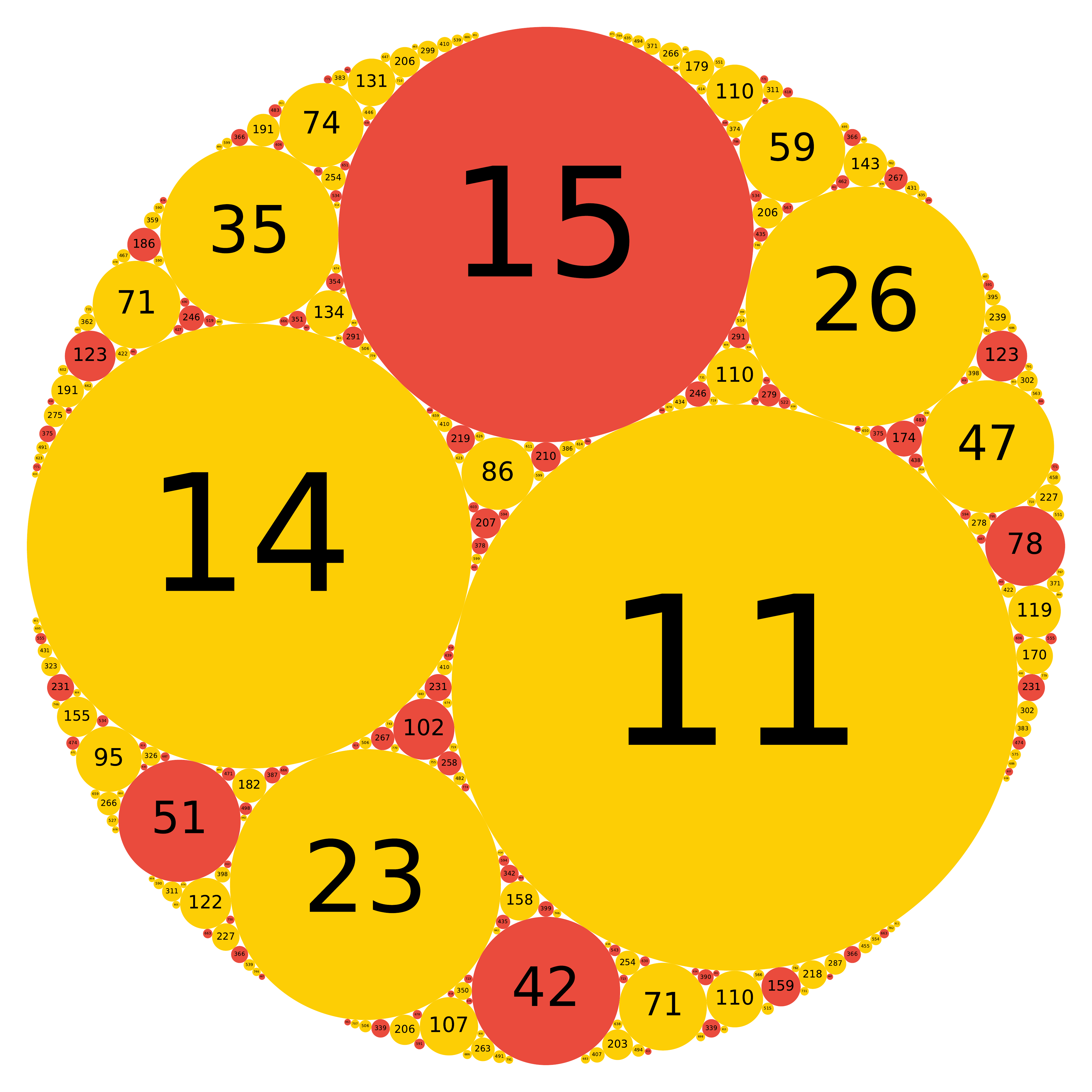}
    \includegraphics[height=2.1in]{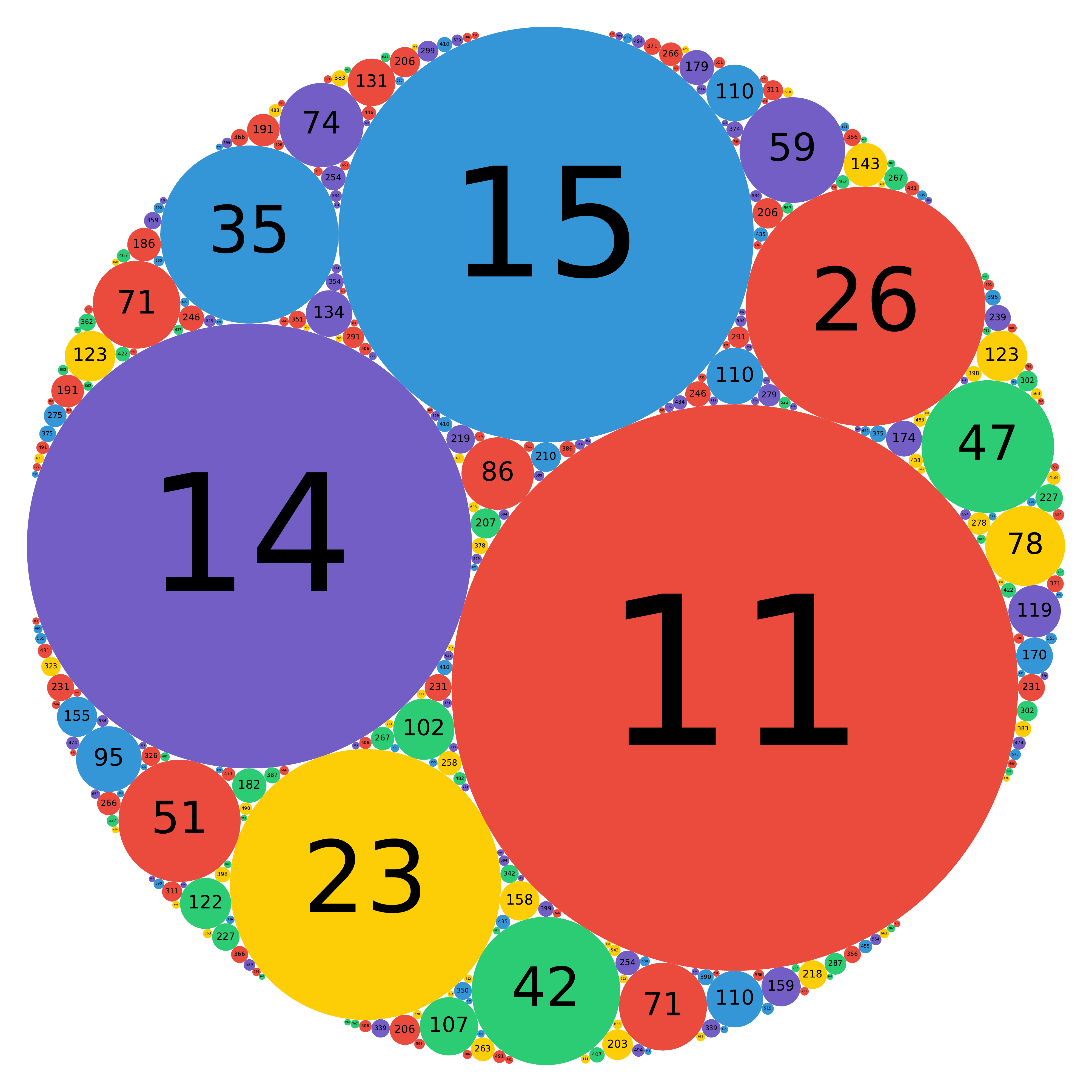}
	\caption{An Apollonian packing coloured by residue modulo $3$ and $5$, respectively.}
	\label{fig:reduced}
\end{figure}

This is not true however.  See Figure~\ref{fig:reduced}.  Graham, Lagarias, Mallows, Wilks and Yan observed a congruence restriction on curvatures \cite{GLMWY-number}.  Precisely, they observed that certain residue classes modulo $24$ were sometimes entirely avoided by the set of curvatures in a primitive integral Apollonian circle packing.  Specifically, when reducing the curvatures of an Apollonian packing modulo $24$, one obtains one of the following six possible sets of \emph{admissible curvatures} (for a complete proof, see \cite[Proposition 2.1]{HKRS}):
	\begin{center}
		{\upshape
		\begin{tabular}{|c|c|} 
			\hline
			type      & residues \\ \hline
			$(6, 1)$  & $0, 1, 4, 9, 12, 16$\\ \hline
			$(6, 5)$  & $0, 5, 8, 12, 20, 21$\\ \hline
			$(6, 13)$ & $0, 4, 12, 13, 16, 21$\\ \hline
			$(6, 17)$ & $0, 8, 9, 12, 17, 20$\\ \hline
			$(8, 7)$  & $3, 6, 7, 10, 15, 18, 19, 22$\\ \hline
			$(8, 11)$ & $2, 3, 6, 11, 14, 15, 18, 23$\\ \hline 
		\end{tabular}
		}
	\end{center}
	Each admissible set is assigned a \emph{type} for reference, which is $(n,k)$ where $n$ is the number of residues and $k$ is the smallest residue coprime to $24$.

	To understand the local (i.e., congruence) obstructions that may appear for a modulus $n$, one can label the Cayley graph by Descartes quadruples (beginning from the root labelled with any fixed quadruple of the packing), and reduce modulo $n$, identifying vertices with identical quadruples modulo $n$.  We obtain a picture such as that in Figure~\ref{fig:reduced-cayley}.  For some moduli, the reduced Cayley graph is small, and some residue classes are missed amongst the curvatures (as in the figure).  This proves that the corresponding residue class can never occur as the residue of a curvature in the Apollonian circle packing.

\begin{figure}
    \includegraphics[height=2.1in]{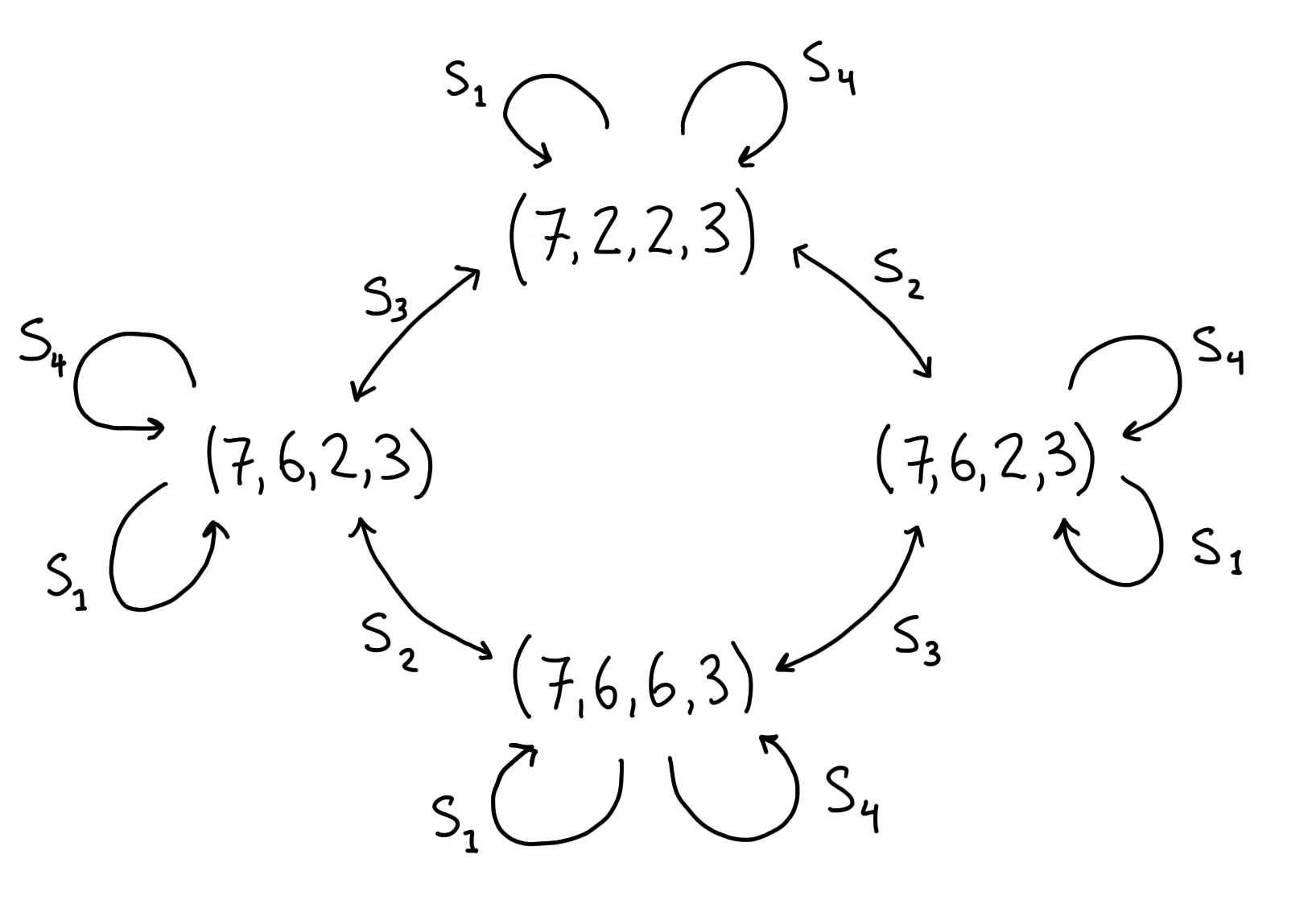}
	\caption{An Apollonian packing reduced modulo $8$.}
	\label{fig:reduced-cayley}
\end{figure}

Graham, Lagarias, Mallows, Wilks and Yan conjectured that this obstruction is essentially all we expect, giving the set of curvatures a positive density in the natural numbers \cite[Positive Density Conjecture]{GLMWY-number}; and that maybe even only finitely many exceptions beyond the congruence obstructions occur \cite[Strong Density Conjecture]{GLMWY-number}.  This came to be known as the \textbf{local-to-global conjecture} for Apollonian circle packings, later refined by Fuchs-Sanden \cite{FuchsSanden}, who collected data on several packings, computing curvatures up to size $5 \times 10^8$.  In particular, they computed the multiplicity of appearance for curvatures.  An example of the results is shown in Figure~\ref{fig:histo}, and this indicates that, for most admissible curvatures, the expected multiplicity grows with the curvature size.  

\begin{figure}
        \begin{center}
                \includegraphics[height=1.7in]{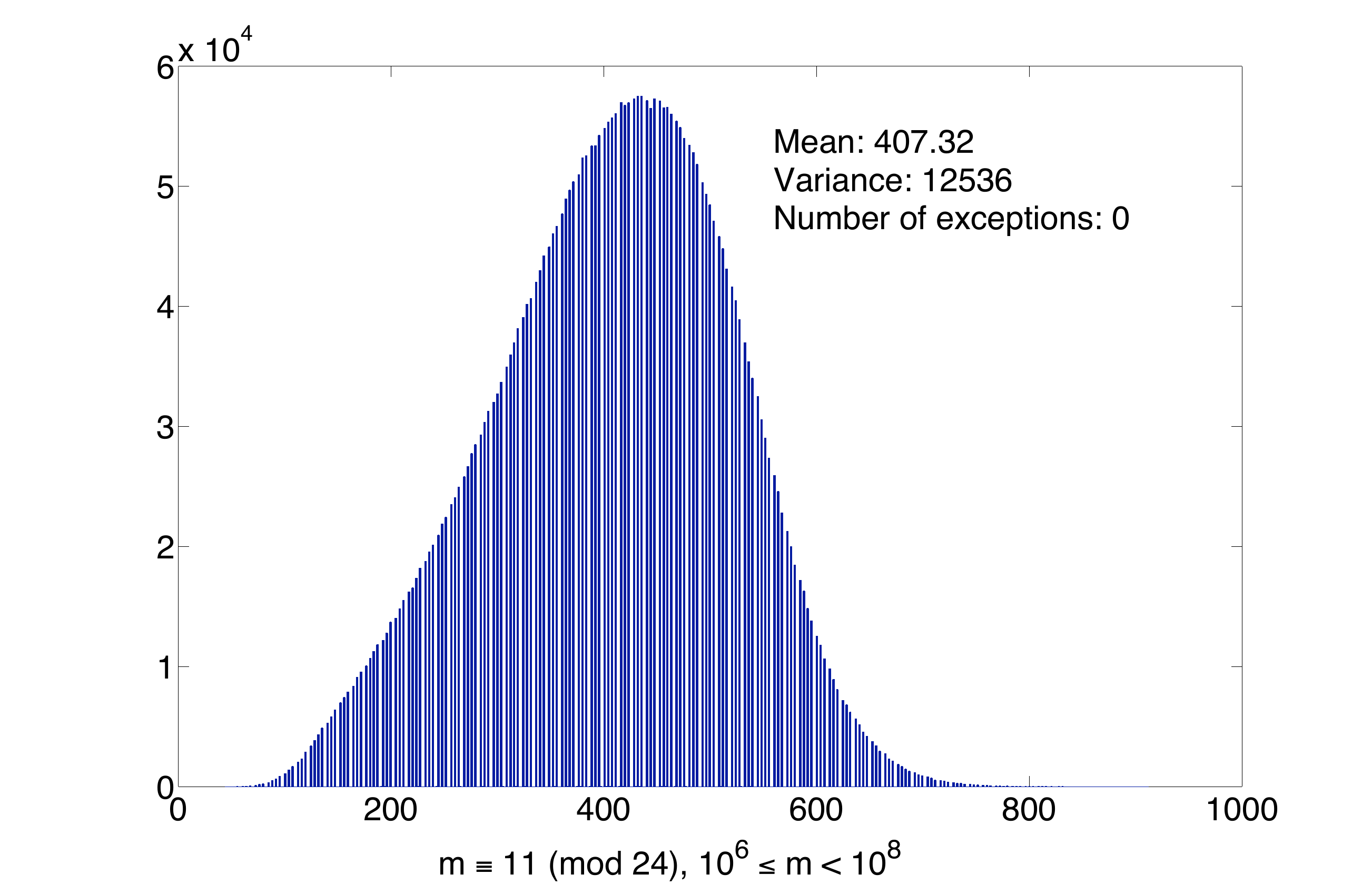}
        \end{center}
	\caption{On the $x$ axis, the number of times a curvature appears.  On the $y$-axis, the number of curvatures with that multiplicity of appearance.  The graph concerns curvatures which are $11 \pmod{24}$ in the packing generated by quadruple $(-1,2,2,3)$, in the range $10^6$ to $10^8$.  Image: \cite[Figure 10]{FuchsSanden}.} 
	\label{fig:histo}
\end{figure}

If we write $\mathcal{K}(N) := \# \{ n < N : n\text{ is a curvature in $\Pcal$} \}$, then the conjecture implies that
\begin{equation}
	\label{eqn:local-global}
		\mathcal{K}(N) = cN + O(1),
	\end{equation}
  where $c = \frac{\mbox{\# admissible curvatures modulo $24$}}{24}$.

Over time, lower bounds on the number of distinct integer curvatures which appear in a packing have gradually improved.
In the original paper of Graham, Lagarias, Mallows, Wilks and Yan, it was shown that 
			$\mathcal{K}(N) \gg \sqrt{N}$.
			Sarnak was able to show that
			$\mathcal{K}(N) \gg \frac{N}{\sqrt{\log N}}$ \cite{Sarnakletter}.  Positive density (meaning a positive proportion of integers) was shown by Bourgain and Fuchs: 
			$\mathcal{K}(N) \gg N$ \cite{BourgainFuchs}.
			This was improved to density one using the Hardy-Littlewood circle method by Bourgain and Kontorovich:
			$\exists \eta>0, \; \mathcal{K}(N) = cN + O(N^{1-\eta})$, where $\eta$ is effectively computable \cite{BourgainKontorovich}.
			Finally, this result was extended to other packings by Fuchs, Stange and Zhang \cite{FuchsStangeZhang}.

However, it was recently discovered \cite{HKRS} that most Apollonian circle packings are subject to some additional more subtle restrictions on their curvatures: the local-to-global conjecture is false!  The source of these new obstructions is quadratic reciprocity.  Recall Definition~\ref{def:legendre} of the Legendre symbol for an integer $a$ and prime $p$ is defined as:
\[
	\left( \frac{a}{p} \right) = 
	\left\{
		\begin{array}{ll}
			1 & a \text{ is a non-zero square modulo } p \\
			-1 & a \text{ is not a square modulo } p \\
			0 & a \text{ is zero modulo } p \\
		\end{array}
		\right.
	\]
	This is multiplicative in the numerator:
	\[
	\left( \frac{ab}{p} \right) = 
	\left( \frac{a}{p} \right)
	\left( \frac{b}{p} \right).
\]
	More generally, the \emph{Jacobi symbol} extends the Legendre symbol multiplicatively for odd positive denominators:
	\[
	\left( \frac{a}{p_1p_2} \right) = 
	\left( \frac{a}{p_1} \right)
	\left( \frac{a}{p_2} \right).
\]
One can extend even further to the \emph{Kronecker symbol} defined for all integers.
Then, \emph{quadratic reciprocity} says that for two odd primes $p$ and $q$, there is a symmetry between the behaviour of $p$ modulo $q$ and $q$ modulo $p$:
\[
	\left( \frac{p}{q} \right)
	=
	(-1)^{\frac{p-1}{2}\cdot \frac{q-1}{2}}
	\left( \frac{q}{p} \right).
\]
There are special rules for $2$ and $-1$.

To demonstrate where these new reciprocity obstructions arise from, we prove a single example. 

\begin{theorem}[{\cite[Theorem 1.9]{HKRS}}]
	The Apollonian circle packing generated by the quadruple $(-3,5,8,8)$ has no square curvatures.
\end{theorem}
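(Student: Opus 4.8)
The plan is to establish a \emph{quadratic reciprocity obstruction}: I will exhibit a quadratic character that evaluates to $-1$ on every curvature of the packing, whereas it evaluates to $+1$ on every perfect square (coprime to its modulus). Since the excerpt already records (Theorem~\ref{thm:curv-form}) that the curvatures tangent to a circle are values of a shifted binary quadratic form of discriminant $-4c^2$, this is the natural engine.

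First I would record the congruence data. The quadruple $(-3,5,8,8)$ reduces modulo $24$ to $(21,5,8,8)$, so the packing is of type $(6,5)$ and every curvature lies in $\{0,5,8,12,20,21\} \pmod{24}$. Reducing these residues modulo $3$ leaves only $\{0,2\}$, so no curvature is $\equiv 1 \pmod 3$. As a perfect square is $\equiv 0$ or $1 \pmod 3$, any hypothetical square curvature $\kappa = m^2$ must satisfy $3 \mid m$, hence $9 \mid \kappa$; likewise the list modulo $24$ forces $\kappa \equiv 0$ or $12 \pmod{24}$. These reductions already pin down the residue of a square curvature and, more importantly, will control the reciprocity signs in the core step.

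The core step uses Theorem~\ref{thm:curv-form}. A curvature $\kappa$ is tangent to some circle of curvature $c$, whence $\kappa + c = f_\mathcal{C}(x,y)$ is primitively represented by a form of discriminant $-4c^2$. From such a representation I would read off, for each curvature, a companion integer $d$ (essentially the spinor/curvature-centre data of the tangency) and form the Jacobi symbol $\kron{d}{\kappa}$; quadratic reciprocity then converts it into $\kron{\kappa}{d}$, whose value depends only on $\kappa \bmod d$ and is therefore dictated by the admissibility congruences of the first paragraph. The conclusion is immediate once the sign is fixed: since $\kron{d}{m^2} = \kron{d}{m}^2 = 1$ whenever $\gcd(d,m)=1$, proving $\kron{d}{\kappa} = -1$ for all curvatures $\kappa$ coprime to $d$ forbids squares among them, and the remaining cases (squares sharing a factor with $d$, or even squares, handled via the Kronecker symbol) are eliminated by the mod-$24$ and mod-$9$ bookkeeping already in hand.

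The main obstacle is that the companion datum $d$ and the intermediate reciprocity signs vary from circle to circle; the obstruction succeeds only because the \emph{combined} character is constant across the entire Apollonian orbit. Proving this invariance — that the character is unchanged under each of the generators $S_1,\dots,S_4$ of $\mathcal{A}$ from \eqref{eqn:iappgens}, using the Descartes relation $Q=0$ of \eqref{eqn:desc} together with quadratic reciprocity to cancel the varying contributions — is the heart of the argument and where I expect the real work to lie.
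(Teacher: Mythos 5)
Your overall strategy is the right one, and it is the paper's: exhibit a quadratic character $\chi_2$ that is constant across the packing and equals $-1$, while a square curvature would force the value $+1$. But both load-bearing steps are either missing or misaimed in your sketch. First, well-definedness at a single circle: your ``companion integer $d$ read off from the representation'' is never pinned down, and the claim that $\kron{\kappa}{d}$ is ``dictated by the admissibility congruences'' is false as stated --- that symbol depends on $\kappa$ modulo $d$ (or $4d$), and the mod-$24$ admissibility data controls nothing modulo an arbitrary $d$. The paper's actual mechanism is cleaner and essential: for a circle $\mathcal{C}$ of curvature $n$, the form $f_{\mathcal{C}}$ has discriminant $-4n^2\equiv 0\pmod{n^2}$, so modulo $n$ it degenerates to $Ax^2$ after a change of variables; hence all invertible values of $f_{\mathcal{C}}$ lie in a \emph{single} multiplicative coset of the squares mod $n$, and $\chi_2(\mathcal{C}):=\kron{c}{n}$ is independent of which tangent curvature $c$ coprime to $n$ you choose. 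Nothing in your sketch delivers this one-coset fact, and without it there is no character to propagate.

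Second, the invariance across the packing --- which you explicitly defer as ``where I expect the real work to lie'' --- is exactly the step you needed to carry out, and the paper's route is different from (and much lighter than) your proposed generator-by-generator $S_i$ computation with the Descartes form. The key unnoticed fact is that in this packing \emph{every} curvature is $0$ or $1\pmod 4$ (visible from your own type-$(6,5)$ residue list $\{0,5,8,12,20,21\}$), so for tangent circles with coprime curvatures $a,b$ quadratic reciprocity has trivial sign: $\chi_2(\mathcal{C}_1)\chi_2(\mathcal{C}_2)=\kron{a}{b}\kron{b}{a}=1$, whence $\chi_2(\mathcal{C}_1)=\chi_2(\mathcal{C}_2)$. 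Combined with the lemma (\cite[Corollary 4.7]{HKRS}, also posed as an exercise in these notes) that any two circles are joined by a path of consecutively coprime curvatures, constancy follows with no analysis of the $S_i$ at all. Finally, you never evaluate the character: on the root quadruple one computes $\chi_2(\mathcal{P})=\kron{8}{5}=-1$, whereas a circle of curvature $m^2$ tangent to one of coprime curvature $c$ would give $\kron{c}{m^2}=\kron{c}{m}^2=1$, the desired contradiction; your fallback ``mod-$24$ and mod-$9$ bookkeeping'' for non-coprime squares is unnecessary once the coprime-path lemma is in hand.
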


\begin{proof}  
	This packing has the property that all curvaures are $0$ or $1 \pmod{4}$.  Let $\mathcal{C}$ be a circle of curvature $n$.  By Theorem~\ref{thm:curv-form}, the circles tangent to $\mathcal{C}$ have curvatures arising as the primitively represented values of a translated quadratic form $f_{\mathcal{C}}(x,y)-n$ of discriminant $-4n^2$.  Therefore the form $f_{\mathcal{C}}(x,y)$ becomes degenerate modulo $n$, being equivalent to $Ax^2$ for some coefficient $A$ after a change of variables.  We see then that the invertible values of $f_\mathcal{C}$ lie in one multiplicative coset of the squares (see \cite[Proposition 4.1]{HKRS}).  Thus we define $\chi_2(\mathcal{C})$ to be the unique non-zero value of the Kronecker symbol $\kron{c}{n}$, where $c$ ranges over the curvatures of circles tangent to $\mathcal{C}$.

	Now suppose two circles $\mathcal{C}_1$ and $\mathcal{C}_2$ are tangent in the packing, having coprime curvatures $a$ and $b$, respectively.  Then, by quadratic reciprocity,
\[\chi_2(\mathcal{C}_1)\chi_2(\mathcal{C}_2)=\dkron{a}{b}\dkron{b}{a}=1.\]
So $\chi_2(\mathcal{C}_1) = \chi_2(\mathcal{C}_2)$.

	By \cite[Corollary 4.7]{HKRS}, any two circles are connected by a path of consecutively pairwise coprime curvatures, and so $\chi_2(\mathcal{C})$ is constant across the entire packing.  It remains to compute this value using one pair of circles, say in the root quadruple:
\[\chi_2(\mathcal{P})=\dkron{8}{5} = -1.\]
If there did exist a circle $\mathcal{C}$ of square curvature in $\mathcal{P}$, it would give $\chi_2(\mathcal{P}) = 1$, a contradiction. 
\end{proof}

\begin{exercise}
	Prove that any two circles in an integral Apollonian circle packing are connected by a path of consecutively coprime curvatures.
\end{exercise}

In general, we define two functions $\chi_2$ and $\chi_4$ on the set of Descartes quadruples, or, if you prefer, on the set pairs $(\mathcal{C}, \mathcal{P})$ of a circle in a packing.  These should be thought of in the vein of characters or Legendre symbols; in particular, they take values $\chi_2 \in \{\pm 1 \}$ and $\chi_4 \in \{ \pm 1, \pm i \}$.  To define them in general is a little complicated, but we can explain $\chi_2(\mathcal{C})$ fairly simply in the case of a packing of type $(6,*)$ as the Legendre symbol $\left( \frac{a}{b} \right)$ where $b$ is the curvature of $\mathcal{C}$, and $a$ is a coprime curvature tangent to $\mathcal{C}$, and $a$ and $b$ are both coprime to $6$.  The symbol $\chi_4$ is only relevant, hence only defined, for types $(6,1)$ and $(6,17)$.

Then, it is a consequence of quadratic reciprocity that the values of $\chi_2$ are well-defined for a packing $\mathcal{P}$ independent of the circle $\mathcal{C}$.  Hence we can write $\chi_2(\mathcal{P})$.  Similarly, $\chi_4(\mathcal{P})$ is well-defined because of quartic recprocity.


\begin{theorem}[{Haag-Kertzer-Rickards-Stange \cite{HKRS}}]
	Let $\mathcal{P}$ be a primitive integral Apollonian circle packing.  There is an explicit list of obstructions (families of missing curvatures) of the form $\{ ux^2 : x \in \ZZ \}$ for $u \mid 6$ or $\{ ux^4 : x \in \ZZ\}$ for $u \mid 36$ that are missed by the list of curvatures in $\mathcal{P}$.  The list is determined entirely by the set of admissible curvatures, and by the value(s) $\chi_2(\mathcal{P})$ and $\chi_4(\mathcal{P})$, if defined.
\end{theorem}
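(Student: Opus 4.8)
The plan is to promote the single worked example above (the non-existence of square curvatures in the $(-3,5,8,8)$ packing) into two systematic families of obstructions, one governed by quadratic reciprocity and one by quartic reciprocity. The backbone is Theorem~\ref{thm:curv-form}: for a circle $\mathcal{C}$ of curvature $n$, the tangent curvatures are the primitively represented values of $f_{\mathcal{C}}(x,y)-n$, where $f_{\mathcal{C}}$ has discriminant $-4n^2$ and so degenerates modulo $n$ to a form equivalent to $Ax^2$. First I would record the consequence (as in \cite[Proposition 4.1]{HKRS}) that the tangent curvatures coprime to $n$ all lie in a single multiplicative coset of the squares modulo $n$; this lets me define the local invariant $\chi_2(\mathcal{C})$ as the common value of the Kronecker symbol $\kron{c}{n}$ over tangent curvatures $c$. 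Sharpening the coset from squares to fourth powers — available only for types $(6,1)$ and $(6,17)$, where the curvatures coprime to $6$ are all congruent to $1$ \pmod{4} so the biquadratic symbol is well behaved — defines $\chi_4(\mathcal{C}) \in \{\pm 1, \pm i\}$ via the biquadratic residue symbol in $\ZZ[i]$.

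Next I would prove that these invariants are constant across the packing. For $\chi_2$, if $\mathcal{C}_1,\mathcal{C}_2$ are tangent with coprime curvatures $a,b$, quadratic reciprocity gives $\chi_2(\mathcal{C}_1)\chi_2(\mathcal{C}_2)=\dkron{a}{b}\dkron{b}{a}$, a product that the admissible residue classes pin to $+1$ (trivially so when the curvatures are $\equiv 1 \pmod{4}$, exactly as in the example); the connectivity result \cite[Corollary 4.7]{HKRS}, that any two circles are joined by a path of consecutively coprime curvatures, then upgrades local constancy to a single packing-wide value $\chi_2(\mathcal{P})$. The identical scheme, with biquadratic reciprocity in $\ZZ[i]$ in place of quadratic reciprocity, produces a well-defined $\chi_4(\mathcal{P})$. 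This is where the admissibility table and the case split of type $(6,\ast)$ versus $(8,\ast)$ enter, since they control the signs in the reciprocity laws and determine for which packings $\chi_4$ is even defined.

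With the global invariants in hand I would detect the obstructions. Suppose a curvature $n=ux^2$ with $u\mid 6$ occurs, realized by a circle $\mathcal{C}$. For a tangent curvature $c$ coprime to $n$, the square part collapses the symbol, $\kron{c}{n}=\kron{c}{u}\kron{c}{x}^2=\kron{c}{u}$, so $\chi_2(\mathcal{C})$ is forced to a value depending only on $c$ modulo $u$, hence only on the admissible set. If this forced value disagrees with $\chi_2(\mathcal{P})$, no such circle can exist and the entire family $\{ux^2\}$ is missing. The analogous computation with $\chi_4$, fourth powers, and $u\mid 36$ handles the quartic families. Running this test over the six admissible types and all divisors $u$ yields the explicit finite list, which is manifestly a function only of the admissible set together with $\chi_2(\mathcal{P})$ and, when defined, $\chi_4(\mathcal{P})$.

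The main obstacle I expect is the quartic layer. Defining $\chi_4$ requires that the coprime tangent curvatures occupy a single \emph{fourth-power} coset, which fails in general and forces the restriction to types $(6,1)$ and $(6,17)$; verifying this, and then proving constancy through the biquadratic reciprocity law in $\ZZ[i]$ — including its supplementary laws at the prime $2$ and the book-keeping of the unit $i$ — is considerably more delicate than the quadratic case. A secondary difficulty is completeness: confirming that the families listed are exactly those obstructed, with no spurious or missing cases, needs a careful but finite check that the only congruence-and-reciprocity interactions occur at the primes $2$ and $3$, which is what pins the coefficients $u$ to divisors of $6$ and $36$.
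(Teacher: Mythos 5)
Your route is the paper's own template: it is exactly the worked example for the $(-3,5,8,8)$ packing scaled up --- Theorem~\ref{thm:curv-form}, the single-square-coset fact \cite[Proposition 4.1]{HKRS}, constancy of $\chi_2$ via reciprocity plus the coprime-path connectivity \cite[Corollary 4.7]{HKRS}, then the collapse $\kron{c}{ux^2}=\kron{c}{u}$ to rule out whole families --- and this is indeed how \cite{HKRS} proceeds. But there is a genuine gap in the constancy step as you state it. You claim the product $\dkron{a}{b}\dkron{b}{a}$ is ``pinned to $+1$ by the admissible residue classes.'' That is false for the $(8,\ast)$ types: there the curvatures coprime to $24$ are all $\equiv 3 \pmod{4}$ (residues $7,19$ for type $(8,7)$ and $11,23$ for type $(8,11)$), so for a tangent pair of coprime odd curvatures quadratic reciprocity gives $\dkron{a}{b}\dkron{b}{a}=-1$. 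The naive symbol therefore \emph{alternates} along a coprime path rather than being constant, and your argument collapses for half the rows of the table. This is precisely why the paper gives the simple description $\kron{a}{b}$ only ``in the case of a packing of type $(6,*)$'' and calls the general definition complicated: for the $(8,\ast)$ types the invariant must be twisted by supplementary factors (involving $\kron{-1}{\cdot}$, $\kron{2}{\cdot}$, chosen per type) before reciprocity makes it path-independent. Fixing this requires changing the definition of $\chi_2$, not merely ``book-keeping of signs.''

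A second, smaller error: your stated reason for restricting $\chi_4$ to types $(6,1)$ and $(6,17)$ --- that there the coprime-to-$6$ curvatures are $\equiv 1 \pmod{4}$ --- does not distinguish these types, since the coprime-to-$24$ residues of all four $(6,\ast)$ types are $1,5,13,17 \pmod{24}$, each $\equiv 1 \pmod 4$. The restriction instead reflects where a quartic invariant is \emph{relevant}: only for $(6,1)$ and $(6,17)$ do quartic families survive the congruence conditions and the quadratic layer nontrivially, which is what the paper means by ``only relevant, hence only defined.'' Relatedly, in your detection step ``a value depending only on $c$ modulo $u$'' is too coarse when $u$ is even: $\kron{c}{2}$ depends on $c$ modulo $8$, and distinct admissible classes within one type (e.g.\ $7$ versus $19$ mod $24$ for type $(8,7)$) give different values of $\kron{c}{2}$; what rescues well-definedness is that the tangent curvatures coprime to a fixed circle's curvature lie in a single square-class, so the forced value must be computed type by type rather than read off the admissible set wholesale. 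With the twisted $\chi_2$ for $(8,\ast)$ and this per-type computation, your plan does match the published proof.
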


The full list is given here, where the type is now extended to be of the form $(n,k,\chi_2)$ or $(n,k,\chi_2,\chi_4)$:
\vspace{0.2em}
		\begin{center}
		{\upshape
			\begin{tabular}{|l|c|c|c|c|} 
				\hline
				type              & quadratic obstructions  & quartic obstructions    \\ \hline
				$(6, 1, 1, 1)$    &                         &                         \\ \hline
				$(6, 1, 1, -1)$   &                         & $n^4, 4n^4, 9n^4, 36n^4$\\ \hline
				$(6, 1, -1)$      & $n^2, 2n^2, 3n^2, 6n^2$ &                         \\ \hline
				$(6, 5, 1)$       & $2n^2, 3n^2$            &                         \\ \hline
				$(6, 5, -1)$      & $n^2, 6n^2$             &                         \\ \hline
				$(6, 13, 1)$      & $2n^2, 6n^2$            &                         \\ \hline
				$(6, 13, -1)$     & $n^2, 3n^2$             &                         \\ \hline
				$(6, 17, 1, 1)$   & $3n^2, 6n^2$            & $9n^4, 36n^4$           \\ \hline
				$(6, 17, 1, -1)$  & $3n^2, 6n^2$            & $n^4, 4n^4$             \\ \hline
				$(6, 17, -1)$     & $n^2, 2n^2$             &                         \\ \hline
				$(8, 7, 1)$       & $3n^2, 6n^2$            &                         \\ \hline
				$(8, 7, -1)$      & $2n^2$                  &                         \\ \hline
				$(8, 11, 1)$      &                         &                         \\ \hline
				$(8, 11, -1)$     & $2n^2, 3n^2, 6n^2$      &                         \\ \hline
			\end{tabular}
		}
		\end{center}
		\vspace{0.2em}

These families of powers are called \emph{reciprocity obstructions} and more specifically \emph{quadratic obstructions} and \emph{quartic obstructions}.
Certain packings have no reciprocity obstructions at all, but many (most, in a suitable sense) do.

We use the term \emph{missing} for the curvatures which do not appear in a packing but are allowed by the congruence obstructions.  Curvatures which are allowed by the known linear (congruence), quadratic, and quartic obstructions but are \emph{still} nevertheless missing, are called \emph{sporadic}.

\begin{conjecture}[Haag-Kertzer-Rickards-Stange \cite{HKRS}]
        \label{conj:main}
	Let $\Pcal$ be a primitive integral Apollonian circle packing.  Then the sporadic set $S(\Pcal)$ is finite.
\end{conjecture}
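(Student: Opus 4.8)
The only engine currently powerful enough to attack a statement of this shape is the Hardy--Littlewood circle method, in the form developed by Bourgain and Kontorovich \cite{BourgainKontorovich} for the density-one result; so the plan is to push that machinery from an ``almost all'' statement to an ``all but finitely many'' statement. First I would fix $\Pcal$ and attach to each integer $m$ a weighted representation count $r_\Pcal(m)$ recording how $m$ arises as a curvature. Using Theorem~\ref{thm:curv-form}, the event ``$m$ is a curvature tangent to $\mathcal{C}$'' becomes ``$m + \operatorname{curv}(\mathcal{C})$ is primitively represented by the binary quadratic form $f_\mathcal{C}$ of discriminant $-4\operatorname{curv}(\mathcal{C})^2$.'' Summing over a well-chosen ensemble of circles $\mathcal{C}$ in the packing (those reachable by bounded-length Apollonian words from a fixed quadruple, suitably rescaled), I would expand $r_\Pcal(m)$ as an exponential sum and split into major and minor arcs in the usual way.

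On the major arcs the expected main term is $r_\Pcal(m) \sim \mathfrak{S}_\Pcal(m)\cdot \mathcal{J}(m)$, where $\mathcal{J}$ is an archimedean density growing like a positive power of $m$ and $\mathfrak{S}_\Pcal(m)$ is a singular series assembled from local densities; the linear (congruence) obstructions appear here as the vanishing of $\mathfrak{S}_\Pcal(m)$ for $m$ inadmissible modulo $24$. The genuinely new step is to make the singular series \emph{see} the reciprocity obstructions, which are global rather than local. The mechanism is the global invariants $\chi_2(\Pcal)$ and $\chi_4(\Pcal)$: exactly as in the argument for the packing generated by $(-3,5,8,8)$, if $m = u x^2$ with $u \mid 6$ then for any circle of curvature $m$ the Kronecker symbol $\dkron{a}{m}$ of a coprime tangent curvature $a$ collapses to $\dkron{a}{u}$, a purely congruence quantity, and quadratic reciprocity forces this to equal $\chi_2(\Pcal)$. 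I would therefore partition the contributing circles by the value of the relevant quadratic (respectively quartic) symbol and use reciprocity to show that, on the obstructed families, the contributions carrying the ``wrong'' symbol value are excluded, so that the effective singular series vanishes precisely on the families $\{u x^2\}$ and $\{u x^4\}$ of the table and is bounded below by an absolute constant for every other admissible $m$.

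On the minor arcs the input is the spectral gap and expansion of the thin Apollonian group exploited in \cite{BourgainKontorovich}. Granting $\mathfrak{S}_\Pcal(m) \gg 1$ away from the obstructed families, finiteness of the sporadic set reduces to an \emph{individual} power-saving estimate: the minor-arc error must beat $\mathcal{J}(m)$ for \emph{every} sufficiently large admissible, reciprocity-unobstructed $m$, not merely for almost all of them. Producing such a bound is, I expect, the main obstacle. The density-one theorem tolerates an exceptional set of size $O(N^{1-\eta})$ that is invisible to current minor-arc technology; but the reciprocity families $\{u x^2\}$ and $\{u x^4\}$ are themselves of density zero (sizes $O(\sqrt{N})$ and $O(N^{1/4})$), so they hide inside that same error term. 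The essential difficulty is thus to separate the conjecturally finite sporadic set from the infinite, density-zero reciprocity families using analytic input that at present cannot resolve density-zero sets at all. A successful proof will therefore need either a substantially sharpened minor-arc bound tailored to the Apollonian orbit, or a new arithmetic identity pinning down $r_\Pcal(m)$ for individual $m$ beyond what the circle method alone supplies.
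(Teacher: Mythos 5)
You have not produced a proof, and indeed you could not have compared favourably or unfavourably with the paper's argument, because the paper contains none: Conjecture~\ref{conj:main} is stated as an open conjecture, supported only by the numerical evidence of \cite{HKRS} (missing-curvature data up to bounds between $10^{10}$ and $10^{12}$ in a few dozen packings) and by the heuristic expectation \eqref{eqn:newconj} that $\mathcal{K}(N) = cN + O(\sqrt{N})$. Your proposal is a research programme, and a sensible one --- it correctly identifies the Bourgain--Kontorovich circle method \cite{BourgainKontorovich} as the strongest available tool, correctly locates the reciprocity obstructions in the global invariants $\chi_2(\Pcal)$, $\chi_4(\Pcal)$ via the mechanism of Theorem~\ref{thm:curv-form}, and correctly observes that the obstructed families $\{ux^2\}$, $\{ux^4\}$ have density zero and so hide inside the $O(N^{1-\eta})$ exceptional set of the density-one theorem. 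But the decisive step, as you yourself concede in the final paragraph, is precisely the open problem: an \emph{individual} power-saving minor-arc estimate valid for every sufficiently large admissible, reciprocity-unobstructed $m$. Deferring that step means the argument proves nothing beyond what \cite{BourgainKontorovich} and \cite{FuchsStangeZhang} already give; the conjecture remains exactly as open at the end of your sketch as at the beginning.

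One further structural caveat: your plan to make the singular series ``see'' the reciprocity obstructions is not quite coherent as stated. A singular series is assembled from local densities, and the paper is emphatic that the reciprocity obstructions are \emph{not} local --- they arise from quadratic and quartic reciprocity linking symbols $\dkron{a}{b}$ along paths of pairwise coprime tangent curvatures through the whole packing, which is why the local-to-global conjecture of \cite{GLMWY-number} and \cite{FuchsSanden} failed. So the vanishing you want cannot literally occur in $\mathfrak{S}_\Pcal(m)$; it must instead be built into the counting function itself, by restricting the ensemble of mother circles $\mathcal{C}$ compatibly with the fixed values $\chi_2(\Pcal)$, $\chi_4(\Pcal)$ before the circle method is run. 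That restriction changes the archimedean count and the spectral input, and verifying that the Bourgain--Kontorovich minor-arc analysis survives it is itself nontrivial --- a second gap layered under the main one.
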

This actually says that, instead of \eqref{eqn:local-global}, we are in most cases asserting at best that 
\begin{equation}
	\label{eqn:newconj}
		\mathcal{K}(N) = cN + O(\sqrt{N}).
	\end{equation}

  Haag, Kertzer, Rickards and Stange collected data on the missing curvatures up to bounds between $10^{10}$ and $10^{12}$ in a few dozen small packings, and the data supports the conjecture that the sporadic set is finite, as the set peters out in that range and appears to end.

For a nice exposition on Apollonian circle packings and their integer curvatures (before the newest obstructions were found, however), see \cite{FuchsBulletin}.  The rest of this section is devoted to some of the key tools that appear in proofs concerning integral packings, particularly lower bounds on the number of distinct curvatures.

\subsection{Integral quadratic forms}

We have seen in Theorem~\ref{thm:curv-form}, that the circles tangent to a fixed `mother' circle represent the values of a translated quadratic form.  It is furthermore the case that when $\mathcal{P}$ is a primitive integral packing, then the form is a primitive integral positive semi-definite form, and the multiplicity with which a curvature $k \in \ZZ$ appears is exactly the number of primitive solutions $(x,y)$ to $f_\mathcal{C}(x,y) - c = k$.  Since quadratic forms are\footnote{arguably} well-understood, many of the analytic lower bound results so far mentioned involve collecting ensembles of such forms and restricting their overlap to grow large collections of curvatures.

In the section on Schmidt arrangements that follows, we will see that the images of the strip packing $(0,0,2,2)$ under $\PSL_2(\ZZ[i])$ include all primitive integral packings (all scaled by two); see Theorem~\ref{thm:every-app}.  We will see that up to similarity, they are in bijection with ideal classes of orders of the Gaussian integers.

\subsection{Expander graphs}

Consider the Cayley graphs of the Apollonian group modulo $p^m$.  To see how these are useful, we consider the spectral theory of graphs, which is to say, studying the eigenvalues of the adjacency matrix.  The graph we are interested in, denoted $\mathcal{A}_{p^m}$, is the Cayley graph of $\mathcal{A}^{geo}/p^m$ (coefficients of the matrices taken modulo $p^m$) using the images of the standard swaps as generators.  This is still $4$-regular, just as for the Cayley graph of $\mathcal{A}$.  These Cayley graphs, taken as a family, are an \emph{expander family}, meaning that they are well-connected in a precise sense as $p^m$ grows.

We will develop the basics of the spectral theory slightly more generally.  Let $\mathcal{G}$ be a $d$-regular graph with vertex set $V$ of size $n$ and edge set $E$.  (The spectral theory of non-regular graphs is significantly more complex in various ways.)  Let $A$ be the $n \times n$ adjacency matrix of $\mathcal{G}$, whose $ij$-th entry is $1$ when the $i$-th vertex is connected to the $j$-th vertex, and $0$ otherwise.  The normalized adjacency matrix $\frac{1}{d} A$ can be thought of as an operator controlling the flow of mass between vertices.  If $\mathbf{x}$ is a vector whose entries are indexed by the vertices of $\mathcal{G}$, then $\frac{1}{d}A\mathbf{x}$ has entries
\[
	\frac{1}{d} \sum_{w \sim v} x_w,
\]
where $v \sim w$ denotes adjacency, so the sum is over the neighbours.  Imagine the vector $\mathbf{x}$ denotes a distribution of mass amongst the vertices of $\mathcal{G}$.  Then $\frac{1}{d}A\mathbf{x}$ is the mass distribution after each vertex `gives away' its mass uniformly to its neighbours (that is, it sends $1/d$ of its mass to each neighbour), and, consequently, receives $1/d$ of the mass of each of its neighbours.  This is a type of Markov chain.

Under this perspective, an eigenvector of this matrix is a distribution of mass which is scaled under such a flow.  One such eigenvector is the uniform distribution (the same mass at all vertices), which has eigenvalue $1$.  In fact, the eigenvalues $\lambda_i$ of this matrix are real, and satisfy 
\[
	1 = \lambda_0 \ge \lambda_1 \ge \cdots \ge -1.
\]
That they are real is a property of symmetric matrices.  Moreover, since the adjacency matrix is real, symmetric, non-negative and irreducible, there is an orthonormal basis of eigenvectors.

If $\mathcal{G}$ is connected, then $\lambda_0 > \lambda_1$ (see Exercise~\ref{exercise:components}).  The size of this \emph{spectral gap} measures the connectedness of the graph in some sense.  One intuition for this is to consider the convergence of the Markov chain toward the uniform distribution.  Let $\mathbf{v}_0, \ldots, \mathbf{v}_{n-1}$ be an orthonormal basis of eigenvectors such that $\frac{1}{d}A \mathbf{v}_i = \lambda_i \mathbf{v}_i$.  Let $\mathbf{w}$ be any mass distribution on the graph.  Then $\mathbf{w} = \sum \alpha_i \mathbf{v}_i$.  Then we have
\[
	\left( \frac{1}{d} A \right)^k \mathbf{w} = \alpha_0 \mathbf{v}_0 + \sum_{i > 0} \lambda_i^k \alpha_i \mathbf{v}_i.
\]
From this, using that the basis of eigenvectors is orthonormal,
\[
	\left|\left| \left(\frac{1}{d}A\right)^k \mathbf{w} - \alpha_0 \mathbf{v}_0 \right|\right|_2^2 = \sum_{i > 0} |\lambda_i|^{2k} |\alpha_i|^2 \le |\lambda_1|^{2k} || \mathbf{w} ||^2.
\]
Thus we see that the rate of convergence to a uniform distribution is controlled by the spectral gap (i.e., $|\lambda_1|$).

\begin{definition}
	\label{defn:expander}
	A family of graphs, $\mathcal{G}_i$ for $i \ge 1$, is called an \emph{expander family of degree $d$} if the following hold:
	\begin{enumerate}
		\item the $\mathcal{G}_i$ are finite $d$-regular connected graphs with $|\mathcal{G}_i| \rightarrow \infty$;
		\item for each $i$, let $\lambda_i$ be the largest eigenvalue in absolute value besides $\pm 1$ of the normalized adjacency matrix of $\mathcal{G}_i$; then
			\[
				\epsilon := \limsup_{i \rightarrow \infty} \lambda_i < 1.
			\]
	\end{enumerate}
\end{definition}

If the graph is `easily cut' in the sense that removing a small number of edges can disconnect it, then these edges form a bottleneck to rapid convergence.  Another measure of the same `connectedness' is given in this language.
For any subset $S \subseteq V$, write $\partial S \subseteq E$ for the `boundary' of $S$, i.e. the edges connecting a vertex of $S$ to a vertex of the complement.  The \emph{Cheeger constant} of $\mathcal{G}$ is
\[
	h(\mathcal{G}) = \min_{S \subseteq V, |S| \le |V|/2} \frac{ |\partial S| }{ |S| }.
\]
This measures how easy it is to disconnect the graph by removing edges.  Then we can replace condition (2) above with the existence of an $\epsilon < 0$ such that $h(\mathcal{G}_i) > \epsilon$ for all $i$.

It is possible to prove a spectral gap for the Apollonian Cayley graphs $\mathcal{A}_{p^m} := \mathcal{A}^{geo}/p^m$ in a combinatorial way, by showing that every vertex can be reached in a bounded number of steps, where the bound is independent of the growing parameter $p^m$.  The existence of short paths to all points in the graph implies `good mixing' and `connectedness'; this is the `combinatorial spectral gap' used in \cite[Section 8]{FuchsStangeZhang}.  

In the next section we will prove strong approximation, but a byproduct of this proof is the spectral gap.

\begin{theorem}\label{thm:expander}
	The Cayley graphs $\mathcal{A}^{geo}/p^m$ form an expander family.
\end{theorem}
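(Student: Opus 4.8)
The plan is to deduce the uniform spectral gap from the Bourgain--Gamburd expansion machine, using the strong approximation of the next section only to identify the quotient groups and establish connectivity. First I would pass through the exceptional isomorphism \eqref{eqn:exceptional} (the spin homomorphism) to realize $\mathcal{A}^{geo}$, up to the complex-conjugation factor, as a subgroup of $\PSL_2(\ZZ[i])$; this replaces the orthogonal quotients $\mathcal{A}^{geo}/p^m$ by the more tractable groups $\SL_2(\ZZ[i]/p^m\ZZ[i])$, which for $p \neq 2$ split as products of $\SL_2$ over the residue rings $\FF_p \times \FF_p$, $\FF_{p^2}$, or their prime-power thickenings, according to whether $p$ splits, is inert, or ramifies. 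Strong approximation then guarantees that the reduction of $\mathcal{A}^{geo}$ surjects onto these full congruence quotients for all but finitely many $p$ and every $m$; this both forces $|\mathcal{A}_{p^m}| \to \infty$ and shows the Cayley graphs are connected, so condition (1) of Definition~\ref{defn:expander} holds.

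For the uniform bound on $\lambda_1$ in condition (2), I would verify the three hypotheses of the Bourgain--Gamburd criterion for the probability measure supported on the images of the four swaps. The first is \emph{quasirandomness}: the minimal dimension of a nontrivial irreducible representation of $\SL_2$ over $\ZZ[i]/p^m$ grows at least like a fixed positive power of $p$, which follows from the Landazuri--Seitz bounds at level $p$ and their lift to level $p^m$. The second is a \emph{product theorem}: Helfgott's growth estimate in $\SL_2(\FF_p)$, together with its extension to $\SL_2(\FF_q)$ by Breuillard--Green--Tao and Pyber--Szab\'o, supplies the $L^2$-flattening input. The third is \emph{non-concentration}: random words of length $\asymp \log|\mathcal{A}_{p^m}|$ in the generators should not concentrate on any proper subgroup, which I would obtain from the Zariski density of $\mathcal{A}^{geo}$ (Fuchs's thinness theorem \cite{FuchsStrong}) sharpened to a quantitative escape-from-subvarieties estimate, using that the Cayley graph of $\mathcal{A}$ itself is a tree (so the girth grows and there is no short-scale collapse).

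Feeding these into the Bourgain--Gamburd iteration, the convolution powers of the generating measure flatten to nearly uniform in $O(\log|\mathcal{A}_{p^m}|)$ steps, which is precisely the assertion that the normalized adjacency operator has a spectral gap bounded away from $1$ uniformly in $p$ and $m$; this is condition (2), and the theorem follows.

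I expect the main obstacle to be the product theorem at \emph{prime-power} level. Helfgott's theorem is a statement about the near-simple groups $\SL_2(\FF_p)$, whereas $\SL_2(\ZZ/p^m)$ carries a large filtration of congruence normal subgroups and is far from simple, so growth must be controlled level by level through this filtration; this is exactly the content of the super-approximation theorems of Bourgain--Varj\'u and Salehi Golsefidy--Varj\'u, and invoking (or reproving) them is the technical heart. A secondary difficulty is making Zariski density effective enough to rule out concentration uniformly in the level. As an alternative that sidesteps the product theorem, one can run the \emph{combinatorial} route of \cite{FuchsStangeZhang}: establish property $(\tau)$ for the ambient arithmetic group and transfer the gap to the thin Cayley graphs by a comparison-of-Dirichlet-forms argument, using bounded-length expressions for the arithmetic generators in terms of the swaps modulo $p^m$.
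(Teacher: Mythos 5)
Your main route is genuinely different from the paper's. The paper does not run the Bourgain--Gamburd machine at all: its proof of Theorem~\ref{thm:expander} is a byproduct of the explicit strong approximation argument in the following subsection (adapted from Varj\'u's appendix to \cite{BourgainKontorovich} and from \cite{FuchsStangeZhang}), which constructs concrete triangular and unipotent families inside the image of $\mathcal{A}^{geo}$ modulo $p^m$ and then observes that this construction expresses every element of $\SL_2(\ZZ/p^m\ZZ)$ as a product of a \emph{bounded} number of elements drawn from those explicit families --- bounded independently of $p$ and $m$ (note this is bounded generation by parametrized families, not bounded word length in the four swaps, which would force the graphs to have bounded size); together with a separate analysis at $p=2,3$, this ``combinatorial spectral gap'' in the sense of \cite[Section 8]{FuchsStangeZhang} yields the expander property directly. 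In other words, the paper's actual proof is essentially the fallback you sketch in your last paragraph, not the quasirandomness/product-theorem/non-concentration iteration. What your main route buys is robustness and generality: it applies to any Zariski-dense subgroup with no bespoke element constructions. What the paper's route buys is effectivity and near self-containment: the gap falls out of the same computation that proves strong approximation.

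One concrete error in your main route: the quasirandomness step as stated fails at prime-power level. The minimal dimension of a nontrivial irreducible representation of $\SL_2(\ZZ/p^m\ZZ)$ does \emph{not} grow with $m$ --- representations factoring through $\SL_2(\FF_p)$ always exist, so the minimal dimension stays of order $p$ while the group has order of order $p^{3m}$; there is no ``lift to level $p^m$'' of the Landazuri--Seitz bound making it a fixed positive power of the group order uniformly in $m$. Consequently the vanilla Bourgain--Gamburd criterion does not apply once $m$ grows, and the multiscale analysis of Bourgain--Gamburd for $\SL_2(\ZZ/p^n\ZZ)$, or the super-approximation theorems of Bourgain--Varj\'u and Salehi Golsefidy--Varj\'u that you cite, are needed not only for the product theorem but already to repair quasirandomness and non-concentration uniformly in the level. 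Since you identify exactly those references as the technical heart, your argument closes, but as an invocation of a substantial black box; be aware that the paper's combinatorial argument sidesteps the issue entirely.
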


We will discuss the use of this tool at the end of the next section. 
A good reference for expander graphs for those interested in Cayley graphs is \cite{Kowalski}.

\begin{exercise}
	\label{exercise:components} Let $G$ be a $d$-regular graph with $k$ connected components.  Show that $\lambda_0 = \lambda_1 = \cdots = \lambda_{k-1}$ by finding independent eigenvectors.  Conversely, show that when $k=1$, $\lambda_0 > \lambda_1$. 
\end{exercise}

\begin{exercise}
	Let $G$ be a connected $d$-regular graph.  Show that the eigenvectors for $\lambda_i < 1$ have the property that the sum of their entries is $0$.  This shows that they are orthogonal to the eigenvector for $\lambda_0 = 1$.
\end{exercise}

\subsection{Strong approximation}

An algebraic group $G$ has \emph{strong approximation} if the maps $G(\ZZ) \mapsto G(\ZZ/p^m\ZZ)$ are surjective.  For example, $\GL_2$ fails this property since matrices in $\GL_2(\ZZ/p\ZZ)$ with invertible determinants other than $\pm 1$ are not in the image.  However, it does hold for $\SL_2$ \cite{DavidoffSarnakValette}.  

In particular, the reduction map modulo $\mathfrak{a}$ for any ideal $\mathfrak{a}$ of $\ZZ[i]$, $\SL_2(\ZZ[i]) \rightarrow \SL_2(\ZZ[i]/\mathfrak{a})$ is always surjective.
One way to measure of the `size' of a subgroup like $\mathcal{A}^{geo}$ is to ask whether the reduction maps $\mathcal{A}^{geo} \rightarrow \SL_2(\ZZ/p^m\ZZ)$ are surjective.  The Apollonian group \emph{almost} has this property:  it holds for sufficiently large prime powers.  Thus we say that $\mathcal{A}^{geo}$ itself has strong approximation.

For primes besides $2$ and $3$, the proof is by construction, using the fact that $z \mapsto z + i$ is in the Apollonian group.

	\begin{theorem}
		The Apollonian group $\mathcal{A}^{geo}$ satisfies $\mathcal{A}^{geo}/p^m \cong \SL_2(\ZZ/p^m\ZZ)$ for all $p \ge 5$. 
	\end{theorem}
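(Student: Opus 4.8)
The plan is to prove that the reduction map sends $\mathcal{A}^{geo}$ onto the full $\SL_2$ of the reduced Gaussian ring; that is, passing to the orientation-preserving subgroup and lifting through the spin double cover set up in the excerpt, I would show the image of $\mathcal{A}^{geo}$ modulo $p^m$ is all of $\SL_2(\OK/p^m\OK)$, where $\OK=\ZZ[i]$ (this has order $\approx p^{6m}$, matching the six-dimensional spin cover of $O_Q$, and specializes to the stated claim). The argument is constructive, as the hint indicates, and rests on two elements already identified in the excerpt: the Fuchsian subgroup $\Gamma(2)\subset\SL_2(\ZZ)$ sitting inside $\mathcal{A}^{geo}$, and the translation $z\mapsto z+i$, i.e. $u_i:=\begin{pmatrix}1&i\\0&1\end{pmatrix}$. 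Since $p$ is coprime to $2$, strong approximation for $\SL_2$ (quoted earlier, \cite{DavidoffSarnakValette}) already gives that $\Gamma(2)\to\SL_2(\ZZ/p^m\ZZ)$ is surjective, so the image contains the ``real'' copy $\SL_2(\ZZ/p^m\ZZ)\hookrightarrow\SL_2(\OK/p^m\OK)$; everything then reduces to showing that adjoining $u_i$ fills out the Gaussian part. I would split this into a base case modulo $p$ and a lift to prime powers.

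For the base case, let $G\le\SL_2(\OK/p\OK)$ be generated by the diagonal $\SL_2(\FF_p)$ and $u_i$. If $p\equiv 3\pmod 4$ then $\OK/p\cong\FF_{p^2}$ and $i\in\FF_{p^2}\setminus\FF_p$, so $G$ strictly contains $\SL_2(\FF_p)$; Dickson's classification of subgroups of $\SL_2(\FF_{p^2})$ then forces $G=\SL_2(\FF_{p^2})$, since $G$ is irreducible, not contained in the normalizer of a torus, not a proper subfield subgroup (the only ones are $\SL_2(\FF_p)$-conjugates, which $u_i$ escapes), and for $p\ge 5$ the exceptional subgroups $2.A_4,\,2.S_4,\,2.A_5$ are too small to contain $\SL_2(\FF_p)$. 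If $p\equiv 1\pmod 4$ then $\OK/p\cong\FF_p\times\FF_p$ and $\SL_2(\OK/p)\cong\SL_2(\FF_p)^2$, with $\SL_2(\FF_p)$ diagonal and $u_i\mapsto\left(\begin{smallmatrix}1&\lambda\\0&1\end{smallmatrix}\,,\,\begin{smallmatrix}1&-\lambda\\0&1\end{smallmatrix}\right)$ where $\lambda^2=-1$, $\lambda\neq-\lambda$; Goursat's lemma together with the simplicity of $\PSL_2(\FF_p)$ for $p\ge 5$ then forces $G$ to be the whole product.

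For the lift from $p$ to $p^m$, I would invoke the standard lemma (Serre) that a subgroup of $\SL_2(\OK/p^m\OK)$ surjecting onto $\SL_2$ of the residue ring is everything, provided $\SL_2$ of the residue field is perfect. Concretely, the congruence filtration of $\ker\big(\SL_2(\OK/p^m)\to\SL_2(\OK/p)\big)$ has successive quotients isomorphic, as modules for the residue-field $\SL_2$, to $\mathfrak{sl}_2$ over that field; perfectness of $\SL_2(\FF_p)$ (resp. $\SL_2(\FF_{p^2})$) shows each graded piece is generated by commutators with the already-attained part, so the full congruence kernel is recovered. This is the second place the hypothesis $p\ge 5$ is essential, since $\SL_2(\FF_2)$ and $\SL_2(\FF_3)$ fail to be perfect.

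The main obstacle is the base-case generation modulo $p$: ruling out that the images land in a proper subgroup of $\SL_2(\OK/p)$. This is exactly where the subgroup classification (Dickson, Goursat) and the separate split/inert analyses are needed, and it is precisely there that the bound $p\ge 5$ is forced from both directions (exceptional/too-small subgroups and perfectness). I note that one could alternatively cite a general strong-approximation theorem (Matthews--Vaserstein--Weisfeiler, or Nori--Weisfeiler) to obtain surjectivity for all but finitely many $p$, but those results do not pin down the excluded primes; the value of the explicit construction via $\Gamma(2)$ and $z\mapsto z+i$ is that it yields the clean statement for every $p\ge 5$.
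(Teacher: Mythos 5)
Your proposal is correct in substance, but it takes a genuinely different route from the paper's proof, which is a one-shot construction directly modulo $p^m$ in the style of Varj\'u: choose $x,y$ with $x^2+y^2\equiv 1\pmod{p^m}$ and $p\nmid xy$, lift the rotation $\begin{pmatrix} x & y\\ -y & x\end{pmatrix}$ to $\SL_2(\ZZ)$ by strong approximation, and conjugate by the translation $\begin{pmatrix}1 & i\\ 0 & 1\end{pmatrix}$ to manufacture an element of the group that is upper triangular mod $p^m$ with diagonal $x\pm yi$; since $(x+yi)^2$ and $1$ generate $\ZZ[i]/p^m\ZZ[i]$ additively, conjugating integer unipotents gives all Gaussian unipotents, hence all triangular matrices, and an index count (``more than half'') finishes, with no induction on $m$ and no subgroup classification. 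You instead prove a base case modulo $p$ by classification (Dickson in the inert case, Goursat plus simplicity of $\PSL_2(\FF_p)$ in the split case) and climb the congruence filtration by a perfectness/Frattini argument. Both arguments consume exactly the same two inputs from the group ($\Gamma(2)$ and $z\mapsto z+i$), and your reading of the statement as surjectivity onto $\SL_2(\OK/p^m\OK)$ matches what the paper's proof actually establishes. What each buys: your route is modular and rests on standard citable machinery; the paper's explicit construction additionally shows that every element is a word of length bounded independently of $p$ and $m$, which is precisely what is harvested for the spectral gap and the expander family (Theorem~\ref{thm:expander}) — your inductive lifting, whose words ($p$-th powers and iterated commutators) lengthen with $m$, does not yield this quantitative byproduct.

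Two small repairs to your base case. In the inert case, Dickson's subfield subgroups include conjugates not only of $\SL_2(\FF_p)$ but also of its extension by the diagonal automorphism (projectively $\PGL_2(\FF_p)$); you rule these out by checking that $u_i$ fails to normalize $\PSL_2(\FF_p)$, e.g.\ $u_i\begin{pmatrix}1 & 0\\ 1 & 1\end{pmatrix}u_i^{-1}=\begin{pmatrix}1+i & 1\\ 1 & 1-i\end{pmatrix}$ is not projectively $\FF_p$-rational. And your claim that the exceptional subgroups are ``too small'' for $p\ge 5$ degenerates to an equality at $p=5$, where $|2.A_5|=|\SL_2(\FF_5)|=120$; this is harmless only because $5\equiv 1\pmod{4}$ is split, so your Dickson branch in fact only ever sees $p\ge 7$ — worth saying explicitly.
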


	The following proof is an adaptation of that of Varj\'u \cite[Appendix]{BourgainKontorovich} and \cite{FuchsStangeZhang}.
	\begin{proof}
		Let $p \ge 5$ be prime and $m \ge 1$ be an integer.  
		Then there exists a pair $x,y \in \ZZ$ such that $x^2 + y^2 \equiv 1 \pmod{p^m}$ and $(xy, p) = 1$ \cite[Exercise 13(v)]{Cassels}. 
		Consider reduction modulo $p^m$ on $\ZZ[i]$ (the following will work whether $p$ is split or inert; we write $i$ for the image of $i$ under reduction).
		The image matrix $T_0 := \begin{pmatrix} x & y \\ -y & x \end{pmatrix}$ lies in $\SL_2(\ZZ/p^m\ZZ)$ by construction.
		Note that $T_0$ has fixed points $\pm i$.
		Then there exists a lift $T_1 := \begin{pmatrix} x_0 & -y_0 \\ y_0 & x_0 \end{pmatrix} \in \SL_2(\ZZ)$ of $T_0$ by the strong approximation of $\SL_2$.

		Let $T := \begin{pmatrix} 1 & i \\ 0 & 1 \end{pmatrix} \in \mathcal{A}$.  
		Then $T T_1 T^{-1}$ has fixed points $T(\pm i) = \{0, 2i\}$, the first of which we can conjugate to $\infty$ using $S \in \SL_2(\ZZ)$.  Call the result $T_2 := S T T_1 T^{-1} S^{-1} \in \mathcal{A}$, which fixes $\infty$ modulo $p^m$.  That is, it must have the form
		\[
			T_2 = \begin{pmatrix} a_0 & b \\ 0 & a_1 \end{pmatrix}.
		\]
		Here, $a_0$ and $a_1$ are the eigenvalues of $T_2$, and hence also of $T_1$, which are $x \pm yi$.  In particular, $a_0^2 \notin \ZZ/p^m\ZZ$ (observe that $y \not\equiv 0 \pmod{p}$ by construction, so the reduction of $x + iy$ modulo $p^m$ is in $\ZZ[i]/p^m\ZZ[i] \setminus \ZZ/p^m\ZZ$), and it is invertible (also by construction).
		Now let
		\[
			T_{3,n} := T_1 \begin{pmatrix} 1 & n \\ 0 & 1 \end{pmatrix} T_1^{-1} \equiv \begin{pmatrix} 1 & na_0^2 \\ 0 & 1 \end{pmatrix}
		\]
		where the upper right corner of $T_{3,1}$ is invertible, but not in $\ZZ/p^m \ZZ$.  Hence $a_0^2$ and $1$ generate $\ZZ[i]/p^m\ZZ[i]$.  This implies that all upper triangular matrices are in $\mathcal{A}$.
		Similarly we obtain all lower triangular matrices.  By combining these, we have everything except those things whose lower left entry is divisible by $p$.  That is, we have more than half of $\SL_2(\ZZ/p^m\ZZ)$.  Therefore we must generate it all.
	\end{proof}

	Although the theorem fails for $(p,m) \in \{ (2,1), (2,2), (2,3), (3,1) \}$, for higher powers of $2$ and $3$, we do recover predictable behaviour, in the following sense.  Although $\mathcal{A}_3$ is not all of $\SL_2(\ZZ/3\ZZ)$, when lifting from $\mathcal{A}_3$ to $\mathcal{A}_{3^2}$, we do obtain `all' of the valid lifts, meaning that although of course we cannot recover $\SL_2(\ZZ/3^2\ZZ)$, we do not `lose even more.'  More precisely, for any $m > m_p$ (where $m_2 = 2$ and $m_3 = 1$), if $M \in \SL_2(\ZZ/p^m\ZZ)$ has a reduction to $\mathcal{A}_{p^{m_p}}$, then it has a reduction to $\mathcal{A}_{p^m}$.

	In fact, the proof above for $p \ge 5$ shows that all of $\SL_2(\ZZ/p^m\ZZ)$ is generated by words of a bounded finite length independent of $p$ and $m$; a similar result for $2$ and $3$ combines with this to give a \emph{spectral gap}, showing that the Cayley graphs form an expander family (Theorem~\ref{thm:expander}).

Strong approximation and the spectral gap turn out to be an important tool in the proofs that many curvatures appear.  The rough idea is that for an expander graph, there is rapid mixing, so that as $n$ and $m$ grow, all curvatures modulo $n$ and modulo $m$ will be appearing regularly, so that all residues modulo $nm$ are also likely to occur regularly.  Analytic methods allow one to extrapolate that most integers will eventually occur.  One might think of this as a sort of explicit Sunzi's Theorem for Apollonian packings.

\begin{exercise}
 \label{exercise:reduction} Prove that the natural reduction map $\PSL_2(\ZZ) \rightarrow \PSL_2(\ZZ/n\ZZ)$ is surjective.  Show that this is a group homomorphism and find the kernel.
\end{exercise}

\subsection{Orbits of thin groups more generally}

These can be viewed as statements about orbits of the Apollonian group.  To be precise, in this perspective, explored in more depth in \cite{ApolloniusZaremba}, the curvatures of a fixed packing form a set
	$\{ \pi_i(\mathbf{v}) : \mathbf{v} \in \mathbf{v}_0 \mathcal{A}, 1 \le i \le 4 \}$,
	where $\pi_i$ is projection on the $i$-th coordinate, $\mathcal{A}$ is the Apollonian group, and $\mathbf{v}_0$ is a row vector of curvatures of some fixed Descartes quadruple.  We consider the Apollonian group as a subgroup $\mathcal{A}$ of $O_Q(\ZZ)$.

	Another famous conjecture is part of the same general type.

	\begin{conjecture}[{Zaremba's Conjecture, \cite{Zaremba72}}]
		There exists a positive constant $Z$ such that 
		every natural number is the denominator of some rational number (in reduced form) whose continued fraction partial quotients are $\le Z$.
	\end{conjecture}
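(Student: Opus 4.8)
The plan is to recognize Zaremba's conjecture as a thin-orbit problem of exactly the flavour developed above for Apollonian curvatures, and to attack it with the same machine: strong approximation, a spectral gap, and the circle method. First I would reformulate the statement using the matrix description of continued fractions from Theorem~\ref{thm:cutting}. A reduced fraction $p/q = [0; a_1, \ldots, a_k]$ is encoded by the product of step matrices $\begin{pmatrix} a_i & 1 \\ 1 & 0\end{pmatrix}$, and the denominator $q$ appears as a fixed entry of this product. Restricting all partial quotients to satisfy $a_i \le Z$ amounts to working inside the finitely generated semigroup
\[
G_Z := \left\langle \begin{pmatrix} a & 1 \\ 1 & 0 \end{pmatrix} : 1 \le a \le Z \right\rangle < \GL_2(\ZZ).
\]
Thus the set $\mathcal{D}_Z$ of Zaremba denominators for the bound $Z$ is precisely the set of values taken by a single matrix entry as the matrix ranges over $G_Z$ --- literally a projection of the orbit $\mathbf{v}_0 G_Z$, in complete analogy with the description of Apollonian curvatures as $\{\pi_i(\mathbf{v}) : \mathbf{v} \in \mathbf{v}_0 \mathcal{A}\}$. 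For fixed moderate $Z$ the semigroup $G_Z$ is thin: it is Zariski dense in $\SL_2$ but its limit set is a Cantor set of Hausdorff dimension $\delta_Z < 1$, the exact analogue of the residual-set dimension of Theorem~\ref{thm:Hausdorff-dim}.

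Second, I would separate the problem into a local (congruence) part and a global (archimedean) part. For the local part, one shows that $G_Z$ reduces surjectively enough modulo each $q$ --- a strong-approximation statement of the kind just proved for $\mathcal{A}^{geo}$ --- so that the orbit entries equidistribute in $\ZZ/q\ZZ$, with the rate of equidistribution controlled by a spectral gap for the associated Cayley and transfer graphs (Theorem~\ref{thm:expander} is the Apollonian incarnation of the same input). For the global part, I would count matrices in $G_Z$ of norm up to $X$; their number grows like $X^{2\delta_Z}$, so that the candidate denominators up to $X$ number roughly $X^{2\delta_Z}$ with multiplicity. Feeding the equidistribution into the Hardy--Littlewood circle method --- exactly the method behind the density-one Apollonian result $\mathcal{K}(N) = cN + O(N^{1-\eta})$ of Bourgain--Kontorovich --- one obtains a main term from the major arcs showing that a density-one set of integers lies in $\mathcal{D}_Z$, provided the minor-arc contribution can be absorbed.

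The hard part is the minor arcs, and it is here that the conjecture remains open. The minor-arc bound that the circle method requires is only available when $\delta_Z$ is close enough to $1$, since the archimedean count must be dense enough to beat the error term; this forces $Z$ to be large, and the first unconditional density-one theorem took the bound $Z$ in the dozens rather than the value $Z = 5$ that Zaremba conjectured. Two obstacles therefore stand in the way of the full statement: lowering the dimension threshold so that small values of $Z$ become admissible, and upgrading \emph{density one} to \emph{every sufficiently large} denominator --- a step for which the circle method, producing only an asymptotic count, gives no individual guarantee. Progress would likely come from sharper transfer-operator estimates (better effective values of $\delta_Z$ and of the spectral gap) combined with an arithmetic input ruling out the thin set of possible exceptional denominators; absent such an input, the conjecture in its exact form with a universal small $Z$ appears to lie beyond the current reach of the thin-orbit circle method.
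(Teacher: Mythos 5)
You have not produced a proof, and you say so yourself --- which is the right call, because the statement you were given is an open conjecture. The paper does not prove it either: it states Zaremba's conjecture precisely as a conjecture, records the thin-orbit reformulation via the semigroup generated by the matrices $\begin{pmatrix} 0 & 1 \\ 1 & a \end{pmatrix}$ with $a \le Z$ (the transpose-conjugate of your $G_Z$; the two encodings are equivalent), and surveys the surrounding landscape: Niederreiter's and Hensley's refinements, the Bourgain--Kontorovich counterexample to Hensley's dimension criterion for the alphabet $S=\{2,4,6,8,10\}$ (denominators $\equiv 3 \pmod 4$ never occur), and the reciprocity obstructions of Rickards--Stange. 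Your roadmap is an accurate account of the actual partial results in the literature --- the Bourgain--Kontorovich density-one theorem via the thin-orbit circle method, with strong approximation and a spectral gap as local input and the $X^{2\delta_Z}$ archimedean count as global input --- and you correctly locate the two genuine obstructions: the minor arcs force $\delta_Z$ near $1$ (hence large $Z$), and the circle method yields only an asymptotic with a power-saving exceptional set, never an individual guarantee for every denominator.

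Two points deserve sharpening if this sketch is to be honest about where it would fail. First, your local step asserts that $G_Z$ ``reduces surjectively enough'' modulo each $q$; for a \emph{semigroup} this is not automatic and is precisely where the counterexample $S=\{2,4,6,8,10\}$ bites --- there the reductions carry a persistent congruence obstruction despite large Hausdorff dimension, so any argument must verify, alphabet by alphabet, that the reductions of $G_Z$ modulo $q$ hit the required residues, via expansion results of Bourgain--Gamburd--Sarnak type rather than by analogy with $\mathcal{A}^{geo}$. Second, even granting all of that, the conclusion is $|\mathcal{D}_Z \cap [1,N]| = N + O(N^{1-\eta})$ at best, and no mechanism in the method --- nor any known arithmetic input --- controls the exceptional set down to finiteness, let alone emptiness. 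So the verdict is: your proposal is a correct description of the state of the art and of why the state of the art stops short, but it is not, and does not claim to be, a proof of the statement; no proof exists, in this paper or elsewhere.
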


For example, the continued fraction expansions of all rationals with denominator $7$ are:
\[
	\frac{1}{7} = [0; 7], \quad
	\frac{2}{7} = [0; 3,2], \quad
	\frac{3}{7} = [0; 2,3], \quad
	\frac{4}{7} = [0; 1,1,3], \quad
	\frac{5}{7} = [0; 1,2,2], \quad
	\frac{6}{7} = [0; 1,6], \quad
\]
A reasonable guess with current data is that Zaremba's conjecture holds for $Z = 5$.  In the example above, things are `so far so good' for $Z=5$ because at least one of the expansions (in fact, 4 of them) involves only convergents $\le 5$.  We know that denominators $6$, $54$, and $150$ 
fail for $Z=4$, but we do not know of any further failures.  Niederreiter \cite{Niederreiter78} conjectured that even for $Z=3$ there are only finitely many exceptions; Hensley \cite{Hensley} conjectured this for $Z=2$.  Of course, it certainly fails for $Z=1$, whose continued fraction expansions have only Fibonacci sequence denominators.

One might consider the Cantor-like set $C_Z$ of real numbers whose continued fraction expansions contain only convergents $a_i \le Z$.  Or even more generally, a cantor set $C_S$ for a set $S$ of allowable convergents.  Hensley conjectured that a Zaremba-like statement will hold for convergents in $S$ if and only if the Hausdorff dimension of $C_Z$ exceeds $1/2$ \cite{Hensley}.  However, Bourgain and Kontorovich found a counterexample of $S=\{2,4,6,8,10\}$, which has a congruence-like obstruction; denominators of $3 \pmod{4}$ cannot appear \cite{BourgainKontorovichZaremba}.

Zaremba's conjecture and its variants can be phrased as a thin orbit question.  As we saw before, we can generate continued fraction convergents as the columns of elements $M \in \SL_2^+(\ZZ)$.  A slight reformution is that every continued fraction convergent can be found as a column of a matrix of the form
\[
	\begin{pmatrix}
		0 & 1 \\ 1 & a_0
	\end{pmatrix}
	\begin{pmatrix}
		0 & 1 \\ 1 & a_1
	\end{pmatrix}
	\begin{pmatrix}
		0 & 1 \\ 1 & a_2
	\end{pmatrix}
	\cdots
	\begin{pmatrix}
		0 & 1 \\ 1 & a_n
	\end{pmatrix}
\]
where the $a_n$ are the partial quotients.  To restrict the partial quotients, we simply generate a semi-group (i.e., no inverses) by the matrices
$	\begin{pmatrix}
		0 & 1 \\ 1 & a
	\end{pmatrix}
	$
where $a$ is in our set $S$ of allowable convergents.
In recent work, Rickards and Stange find reciprocity obstructions in this context \cite{RickardsStange}.


\section{Schmidt arrangements}

\newcommand{\SK}{\mathcal{S}_K}

\subsection{$\PSL_2(\mathcal{O}_K)-$orbit}

As we saw before, Schmidt defined a subdivision of the complex plane that involved Apollonian circle packings.  See Figure~\ref{fig:gauss-schmidt}.  The easiest way to generate this image is to take the image of $\widehat{\RR}$ under $\PSL_2(\ZZ[i])$.  Suitable references for this section are \cite{StangeVis, StangeBianchi, MartinStudy}.


	\begin{figure}
                \includegraphics[height=4.0in]{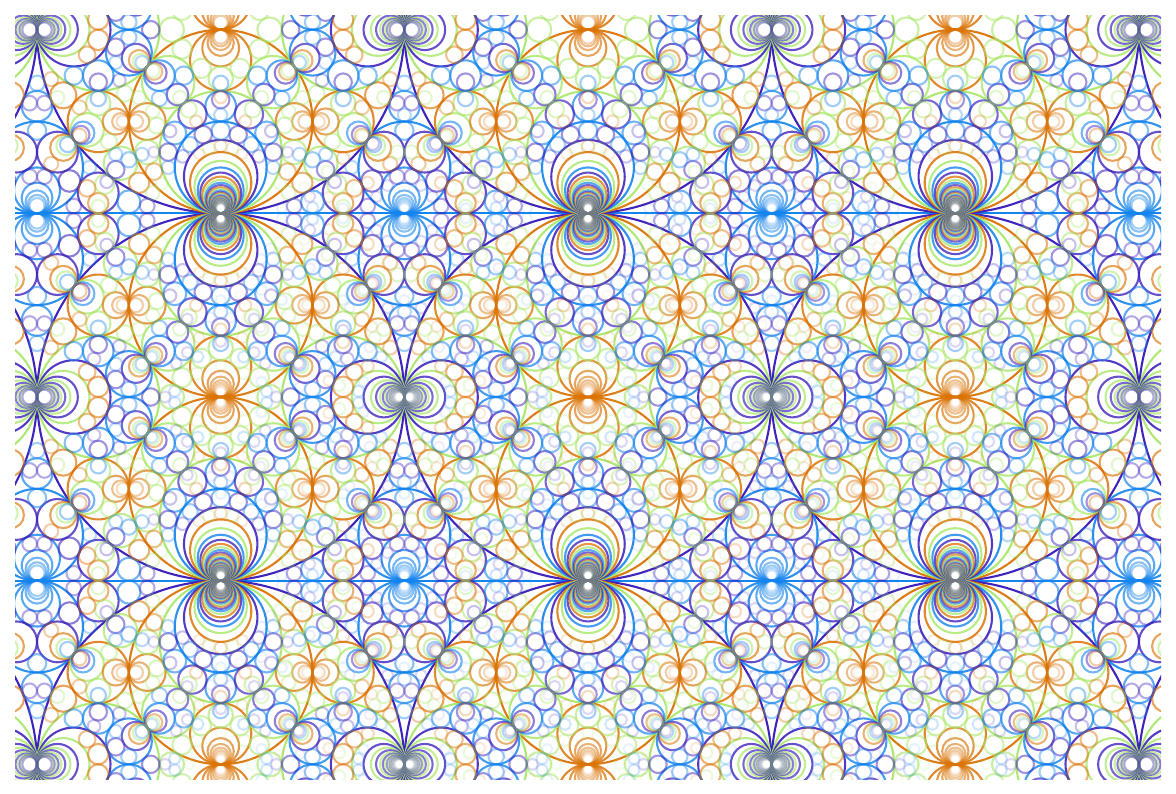}
		\caption{The Gaussian Schmidt arrangement.}
		\label{fig:gauss-schmidt}
	\end{figure}

	The definition of the Schmidt arrangement can be given more generally in terms of an imaginary quadratic field, so that Figure~\ref{fig:gauss-schmidt} is for the Gaussian field $\QQ(i)$.  Another example is given in Figure~\ref{fig:Kschmidt}.
	\begin{definition}
		The \emph{Schmidt arrangement} $\SK$ for an imaginary quadratic ring $\mathcal{O}_K$ is the image of $\widehat{\RR}$ under $\PSL_2(\mathcal{O}_K)$.
	\end{definition}
	We will be mainly interested in the case $\mathcal{O}_K = \ZZ[i]$, because of its connection to the Apollonian packing.

	\begin{figure}
		\includegraphics[height=4.0in]{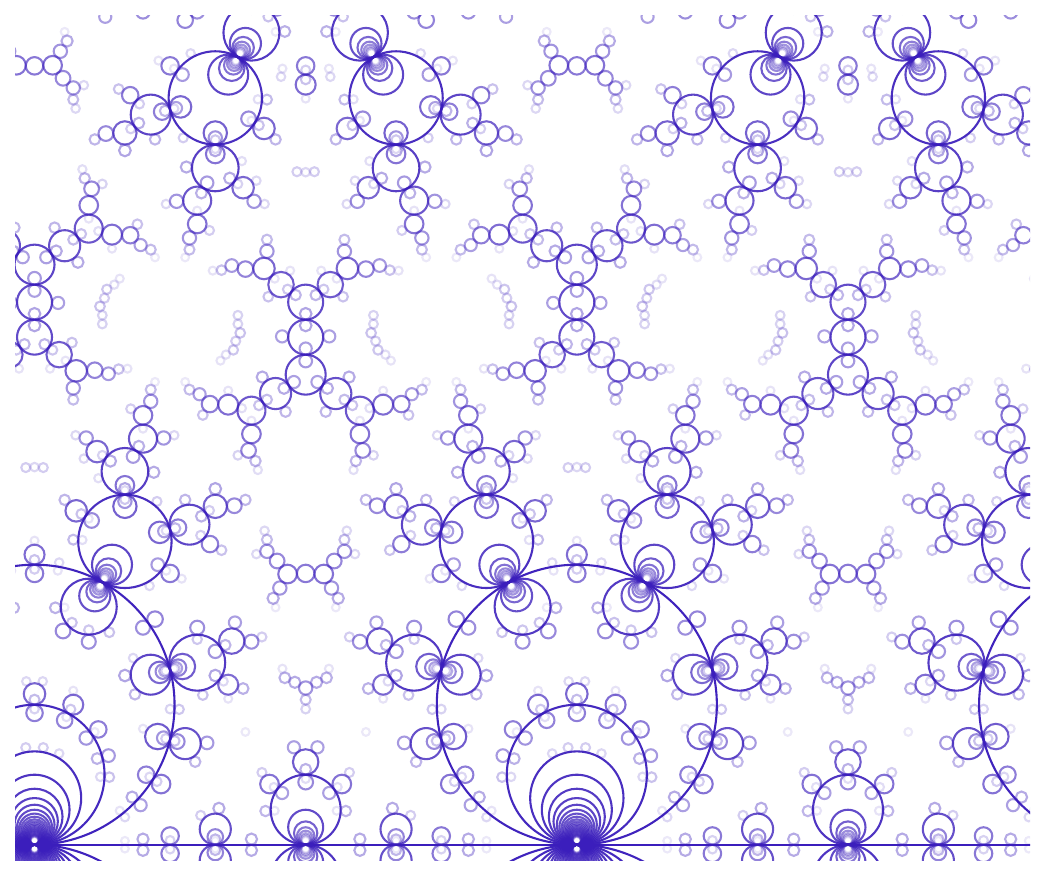}
		\caption{The Schmidt arrangement for $\QQ(\sqrt{-15})$.}
		\label{fig:Kschmidt}
	\end{figure}

	We begin with some basic properties, all of which are a consequence of Proposition~\ref{prop:mobR} (see also the proof of Theorem~\ref{thm:curv-form}).

	\begin{proposition}
		\begin{enumerate}
			\item The Schmidt arrangement has symmetry by translation by $\ZZ[i]$ and by rotation of $\pi$ about the origin.
			\item The curvatures of the circles in the Schmidt arrangement lie in $2\ZZ$.
			\item The circles are either tangent or disjoint.
			\item The points of tangency are Gaussian rationals.
		\end{enumerate}
	\end{proposition}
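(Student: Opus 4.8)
The plan is to derive all four statements from two facts about the vector $(p,q,r,s)$ that Proposition~\ref{prop:mobR} attaches to a circle $M\cdot\widehat{\RR}$ of the arrangement, namely its reduction modulo $2$ and its normalization. First I would record the structural observations: that $\mathcal{S}_K = \PSL_2(\ZZ[i])\cdot\widehat{\RR}$ is a single orbit, so that every $g \in \PSL_2(\ZZ[i])$ carries $\mathcal{S}_K$ to itself; that Exercise~\ref{ex:mobR} gives $(p,q,r,s)\equiv(0,0,0,1)\pmod{2}$, so $p,q,r$ are even and $s$ is odd; and that, since $\widehat{\RR}$ is represented by $(0,0,0,-1)$ with $Q = r^2+s^2-pq = 1$ and the action $H \mapsto M^{-1}HM^{-\dagger}$ of Theorem~\ref{thm:equiv3} preserves the determinant of the Hermitian matrix $\begin{pmatrix} q & -r+si \\ -r-si & p \end{pmatrix}$ (and $\det = pq-(r^2+s^2) = -Q$), every circle of $\mathcal{S}_K$ has $Q(p,q,r,s)=1$.

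Parts (1) and (2) I would then dispatch immediately. For (1), translation by $\beta\in\ZZ[i]$ is $\begin{pmatrix} 1 & \beta \\ 0 & 1 \end{pmatrix}\in\PSL_2(\ZZ[i])$ and the rotation $z\mapsto -z$ is $\begin{pmatrix} i & 0 \\ 0 & -i \end{pmatrix}\in\PSL_2(\ZZ[i])$; since $\mathcal{S}_K$ is carried to itself by every element of the group, it is in particular invariant under these two maps. For (2), Proposition~\ref{prop:mobR} gives the curvature as $2\Im(\overline{\gamma}\delta)$, and $\Im(\overline{\gamma}\delta)\in\ZZ$ whenever $\gamma,\delta\in\ZZ[i]$, so every curvature lies in $2\ZZ$.

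The substance is in (3), and I expect this to be the main obstacle. The key is the \emph{inversive-distance dictionary}: two circles with vectors $\mathbf{v}_1,\mathbf{v}_2$ cross, are tangent, or are disjoint according as $\langle\mathbf{v}_1,\mathbf{v}_2\rangle_Q^2$ is less than, equal to, or greater than $Q(\mathbf{v}_1)Q(\mathbf{v}_2)$. Establishing this correspondence is the one step needing genuine (if routine) computation: in terms of centres $z_i$ and radii $\rho_i$ one verifies $\langle\mathbf{v}_1,\mathbf{v}_2\rangle_Q = (\rho_1^2+\rho_2^2-|z_1-z_2|^2)/(2\rho_1\rho_2)$, which equals $\cos\theta$ at a crossing of angle $\theta$, so a transverse crossing forces $\langle\mathbf{v}_1,\mathbf{v}_2\rangle_Q^2 < Q(\mathbf{v}_1)Q(\mathbf{v}_2)$. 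With the normalization $Q(\mathbf{v}_i)=1$ this reads $|\langle\mathbf{v}_1,\mathbf{v}_2\rangle_Q| < 1$, so it suffices to show that $\langle\mathbf{v}_1,\mathbf{v}_2\rangle_Q$ is never strictly between $-1$ and $1$. In fact I would show it is always an \emph{odd integer}: from $\langle\mathbf{v}_1,\mathbf{v}_2\rangle_Q = r_1r_2+s_1s_2-\tfrac12(p_1q_2+p_2q_1)$, the facts that $p_i,q_i,r_i$ are even and $s_i$ odd make $r_1r_2$, $\tfrac12 p_1q_2$, and $\tfrac12 p_2q_1$ all even while $s_1s_2$ is odd, whence $\langle\mathbf{v}_1,\mathbf{v}_2\rangle_Q\equiv 1\pmod{2}$. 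An odd integer has absolute value at least $1$, so no two circles cross, leaving only tangency or disjointness; everything beyond the dictionary is bookkeeping modulo $2$.

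For (4), I would first note that a circle $(p,q,r,s)$ is tangent to $\widehat{\RR}=(0,0,0,-1)$ exactly when $\langle(p,q,r,s),(0,0,0,-1)\rangle_Q = -s = \pm 1$, i.e. when $s=\pm 1$. By Theorem~\ref{thm:spaceofcircles} such a circle has centre $(r+si)/p$ and radius $1/p$, so $|\Im(\text{centre})|$ equals its radius and (when $p\neq 0$) it meets $\RR$ in the single point $r/p\in\QQ$, while if $p=0$ it is a horizontal line tangent to $\widehat{\RR}$ at $\infty$; in either case the tangency point is a Gaussian rational. For a general tangent pair $C_1,C_2$ meeting at $\xi$, I would choose $M_1\in\PSL_2(\ZZ[i])$ with $M_1\widehat{\RR}=C_1$; then $M_1^{-1}C_2\in\mathcal{S}_K$ is tangent to $\widehat{\RR}$ at $M_1^{-1}\xi$, which by the previous computation lies in $\QQ(i)\cup\{\infty\}$. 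Since $M_1$ has Gaussian-integer entries it preserves $\PP^1(\QQ(i))=\QQ(i)\cup\{\infty\}$, so $\xi=M_1(M_1^{-1}\xi)$ is a Gaussian rational, completing the argument.
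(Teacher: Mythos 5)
Your proof is correct, and it supplies considerably more than the paper does: the paper offers no written proof at all, only the pointer ``all of which are a consequence of Proposition~\ref{prop:mobR} (see also the proof of Theorem~\ref{thm:curv-form}).'' Your parts (1) and (2) are exactly that intended consequence (orbit structure plus the curvature formula $2\Im(\overline\gamma\delta)$), but for (3) and (4) you make the missing mechanism explicit, and you do so with the paper's own apparatus rather than ad hoc computation: the congruence $(p,q,r,s)\equiv(0,0,0,1)\pmod{2}$ from Exercise~\ref{ex:mobR}, the invariance of $Q$ under the Hermitian action of Theorem~\ref{thm:equiv3}, and the centre/radius data of Theorem~\ref{thm:spaceofcircles}. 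The inversive-distance dictionary plus the parity computation $\langle\mathbf{v}_1,\mathbf{v}_2\rangle_Q\equiv s_1s_2\equiv 1\pmod{2}$ is precisely the right key lemma for (3), and it is the same lattice-and-hyperboloid structure ($\Lambda$ preserved by the image of $\PSL_2(\ZZ[i])$, arrangement contained in $Q=1$) that the paper later exploits in its proof of Martin's lattice description of $\mathcal{S}_{\QQ(i)}$; so your route anticipates, rather than diverges from, the paper's framework. What your write-up buys over the paper's one-line remark is a genuinely checkable argument; what it costs is nothing, since every ingredient is already on the page.

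Two small edge cases deserve a sentence each. First, your verification of the dictionary $\langle\mathbf{v}_1,\mathbf{v}_2\rangle_Q=(\rho_1^2+\rho_2^2-|z_1-z_2|^2)/(2\rho_1\rho_2)$ is stated in terms of centres and radii, which excludes curvature-zero circles (lines); this is repaired by noting that both the pairing and the intersection type (crossing/tangent/disjoint) are invariant under $\PSL_2(\CC)$, so one may first move any line to a proper circle. Second, in (4) the tangency point of two parallel horizontal lines, or of a line with $\widehat{\RR}$, is $\infty$, so the conclusion should be read projectively: tangency points lie in $\PP^1(\QQ(i))=\QQ(i)\cup\{\infty\}$, which is also exactly what your final step via $M_1$ preserves. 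Neither point is a gap in substance, but both should be flagged in a final version.
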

	In particular, up to a scaling factor of $2$, we'll think of the circles as having integral curvature.  In fact, we'll call half the curvature the \emph{reduced curvature} for convenience; this lies in $\ZZ$.

	The following fact was known to Graham, Lagarias, Mallows, Wilks and Yan in the context of the Apollonian super-packing.

	\begin{theorem}[{\cite{GLMWY-geometry,StangeBianchi}}]
		\label{thm:every-app}
		Every single primitive integral Apollonian circle packing appears in the Schmidt arrangement of $\QQ(i)$ exactly once up to translation by $\ZZ[i]$ and rotation about the origin by $\pi$.
	\end{theorem}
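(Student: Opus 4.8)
The plan is to separate the statement into two assertions: that every primitive integral packing appears \emph{at least} once, and that it appears \emph{at most} once modulo the stated symmetries. For containment I would first verify that the four circles of the base quadruple of the strip packing (Figure~\ref{fig:basequad}) already lie in $\mathcal{S}_{\QQ(i)}$: the real line $\widehat{\RR}$ is in the arrangement by definition; the horizontal line through $i$ is its image under $\begin{pmatrix} 1 & i \\ 0 & 1 \end{pmatrix} \in \PSL_2(\ZZ[i])$; and the two circles of curvature $2$ are the Ford circles at $0$ and $1$, hence images of $\widehat{\RR}$ under $\PSL_2(\ZZ) \subseteq \PSL_2(\ZZ[i])$. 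Next I would observe that $\mathcal{S}_{\QQ(i)}$ is stable under $\mathcal{A}^{geo}$: elements of $\PSL_2(\ZZ[i])$ permute its circles because $\mathcal{S}_{\QQ(i)}$ is by definition a single $\PSL_2(\ZZ[i])$-orbit, while complex conjugation $\tau$ fixes $\widehat{\RR}$ and stabilizes $\ZZ[i]$, so it too permutes the circles. Since $\mathcal{A}^{geo} \subseteq \PSL_2(\ZZ[i]) \rtimes \langle \tau \rangle$ and the strip packing is the $\mathcal{A}^{geo}$-orbit of the base quadruple, the entire strip packing lies in $\mathcal{S}_{\QQ(i)}$.

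For exhaustion I would pass to the space of circles $\mathbb{D}^3_M$ and use the arithmetic description coming from Proposition~\ref{prop:mobR} and Exercise~\ref{ex:mobR}: a circle lies in $\mathcal{S}_{\QQ(i)}$ precisely when its coordinate vector $(p,q,r,s)$ can be taken primitive, integral, and $\equiv (0,0,0,1) \pmod 2$. Given a primitive integral packing $\mathcal{P}$, after rescaling curvatures by $2$ and positioning $\mathcal{P}$ by an orientation-preserving similarity, every circle acquires integral coordinates of the correct parity --- this is exactly the curvature and curvature-centre data produced by Theorem~\ref{thm:curv-form} together with the Descartes relation --- so all of its circles belong to $\mathcal{S}_{\QQ(i)}$. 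Equivalently, the similarity carrying the strip packing to $\mathcal{P}$ can be realized by an element of $\PSL_2(\ZZ[i])$, modulo the symmetries in the statement. I expect this step --- upgrading the \emph{necessary} congruence conditions of Exercise~\ref{ex:mobR} to \emph{sufficient} ones --- to be the main obstacle, since it rests on $\QQ(i)$ having class number one, which is what guarantees that $\PSL_2(\ZZ[i])$ acts transitively on circles carrying the admissible arithmetic data.

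Finally, for uniqueness I would compute the group $\Sigma$ of orientation-preserving similarities of $\CC$ stabilizing $\mathcal{S}_{\QQ(i)}$. Discreteness of the curvature set (contained in $2\ZZ$) rules out genuine scalings, so $\Sigma$ consists of isometries $z \mapsto \lambda z + \omega$ with $|\lambda| = 1$; the parity condition of Exercise~\ref{ex:mobR} (which already excludes the imaginary axis $i\widehat{\RR}$, whose normalized coordinates are $(0,0,1,0) \not\equiv (0,0,0,1) \pmod 2$) then forces $\lambda \in \{\pm 1\}$ and $\omega \in \ZZ[i]$, so $\Sigma$ is exactly the group generated by $\ZZ[i]$-translations and the rotation $z \mapsto -z$. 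It remains to argue that any orientation-preserving similarity carrying one copy of a packing in $\mathcal{S}_{\QQ(i)}$ to another actually lies in $\Sigma$; this follows because such a map conjugates one copy of $\mathcal{A}^{geo}$ inside $\PSL_2(\ZZ[i]) \rtimes \langle \tau \rangle$ to another, hence normalizes $\PSL_2(\ZZ[i])$, and the part of this normalizer preserving $\mathcal{S}_{\QQ(i)}$ is precisely $\Sigma$. Combining the three parts, each primitive integral Apollonian packing occurs exactly once modulo $\Sigma$, which is the asserted translation by $\ZZ[i]$ and rotation by $\pi$.
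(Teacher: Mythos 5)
First, note that the paper itself does not prove Theorem~\ref{thm:every-app}; it cites \cite{GLMWY-geometry,StangeBianchi}, so your proposal must be judged against the strategy of those sources, parts of which do appear in the paper (Proposition~\ref{prop:mobR}, Exercise~\ref{ex:mobR}, and the Martin-style lattice proposition characterizing circles of the arrangement). Your skeleton --- containment of the strip packing, exhaustion via the lattice $(p,q,r,s)\equiv(0,0,0,1)\pmod 2$ in the space of circles, uniqueness via a stabilizer computation --- is the right shape, and your circle-level ``necessary and sufficient'' step is exactly the converse proved in the paper's proposition (where, as you suspect, the Euclidean/PID property of $\ZZ[i]$ enters through solving $\alpha x + \beta y = 1$). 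The genuine gap is in your exhaustion step: the claim that positioning a primitive integral packing so that \emph{every} circle acquires integral coordinates of the correct parity ``is exactly the curvature and curvature-centre data produced by Theorem~\ref{thm:curv-form} together with the Descartes relation'' does not hold. Theorem~\ref{thm:curv-form} controls only the \emph{curvatures} of circles tangent to one fixed mother circle; it says nothing about curvature-centres or co-curvatures. What is actually needed is the strong integrality theorem of Graham--Lagarias--Mallows--Wilks--Yan: an extended Descartes-type identity satisfied by the matrix of co-curvatures, curvatures, and curvature-centres of a quadruple, which is preserved integrally under the Apollonian group action, together with an argument that the root quadruple of a primitive integral packing can be moved by a Euclidean motion so that this matrix is integral with the stated parities. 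That is the substantive content of ``appears at least once,'' and it is several pages of work in \cite{GLMWY-geometry}; class number one is not the crux of this step.

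Your uniqueness argument also has a non sequitur: a similarity $g$ carrying one occurrence of a packing to another conjugates one conjugate of $\mathcal{A}^{geo}$ to another inside $\PSL_2(\ZZ[i]) \rtimes \langle \tau \rangle$, but conjugating a thin, infinite-index subgroup into the ambient group does \emph{not} imply that $g$ normalizes $\PSL_2(\ZZ[i])$, nor is $g$ known in advance to stabilize the whole arrangement --- so your computation of the stabilizer $\Sigma$ of $\mathcal{S}_{\QQ(i)}$, even granting it, does not apply to $g$. The repair is to argue directly with the invariants you already identified: writing $g(z) = \lambda z + \omega$, equality of the discrete integral curvature sets of the two occurrences forces $|\lambda| = 1$; the parity class $(0,0,0,1)\pmod 2$ of every circle (curvature-centres in $2\ZZ + (2\ZZ+1)i$) rules out $\lambda = \pm i$, since multiplication by $i$ swaps the parities of $r$ and $s$; and then, using pairs of circles whose reduced curvatures are coprime (which exist in a primitive packing), the condition that $\lambda(r+si) + p\,\omega$ land back in the lattice forces $\omega \in \ZZ[i]$. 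This direct invariant-chasing, rather than a normalizer argument, is how the cited sources pin the occurrence down to translations by $\ZZ[i]$ and rotation by $\pi$.
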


	We saw in Theorem~\ref{thm:curv-form} that every circle in a packing has a natural quadratic form associated to it.  We also saw in Section~\ref{sec:quadratic-forms} that quadratic forms are in bijection with lattices.  In fact, every circle has a lattice associated to it.

	\begin{theorem}
		\label{thm:lattice-of-circ}
		Let $\mathcal{C}$ be a circle in the Schmidt arrangement for $\QQ(i)$, given as an image of $\widehat{\RR}$ by $\begin{pmatrix} \alpha & \beta \\ \gamma & \delta \end{pmatrix}$.  Let $\Lambda = \gamma \ZZ + \delta \ZZ$.  Then the denominators of the tangency points touching $\mathcal{C}$ are exactly the elements of $\Lambda$, and at tangency point $\sigma/\rho$, the circles tangent have curvatures $\Im(\gamma \overline{\delta}) + 2 k N(\rho)$, $k \in \ZZ$. 

	\end{theorem}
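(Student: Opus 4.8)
The plan is to transport the whole configuration to the base circle $\widehat{\RR}$ by the given matrix $M=\begin{pmatrix}\alpha&\beta\\\gamma&\delta\end{pmatrix}$ and to feed the relevant matrices into Proposition~\ref{prop:mobR}, which reads off the curvature and curvature-centre of $M\cdot\widehat{\RR}$ from its entries. First I would pin down the tangency points. Since tangency points of the arrangement are Gaussian rationals and $M$ carries Gaussian rationals bijectively to Gaussian rationals, the points where circles of the arrangement meet $\mathcal{C}=M\cdot\widehat{\RR}$ are exactly the images $M\cdot(\PP^1(\QQ))$ of the tangency points of $\widehat{\RR}$ (the latter being all of $\PP^1(\QQ)$, realised concretely by the Ford circles of the earlier proof). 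Writing a rational as a primitive integer column $(p,q)^t$, the tangency point is $\tfrac{\alpha p+\beta q}{\gamma p+\delta q}=:\sigma/\rho$, so $\rho=\gamma p+\delta q\in\Lambda$. As $\det M$ is a unit, $M\in\GL_2(\ZZ[i])$ preserves $\ZZ[i]$-primitivity and a coprime integer pair is $\ZZ[i]$-primitive, so $(\sigma,\rho)$ is primitive and $\sigma/\rho$ is already in lowest terms with denominator $\rho$. As $(p,q)$ runs over coprime pairs, $\rho$ runs over exactly the \emph{primitive} elements of $\Lambda$, because $(p,q)\mapsto\gamma p+\delta q$ is a $\ZZ$-module isomorphism $\ZZ^2\to\Lambda$ taking primitive vectors to primitive elements; I would record this as the correct form of the denominator statement (the bare ``elements of $\Lambda$'' holds only in the degenerate case where $\gamma,\delta$ are $\QQ$-dependent, as for $\widehat{\RR}$ itself).

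The structural heart is to describe all circles tangent to $\mathcal{C}$ at a fixed tangency point. I would first handle $\widehat{\RR}$ at $\infty$: a circle of the arrangement tangent to $\widehat{\RR}$ there is a horizontal line, which forces its defining matrix to fix $\infty$, hence to be upper triangular with unit diagonal, giving exactly the lines $\Im(z)=b$ with $b\in\ZZ$, i.e. the orbit of $\widehat{\RR}$ under the unipotent $\begin{pmatrix}1&bi\\0&1\end{pmatrix}$. Choosing $A=A_{p/q}=\begin{pmatrix}p&u\\q&v\end{pmatrix}\in\SL_2(\ZZ)$ with $A\cdot\infty=p/q$ (so $pv-uq=1$), conjugation by $A$ identifies the circles tangent to $\widehat{\RR}$ at $p/q$ with those tangent at $\infty$, and applying $M$ shows that the circles tangent to $\mathcal{C}$ at $M(p/q)$ are precisely
\[
	M A \begin{pmatrix}1&bi\\0&1\end{pmatrix}\cdot\widehat{\RR},\qquad b\in\ZZ.
\]
I expect \emph{this} to be the main obstacle: not producing these circles, but proving they \emph{exhaust} the tangent family, which requires the classification at $\infty$ together with the bijection furnished by $A^{-1}\in\PSL_2(\ZZ)$ between the two horoball families.

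Finally I would compute curvatures via Proposition~\ref{prop:mobR}. The bottom row of $MA\begin{pmatrix}1&bi\\0&1\end{pmatrix}$ is $\big(\rho,\ \rho bi+(\gamma u+\delta v)\big)$, so the curvature of the corresponding circle is
\[
	2\Im\!\big(\overline\rho(\rho bi+\gamma u+\delta v)\big)=2N(\rho)\,b+2\Im\!\big(\overline\rho(\gamma u+\delta v)\big).
\]
Expanding $\overline\rho(\gamma u+\delta v)=(\overline\gamma p+\overline\delta q)(\gamma u+\delta v)$ and taking imaginary parts, the $|\gamma|^2$ and $|\delta|^2$ terms are real, while the cross terms combine through $\Im(\gamma\overline\delta)=-\Im(\overline\gamma\delta)$ and the relation $pv-uq=1$ into the single term $2\Im(\overline\gamma\delta)$, which is the curvature of $\mathcal{C}$ itself. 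Hence the tangent circles at $\sigma/\rho$ have curvatures $\{\,2\Im(\overline\gamma\delta)+2N(\rho)\,b : b\in\ZZ\,\}$: an arithmetic progression of common difference $2N(\rho)$, as claimed. The constant term here is $2\Im(\overline\gamma\delta)$ (equivalently $-2\Im(\gamma\overline\delta)$); note this is even, consistent with all arrangement curvatures lying in $2\ZZ$, which suggests the constant in the displayed statement should read $2\Im(\overline\gamma\delta)$ rather than $\Im(\gamma\overline\delta)$. Throughout, the base case $\mathcal{C}=\widehat{\RR}$, where the tangent circles at $p/q$ have curvatures $2q^2\ZZ=2N(q)\ZZ$, serves as a running sanity check.
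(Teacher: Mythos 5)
Your proposal is correct in substance, and it uses exactly the machinery the paper itself deploys for the closely related Theorem~\ref{thm:curv-form} (the notes state Theorem~\ref{thm:lattice-of-circ} without proof, deferring to the references): transport by $\PSL_2(\ZZ[i])$ plus Proposition~\ref{prop:mobR}. The difference is only in how the tangent family is sliced: the paper's proof of Theorem~\ref{thm:curv-form} fixes the horoball level (the matrix $\begin{pmatrix} 1 & i \\ 0 & 1 \end{pmatrix}$) and varies the tangency point over $\Gamma(2)$, producing the quadratic form; you fix the tangency point $(p,q)$ and vary the horoball parameter $b$, producing the arithmetic progression $2\Im(\overline\gamma\delta) + 2bN(\rho)$. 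Your curvature computation checks out, including the cancellation via $pv-uq=1$. Moreover, both of your corrections to the draft statement are genuine errata (the notes solicit these): the constant must be the curvature $2\Im(\overline\gamma\delta)$ of $\mathcal{C}$, not $\Im(\gamma\overline\delta)$ --- e.g.\ for $M = \begin{pmatrix} 0 & -1 \\ 1 & i \end{pmatrix}$ at the tangency $M(1/2)$, with $\rho = 1+2i$, a direct computation gives curvatures $10b+2$, all $\equiv 2 \pmod{10}$ and even, whereas the printed formula $\Im(\gamma\overline\delta) + 2kN(\rho) = -1+10k$ yields odd values, impossible since all curvatures in the arrangement are even; and ``elements of $\Lambda$'' should indeed be \emph{primitive} elements when denominators are read from lowest-terms representations, since $(p,q)\mapsto \gamma p + \delta q$ is a basis isomorphism $\ZZ^2 \to \Lambda$ for a genuine circle (where $\gamma,\delta$ are $\QQ$-independent).

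One step needs a small repair, precisely where you predicted the difficulty lies. A circle of the arrangement through $\infty$ does \emph{not} force its defining matrix $M'$ to fix $\infty$; rather, $M'^{-1}(\infty)$ is a Gaussian rational lying on $\widehat{\RR}$, hence lies in $\PP^1(\QQ)$, so you may precompose with some $B \in \PSL_2(\ZZ)$ sending $\infty$ to $M'^{-1}(\infty)$; since $B$ stabilizes $\widehat{\RR}$, the matrix $M'B$ defines the same circle and fixes $\infty$. Also, ``upper triangular with unit diagonal'' glosses the diagonal possibilities: the diagonal entries are units of $\ZZ[i]$ with product $\pm 1$, so the resulting maps are $z \mapsto \pm z + \beta$ with $\beta \in \ZZ[i]$, whose images of $\widehat{\RR}$ are still exactly the lines $\Im(z) = b$, $b \in \ZZ$. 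With this patch your exhaustion argument, and hence the whole proof, is complete.
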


	\subsection{Lattice in the space of circles}

	As an orbit of a circle under M\"obius transformations, we can ask what the Schmidt arrangement is as a subset of the space of circles.  It is actually a very nicely described subset, as given by Daniel Martin (who did this for general Schmidt arrangements).  This is very useful for drawing images.

	\begin{proposition}[{\cite[Definition 3.1 and Theorem 3.11]{MartinStudy}}]
			The circles of the Schmidt arrangement are exactly those circles having curvature $2p'$ and curvature center $2t' + (2s'+1)i$ whenever $p' \mid r'^2 + s'^2 + s'$.
	\end{proposition}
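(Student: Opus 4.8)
The plan is to realize the claim as the computation of a single $\PSL_2(\ZZ[i])$-orbit in the space of circles. Recall from Theorem~\ref{thm:spaceofcircles} that a circle is recorded by normalized coordinates $(p,q,r,s)$ with $r^2+s^2-pq=1$, where $p$ is the curvature, $r+si$ the curvature-center, and $q$ the co-curvature, and that by Theorem~\ref{thm:equiv3} the group $\PSL_2(\ZZ[i])$ acts on these coordinates through the representation $H\mapsto M^{-1}HM^{-\dagger}$, which preserves $Q(p,q,r,s)=r^2+s^2-pq$. The line $\widehat{\RR}$ has coordinates $(0,0,0,1)$, so every circle of the arrangement is an image of this vector, and the proposition amounts to identifying the orbit of $(0,0,0,1)$.

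First I would dispatch the forward inclusion. If $\mathcal{C}=M\cdot\widehat{\RR}$ with $M=\begin{pmatrix}\alpha&\beta\\\gamma&\delta\end{pmatrix}\in\PSL_2(\ZZ[i])$, then Proposition~\ref{prop:mobR} gives the integer coordinates $p=i(\gamma\overline\delta-\overline\gamma\delta)$, $\ r+si=i(\alpha\overline\delta-\overline\gamma\beta)$, $\ q=i(\alpha\overline\beta-\overline\alpha\beta)$, and Exercise~\ref{ex:mobR} records that $(p,q,r,s)\equiv(0,0,0,1)\pmod{2}$. Writing $p=2p'$, $q=2q'$, $r=2r'$, $s=2s'+1$, and using that $M$ preserves $Q$ (so $Q(p,q,r,s)=Q(0,0,0,1)=1$), the identity $4r'^2+(2s'+1)^2-4p'q'=1$ simplifies to $r'^2+s'^2+s'=p'q'$. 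In particular $p'\mid r'^2+s'^2+s'$, and the curvature $2p'$ and curvature-center $2r'+(2s'+1)i$ have exactly the stated shape (here the $t'$ of the statement is the $r'$ of the divisibility condition).

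For the converse I would show that every integer vector with $Q=1$ in the parity class $(0,0,0,1)\bmod 2$ lies in the orbit, by a Euclidean reduction to $(0,0,0,1)$. The group $\PSL_2(\ZZ[i])$ is generated by the translations $T_\omega=\begin{pmatrix}1&\omega\\0&1\end{pmatrix}$, $\omega\in\ZZ[i]$, and $S=\begin{pmatrix}0&-1\\1&0\end{pmatrix}$. A direct matrix computation with $H\mapsto M^{-1}HM^{-\dagger}$ shows that $T_\omega$ fixes $p$, translates the curvature-center $r+si\mapsto(r+si)+p\overline\omega$, and adjusts $q$ by a real increment, while $S$ interchanges $p$ and $q$ and changes the sign of the curvature-center; both preserve $Q$ and the parity class. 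Given a target vector with $p\neq 0$, a suitable $T_\omega$ replaces the curvature-center by a residue modulo $p$ of norm at most $|p|^2/2$ (Gaussian Euclidean division); the relation $pq=r^2+s^2-1$ then forces $|q|<|p|$, so applying $S$ strictly decreases $\max(|p|,|q|)$.

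Iterating, the process terminates at $p=0$, where $r^2+s^2=1$ together with $s$ odd forces $(r,s)=(0,\pm1)$; a final vertical translation adjusts the now-free co-curvature $q$ (which moves in steps of $2$, staying even) to $0$, reaching $(0,0,0,1)=\widehat{\RR}$. Reading the reduction backwards exhibits the required $M\in\PSL_2(\ZZ[i])$. The main obstacle is precisely this converse: one must (i) pin down the explicit action of $S$ and $T_\omega$ on $(p,q,r,s)$ and check that it preserves both $Q$ and the parity class $(0,0,0,1)\bmod 2$, and (ii) verify that Gaussian Euclidean division genuinely makes $\max(|p|,|q|)$ strictly decrease so the algorithm terminates. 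This is the indefinite-form reduction theory of the earlier sections transported to $\ZZ[i]$, and the bookkeeping in step (ii) is where the real work lies.
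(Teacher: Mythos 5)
Your proof is correct, and your forward direction coincides with the paper's: both use Proposition~\ref{prop:mobR} and the parity computation of Exercise~\ref{ex:mobR} to place the orbit inside the lattice $(p,q,r,s)\equiv(0,0,0,1)\pmod 2$, then use invariance of $Q$ to land on the hyperboloid $r^2+s^2-pq=1$ and unwind the parities into the divisibility $p'\mid r'^2+s'^2+s'$. For the converse, however, you take a genuinely different route. The paper does not run a reduction algorithm: it first picks a Gaussian rational point on the given circle (one exists, e.g.\ $(r+si+u)/p$ for a unit $u$, since the curvature and curvature-center are integral) and moves it to $0$ by an element of $\PSL_2(\ZZ[i])$, which forces the co-curvature to vanish; with $q=0$ and $r^2+s^2=1$ it then writes down the single explicit matrix $M=\begin{pmatrix} 0 & -s+ir \\ 1 & ip/2 \end{pmatrix}$, whose determinant $s-ir$ is a unit, with the parity conditions ($p$ even, $s$ odd) guaranteeing both that $ip/2\in\ZZ[i]$ and that the determinant is $\pm 1$ rather than $\pm i$, so $\mathcal{C}=M\cdot\widehat{\RR}$. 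Your approach instead performs a Euclidean descent on the lattice vector itself, using the generators $T_\omega$ and $S$ and Gaussian division to strictly decrease $\max(|p|,|q|)$ until $p=0$, then normalizing $(r,s)=(0,\pm1)$ and killing $q$ by translations. Both are complete arguments; the paper's buys brevity and makes transparent exactly where each congruence condition is used, while yours is more self-contained (it never needs to locate a rational point on the circle, nor the fact that circles through $0$ have co-curvature $0$), mirrors the indefinite-form reduction theory from earlier in the notes, and is constructive in the algorithmic sense of producing $M$ as a word in $S$ and $T_\omega$. Your descent bookkeeping does check out: after translating, $r^2+s^2\le |p|^2/2$ and $s$ odd gives $r^2+s^2\ge 1$, so $|q|=(r^2+s^2-1)/|p|<|p|$, and the terminal case works since $q$ moves in even steps when $(r,s)=(0,\pm1)$; the only slip is cosmetic --- under the convention $H\mapsto M^{-1}HM^{-\dagger}$, the map $S$ sends the curvature-center $r+si$ to $-r+si$ rather than negating it outright, a sign detail that affects nothing in the argument.
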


	For example, for $p'=1$, the condition is always satisfied and we have centers $t' + (s' + 1/2)i$ for all $s',t' \in \ZZ$.  

	\begin{proof}
	We will describe the Gaussian Schmidt arrangement as the intersection of the one-sheeted hyperboloid with a lattice in the space of circles.
		The lattice will be 
		\[
			\Lambda := \{ (p,q,r,s) \in \ZZ^4 : (p,q,r,s) \equiv (0,0,0,1) \pmod{2} \} \subset \RR^4,
		\]
		coordinates representing curvature, co-curvature, real and imaginary parts of curvature-centre in the space of circles, as usual.  
		By Exercise~\ref{ex:mobR}, circles of the Schmidt arrangement lie in $\Lambda$.  

		The image $G$ of $\PSL_2(\ZZ[i])$ in $O_Q(\RR)$ (under the exceptional isomorphism \eqref{eqn:exceptional}) lies in $O_Q(\ZZ)$, and satisfies $G \Lambda \subseteq \Lambda$.
		The extended real line $\widehat{\RR}$ corresponds to $(0,0,0,-1) \in \Lambda$, which lies on the one-sheeted hyperboloid $r^2 + s^2 - pq = 1$.  
		Thus, since $G$ preserves length, the entire Schmidt arrangement lies in the one-sheeted hyperboloid $r^2 + s^2 - pq = 1$.  

		Conversely, assume we have a circle $\mathcal{C}$ in the lattice and on the hyperboloid.  Choose a Gaussian rational point $\alpha/\beta \in \mathcal{C}$.  There is an element of $\PSL_2(\ZZ[i])$ mapping $\alpha/\beta$ to $0$ because there is a solution to $\alpha x + \beta y = 1$ (by coprimality of numerator and denominator).  So without loss of generality, we can assume our circle touches $0$, and therefore has co-curvature $q = 0$ (exercise).  Suppose it has curvature $p$ and curvature-center $r + si$.  It will still lie in the lattice, since $G \Lambda \subseteq \Lambda$.  Consider the matrix 
	\[
	M :=	\begin{pmatrix}
			0 & -s + ir \\
			1 & ip/2 
		\end{pmatrix}.
	\]
		Its determinant has absolute value $|s - ir| = r^2 + s^2 - p\cdot 0 = 1$, so it has determinant in $\{ \pm 1, \pm i \}$.  As $s$ is odd, the determinant is $\pm 1$, and so $M \in \PSL_2(\ZZ[i])$.  Since $\widehat{C} = M \cdot \widehat{\RR}$, it is in the Schmidt arrangement.

	Finally, we simply work out the condition of intersecting the lattice with the one-sheeted hyperboloid.
	Writing $p = 2p'$, $q = 2q'$, $r = 2r'$, $s=2s'+1$, the condition that $q' \in \ZZ$ and $pq - r^2 - s^2 +1 = 0$ becomes
	\[
		p' \mid r'^2 + s'^2 + s'.
	\]
	\end{proof}

\subsection{Visual structure of imaginary quadratic fields}

The Schmidt arrangement $\SK$ gives visual form to the arithmetic of $K$.  $K$-Bianchi circles intersect only at $K$-points and only at `unit angles', i.e. angles $\theta$ such that $e^{i\theta}$ lies in the unit group of $\OK$.  Their curvatures lie in $\sqrt{-\Delta}\ZZ$.  Furthermore, the circles themselves are in bijection with certain ideal classes:

\begin{theorem}[{\cite[Theorem 1.4]{StangeVis}}]
        \label{thm:visoneA}
$K$-Bianchi circles of curvature $f$, modulo translation into the fundamental region of $\OK$, and rotation by unit angles, are in bijection with the invertible ideal classes of the order of conductor $f$ which extend to the trivial class in $\OK$.
\end{theorem}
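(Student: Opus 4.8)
The plan is to realize the bijection as a composite of two natural correspondences,
\[
\{K\text{-Bianchi circles}\}/(\text{transl., rot.}) \;\longleftrightarrow\; \{\text{unimodular lattices}\}/\OK^\times \;\longleftrightarrow\; \ker\!\big(\Pic(\Of)\to\Pic(\OK)\big),
\]
reading the conductor $f$ off the curvature along the way. First I would attach a lattice to each circle. A $K$-Bianchi circle of nonzero curvature is $\mathcal{C}=M\cdot\widehat{\RR}$ for some $M=\left(\begin{smallmatrix}\alpha&\beta\\\gamma&\delta\end{smallmatrix}\right)\in\PSL_2(\OK)$ with $\gamma\neq 0$; set $\Lambda_{\mathcal C}:=\gamma\ZZ+\delta\ZZ$. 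Because $\det M=1$, the bottom row is \emph{unimodular}, i.e.\ $\gamma\OK+\delta\OK=\OK$. By Theorem~\ref{thm:lattice-of-circ} this $\Lambda_{\mathcal C}$ is exactly the lattice of denominators of tangency points, hence intrinsic to $\mathcal{C}$, and by Proposition~\ref{prop:mobR} the reduced curvature of $\mathcal{C}$ is $\Im(\overline\gamma\delta)=\operatorname{covol}(\Lambda_{\mathcal C})$. I would then track how the permitted moves act on the row $(\gamma,\delta)$: right multiplication by the stabilizer of $\widehat{\RR}$ in $\PSL_2(\OK)$ acts as a $\ZZ$-basis change of $\Lambda_{\mathcal C}$; left translation $z\mapsto z+t$ ($t\in\OK$) fixes the bottom row; and rotation $z\mapsto u^2 z$ by a unit square, realized by $\operatorname{diag}(u,u^{-1})$, scales $\Lambda_{\mathcal C}$ by the unit $u^{-1}$. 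Consequently $\mathcal{C}\mapsto[\Lambda_{\mathcal C}]$ is well defined from circles modulo translation and unit rotation to lattices modulo $\OK^\times$.

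Next I would check this is a bijection onto unimodular lattices modulo units. Surjectivity is Bézout: for unimodular $\Lambda=\gamma\ZZ+\delta\ZZ$ pick $a,b\in\OK$ with $a\gamma+b\delta=1$, so $\left(\begin{smallmatrix}b&-a\\\gamma&\delta\end{smallmatrix}\right)\in\SL_2(\OK)$ produces a circle with lattice $\Lambda$. Injectivity requires showing that two circles whose lattices agree up to a unit are related by the allowed geometric moves: one undoes the unit by a rotation and a $\ZZ$-basis change, reducing to the case of equal bottom rows, and then argues that equal unimodular bottom rows force the circles to coincide up to a translation by $\OK$.

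Then I would pass from lattices to ideal classes. Every rank-two lattice $\Lambda\subset K$ is a proper (invertible) fractional ideal of its multiplier ring $\Of:=\ord(\Lambda)$, an order of $K$; let $f$ be its conductor. A covolume computation identifies curvature with conductor: since $\operatorname{covol}(\Lambda)=N(\Lambda)\,\operatorname{covol}(\Of)$ and unimodularity ($\Lambda\OK=\OK$) forces $N(\Lambda)=1$, the reduced curvature equals $\operatorname{covol}(\Of)=\tfrac12\sqrt{|\Delta_f|}$, a strictly increasing function of $f$; hence fixing the curvature to be $f$ is the same as fixing $\Of$ to be the order of conductor $f$. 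Now send $\Lambda\mapsto[\Lambda]\in\Pic(\Of)$. Two unimodular lattices with the same order are $\Of$-equivalent iff $\Lambda'=\lambda\Lambda$ for some $\lambda\in K^\times$; but $\Lambda\OK=\Lambda'\OK=\OK$ then forces $\lambda\OK=\OK$, i.e.\ $\lambda\in\OK^\times$. Thus lattices-mod-units inject into $\Pic(\Of)$, with image the classes admitting a representative $\mathfrak{a}$ satisfying $\mathfrak{a}\OK=\OK$. Finally, under the extension map $\Pic(\Of)\to\Pic(\OK)$, $[\mathfrak{a}]\mapsto[\mathfrak{a}\OK]$, a class lies in the kernel precisely when $\mathfrak{a}\OK$ is principal, and rescaling $\mathfrak{a}$ by a generator yields a representative with $\mathfrak{a}\OK=\OK$; so the image is exactly this kernel, i.e.\ the classes that \emph{extend to the trivial class in} $\OK$. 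Composing the three correspondences yields the theorem.

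The main obstacle will be the injectivity bookkeeping in the circle$\leftrightarrow$lattice step: one must pin down the stabilizer of $\widehat{\RR}$ in $\PSL_2(\OK)$ precisely and verify that ``equal lattice up to a unit'' lifts to translation and unit rotation \emph{and nothing more} — this is exactly where the fundamental region of $\OK$ and the unit group $\OK^\times$ enter, and where sign/normalization conventions are easiest to slip on. The conductor–curvature normalization $N(\Lambda)=1$ is routine but genuinely needed to justify that ``curvature $f$'' and ``conductor $f$'' refer to the same integer.
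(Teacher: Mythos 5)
Your proposal is correct and follows essentially the same route as the paper and its cited source: the paper's own description of the bijection is precisely your explicit map $\mathcal{C} = M\cdot\widehat{\RR} \mapsto \gamma\ZZ+\delta\ZZ$ (the lattice of denominators of tangency points, an invertible ideal of the order of conductor $f$), with unimodularity of the bottom row accounting for the kernel of $\Pic(\Of)\to\Pic(\OK)$. The fiddly points you flag are real but resolve as you expect --- for instance, $N(\Lambda)=1$ follows from $\Lambda\overline{\Lambda}=N(\Lambda)\Of$ together with $\Lambda\OK=\OK$, and equal unimodular bottom rows differ only by left multiplication by $\left(\begin{smallmatrix}1&t\\0&1\end{smallmatrix}\right)$ with $t\in\OK$.
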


This bijection is very explicit:  if $M \cdot \widehat{\RR}$ is a $K$-Bianchi circle of curvature $f$, where $M = \tiny \begin{pmatrix} \alpha & \beta \\ \gamma & \delta \end{pmatrix}$\normalsize, then $\gamma \ZZ + \delta \ZZ$ is an ideal with conductor $f$.  This is the lattice we saw as the lattice of denominators in the last section.

Furthemore the connectedness of $\SK$ is easily characterised:

\begin{theorem}[{\cite[Theorem 1.5]{StangeVis}}]
        \label{thm:visoneB}
        $\SK$ is connected if and only if $\OK$ is Euclidean.
\end{theorem}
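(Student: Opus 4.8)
The plan is to convert this topological statement into a group-theoretic one and then invoke a classical generation theorem. Since the circles of $\mathcal{S}_K$ meet only at tangency points (by the preceding proposition they are pairwise tangent or disjoint), the union $\mathcal{S}_K \subseteq \widehat{\CC}$ is connected if and only if the \emph{tangency graph} $\Gamma$ — one vertex per circle, one edge per tangency — is connected. Now $G := \PSL_2(\mathcal{O}_K)$ acts transitively on the vertices (all circles are images of $\widehat{\RR}$), with $\operatorname{Stab}(\widehat{\RR}) = \PSL_2(\RR) \cap G = \PSL_2(\ZZ)$, because a M\"obius map preserves $\widehat{\RR}$ iff it has real coefficients and $\mathcal{O}_K \cap \RR = \ZZ$. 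Thus $\Gamma$ is the Schreier graph of $G$ on $G/\PSL_2(\ZZ)$, and its connectivity is equivalent to $G$ being generated by $\PSL_2(\ZZ)$ together with the set $T$ of those $g$ for which $g\widehat{\RR}$ is tangent to $\widehat{\RR}$.

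The geometric heart is to identify $T$ up to the stabilizer. First, every tangency point on $\widehat{\RR}$ is a $K$-rational point lying on $\RR$, hence lies in $\PP^1(\QQ)$; since $\PSL_2(\ZZ)$ acts transitively on $\PP^1(\QQ)$, I may move any such point to $\infty$. A circle tangent to $\widehat{\RR}$ at $\infty$ is a line parallel to $\RR$, and a short computation with $\operatorname{Stab}_G(\infty) = \{\, z \mapsto a^2 z + c : a \in \mathcal{O}_K^{\times},\ c \in \mathcal{O}_K \,\}$ (using that $a^2\RR \parallel \RR$ iff $a^2 \in \RR$) shows these lines are exactly $\RR + c = T_c\widehat{\RR}$ with $c \in \mathcal{O}_K$, where $T_c = \begin{pmatrix} 1 & c \\ 0 & 1 \end{pmatrix}$. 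Let $E \le G$ be the \emph{elementary subgroup} generated by all $\begin{pmatrix} 1 & c \\ 0 & 1 \end{pmatrix}$ and $\begin{pmatrix} 1 & 0 \\ c & 1 \end{pmatrix}$, $c \in \mathcal{O}_K$; note $\PSL_2(\ZZ) \subseteq E$ and that $\begin{pmatrix} 1 & 0 \\ c & 1 \end{pmatrix} = S\, T_{-c}\, S^{-1}$. The computation above, run in both directions, is meant to prove that the connected component of $\widehat{\RR}$ in $\Gamma$ is precisely the orbit $\{ M\widehat{\RR} : M \in E \}$: on one hand every elementary generator sends $\widehat{\RR}$ to a circle tangent to (or equal to) it, so any word in the generators traces a walk inside one component; on the other hand, any circle tangent to an $E$-translate of $\widehat{\RR}$ is again an $E$-translate, by the reduction to a horizontal line. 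Hence $\mathcal{S}_K$ is connected iff $E\widehat{\RR} = G\widehat{\RR}$, i.e.\ (as $\PSL_2(\ZZ) \subseteq E$) iff $E = G$.

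The single arithmetic input, and the crux of the whole argument, is the equivalence $E = \PSL_2(\mathcal{O}_K) \iff \mathcal{O}_K$ is Euclidean. One direction is the Euclidean algorithm itself: if $\mathcal{O}_K$ is norm-Euclidean, any matrix reduces to the identity by clearing entries with translations and swapping with $S$, so $E = G$. The reverse is the deep direction, supplied by Cohn's theorem that an imaginary quadratic $\mathcal{O}_K$ is a $GE_2$-ring (its $SL_2$ generated by elementary matrices) exactly for the five norm-Euclidean fields $d \in \{1,2,3,7,11\}$, and for every other $d$ there exist matrices in $SL_2(\mathcal{O}_K)$ not expressible through elementary operations. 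Combined with the classical fact that for imaginary quadratic fields \emph{Euclidean} and \emph{norm-Euclidean} coincide (Motzkin's universal-side-divisor criterion excludes any Euclidean function on the non-norm-Euclidean PIDs such as $\QQ(\sqrt{-19})$), this yields $E = G \iff \mathcal{O}_K$ Euclidean, closing the chain; see \cite{StangeVis}.

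I expect the routine-but-delicate step to be the line-classification lemma of the second paragraph, since it must hold uniformly across $K$: the extra units of $\QQ(i)$ and $\QQ(\sqrt{-3})$ either satisfy $a^2 \in \RR$ or produce lines not parallel to $\RR$, so they do not enlarge the set of horizontal tangent lines and cause no trouble. The genuinely hard mathematics, however, is entirely packaged in Cohn's non-$GE_2$ theorem, which powers the ``not Euclidean $\Rightarrow$ not connected'' direction; everything else rests on Proposition~\ref{prop:mobR} and the transitivity of $\PSL_2(\ZZ)$ on $\PP^1(\QQ)$.
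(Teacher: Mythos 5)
Your proposal follows essentially the same route as the proof of this theorem in the cited paper \cite{StangeVis}: identify the tangency-connected component of $\widehat{\RR}$ as the orbit of the elementary subgroup $E_2(\mathcal{O}_K)$ (generated by $\PSL_2(\ZZ)$ together with the translations $z \mapsto z + c$, $c \in \mathcal{O}_K$), reduce connectivity of $\mathcal{S}_K$ to the group equality $E_2(\mathcal{O}_K) = \PSL_2(\mathcal{O}_K)$, and then invoke Cohn's theorem that an imaginary quadratic $\mathcal{O}_K$ is a $GE_2$-ring exactly when it is norm-Euclidean, combined with the classical fact (via Motzkin's criterion) that Euclidean and norm-Euclidean coincide for imaginary quadratic fields. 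The only blemishes are cosmetic and confined to the two extra-unit fields, which are Euclidean anyway and so only involve the easy direction: for $K = \QQ(i)$ the stabilizer of $\widehat{\RR}$ in $\PSL_2(\ZZ[i])$ is slightly larger than $\PSL_2(\ZZ)$ (it contains $z \mapsto -z$ via $\operatorname{diag}(i,-i)$), and for $K = \QQ(\sqrt{-3})$ circles can cross at unit angles, so ``tangent or disjoint'' is not literally available there --- neither issue touches the delicate non-Euclidean direction.
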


	\begin{figure}
                \includegraphics[height=2in]{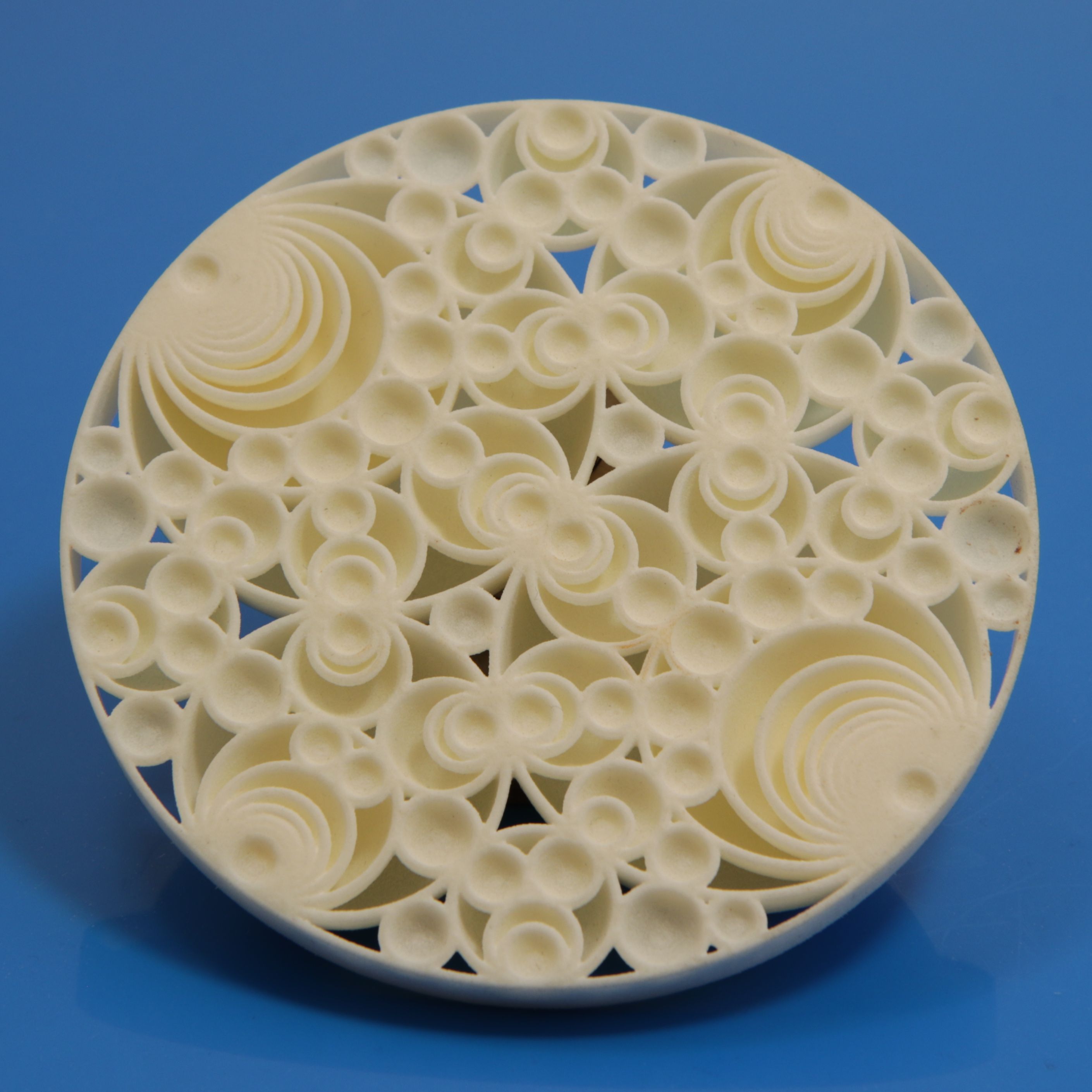} \quad
                \includegraphics[height=2in]{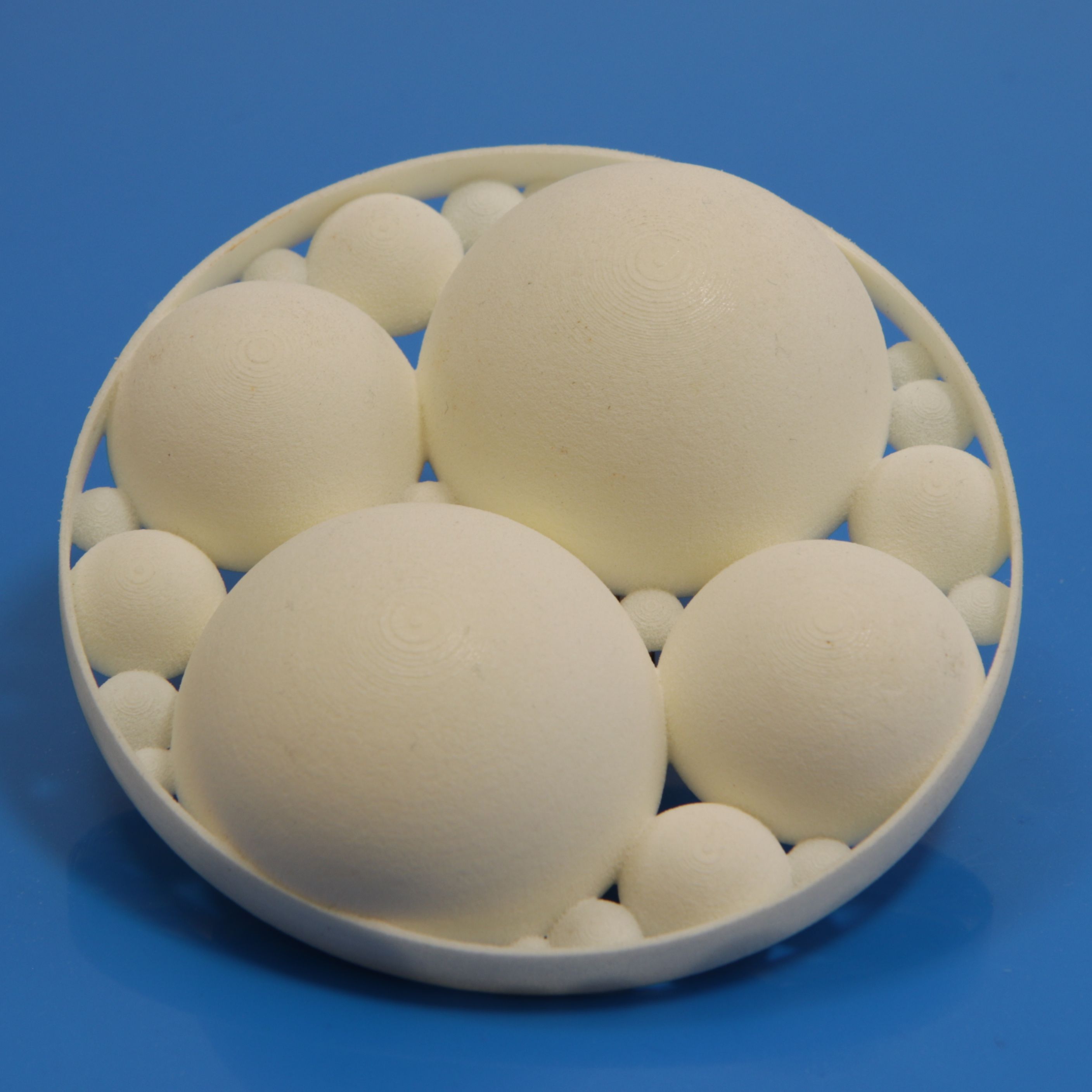} 
		\caption{The geodesic surfaces that form the Schmidt arrangement, 3d printed (front and back views).}
		\label{fig:gauss-schmidt-3d}
	\end{figure}

	The arithmetic of Kleinian groups has a long history.  The M\"obius transformations of $\widehat{\CC}$ extend to hyperbolic isometries on the upper half plane model of hyperbolic $3$-space for which $\widehat{\CC}$ is the boundary.  The quotient of this space by a Bianchi group defines a \emph{Bianchi orbifold}, and the arithmetic of the field is known to play an important role in the topology and geometry of these orbifolds (see, for example \cite{MR}).  As the simplest example, the cusps of the Bianchi orbifold are in bijection with the class number.  The Schmidt arrangement is another aspect of the orbifold:  in essence, $\SK$ represents a particular choice of geodesic surface in the manifold.  The classification of geodesic surfaces in Bianchi orbifolds is not yet well understood. 

	\begin{figure}
                \includegraphics[height=4.0in]{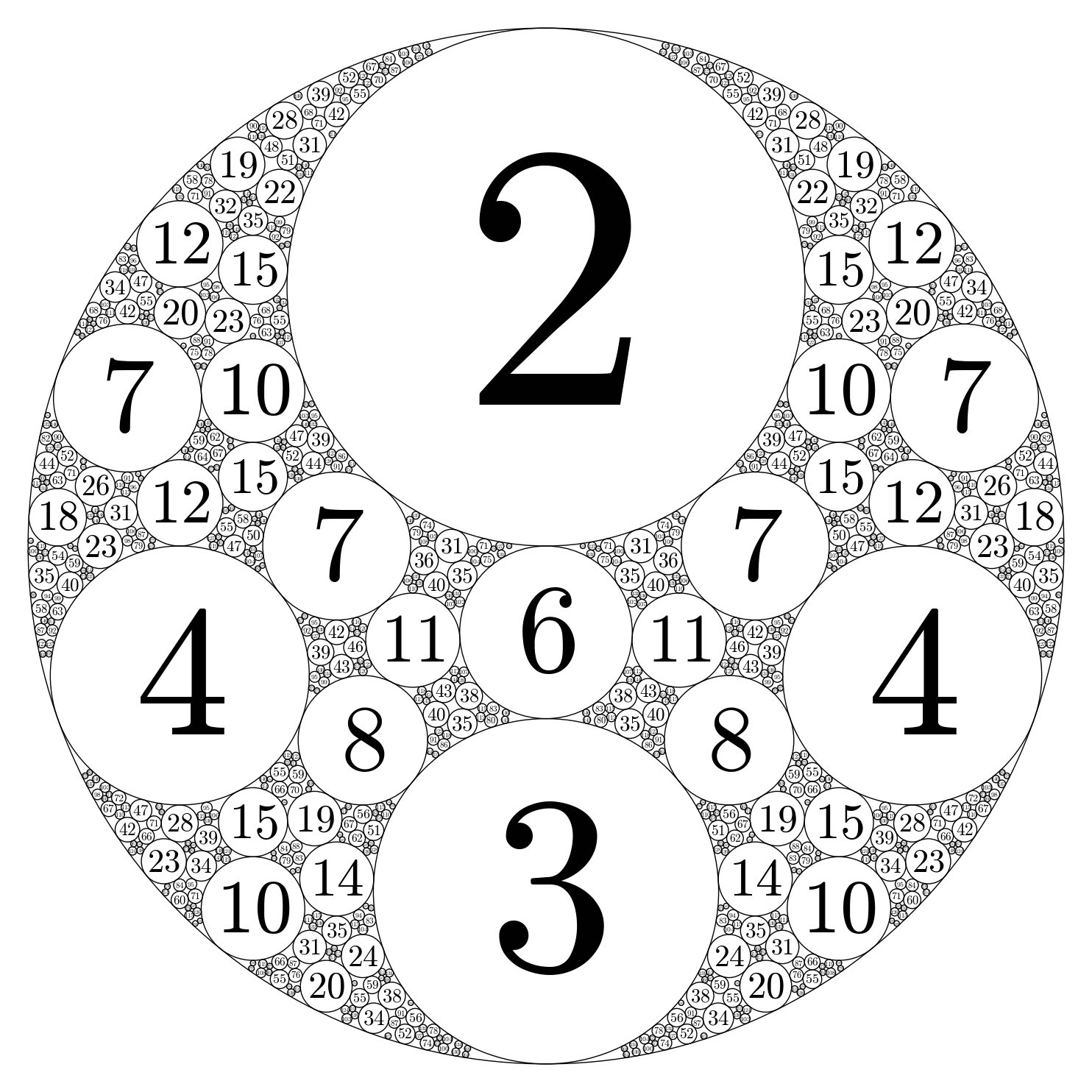}
		\caption{A $\QQ(\sqrt{-2})-$Apollonian packing with curvatures shown.}
		\label{fig:2-app}
	\end{figure}

	\begin{figure}
                \includegraphics[height=2.0in]{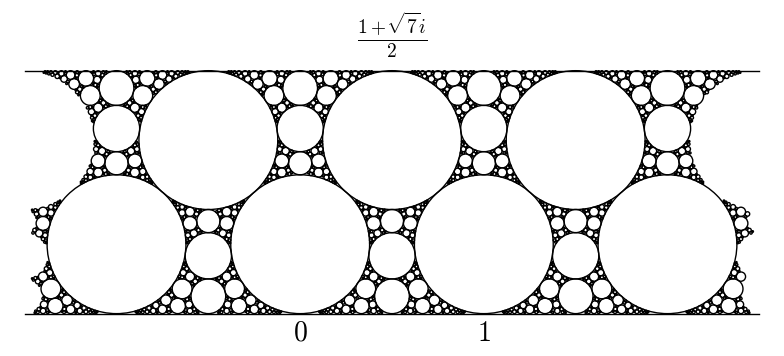}
		\caption{A $\QQ(\sqrt{-7})-$Apollonian packing.}
		\label{fig:7-app}
	\end{figure}

	There is a simple geometric criterion for an Apollonian circle packing as a subset of $\Scal_{\QQ(i)}$:  it is obtained from any one circle by adding on the largest exteriorly tangent circle at each tangency point.  This criterion, applied to other Schmidt arrangements, gives $K$-Apollonian packings for other imaginary quadratic fields (Figures~\ref{fig:2-app} and \ref{fig:7-app}).  Along with these packings come $K$-Apollonian groups for which these packings are the limit set.  These are thin groups acting on appropriate clusters of circles (analogous to the notion of Descartes quadruple) to generate the packing.  For example, in $\QQ(\sqrt{-2})$ the relevant cluster has as tangency graph a cube, and there are six swaps through faces, so that the $K$-Apollonian group is a free product of six copies of $\ZZ/2\ZZ$.  The description of the local obstructions can be extended to $K$-Apollonian packings \cite{StangeBianchi}.

	\section{A postscript}

	I have had the good fortune of a great deal of freedom in choosing and exploring mathematical projects.  I have found myself attracted to subjects in number theory that have an essential geometric aspect, in particular one which can illuminate proofs and demand its own questions.  I fell in love with quadratic forms and continued fractions through the Farey subdivision and Conway's topograph.  I studied curves in graduate school, and learned that it was the genus -- their topology -- that controlled their Diophantine behaviour.  I believe strongly that number theory -- probably all number theory -- is essentially geometric, and that algebra, although powerful and in possession of a beauty of its own, can at times obscure a hidden geometric splendour.

	Of my own research explored in these notes, every project involved extensive computer experiments, often visual ones.  As I discovered the properties of Schmidt arrangements, I held an old fashioned compass to a computer print-out to find patterns.  As I worked with Harriss and Trettel on algebraic numbers, we made an explicit decision to let aesthetics drive our computer experiments, which is how choices like sizing by discriminant and measuring approximation in hyperbolic geometry were born.  

	We are just discovering the many ways in which Apollonian circle packings are essential and natural objects in number theory, much like elliptic curves.  But my favourite justification is geometric:  draw the Schmidt arrangement, which is, in a very real sense, the way that Gaussian rationals choose to organize themselves, and the Apollonian packings are an essential intermediate geometric piece.  They pop out of the picture.  They cannot be avoided: they are dictated by nature.

	I've learned many things from these experiences.  I've learned that the visual cortex is a powerful tool for creativity and intuition, as well as reasoning.  Patterns emerge to our visual cortex that will pass unnoticed in numerical data.  We are essentially visual and social beings, and as such, we vividly recall our visual and social encounters.  This is why mathematics is often best conveyed in stories and pictures.  We remember mathematical objects that appear, to us, to have personality and shape.  As mathematicians, we do this, more-or-less unconsciously, with the most abstract mathematical objects; they become our friends, our tormentors, our landscapes.  Why not also do it explicitly?

	I encourage the reader to wander the pages of John H. Conway and Francis Y. C. Fung's \emph{The Sensual Quadratic Form} \cite{ConwayFung}, Martin H. Weissman's \emph{An Illustrated Theory of Numbers} \cite{Weissman}, and Allen Hatcher's \emph{Topology of Numbers} \cite{Hatcher}; to value an illustrative and visual approach to mathematics; and, at the risk of sentimentality, to follow one's heart in mathematical research and elsewhere.

	\bibliographystyle{alpha}
	\bibliography{app-pack-bib}

	\end{document}